\documentclass[12pt, a4paper, reqno]{amsart}

\usepackage{graphicx}
\usepackage{amsmath}
\usepackage{amssymb}
\usepackage{amsthm}
\usepackage[utf8]{inputenc}
\usepackage[T1]{fontenc}
\usepackage{amsfonts}
\usepackage{mathtools}
\usepackage{mathrsfs}
\usepackage{enumitem}
\usepackage{xfrac}
\usepackage{bbm}
\usepackage{xcolor}
\usepackage{lmodern}
\usepackage{xparse}
\usepackage[
backend=biber,
style=alphabetic,
sorting=nyt,
maxbibnames=6,
]{biblatex}
\usepackage{geometry}
\usepackage{blindtext}
\usepackage{stmaryrd} 
\usepackage{url}
\usepackage{bbold}
\usepackage{tikz}\usetikzlibrary{
arrows.meta, cd, decorations.markings, 3d, shapes.geometric, math, colorbrewer}
\usepackage{pgfplots}\pgfplotsset{compat=1.18}
\usepackage{standalone}
\usepackage[hidelinks]{hyperref}
\usepackage[title]{appendix}
\usepackage{cleveref}
\usepackage{comment}

\title[]{Periodic Orbits for Perturbations of Zoll Contact Forms}
\author[Tom Stalljohann]{Tom Stalljohann$^\ast$}
\thanks{$\ast$ Universit\"at Heidelberg, \url{tstalljohann@mathi.uni-heidelberg.de}}
\date{2026}

\newtheorem{theorem}{Theorem}[section]
\newtheorem*{theorem*}{Theorem}
\newtheorem{MainThm}{Theorem}

\newtheorem{lemma}[theorem]{Lemma}
\newtheorem*{lemma*}{Lemma}
\newtheorem{corollary}[theorem]{Corollary}
\newtheorem*{corollary*}{Corollary}

\newtheorem{proposition}[theorem]{Proposition}
\newtheorem*{proposition*}{Proposition}
\newtheorem{step}{Step}

\newtheorem*{claim}{Claim}
\newtheorem{claimI}{Claim}
\newtheorem*{conjecture*}{Conjecture}

\theoremstyle{definition}
\newtheorem{definition}[theorem]{Definition}
\newtheorem*{definition*}{Definition}
\newtheorem*{convention}{Convention}
\newtheorem*{question*}{Question}

\theoremstyle{remark}
\newtheorem{remark}[theorem]{Remark}

\newtheorem{example}[theorem]{Example}

\newenvironment{proofClaim}{ \proof[Proof of the claim]}{\endproof}

\newcommand{\Forall}[0]{\forall\,}
\newcommand{\Exists}[0]{\exists\,}

\newcommand{\R}[0]{\mathbb{R}}
\newcommand{\Rbar}[0]{\overline{\R}}
\newcommand{\C}[0]{\mathbb{C}}
\newcommand{\Z}[0]{\mathbb{Z}}
\newcommand{\N}[0]{\mathbb{N}}

\newcommand{\bbS}[0]{\mathbb{S}}

\newcommand{\eps}[0]{\varepsilon}
\newcommand{\comma}[0]{\, , \, }
\newcommand{\Cinfty}[0]{C^{\infty}}
\newcommand{\Cinftyloc}[0]{\Cinfty_{\mathrm{loc}}}

\newcommand{\ie}[0]{i.e.\ }

\newcommand{\cf}[0]{cf.\ }

\newcommand{\ArzelaAscoli}[0]{Arzel\`a-Ascoli\ }

\newcommand{\set}[2]{\left\{#1 \,\big| \, #2 \right\}}
\newcommand{\Bigset}[2]{\left\{#1 \,\Big| \, #2 \right\}}

\renewcommand{\d}[0]{\mathrm{d}}
\newcommand{\D}[0]{\mathrm{D}}
\newcommand{\Dvert}[0]{\mathrm{D}^{\mathrm{v}}}
\newcommand{\omegacan}[0]{\omega_{\mathrm{can}}}

\newcommand{\lambdacan}[0]{\lambda_{\mathrm{can}}}
\newcommand{\interior}[0]{\mathrm{int}}

\newcommand{\image}[0]{\mathrm{im}}
\newcommand{\supp}[0]{\mathrm{supp}}
\newcommand{\id}[0]{\mathrm{id}}

\newcommand{\convergence}[1]{\overset{#1\rightarrow \infty}{\longrightarrow}}

\newcommand{\Crit}[0]{\mathrm{Crit}}
\newcommand{\loopspace}[0]{\mathscr{L}}
\newcommand{\moduli}[0]{\mathcal{M}}

\newcommand{\Dscr}[0]{\mathscr{D}}
\newcommand{\COneDatum}[0]{\mathscr{D} = (U_\Sigma, \, c, \, \mathscr{B}_{C^1})}
\newcommand{\BCOne}[0]{\mathscr{B}_{C^1}}
\newcommand{\Rabinowitz}[1]{\mathcal{A}^{#1}}
\newcommand{\RabinowitzHZero}[0]{\Rabinowitz{H_0}}
\newcommand{\RabinowitzH}[0]{\Rabinowitz{H}}
\newcommand{\ActionClass}[1]{\mathcal{A}_{\mathrm{class}}^{#1}}
\newcommand{\Hcal}[0]{\mathcal{H}}
\newcommand{\Hclassparam}[0]{\Hcal^{h_0}(H_0, \Dscr, \delta)}
\newcommand{\Hclassdelta}[1]{\Hcal^{h_0}(H_0,\Dscr, #1)}
\newcommand{\RabinowitzHParam}[1]{\mathcal{A}^{H}_{#1}}
\newcommand{\RabinowitzHrs}[0]{\RabinowitzHParam{r,s}}
\newcommand{\CCrit}[0]{\mathscr{C}^{\tau(\alpha)}}
\newcommand{\moduliH}[1]{{\moduli}^H_{#1}}

\newcommand{\Fsection}[0]{\mathcal{F}}
\newcommand{\Bcal}[0]{\mathcal{B}}
\newcommand{\Ecal}[0]{\mathcal{E}}
\newcommand{\Ucal}[0]{\mathcal{U}}
\newcommand{\Ocal}[0]{\mathcal{O}}
\newcommand{\BcalHat}[0]{\widehat{\Bcal}}
\newcommand{\EcalHat}[0]{\widehat{\Ecal}}
\newcommand{\ev}[0]{\mathrm{ev}}

\newcommand{\gaux}[0]{g_{\mathrm{aux}}}

\numberwithin{equation}{section}

\begin{document}

\begin{abstract}
We consider the perturbation of a Zoll contact form on some prescribed domain of the underlying manifold by multiplying it with a positive function which is constant of value 1 outside the domain. For every sufficiently $C^0$-small such perturbation we find a periodic Reeb orbit of the perturbed contact form intersecting the domain. As an application, starting from a Zoll Riemannian manifold, we demonstrate that for every exact magnetic field, with $C^0$-small magnetic potential vanishing outside some given domain, there exists a periodic magnetic geodesic intersecting this domain.

The theorem is a rather direct consequence of a result which is of independent interest: For Hamiltonians sufficiently $C^0$-close to a defining Hamiltonian, we show existence of a gradient flow line of the corresponding Rabinowitz action functional with a constraint on the position of the cylinder component at $(0,0) \in \R \times \bbS^1$. This relies on a homotopy stretching argument for the Rabinowitz action functional, in the course of which we have to derive some delicate estimates to ensure compactness of the appearing moduli spaces.
\end{abstract}

\maketitle

\section{Introduction}
\label{sec: Introduction}

\subsection{Motivation}
\label{subsec: Introduction - Motivation}

Suppose we are given a contact manifold $(\Sigma,\alpha)$ and an open domain $D \subseteq \Sigma$ with smooth boundary for which there exists a periodic Reeb orbit intersecting $D$. Let $f : \Sigma \rightarrow \R$ be a function which vanishes on $\Sigma \backslash D \,$. We are interested in finding periodic Reeb orbits of the perturbed contact form $e^f \alpha \,$. Since the Reeb vector fields of $\alpha$ and $e^f \alpha$ agree on $\Sigma \backslash D \,$, a nontrivial such periodic Reeb orbit should intersect $D$. We explicitly allow $D = \Sigma \,$, in which case the condition on $f$ to vanish on $\Sigma \backslash D$ becomes void and we are merely looking for a periodic Reeb orbit of $e^f \alpha \,$.
\\

\noindent The relevance of this question lies in the fact that Reeb orbits of $(\Sigma,e^f \alpha)$ are conjugated via $f \times \id_\Sigma : \Sigma \rightarrow \R \times \Sigma$ to 
integral curves of the characteristic line bundle on the graph $\Sigma_f = \mathrm{Gr}(f) \subseteq \R \times \Sigma$ in the symplectization $(\R_h \times \Sigma, \d (e^h \alpha)) \,$. If $\Sigma \subseteq W$ is of contact type in an ambient symplectic manifold $(W,\omega) \,$, then a neighborhood of $\Sigma$ in $W$ is symplectomorphic to a neighborhood of $\{0\} \times \Sigma$ in $\R \times \Sigma $ via the flow of a transverse Liouville vector field. In this sense, periodic Reeb orbits of $e^f \alpha$ intersecting $D$ correspond to closed characteristics on the hypersurface $\Sigma_f \subseteq W$ whose projection to $\Sigma$ via the Liouville flow intersects $D$.

This is especially interesting in the case of the cotangent bundle,
\[
(W,\omega) = (T^*M, \omegacan = \d \lambdacan) \comma \text{ where } \lambdacan := {\textstyle \sum_{i}  p_i \, \d q_i }\, ,
\]
as the flow of the fiberwise radial Liouville vector field $X_{\mathrm{rad}} = \sum_i p_i \, \partial_{p_i}$ preserves cotangent fibers. If $\Sigma \subseteq T^*M$ is a fiberwise starshaped hypersurface with induced contact form $\alpha := \lambdacan |_\Sigma$ and if $B \subseteq M$ is an open domain with smooth boundary, we may take $D := T^*B \cap \Sigma \subseteq T^*M \,$. Then periodic Reeb orbits of $(\Sigma,e^f \alpha)$ intersecting $D$ correspond one-to-one to closed characteristics on $\Sigma_f \subseteq T^*M$ which intersect $T^*B \,$.
\\

\noindent We will answer the above raised question of existence of periodic Reeb orbits of $(\Sigma,e^f \alpha)$ intersecting $D$ in the affirmative for every Zoll contact manifold $(\Sigma,\alpha) \,$, that is the Reeb flow of $\alpha$ is everywhere periodic of the same minimal period, and every sufficiently $C^0$-small $f$ which satisfies a given bound on its derivative, see Theorem \ref{mthm: rescaling Zoll contact form}. The situation in the Zoll case is depicted schematically in Figure \ref{fig: Perturbed Contact Form}.
\\

\noindent As an application in the spirit of the above line of thought, we show that for every magnetic potential on a Zoll Riemannian manifold $(M,g)$, which vanishes outside a domain $B \subseteq M$, there exists a periodic magnetic geodesic intersecting $B$. More precisely, let $(M,g)$ be a closed Zoll Riemannian manifold, \ie all geodesics on $M$ are closed and of the same length. Given a 1-form $\theta \in \Omega^1(M) \,$, we define the skew-symmetric bundle endomorphism $F = F^{(g,\theta)} : TM \rightarrow TM$ by $g (F \, \cdot\, , \,\cdot\,) = \d \theta \,$. We call $F$ the \textit{Lorentz force} of the magnetic field $\d \theta$ with \textit{magnetic potential} $\theta \,$. A \textit{magnetic geodesic} $\gamma : (t_0,t_1) \rightarrow M$ is a curve satisfying 
\begin{equation}
\label{eq: introduction - magnetic geodesic eq}
\frac{\D}{\d t} \dot{\gamma} = F_\gamma(\dot{\gamma}) \, , 
\end{equation}
where $\frac{\D}{\d t}$ denotes covariant differentiation with respect to the metric $g$. Equation \eqref{eq: introduction - magnetic geodesic eq} describes the motion in $M$ of a charged particle under the influence of a magnetic field with Lorentz force $F$. The energy of a magnetic geodesic $\gamma$ is defined by $E(\gamma) := \tfrac{1}{2} \, |\dot{\gamma}|^2$ and is constant along $\gamma\,$. 
We then show the following.

\begin{MainThm}
\label{mthm: magnetic geodesics}
Let $(M,g)$ be a closed Zoll Riemannian manifold of dimension $\dim(M) \geq 2 \,$. For every $E_0 > 0$ 
and every $C > 0$ there exists $\delta = \delta(E_0, C) > 0$ with the following significance. For every non-empty open subset $B \subseteq M$ with smooth boundary and every magnetic potential $\theta \in \Omega^1(M)$ with
\begin{enumerate}[label=\arabic*)]
    \item $\lVert \theta \rVert_{C^0} \leq \delta$ and $\lVert \theta \rVert_{C^1} \leq C \,$, and
    \item $\theta = 0$ on $M \backslash B \,$,
\end{enumerate}
there exists a periodic magnetic geodesic $\gamma \,$, \ie a solution of $\frac{\D}{\d t} \dot{\gamma} = F^{(g,\theta)}_\gamma(\dot{\gamma}) \,$,
of energy $E(\gamma) = E_0$ intersecting $B \,$.
\end{MainThm}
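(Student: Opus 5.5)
The plan is to deduce the statement from Theorem \ref{mthm: rescaling Zoll contact form}, applied to the unit cotangent bundle of the Zoll metric, by realising the magnetic energy level as the graph of a $C^0$-small function $f$ over $\Sigma$. The reduction runs through the twisted-to-standard picture. Magnetic geodesics of energy $E_0$ are, after a fiberwise translation, the projections of Hamiltonian orbits of
\[
H_\theta(q,p) = \tfrac12 |p - \theta_q|^2
\]
on $\{H_\theta = E_0\}$ with respect to the canonical form $\omegacan$. This holds because the translation $p \mapsto p+\theta$ pulls $\omegacan$ back to $\omegacan + \pi^*\d\theta$, so exact magnetic fields correspond to the standard symplectic form with a shifted kinetic Hamiltonian.

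The first step is to fix the model hypersurface. Let $\Sigma := \{|p| = \sqrt{2E_0}\}$ with $\alpha := \lambdacan|_\Sigma$. Its Reeb flow is a reparametrisation of the geodesic flow, so $(\Sigma,\alpha)$ is Zoll because $g$ is Zoll. Set $D := T^*B \cap \Sigma$.

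Next, express the level set as a radial graph. Write $p = e^{h} u$ with $u \in \Sigma$. The equation $|e^h u - \theta|^2 = 2E_0$ reads
\[
e^{2h}\,2E_0 - 2e^h \theta(u) + |\theta|^2 - 2E_0 = 0 .
\]
When $\lVert\theta\rVert_{C^0}$ is small relative to $\sqrt{E_0}$, this has a unique solution $h = f(u)$ near $0$, given by the explicit quadratic formula in $e^h$. The resulting $f$ is smooth and satisfies $\lVert f\rVert_{C^0} \le c(E_0)\lVert\theta\rVert_{C^0}$. Its derivative is controlled, $\lVert f\rVert_{C^1} \le c'(E_0)(1+C)$, since $f$ is an explicit smooth function of $\theta(u)$ and $|\theta|^2$. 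Moreover $f = 0$ wherever $\theta_{\pi(u)} = 0$, in particular on $\Sigma\setminus D$. The fiberwise radial Liouville flow identifies $\mathrm{Gr}(f)$ with $\{H_\theta = E_0\}$, so by the discussion in Section \ref{subsec: Introduction - Motivation} periodic Reeb orbits of $e^f\alpha$ meeting $D$ correspond to closed characteristics on this level set meeting $T^*B$.

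Then apply Theorem \ref{mthm: rescaling Zoll contact form} with the derivative bound $c'(E_0)(1+C)$. It yields a $C^0$-threshold $\delta_0$. Choose $\delta(E_0,C)$ so that $c(E_0)\delta \le \delta_0$ and $\delta$ is small enough for the graph description to be valid. The theorem then provides a periodic Reeb orbit of $e^f\alpha$ meeting $D$. Transport this orbit to a closed characteristic of $\{H_\theta = E_0\}$, reparametrise it as a Hamiltonian orbit of $H_\theta$, and translate back by $p \mapsto p+\theta$. Projecting to $M$ gives a periodic solution $\gamma$ of $\frac{\D}{\d t}\dot\gamma = F^{(g,\theta)}_\gamma(\dot\gamma)$ with $\tfrac12|\dot\gamma|^2 = E_0$, and $\gamma$ intersects $B$.

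Two points need care and I expect them to be the main obstacle. The first is checking the precise hypotheses of Theorem \ref{mthm: rescaling Zoll contact form}, in particular which $C^1$ or derivative norm on $f$ it requires; the bound on $f$ must come only from $\lVert\theta\rVert_{C^1} \le C$ and $E_0$, uniformly in $B$. The second is that the orbit obtained is nonconstant and genuinely projects into $B$. The case $\dim M \ge 2$ ensures that $\Sigma$ is a genuine contact manifold, and the periodic orbit has positive period, so its projection is a true closed magnetic geodesic. It intersects $B$ because its lift meets $D = T^*B\cap\Sigma$ and the radial flow preserves fibers.
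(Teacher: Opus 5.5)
Your proposal is correct and follows essentially the paper's route: identify $T^*M$ minus the zero section with the symplectization of the cosphere bundle $\Sigma$ of radius $\sqrt{2E_0}$, write the energy level as $\mathrm{Gr}(f)$ with $f$ $C^0$-small, $C^1$-bounded and vanishing on $\Sigma\setminus\pi^{-1}(B)$, and apply Theorem~\ref{mthm: rescaling Zoll contact form}. The only difference is that you solve for $f$ explicitly via the quadratic formula, whereas the paper proves the more general Theorem~\ref{mthm: electromagnetic Hamiltonians - periodic solution} using an abstract graph lemma based on $\d H(\partial_h)\geq c$.

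Two small points. With the paper's conventions ($\omegacan=\sum_i \d p_i\wedge\d q_i$, $\iota_{X_H}\omegacan=-\d H$) the correct Hamiltonian is $\tfrac12|p+\theta_q|^2$; yours produces the Lorentz force of $-\theta$, which is harmless by time reversal or by replacing $\theta$ with $-\theta$. You should also record that $D=\pi^{-1}(B)$ has tame boundary, for example by Lemma~\ref{lem: preimage tame boundary}.
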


\noindent In fact, in Section \ref{sec: Electromagnetic Hamiltonians} we will show a generalization of this result (Theorem \ref{mthm: electromagnetic Hamiltonians - periodic solution}), in which we also allow for a mechanic potential and distortion of the metric.

\begin{figure}
    \centering
    \includegraphics[width=0.8\linewidth]{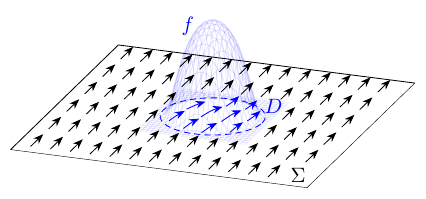}
    \caption{In this schematic sample, the open domain $D$ is the interior of the ellipse and $f : \Sigma \rightarrow \R$ has support $\overline{D}$. The Reeb vector fields of the Zoll contact form $\alpha$ respectively the perturbed contact form {\color{blue}$e^f \alpha$} are  colored black respectively {\color{blue}blue}.}
    \label{fig: Perturbed Contact Form}
\end{figure}

\subsection{Main results}
\label{subsec: Introduction - Main Results}

A contact form $\alpha \in \Omega^1(\Sigma)$ on a manifold $\Sigma$ of dimension $2N-1 \geq 3$ is a 1-form with $\alpha \wedge (\d \alpha)^{N-1} \not= 0$ pointwise everywhere. The vector field $R_\alpha$ on $\Sigma$ uniquely determined by
\[
\iota_{R_\alpha} \d \alpha = 0 \,\,\, \text{ and } \,\,\, \alpha(R_\alpha) =1 
\]
is called the \textit{Reeb vector field}, its induced flow the \textit{Reeb flow}.

\begin{definition}[Zoll contact manifold]
\label{def: Zoll contact mfd}
The pair $(\Sigma,\alpha)$ is called \textit{Zoll contact manifold} and $\alpha$ \textit{Zoll contact form} if $\Sigma$ is a closed (\ie compact, without boundary) connected manifold of dimension $\dim(\Sigma) \geq 3$ and $\alpha \in \Omega^1(\Sigma)$ is a contact form on $\Sigma$ for which all its Reeb orbits are periodic of the same minimal period.
\end{definition}

\noindent We need to introduce some further definitions to state our main theorem. 

\begin{definition}[$C^1$-bounded]
\label{def: C^1-bounded subset}
Let $M$ be a compact smooth manifold (possibly with boundary). The space of smooth functions $C^\infty(M) = \Cinfty(M,\R)$ with the $C^1$-topology is a normable topological vector space. A subset $\mathscr{B} \subseteq \Cinfty(M)$ will be called \textit{$C^1$-bounded} if $\mathscr{B}$ is a bounded subset with respect to any norm inducing the $C^1$-topology on $\Cinfty(M) \,$. This is independent of the norm used.
\end{definition}

\begin{remark}
\label{rmk: C^1-bounded subset}
Fixing a Riemannian metric on $M$, a subset $\mathscr{B} \subseteq \Cinfty(M)$ is $C^1$-bounded iff there exists $A \geq 0$ so that $\lVert f \rVert_\infty + \lVert \nabla f \rVert_\infty \leq A$ for every $f \in \mathscr{B} \,$.
\end{remark}

\noindent Given a topological space $X$ and a subset $S$, we write $\overline{S}$ (respectively $\mathrm{int}(S) ,\, \partial S$) for its closure (respectively interior, boundary). 

\begin{definition}[Tame boundary]
\label{def: tame boundary}
A subset $S \subseteq X$ of a topological space $X$ has \textit{tame boundary} if the following equivalent conditions hold:
\begin{enumerate}[label=(\roman*)]
    \item\label{it: tame boundary - i} $\partial S \cap \mathrm{int}(\overline{S}) = \varnothing \,$,
    \item\label{it: tame boundary - ii} $X \backslash S \subseteq \overline{\mathrm{int}(X \backslash S)} \,$.
\end{enumerate}
\end{definition}

\begin{remark}
\label{rmk: tame boundary}
Since $\mathrm{int}(\overline{S}) \subseteq \overline{S} = \mathrm{int}(S) \sqcup \partial S$ and $\mathrm{int} (\overline{S}) =  X \backslash  \overline{\mathrm{int}(X \backslash S)} \,$, indeed
\begin{align*}
    \text{\ref{it: tame boundary - i}} \,\, \Longleftrightarrow \,\, \mathrm{int}(\overline{S}) \subseteq \mathrm{int}(S) \,\, \Longleftrightarrow \,\, \mathrm{int}( \overline{S}  ) \subseteq S \,\, \Longleftrightarrow\,\,    X \backslash  \overline{\mathrm{int}(X \backslash S)}  \subseteq S \,\, \Longleftrightarrow \,\,  \text{\ref{it: tame boundary - ii}} \,\, .
\end{align*}
\end{remark}

\begin{example}
\label{example: tame boundary}
Let $M$ be a smooth manifold without boundary.
\begin{enumerate}[label=(\alph*)]
    \item Every closed subset of $M$ (in particular $M$ itself) has tame boundary.
    \item Let $U \subseteq M$ be an open subset with smooth boundary, \ie $\overline{U}$ is a codimension-$0$ submanifold of $M$ with (possibly empty) boundary and $\mathrm{int}(\overline{U}) = U \,$. Then $U$ has tame boundary.
    \item $M \backslash \{q\}$ does not have tame boundary, for every $q \in M$.
\end{enumerate}
\end{example}

\noindent Lastly, given a continuous curve $\gamma : I \rightarrow X$ from an interval $I \subseteq \R$ (or $I = \bbS^1$) to a topological space $X$ and a subset $S \subseteq X \,$, we say that \textit{$\gamma$ intersects $S$} if $\gamma^{-1}(S) \not= \varnothing \,$.

We are now ready to state our main result.

\begin{MainThm}
\label{mthm: rescaling Zoll contact form}
Let $(\Sigma,\alpha)$ be a Zoll contact manifold and $\mathscr{B} \subseteq \Cinfty(\Sigma)$ be a $C^1$-bounded subset. Then there exists a constant $\delta = \delta(\mathscr{B}) > 0$ with the following significance. For every non-empty subset $D \subseteq \Sigma$ with tame boundary and every $f \in \mathscr{B}$ satisfying
\begin{enumerate}[label=\arabic*)]
    \item\label{it: mthm rescaling Zoll contact form - C^0 bound} $\lVert f \rVert_\infty := \max_\Sigma |f| \leq \delta$ and
    \item\label{it: mthm rescaling Zoll contact form - f=0 outside D} $f \equiv 0$ on $\Sigma \backslash D \,$,
\end{enumerate}
there exists a periodic Reeb orbit of $(\Sigma, \, e^f \alpha)$ intersecting $D$.
\end{MainThm}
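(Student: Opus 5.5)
The plan is to deduce the theorem from the Rabinowitz Floer result announced in the abstract: a gradient flow line of the Rabinowitz action functional whose cylinder component at $(0,0)\in\R\times\bbS^1$ is constrained to lie in a prescribed region. First I symplectize. Set $W=\R_h\times\Sigma$ with $\omega=\d(e^h\alpha)$ and choose the defining Hamiltonian $H_0(h,x)=e^h-1$ (suitably cut off far from $\{0\}\times\Sigma$). Its Hamiltonian flow on $\Sigma=H_0^{-1}(0)$ is the Reeb flow of $\alpha$, so it is Zoll. Given $f$, set $H(h,x)=H_0(h-f(x),x)=e^{h-f(x)}-1$. Then $H^{-1}(0)=\Sigma_f=\mathrm{Gr}(f)$, and its characteristics are conjugate to the Reeb orbits of $e^f\alpha$. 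We have $\lVert H-H_0\rVert_{C^0}\lesssim\lVert f\rVert_\infty\le\delta$, and the $C^1$-bound on $\mathscr{B}$ controls $\d H$ uniformly. This uniform control is what fixes the datum $\Dscr=(U_\Sigma,c,\BCOne)$ with respect to which the independent result is stated. Moreover $H=H_0$ on $\R\times(\Sigma\setminus D)$.

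Next I apply the flow-line theorem. For $\delta$ small, depending only on $\mathscr{B}$, it yields a gradient flow line $(u,\eta)$ of $\RabinowitzH$ that is asymptotic at both ends to critical points, i.e. to periodic orbits on $\Sigma_f$ or to constants, and whose cylinder component satisfies $u(0,0)\in\R\times D'$. I choose $D'$ to be a region whose projection meets $D$. Because $D$ has tame boundary, I can instead arrange the constraint on an open set $\mathrm{int}(\overline D)\subseteq D$, or handle the case $\partial D$ via condition (ii). The point is that any curve through a point of $\overline{D}$ near which $f\neq0$ must also meet $D$ itself.

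The existence step comes from a stretching-of-homotopy argument. Outside $D$ the functional's gradient equation coincides with that of the Zoll $H_0$. If no periodic orbit of $\Sigma_f$ met $D$, every critical point of $\RabinowitzH$ would be a Reeb orbit of $\alpha$ lying entirely in $\Sigma\setminus D$, or a constant. I then argue that the asymptotic ends of the constrained flow line, or a limit of such flow lines, must be nonconstant critical points that meet $D$. The mechanism is an action/energy argument: the action is $\approx$ the Zoll period $T$ for orbits near $\Sigma$, and small energy, of size $O(\delta)$, keeps the whole cylinder $C^0$-close to a single Zoll orbit through $u(0,0)$. Passing to the loop $u(s,\cdot)$ as $s\to\pm\infty$ then gives a periodic orbit of $e^f\alpha$ lying close to the Zoll orbit through a point of $D$. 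If that orbit avoided $D$ it would be an $\alpha$-Reeb orbit, and one must show this is incompatible with $f\not\equiv0$ unless the orbit passes through $D$.

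The main obstacle is this last localization. The flow line passes through $D$ at $(0,0)$, but a priori its asymptotic orbits need not. I would settle it by choosing the constraint cleverly: take $D'$ to consist of points where $f$ attains a nonzero value, or simply points of $D$. I would then use that a Zoll orbit of $\alpha$ lying entirely outside $D$ is a critical point with action exactly $T$. Writing the action difference between the two ends as energy, if both ends were such unperturbed orbits the flow line would have zero energy and hence be constant. A constant flow line is a critical point through $u(0,0)\in D$, and is therefore an orbit intersecting $D$. If the ends differ, one of them has action $\ne T$, so it is not an unperturbed orbit and must intersect $D$. The degenerate case $f\equiv0$ is trivial, since then every Zoll orbit through $D$ works.
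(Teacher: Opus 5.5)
Your last paragraph is, in substance, the paper's argument. The paper applies Theorem~\ref{mthm: Rabinowitz gradient flow line} with $z\in D$, packaged through Corollary~\ref{cor: Rabinowitz gradient flow line}. It separates a constant flow line, which is itself a critical point through $\R\times\{z\}$, from one whose ends have different actions. It then uses that a critical point avoiding $\R\times D$ is a critical point of $\RabinowitzHZero$, so its action lies in $\tau(\alpha)\Z\cap[\tau(\alpha)-\eps,\tau(\alpha)+\eps]=\{\tau(\alpha)\}$ once $\eps<\tau(\alpha)$.

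Your choice $H=\phi(h-f)$, with $\phi$ a cut-off of $e^h-1$ vanishing only at $0$, is a harmless simplification. It lies directly in the admissible class of Theorem~\ref{mthm: Rabinowitz gradient flow line} (choose $\phi'$ supported in $(-h_0/2,h_0/2)$ and $\lVert f\rVert_\infty\le h_0/2$). It also has $H^{-1}(0)=\mathrm{Gr}(f)$ exactly, so you bypass the cut-off Proposition~\ref{prop: cut-off}.

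The unjustified step is your assertion that outside $D$ the gradient equation of $\RabinowitzH$ coincides with that of $\RabinowitzHZero$. Equivalently, you claim that a critical point of $\RabinowitzH$ not meeting $\R\times D$ is an $\alpha$-Reeb orbit.
\begin{itemize}
\item This needs $X_H=X_{H_0}$ along $\R\times(\Sigma\setminus D)$, i.e.\ $\d f=0$ on $\Sigma\setminus D$, not just $f=0$ there.
\item From $f\equiv0$ on $\Sigma\setminus D$ you only get $\d f=0$ on $\overline{\mathrm{int}(\Sigma\setminus D)}$.
\item Tame boundary in the form \ref{it: tame boundary - ii}, namely $\Sigma\setminus D\subseteq\overline{\mathrm{int}(\Sigma\setminus D)}$, is exactly what closes this. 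It is the only place the hypothesis enters.
\item Without it, $X_H$ may differ from $X_{H_0}$ at points of $\Sigma\setminus D$ where $\d f\neq0$. A closed characteristic inside $\{0\}\times(\Sigma\setminus D)$ then need not be an $\alpha$-orbit, its action need not lie in $\tau(\alpha)\Z$, and your action-gap step fails.
\end{itemize}
Your stated use of tame boundary, moving the constraint into $\mathrm{int}(\overline D)$, is beside the point. The constraint in Theorem~\ref{mthm: Rabinowitz gradient flow line} is a single point, and you simply take $z\in D$.

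Two smaller points.
\begin{itemize}
\item Drop two claims from your third paragraph. First, that energy $O(\delta)$ keeps the whole cylinder $C^0$-close to one Zoll orbit: this is unproved and unnecessary. Second, that an $\alpha$-orbit avoiding $D$ is ``incompatible with $f\not\equiv0$'': this is false as stated, since many Zoll orbits avoid $D$. The energy/action dichotomy of your last paragraph does all the work.
\item State explicitly that you fix $\eps<\tau(\alpha)$ and invoke property~\ref{it: mthm Rabinowitz gradient flow line - action estimate} of Theorem~\ref{mthm: Rabinowitz gradient flow line}, so both ends have action in $[\tau(\alpha)-\eps,\tau(\alpha)+\eps]$. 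This forces an unperturbed end to have action exactly $\tau(\alpha)$. Together with $\tau^\pm\neq0$ from property~\ref{it: mthm Rabinowitz gradient flow line - tau not= 0}, it also guarantees that the orbits you produce are genuine non-constant Reeb orbits.
\end{itemize}
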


\begin{remark}
\label{rmk: mthm rescaling Zoll contact form}
The following remarks are in order.
\begin{enumerate}[label=(\roman*)]
    \item\label{it: mthm rescaling Zoll contact form - D=Sigma} For $D := \Sigma \,$, condition \ref{it: mthm rescaling Zoll contact form - f=0 outside D} is void. We obtain an existence result for Reeb orbits of $(\Sigma, \,e^f \alpha) \,$, for every $f \in \mathscr{B}$ with $\lVert f \rVert_\infty \leq \delta \,$.
    \item\label{it: mthm rescaling Zoll contact form - period} The proof of Theorem \ref{mthm: rescaling Zoll contact form} even reveals the following: Let $\tau(\alpha) > 0$ denote the common minimal Reeb period of $(\Sigma,\alpha) \,$. Given $\eps > 0$ and a $C^1$-bounded subset $\mathscr{B} \subseteq \Cinfty(\Sigma) \,$, there exists $\delta = \delta(\eps, \mathscr{B}) > 0$ with the significance stated in the theorem and so that additionally the Reeb orbit of $(\Sigma,e^f \alpha)$ intersecting $D$ has (not necessarily minimal) period $\tau$ with $|\tau - \tau(\alpha)| \leq \eps \,$.
    \item\label{it: mthm rescaling Zoll contact form - Reeb vf e^f alpha} Setting $\xi := \ker (\alpha) \,$, the Reeb vector field of $e^f \alpha$ is explicitly given by
    \[
    R_{e^f \alpha} = e^{-f} \big( R_\alpha + (\d \alpha|_{\xi})^{-1}(\d f|_{\xi})\big) \,\, ,
    \]
    where we use the isomorphism $\d \alpha|_\xi : \xi \overset{\cong}{\longrightarrow} \xi^* \,$. We see that $R_{e^f \alpha}$ depends on the derivative of $f$, so it is remarkable that Theorem \ref{mthm: rescaling Zoll contact form} only requires $f$ to be $C^0$-small (for prescribed $C^1$-bounds) instead of $C^1$-small.
\end{enumerate}
\end{remark}

\noindent As a byproduct of the proof of Theorem \ref{mthm: rescaling Zoll contact form}, we also obtain a mild multiplicity result, for which we now recall the appropriate terminology:
Two globally defined integral curves $\gamma_1 , \gamma_2: \R \rightarrow M$ of an autonomous vector field on a manifold $M$ are called 
\textit{geometrically equal} if any of the following equivalent conditions holds:
\begin{enumerate}[label=(\roman*)]
    \item $\gamma_1(\R) = \gamma_2(\R) \,$,
    \item $\gamma_1(\R) \cap \gamma_2(\R) \not= \varnothing \,$,
    \item $\gamma_2 = \gamma_1(\,\cdot\, + s) $ for some $s \in \R \,$.
\end{enumerate}
If $\gamma_1$ and $\gamma_2$ are not geometrically equal, we call them \textit{geometrically distinct}.

\begin{MainThm}
\label{mthm: multiplicity result}
Let $(\Sigma,\alpha)$ be a Zoll contact manifold. For given $C^1$-bounded subset $\mathscr{B}\subseteq \Cinfty(\Sigma)$ and $\eps > 0$ there exists a constant $\delta = \delta(\eps,\mathscr{B}) > 0$ with the following significance. For every $f \in \mathscr{B}$ with $\lVert f \rVert_\infty \leq \delta$ one of the following cases holds.
\begin{enumerate}[label=(\alph*)]
    \item\label{it: mthm multiplicity - two periodic orbits} $(\Sigma,e^f \alpha)$ has at least two geometrically distinct periodic Reeb orbits.
    \item\label{it: mthm multiplicity - one short orbit} $(\Sigma,e^f \alpha)$ has a periodic Reeb orbit of period at most $\eps $.
\end{enumerate}
\end{MainThm}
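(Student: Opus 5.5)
We plan to deduce Theorem \ref{mthm: multiplicity result} from Theorem \ref{mthm: rescaling Zoll contact form}, in its period-controlled form from Remark \ref{rmk: mthm rescaling Zoll contact form}\ref{it: mthm rescaling Zoll contact form - period}, by choosing the domain $D$ well. Fix $\eps>0$ and $\mathscr{B}$. We may assume $\eps<\tau(\alpha)/4$. Let $\delta=\delta(\eps,\mathscr{B})$ be the constant of Remark \ref{rmk: mthm rescaling Zoll contact form}\ref{it: mthm rescaling Zoll contact form - period}, shrunk if necessary. Suppose \ref{it: mthm multiplicity - one short orbit} fails, so every periodic Reeb orbit of $e^f\alpha$ has period $>\eps$. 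Applying the theorem with $D=\Sigma$ gives a periodic orbit $\gamma$ of $e^f\alpha$ with period $\tau$, $|\tau-\tau(\alpha)|\le\eps$. Let $K:=\gamma(\R)$, a compact embedded circle.

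The main step is to produce a second orbit that is geometrically distinct from $\gamma$. We take $D:=\Sigma\backslash K$. It is open, and $\Sigma\backslash D=K$ has empty interior because $\dim\Sigma\ge 3$. Hence $\overline{\mathrm{int}(K)}=\varnothing$, and condition \ref{it: tame boundary - ii} of Definition \ref{def: tame boundary} fails. So $D$ does \emph{not} have tame boundary, and we cannot use it directly. We therefore use a thickening instead. We note first that $f$ need not vanish on $K$, so we cannot require $f=0$ off $D$ with $D$ avoiding $K$.

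The idea we will actually pursue is different. We apply the theorem to the pair $(e^f\alpha, D)$ where $D$ is the open set on which $f\neq 0$, but we take the contact form $\alpha$ with $f$ replaced by $f_s$, a cutoff of $f$ near $K$. That approach is cumbersome. A better route is to reopen the proof. The main theorem is derived from an existence result for a Rabinowitz gradient flow line through a constrained position. In the Zoll case, the critical manifold of $\RabinowitzHZero$ at period $\tau(\alpha)$ is $\Sigma\times\{\tau(\alpha)\}$ modulo rotation. The stretching argument yields, for every point $x\in\Sigma$, a flow line whose $(0,0)$ value lies in a prescribed neighbourhood determined by $x$. The limit of this flow line is then a critical point of $\RabinowitzH$ of action near $\tau(\alpha)$, and its orbit passes within $O(\delta)$ (in a sense controlled by $\mathscr{B}$) of the orbit of $\alpha$ through $x$.

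Next we pick $x$ whose $\alpha$-orbit is far from $K$. This is possible, since the quotient $\Sigma/\bbS^1$ is a closed orbifold of positive dimension, whereas $K$ is $C^0$-close to a single $\alpha$-fibre because $f$ is $C^0$-small. This gives an orbit $\gamma'$ that is not contained in $K$, with period in $[\tau(\alpha)-\eps,\tau(\alpha)+\eps]$. Finally we check that $\gamma'$ cannot be an iterate of $\gamma$ either. An iterate of $\gamma$ has image $K$, whereas $\gamma'$ leaves any small neighbourhood of $K$. So $\gamma$ and $\gamma'$ are geometrically distinct, which gives \ref{it: mthm multiplicity - two periodic orbits}.

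The main obstacle is the localization step: we need the orbit produced via the constrained flow line to stay near the prescribed $\alpha$-fibre, uniformly for $f\in\mathscr{B}$ with $\lVert f\rVert_\infty\le\delta$. We expect to obtain it from the action–energy estimate. The flow line has energy at most the action gap, which is $O(\delta)$, and the $C^1$ bound from $\mathscr{B}$ controls $X_H$. Together these confine the flow line to a small neighbourhood in the loop space. The short-orbit alternative \ref{it: mthm multiplicity - one short orbit} enters exactly here. Without short orbits, critical points of action near zero other than constants are excluded, and so the compactness and action window needed for this argument are not destroyed.
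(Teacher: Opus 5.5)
There is a genuine gap. It sits exactly at the localization step you flag as the main obstacle. Nothing in the paper's machinery, or in your sketch, shows that the periodic orbit extracted from the constrained flow line of Theorem \ref{mthm: Rabinowitz gradient flow line} stays near the $\alpha$-fibre through the prescribed point $x$, and your proposed justification does not work. The energy bound $E(w)\le 2\delta T^*$ controls $\int_{\R}|\partial_s w|^2\,\d s$, not the length $\int_{\R}|\partial_s w|\,\d s$. So it gives no control on how far $w(s)$ drifts from $w(0)$ as $s\to\pm\infty$. The asymptotic critical points $(v^\pm,\tau^\pm)$, which are the only actual Reeb orbits you obtain, are therefore not localized.

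A Morse--Bott type length--energy estimate would need $\nabla\RabinowitzH$ to be close to $\nabla\RabinowitzHZero$. By the formula for $R_{e^f\alpha}$ in Remark \ref{rmk: mthm rescaling Zoll contact form}, however, $X_{H_f}-X_{H_0}$ involves $\d f$, which is only bounded, not small. This lack of $C^1$-closeness is the very reason the paper needs the stretching argument. For the same reason, your claim that $K=\gamma(\R)$ is $C^0$-close to a single $\alpha$-fibre ``because $f$ is $C^0$-small'' is unfounded: the transverse part of $R_{e^f\alpha}$ has size comparable to $|\d f|$, so within one period an orbit can move an $O(1)$ distance across fibres. Also, alternative (b) does not enter where you place it. In the paper the action window is protected by $\eps<\tau(\alpha)$ and by the spectral gap of $\RabinowitzHZero$, not by the absence of short orbits of $e^f\alpha$.

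The idea you are missing is that no localization is needed. The flow line of Theorem \ref{mthm: Rabinowitz gradient flow line} comes with a built-in dichotomy (Remark \ref{rmk: Rabinowitz gradient flow line}). Either it has zero energy, so it is a critical point $(v,\tau)$ with $v(0)\in\R\times\{z\}$, or it accumulates at two critical points of strictly different action inside the window. This is packaged as Corollary \ref{cor: Rabinowitz gradient flow line}; apply it to $H_f(h,x)=h-f(x)$ with $U_\Sigma=(-1,1)\times\Sigma$, $c=1$ and window $\eps/2$. Two cases result.
\begin{enumerate}[label=(\roman*)]
\item Through every point of $\Sigma$ there passes a periodic Reeb orbit of $e^f\alpha$. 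Since $\dim\Sigma\ge 3$, these cannot all be geometrically equal, so (a) holds.
\item There are critical points $(v^\pm,\tau^\pm)$ with $0<\Delta:=\RabinowitzH(v^+,\tau^+)-\RabinowitzH(v^-,\tau^-)\le\eps$. If the two orbits are geometrically equal, they sit on levels $k_-<k_+$ of the critical point tower of a single prime loop $(v,\tau)$. The action on level $k$ is $k\,\RabinowitzH(v,\tau)$, and $\RabinowitzH(v,\tau)=\tau\int_0^1\lambda(X_{H_f})(v)\,\d t=\tau$ since $\d H_f(\partial_h)\equiv 1$. Hence $\tau\le (k_+-k_-)\tau=\Delta\le\eps$, which is (b) (Lemmas \ref{lem: action k-th level crit pt tower} and \ref{lem: action difference for crit pts not geometrically distinct}).
\end{enumerate}
This is the paper's proof. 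The short-orbit alternative is precisely the price of not being able to tell different iterates of one orbit apart by action. Your preliminary detour through Theorem \ref{mthm: rescaling Zoll contact form} with $D=\Sigma$ and $D=\Sigma\setminus K$ then becomes unnecessary, and, as you noticed, the latter choice is not admissible anyway.
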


\noindent We do not claim any optimality regarding the number of periodic Reeb orbits of the perturbed contact form in the above theorem.

\subsection{Proof strategy}
\label{subsec: Introduction - Proof Strategy}

Theorems \ref{mthm: rescaling Zoll contact form} and \ref{mthm: multiplicity result} are rather direct consequences of Theorem \ref{mthm: Rabinowitz gradient flow line} in Section \ref{sec: Rabinowitz AF existence crit pts}, which is the actual main contribution of this paper. Briefly summarized, it asserts existence of gradient flow lines of the Rabinowitz action functional for every Hamiltonian sufficiently $C^0$-close to a defining Hamiltonian for the given Zoll contact manifold. Moreover, one can control the position of the cylinder component of the gradient flow line at $(s,t)=(0,0) \in \R \times \bbS^1 \,$. The proof of Theorem \ref{mthm: Rabinowitz gradient flow line} constitutes the bulk of this work and relies on a homotopy stretching argument for Rabinowitz-Floer gradient flow lines. The key step is showing that certain moduli spaces of gradient flow lines are non-empty (Proposition \ref{prop: moduli^H_[0,R](z) non-empty}). This requires an abstract perturbation result for Fredholm sections.

\subsection{Organization of the paper}
\label{subsec: Introduction - Organization}

\noindent In Section \ref{sec: Symplectization} we recall some well-known facts about the symplectization of a contact manifold.

Section \ref{sec: Electromagnetic Hamiltonians} contains a brief repetition of the interplay between Lagrangian and Hamiltonian dynamics and the proof of Theorem \ref{mthm: electromagnetic Hamiltonians - periodic solution} (the generalization of Theorem \ref{mthm: magnetic geodesics}). We try to be comprehensive and give the proof details in length to illustrate how one may generally apply Theorem \ref{mthm: rescaling Zoll contact form}.

In Section \ref{sec: Rabinowitz AF existence crit pts} we introduce the Rabinowitz action functional, state the actual main result (Theorem \ref{mthm: Rabinowitz gradient flow line}) and use it to derive both Theorems \ref{mthm: rescaling Zoll contact form} and \ref{mthm: multiplicity result} from it.

The rest of the paper is devoted to the proof of Theorem \ref{mthm: Rabinowitz gradient flow line}.

Section \ref{sec: Proof MThm - Preparation} is of preparatory nature. We define an interpolation functional and the corresponding moduli space of gradient flow lines. We also prove that the gradient flow lines in this moduli space satisfy uniform bounds on the Lagrange multiplier and energy and that the cylinder components lie in a common compact domain of the symplectization. These are the crucial bounds for the homotopy stretching argument to work.

The proof of Theorem \ref{mthm: Rabinowitz gradient flow line} is presented in Section \ref{sec: Proof MThm}, except for the key proposition (Proposition \ref{prop: moduli^H_[0,R](z) non-empty}), whose proof is deferred to Section \ref{sec: Non-Empty Moduli Spaces}.

Section \ref{sec: Outlook} contains a short outlook regarding the question on how to possibly adapt the arguments for more general (not necessarily Zoll) contact manifolds.

Two appendices are added. Appendix \ref{app sec: Cieliebak-Frauenfelder estimate} contains the proof of the lemma on which the uniform Lagrange multiplier bound rests. Although it is a straightforward adaption of a similar lemma in \cite{Cieliebak_Frauenfelder}, we decided to include its proof due to its high relevance for the overall argument and to illustrate why we need to work with $C^1$-data (see Definition \ref{def: C^1 datum}) in the first place.

In Appendix \ref{app sec: Banach Mfd Bcal} we define an atlas for the Banach manifold $\Bcal$ appearing in Section \ref{sec: Non-Empty Moduli Spaces} and carry out some local computations in $\Bcal \,$.

\subsection{Acknowledgments}
\label{subsec: Introduction - Acknowledgments}

I am indebted to my supervisor P.~Albers to bring to my awareness the homotopy stretching argument to find critical points for perturbations of a Morse-Bott functional. I would also like to thank A.~Abbondandolo, L.~Dahinden, U.~Frauenfelder and F.~Ruscelli for inspiring comments and further discussion. This research was funded by Deutsche Forschungsgemeinschaft through the Research Training Group RTG 2229 (281869850).

\subsection{AI disclosure statement}
\label{subsec: Introduction - AI Statement}

This work was completed without use of artificial intelligence.

\section{The Symplectization}
\label{sec: Symplectization}

\noindent We briefly introduce the terminology we use in the context of the symplectization of a contact manifold. We follow standard conventions.

\subsection{Liouville vector fields}
\label{subsec: Liouville vector fields}

Let $(W,\omega = \d \lambda)$ be an exact symplectic manifold. The \textit{Liouville vector field $X_\lambda$ corresponding to $\lambda$} is the vector field on $W$ defined by $\iota_{X_\lambda} \omega = \lambda \,$. For a given hypersurface $\Sigma \subseteq W \,$, the restriction $\lambda|_{\Sigma} \in \Omega^1(\Sigma)$ is a contact form on $\Sigma$ iff $X_\lambda$ is transverse to $\Sigma \,$.

Now suppose $\Sigma$ is a closed (\ie compact, without boundary) hypersurface and $X= X_\lambda$ is transverse to $\Sigma \,$. Then $\alpha := \lambda|_\Sigma \in \Omega^1(\Sigma)$ is a contact form and for some $0 < h_0 \leq \infty$ the flow $(\phi_X^t)_{t \in \R}$ of $X$ induces an embedding
\begin{equation}
\label{eq: Liouville vfields - Liouville flow}
\Phi : (-h_0,h_0) \times \Sigma \hookrightarrow W \comma \quad (h,x) \mapsto \phi_X^h(x) \,\, .
\end{equation}
The map $\Phi$ pulls back $\lambda$ to the $1$-form $\Phi^* \lambda = e^h \alpha$ on $(-h_0,h_0) \times \Sigma \,$, where $h$ denotes the coordinate in $(-h_0,h_0) \,$. Hence $\Phi^* \omega = \d (e^h \alpha) \,$.

\subsection{Symplectization}
\label{subsec: Symplectization}

The \textit{symplectization} of a contact manifold $(\Sigma,\alpha)$ is the symplectic manifold
\[
(S \Sigma := \R \times \Sigma , \, \d (e^h \alpha)) \,\, ,
\]
where $h$ denotes the $\R$-coordinate. Notice that the coordinate vector field $\partial_h$ is the Liouville vector field for the distinguished primitive $e^h \alpha \,$, that is $X_{e^h \alpha} = \partial_h \,$. It is transverse to the hypersurface $\{0\} \times \Sigma \subseteq S \Sigma$ and the induced contact form $(e^h \alpha)|_{\{0\} \times \Sigma}$ is just $\alpha \,$. We will identify $(\Sigma,\alpha)$ with the hypersurface $\{0\} \times \Sigma \subseteq S \Sigma \,$. 

By the previous discussion, if $\Sigma$ is a closed hypersurface in an exact symplectic manifold $(W, \omega=\d \lambda)$ to which $X_\lambda$ is transverse, the Liouville flow \eqref{eq: Liouville vfields - Liouville flow} induces an exact symplectic diffeomorphism between an open neighborhood of $\Sigma$ in $(S \Sigma,e^h \alpha) \,$, where $\alpha := \lambda|_\Sigma \,$, and an open neighborhood of $\Sigma$ in $(W,\lambda) \,$.

\begin{example}
\label{example: Symplectization in cotangent bundle}
Let $M$ be a closed $n$-manifold. The Liouville vector field of the canonical $1$-form $\lambdacan = \sum_{i=1}^{n} p_i \, \d q_i$ on the cotangent bundle $(T^*M , \omegacan = \d \lambdacan)$ is the fiberwise radial vector field $X_{\mathrm{rad}} = \sum_{i=1}^n p_i \, \partial_{p_i} \,$. Its flow is given by $\phi^t_{X_{\mathrm{rad}}}(q,p) = (q, e^t p)$ and in particular complete. Let $\Sigma \subseteq T^* M$ be a \textit{fiberwise starshaped} hypersurface, \ie $X_{\mathrm{rad}}$ is transverse to $\Sigma$ and in each fiber $T_q^*M$ every ray emanating from the origin intersects $\Sigma$ in precisely one point. Then the fiberwise radial Liouville flow \eqref{eq: Liouville vfields - Liouville flow} yields a globally defined exact symplectic embedding
\begin{equation}
\label{eq: embedding symplectization in cotangent bdl}
\Phi : \, (S \Sigma , e^h \alpha) \hookrightarrow (T^*M, \lambdacan ) \comma \,\,\, \Phi(h,(q,p)) = (q, e^h p) \comma \quad \alpha := \lambdacan|_\Sigma \,\, ,
\end{equation}
onto $\Phi(S \Sigma) = T^*M \backslash O_{T^*M} \,$, the cotangent bundle minus the zero section.
\end{example}

\noindent Given a function $f \in \Cinfty(\Sigma) $ on a closed contact manifold $(\Sigma,\alpha) $, its graph $\mathrm{Gr}(f) \subseteq \R \times \Sigma$ is a hypersurface in the symplectization which is transverse to the Liouville vector field $\partial_h $ (see also Figure \ref{fig: graph symplectization}), so $(e^h \alpha)|_{\mathrm{Gr}(f)} $ is a contact form on $\mathrm{Gr}(f)$.
The next lemma, whose proof is immediate, expresses the idea that one can either work on the graph of a function $f$ with the well-behaved contact form $(e^h \alpha)|_{\mathrm{Gr}(f)}$ or instead work directly on $\Sigma$ with the perturbed contact form $e^f \alpha \,$.

\begin{lemma}
\label{lem: Reeb orbits Gr(f) <--> e^f alpha Reeb orbits}
The map $f \times \id_\Sigma : (\Sigma , e^f \alpha) \overset{\cong}{\longrightarrow} (\mathrm{Gr}(f) , (e^h \alpha)|_{\mathrm{Gr}(f)})$ is a strict contactomorphism whose inverse is the restriction of the projection $\pi_\Sigma : \R \times \Sigma \rightarrow \Sigma$ to $\mathrm{Gr}(f) \,$. In particular, $f \times \id_\Sigma$ conjugates the Reeb flows of $(\Sigma,e^f \alpha)$ and $(\mathrm{Gr}(f), (e^h \alpha)|_{\mathrm{Gr}(f)}) \,$.
\end{lemma}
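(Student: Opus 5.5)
The plan is to verify directly that the graph map $\iota := f \times \id_\Sigma : \Sigma \to \R \times \Sigma$, $x \mapsto (f(x),x)$, pulls back $e^h \alpha$ to $e^f \alpha$, and then to invoke the general fact that strict contactomorphisms intertwine Reeb vector fields.

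First, $\iota$ is a smooth embedding whose image is exactly $\mathrm{Gr}(f)$. It has the smooth left inverse $\pi_\Sigma$, since $\pi_\Sigma \circ \iota = \id_\Sigma$. Conversely, every point of $\mathrm{Gr}(f)$ has the form $(f(x),x)$, so $\iota \circ \pi_\Sigma|_{\mathrm{Gr}(f)} = \id_{\mathrm{Gr}(f)}$. Hence $\iota$ is a diffeomorphism onto $\mathrm{Gr}(f)$ with inverse $\pi_\Sigma|_{\mathrm{Gr}(f)}$.

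Next I compute the pullback. Write $e^h \alpha$ for the form $e^{h}\,\pi_\Sigma^*\alpha$ on $\R \times \Sigma$. Then
\[
\iota^*(e^h \pi_\Sigma^* \alpha) = e^{h\circ \iota}\, (\pi_\Sigma \circ \iota)^* \alpha = e^f \alpha .
\]
Since $\iota$ factors through the inclusion $j : \mathrm{Gr}(f) \hookrightarrow \R\times\Sigma$, this says $\iota^*\big(j^*(e^h\alpha)\big) = e^f\alpha$. Thus $\iota$ is a strict contactomorphism $(\Sigma,e^f\alpha)\to(\mathrm{Gr}(f),(e^h\alpha)|_{\mathrm{Gr}(f)})$. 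This also shows directly that the restricted form is contact, since it is the pushforward of the contact form $e^f\alpha$.

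Finally, let $\psi:(M_1,\beta_1)\to(M_2,\beta_2)$ be any diffeomorphism with $\psi^*\beta_2=\beta_1$. Then $\psi^*\d\beta_2=\d\beta_1$. The vector field $\psi^*R_{\beta_2}$ therefore satisfies $\beta_1(\psi^*R_{\beta_2}) = \beta_2(R_{\beta_2})\circ\psi = 1$ and $\iota_{\psi^*R_{\beta_2}}\d\beta_1 = \psi^*(\iota_{R_{\beta_2}}\d\beta_2)=0$. By uniqueness of the Reeb vector field, $\psi^*R_{\beta_2}=R_{\beta_1}$, so $\psi$ conjugates the Reeb flows. Applying this with $\psi=\iota$ gives the claim.

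There is no real obstacle here. The only point requiring care is to keep track of the fact that the contact form on $\mathrm{Gr}(f)$ is a restriction, so the pullback must be taken through the inclusion $j$.
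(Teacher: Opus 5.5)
Your proof is correct and is precisely the direct verification the paper has in mind when it calls the proof immediate (the paper gives no written argument). The identities $h\circ(f\times\id_\Sigma)=f$ and $\pi_\Sigma\circ(f\times\id_\Sigma)=\id_\Sigma$ give the pullback identity, and strict contactomorphisms intertwine Reeb vector fields by uniqueness; the only blemish is cosmetic, namely that you use $\iota$ both for the graph map and for interior multiplication $\iota_{\psi^*R_{\beta_2}}$.
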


\begin{figure}
    \centering
    \includegraphics[width=0.5\linewidth]{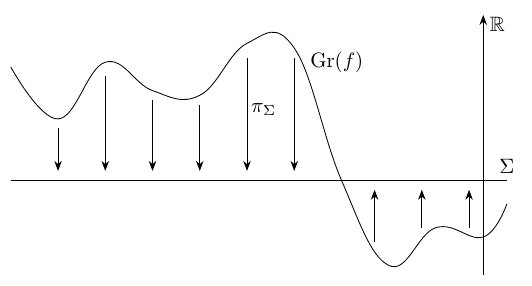}
    \caption{Graph of function $f$ in the symplectization $S \Sigma = \R \times \Sigma \,$. We denote by $\pi_\Sigma : \R \times \Sigma \rightarrow \Sigma$ the projection.}
    \label{fig: graph symplectization}
\end{figure}

\subsection{Level sets of Hamiltonians}
\label{subsec: Level Sets of Hamiltonians}

Let $(W,\omega = \d \lambda)$ be an exact symplectic manifold and $H \in \Cinfty(W)$ an autonomous Hamiltonian. Its \textit{Hamiltonian vector field} $X_H$ is defined by $\iota_{X_H} \omega = - \d H \,$. The Hamiltonian vector field and the Liouville vector field are related by
\[
\lambda(X_H) = \omega( X_\lambda , X_H) = \d H (X_\lambda) \, .
\]
If $\d H(X_\lambda) \not= 0$ on the level set $\Sigma := H^{-1}(c) \,$, then $(\Sigma,\alpha := \lambda|_{\Sigma})$ is a contact hypersurface. The \textit{characteristic line bundle} $\ker(\omega|_\Sigma) \subseteq T \Sigma$ is spanned both by the Reeb vector field $R_\alpha$ as well as the Hamiltonian vector field $X_H|_\Sigma $ (which is tangent to $\Sigma$), so that $R_\alpha$ and $X_H|_\Sigma$ are collinear. More precisely
\begin{equation}
\label{eq: X_H = lambda(X_H) Reeb}
X_H = \lambda(X_H) \, R_\alpha =  \d H(X_\lambda) \, R_\alpha \quad \text{on } \Sigma \, . 
\end{equation}
In particular, $H$-Hamiltonian flow lines on $\Sigma$ are reparametrizations of Reeb orbits.

\begin{example}
\label{example: Level Set Hamiltonian H = h - f(x)}
Given $f \in \Cinfty(\Sigma) \,$, the Hamiltonian $H_f \in \Cinfty(S \Sigma) \comma H_f(h,x) := h - f(x) \,$, has regular zero level set $H_f^{-1}(0) = \mathrm{Gr}(f)$ and its Hamiltonian vector field agrees with the Reeb vector field on $\mathrm{Gr}(f)$ because $\d H(\partial_h) \equiv 1 \,$. By Lemma \ref{lem: Reeb orbits Gr(f) <--> e^f alpha Reeb orbits}, the $H_f\,$-Hamiltonian flow lines are conjugated to the Reeb orbits of $(\Sigma,e^f \alpha) \,$.
\end{example}

\section{Proof of Theorem \ref{mthm: magnetic geodesics}}
\label{sec: Electromagnetic Hamiltonians}

\noindent We will illustrate Theorem \ref{mthm: rescaling Zoll contact form} by proving a generalization of Theorem \ref{mthm: magnetic geodesics}.

\subsection{Electromagnetic Hamiltonians}
\label{subsec: Electromagnetic Hamiltonians}

\noindent The following discussion is a brief summary of what we need below. For a more complete overview regarding electromagnetic Lagrangians and the interplay between the Lagrangian and Hamiltonian formalism we refer the reader to \cite{Lagrangian_Minimizers}, \cite{Ana_Cannas_Silva}, \cite{Contreras_Macarini_Paternain} and \cite{Abbondandolo}.

Let $(M,g)$ be a closed, connected Riemannian manifold of dimension $n \geq 2$. To a 1-form $\theta \in \Omega^1(M) \,$, called \textit{magnetic potential}, and a function $V \in \Cinfty(M) \,$, called \textit{mechanic potential}, we associate the electromagnetic Lagrangian
\[
L = L_{(g,\theta,V)}  : TM \rightarrow \R \comma \quad L(q,v) := \tfrac{1}{2} g_q(v,v) - \theta_q(v) - V(q) \,\, .
\]
Define a bundle endomorphism $F = F^{(g,\theta)} : TM \rightarrow TM \,$, called \textit{Lorentz force}, via
\[
g_q (F_q(v_1), v_2) = \d \theta_q (v_1,v_2) \qquad \Forall v_1,v_2 \in T_q M \,\, .
\]
A curve $\gamma : (t_0,t_1) \rightarrow M$ solves the Euler-Lagrange equations for $L$ if and only if
\begin{equation}
\label{eq: Euler-Lagrange electromagnetic Lagrangian}
\frac{\D \dot{\gamma} }{\d t} = F_\gamma(\dot{\gamma}) - \nabla V(\gamma) \,\, ,
\end{equation}
where both the covariant derivative $\frac{\D}{\d t}$ and the gradient $\nabla V$ are understood with respect to the metric $g$. 
A curve $\gamma$ solving \eqref{eq: Euler-Lagrange electromagnetic Lagrangian} describes the trajectory of a charged particle moving in $M$ under the influence of the conservative force $-\nabla V$ and the Lorentz force $F$ of the exact magnetic field $\d \theta \,$.

The energy function $E : TM \rightarrow \R$ associated to $L$ is given by $E(q,v) = \tfrac{1}{2} |v|^2 + V(q) \,$. For an Euler-Lagrange solution $\gamma \,$, the function $E(\gamma,\dot{\gamma})$ is constant and we define the \textit{energy $E(\gamma) = E_{(g,V)}(\gamma) \in \R$ of $\gamma$} to be
\begin{equation*}
E(\gamma) := E(\gamma(t),\dot{\gamma}(t)) = \tfrac{1}{2}  |\dot{\gamma}(t) |^2 + V(\gamma(t)) \comma \quad \text{ for any } t \in (t_0,t_1) \, .
\end{equation*}
Let $g^*$ be the bundle metric on $T^*M$ dual to $g$, defined by
\[
g^* = g(\flat_g^{-1} \cdot , \flat_g^{-1}\cdot\,) \comma \text{ where } \, \flat_g : TM \overset{\cong}{\longrightarrow} T^*M \comma \, \flat_g(q,v) = (q, g_q(v,\,\cdot\,)) \, .
\]
The Fenchel dual of the Lagrangian $L$ is the electromagnetic Hamiltonian 
\begin{equation}
\label{eq: electromagnetic Hamiltonian}
H = H_{(g,\theta,V)}  : T^*M \rightarrow \R \comma \quad H(q,p) = \tfrac{1}{2} | p + \theta_q |_{g^*}^2 + V(q) \,\, .
\end{equation}
The composition of $H$ with the Legendre transform of $L$ agrees with the energy function $E : TM \rightarrow \R \,$. Denote by $\omegacan = \d \lambdacan = \sum_{i=1}^{n} \d p_i \wedge \d q_i$ the canonical symplectic form on $T^*M \,$. Since the Legendre transform of $L$ conjugates the Euler-Lagrange flow and the Hamiltonian flow of $H$ with respect to $\omegacan \,$, see \cite[Prop.~1-4.1]{Lagrangian_Minimizers}, there is a one-to-one correspondence between Hamiltonian flow lines on $T^*M$ contained in $H^{-1}(E_0)$ and solutions of \eqref{eq: Euler-Lagrange electromagnetic Lagrangian} of energy $E_0 \,$, given by projecting a Hamiltonian flow line to $M$ via the basepoint projection $\pi_{T^*M} : T^*M \rightarrow M$. Moreover, a Hamiltonian flow line is periodic if and only if its projection to $M$ is periodic; and if this is the case, the minimal periods coincide.

\subsection{Results}
\label{subsec: Electromagnetic Hamiltonians - Results}

\noindent We first remind the reader of the definition of a (Riemannian) Zoll manifold.

\begin{definition}
\label{def: Riemannian Zoll mfd}
A closed, connected Riemannian manifold $(M,g)$ is called \textit{Zoll manifold} if all its geodesics are closed with same Riemannian length. 
\end{definition}

\begin{remark}
\label{rmk: Riemannian Zoll mfd}
Equivalently, all unit-speed geodesics are periodic with same minimal period. We emphasize that we do not require the geodesics to be simple.
\end{remark}

\begin{example}
\label{example: Riemannian Zoll mfd}
The standard example of a Zoll manifold is the $n$-sphere $(\bbS^n, g_{\bbS^n})$ with the round metric $g_{\bbS^n} \,$, all whose geodesics are great circles of length $2 \pi \,$. However, as Zoll \cite{Zoll} discovered,
there are many more Zoll metrics on $\bbS^2 \,$. Indeed, in cylindrical coordinates $(z,\varphi) \in (-1,1) \times \R/2\pi \Z \,$, every metric of the form
\[
g = \frac{(1+f(z))^2}{1 - z^2} \,\d z^2 + (1-z^2) \,\d \varphi^2 \,\, ,
\]
where $f : [-1,1] \rightarrow (-1,1)$ is an odd smooth function with $f(\pm 1) = 0 \,$, is Zoll, see \cite{Zoll_Surface}.
\end{example}

\noindent Given a Zoll manifold $(M,g_0)$ we are now interested in how the dynamics of the geodesic equation $\frac{\D}{\d t} \dot{\gamma} = 0$ changes if we introduce a $C^0$-small perturbation 
\[
(g,\theta,V) \in \mathrm{Met}(M) \times \Omega^1(M) \times \Cinfty(M) 
\]
and consider the solutions of the corresponding Euler-Lagrange equation \eqref{eq: Euler-Lagrange electromagnetic Lagrangian}. That is, we distort the metric and turn on an exact magnetic field and a conservative force field.

Generalizing Definition \ref{def: C^1-bounded subset}, given a smooth vector bundle $E$ over the closed base manifold $M$, a subset of the space of smooth sections $\Gamma(E) $ is called \textit{$C^1$-bounded} if it is bounded in any norm inducing the $C^1$-topology on $\Gamma(E) \,$. In particular, we may speak of $C^1$-bounded subsets of the space of smooth metrics $\mathrm{Met}(M)$ (which is a subset of $\Gamma(\mathrm{Sym}^2(T^*M)$) respectively of the space of smooth 1-forms $\Omega^1(M)$.

Here is the announced generalization of Theorem \ref{mthm: magnetic geodesics}.

{
\theoremstyle{theorem}
\newtheorem{MThmPrime}{Theorem}
\renewcommand{\theMThmPrime}{\Alph{MThmPrime}'}
\begin{MThmPrime}
\label{mthm: electromagnetic Hamiltonians - periodic solution}
Let $(M,g_0)$ be a Zoll manifold of dimension $\dim(M) \geq 2 \,$. For every $E_0 > 0$ and every $C^1$-bounded subset
\[
\mathscr{B}_{C^1} \subseteq \mathrm{Met}(M) \times \Omega^1(M) \times \Cinfty(M)
\]
there exists a $C^0$-neighborhood $\mathscr{B}_{C^0} \subseteq \mathrm{Met}(M) \times \Omega^1(M) \times \Cinfty(M)$ of $(g_0,0,0)$
with the following significance. For every subset $\varnothing \not= B \subseteq M$ with tame boundary and every $(g,\theta,V) \in \mathscr{B}_{C^0} \cap \mathscr{B}_{C^1}$ with
\begin{align}
\label{eq: electromagnetic Hamiltonians - no perturbation on M-B}
g= g_0 \comma \theta = 0 \comma V = 0 \quad \text{ on } M \backslash B \, ,
\end{align}
there exists a periodic solution $\gamma$ to
\begin{equation}
\label{eq: Euler-Lagrange electromagnetic Lagrangian (g,theta,V)}
\frac{\D^g}{\d t} \dot{\gamma} = (F^{(g,\theta)})_\gamma(\dot \gamma) - \nabla^g V(\gamma) 
\end{equation}
of energy $E_{(g,V)}(\gamma) = E_0$ which intersects $B \,$.
\end{MThmPrime}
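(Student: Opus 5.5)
We will deduce Theorem \ref{mthm: electromagnetic Hamiltonians - periodic solution} from Theorem \ref{mthm: rescaling Zoll contact form}, applied to the unperturbed energy hypersurface written as a graph in its symplectization. Put $H_0 := H_{(g_0,0,0)}$, so $H_0(q,p) = \tfrac12 |p|^2_{g_0^*}$, and let $\Sigma_0 := H_0^{-1}(E_0) \subseteq T^*M$ with $\alpha := \lambdacan|_{\Sigma_0}$.

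First we check that $(\Sigma_0,\alpha)$ is a Zoll contact manifold. The hypersurface $\Sigma_0$ is fiberwise starshaped, it is a sphere bundle over $M$, and it is closed and connected because $n \geq 2$; moreover $\dim \Sigma_0 = 2n-1 \geq 3$. By \eqref{eq: X_H = lambda(X_H) Reeb} we have $X_{H_0} = \d H_0(X_{\mathrm{rad}})\, R_\alpha = 2E_0 \, R_\alpha$ on $\Sigma_0$. So the Reeb flow is a constant reparametrization of the cogeodesic flow at speed $\sqrt{2E_0}$. Since $(M,g_0)$ is Zoll, all these orbits are periodic with the same minimal period (Remark \ref{rmk: Riemannian Zoll mfd}), and $\alpha$ is Zoll.

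Next we write the perturbed level set as a graph. Using the embedding $\Phi$ of \eqref{eq: embedding symplectization in cotangent bdl}, for $(q,p)\in\Sigma_0$ we look at the ray $r\mapsto (q,rp)$, along which
\[
H(q,rp) = \tfrac12 r^2 |p|^2_{g^*} + r\, g^*(p,\theta_q) + \tfrac12|\theta_q|^2_{g^*} + V(q).
\]
Take $\mathscr{B}_{C^0}$ so small that $|p|^2_{g^*}$ is close to $2E_0$, $|\theta|$ is small, and $|V|$ is small. Then this quadratic in $r$ is below $E_0$ at $r=0$ and has positive derivative wherever it equals $E_0$ for $r>0$. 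Hence it has a unique positive root $r = e^{f(q,p)}$, given by the explicit quadratic formula. It follows that $H^{-1}(E_0) = \Phi(\mathrm{Gr}(f))$, that $\d H(X_{\mathrm{rad}}) > 0$ there, and that $H^{-1}(E_0)$ is a regular contact-type hypersurface.

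The explicit formula expresses $f$ as a smooth function of $|p|^2_{g^*}$, $g^*(p,\theta)$, $|\theta|^2_{g^*}$ and $V$, and its discriminant stays bounded away from $0$. Consequently $\lVert f\rVert_\infty \to 0$ as $(g,\theta,V)\to(g_0,0,0)$ in $C^0$. In addition, $f$ ranges in a $C^1$-bounded set $\mathscr{B}\subseteq\Cinfty(\Sigma_0)$ whenever $(g,\theta,V)\in\mathscr{B}_{C^1}$ and $\mathscr{B}_{C^0}$ is fixed small. I expect this uniformity — choosing one $C^0$-neighbourhood so that simultaneously $\lVert f\rVert_\infty\le\delta(\mathscr{B})$ and $f\in\mathscr{B}$ — to be the only genuinely careful step. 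One fixes $\mathscr{B}$ first, using only the $C^1$-bounds together with a crude $C^0$-neighbourhood, then takes $\delta(\mathscr{B})$ from Theorem \ref{mthm: rescaling Zoll contact form}, and finally shrinks $\mathscr{B}_{C^0}$. Also, by \eqref{eq: electromagnetic Hamiltonians - no perturbation on M-B}, $H = H_0$ over $M\backslash B$, so $f \equiv 0$ on $\Sigma_0\backslash D$, where $D := \Sigma_0\cap \pi_{T^*M}^{-1}(B)$.

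We then verify that $D$ is non-empty and has tame boundary in $\Sigma_0$. Since $\pi := \pi_{T^*M}|_{\Sigma_0}$ is a locally trivial sphere bundle, taking closures and interiors commutes with $\pi^{-1}$: we have $\overline{D} = \pi^{-1}(\overline B)$, $\mathrm{int}(\overline D) = \pi^{-1}(\mathrm{int}\,\overline B)$ and $\partial D = \pi^{-1}(\partial B)$. Condition \ref{it: tame boundary - i} for $B$ then transfers to $D$.

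Theorem \ref{mthm: rescaling Zoll contact form} now yields a periodic Reeb orbit of $e^f\alpha$ intersecting $D$. By Lemma \ref{lem: Reeb orbits Gr(f) <--> e^f alpha Reeb orbits} and $\Phi$, this orbit corresponds to a closed characteristic on $H^{-1}(E_0)$. Since the characteristic line field of $H^{-1}(E_0)$ is spanned by $X_H$ (see \eqref{eq: X_H = lambda(X_H) Reeb}), this characteristic is a reparametrized periodic $H$-flow line. Because $\Phi$ preserves base points, the flow line meets $T^*B$. Finally, the Legendre correspondence of Section \ref{subsec: Electromagnetic Hamiltonians} turns it into a periodic solution of \eqref{eq: Euler-Lagrange electromagnetic Lagrangian (g,theta,V)} of energy $E_0$ intersecting $B$.
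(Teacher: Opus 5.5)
Your proposal is correct and follows the paper's proof. It uses the same Zoll cotangent sphere bundle and the same order of choices: the $C^1$-bounded set $\mathscr{B}$ first, then $\delta(\mathscr{B})$ from Theorem \ref{mthm: rescaling Zoll contact form}, then shrinking the $C^0$-neighbourhood. It also uses the same $D=\Sigma_0\cap\pi_{T^*M}^{-1}(B)$, whose boundary is tame because the projection is continuous and open, and the same translation back via the graph/Reeb correspondence and the Legendre transform.

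The only difference is how you produce $f$: you take the explicit quadratic formula along rays, which even gives $H^{-1}(E_0)=\Phi(\mathrm{Gr}(f))$ globally. The paper instead proves a general graph lemma from $\d H(\partial_h)\geq c$ near $\Sigma$ and the derivative bound $|\d f|\leq \lVert \d H\rVert_\infty/c$, so that it applies to Hamiltonians that are not fiberwise quadratic.
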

}

\noindent We emphasize that we only require $(g, \theta,V)$ to be $C^0$-close to the unperturbed system $(g_0, \theta_0 = 0, V_0 = 0)$ and still obtain periodic solutions of \eqref{eq: Euler-Lagrange electromagnetic Lagrangian (g,theta,V)}, although this differential equation depends heavily on the first derivatives of $g , \theta$ and $V$.
The proof of the theorem is postponed to Subsection \ref{subsec: Electromagnetic Hamiltonians - proof mthm}.

\begin{remark}
\label{rmk: electromagnetic Hamiltonians - periodic solution}
For $B := M \,$, condition \eqref{eq: electromagnetic Hamiltonians - no perturbation on M-B} is void and we obtain existence of a solution of \eqref{eq: Euler-Lagrange electromagnetic Lagrangian (g,theta,V)} of energy $E_0$ for every $(g,\theta,V) \in \mathscr{B}_{C^0} \cap \mathscr{B}_{C^1} \,$. This result is not new and fits into the existing literature as we will now briefly explain.

The \textit{Ma\~n\'e critical value} of a Tonelli Lagrangian $L : TM \rightarrow \R$ is defined by
\[
c(L) := \inf \Bigset{\kappa \in \R}{\int_0^T \big( L(\gamma(t), \dot{\gamma}(t)) + \kappa \, \big)  \d t \geq 0 \quad \Forall T > 0 \comma \gamma \in \Cinfty(\R/T\Z,M)} \,\, .
\]
It is well-known, see \cite[Thm. 6.1]{Abbondandolo}, that periodic Euler-Lagrange solutions of $L$ exist on every energy level above the Ma\~n\'e critical value $c(L) \,$.

Now for given $E_0 > 0 \,$, if $(g-g_0,\theta,V)$ is sufficiently $C^0$-small, then the Ma\~n\'e critical value of the electromagnetic Lagrangian $L_{(g,\theta,V)}$ is smaller than $E_0 \,$, that is
\begin{equation}
  \label{eq: Electromagnetic Hamiltonians - c(L) < E_0}  
  c(L_{(g,\theta,V)}) < E_0 \,\, ,
\end{equation}
so that a periodic solution to \eqref{eq: Euler-Lagrange electromagnetic Lagrangian (g,theta,V)} of energy $E_0$ exists. To see that \eqref{eq: Electromagnetic Hamiltonians - c(L) < E_0} holds, for arbitrary $(q,v) \in TM$ we estimate
\begin{align*}
    L_{(g,\theta,V)}(q,v) &\geq \tfrac{1}{2} |v|^2_g - \lVert \theta \rVert_{\infty,g^*} \, |v|_{g} - \lVert V \rVert_\infty 
    =  \tfrac{1}{2} \big( |v|_{g} - \lVert \theta \rVert_{\infty,g^*} \big)^2 - \tfrac{1}{2} \lVert \theta\rVert_{\infty,g^*}^2 - \lVert V \rVert_\infty \\
    &\geq - \tfrac{1}{2} \lVert \theta\rVert_{\infty,g^*}^2 - \lVert V \rVert_\infty  \,\, ,
\end{align*}
showing that $c(L_{(g,\theta,V)}) \leq \tfrac{1}{2} \lVert \theta\rVert_{\infty,g^*}^2 + \lVert V \rVert_\infty \,$. The right-hand side is arbitrarily small, provided $(g-g_0, \theta,V)$ is sufficiently $C^0$-small. This proves \eqref{eq: Electromagnetic Hamiltonians - c(L) < E_0}.

In the Lagrangian approach, no bound on the first derivatives is required at all. This indicates that Theorem \ref{mthm: electromagnetic Hamiltonians - periodic solution} is not optimal in this regard.
\end{remark}

\subsection{Two general lemmata to apply Theorem \ref{mthm: rescaling Zoll contact form}}
\label{subsec: Electromagnetic Hamiltonians - general lemmata}

In this subsection, we prove two lemmata which are frequently useful when one wants to apply Theorem \ref{mthm: rescaling Zoll contact form}.

\begin{lemma}
\label{lem: preimage tame boundary}
Let $\pi : X \rightarrow Y$ be a continuous, open map between topological spaces $X$ and $Y$. If $B \subseteq Y$ has tame boundary, then $\pi^{-1}(B) \subseteq X $ has tame boundary.
\end{lemma}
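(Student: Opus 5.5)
We verify condition \ref{it: tame boundary - i} of Definition \ref{def: tame boundary} for $S := \pi^{-1}(B)$: that is, $\mathrm{int}(\overline{\pi^{-1}(B)}) \subseteq \pi^{-1}(B)$, using the reformulation in Remark \ref{rmk: tame boundary}. The hypothesis is $\mathrm{int}(\overline{B}) \subseteq B$. So it suffices to show
\[
\mathrm{int}\big(\overline{\pi^{-1}(B)}\big) \subseteq \pi^{-1}\big(\mathrm{int}(\overline{B})\big) \,\, ,
\]
because the right-hand side is contained in $\pi^{-1}(B)$.

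The argument has two steps. First, continuity of $\pi$ gives $\overline{\pi^{-1}(B)} \subseteq \pi^{-1}(\overline{B})$, since the latter is a closed set containing $\pi^{-1}(B)$. Second, let $U := \mathrm{int}(\overline{\pi^{-1}(B)})$. This is an open set contained in $\pi^{-1}(\overline{B})$, so $\pi(U) \subseteq \overline{B}$. Openness of $\pi$ makes $\pi(U)$ open in $Y$, hence $\pi(U) \subseteq \mathrm{int}(\overline{B})$. Therefore $U \subseteq \pi^{-1}(\pi(U)) \subseteq \pi^{-1}(\mathrm{int}(\overline{B})) \subseteq \pi^{-1}(B)$, which is the desired inclusion.

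Nothing here is a real obstacle. The only point needing care is choosing the formulation \ref{it: tame boundary - i}, with its equivalent form $\mathrm{int}(\overline{S}) \subseteq S$, rather than \ref{it: tame boundary - ii}. Openness of $\pi$ is used exactly once, to push the open set $U$ forward into $\mathrm{int}(\overline{B})$; continuity is used exactly once, to control the closure of the preimage.
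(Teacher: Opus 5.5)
Your proof is correct and uses the same mechanism as the paper's: the paper asserts that preimage under a continuous open map commutes with interior, closure and boundary, and pulls back $\partial B \cap \mathrm{int}(\overline{B}) = \varnothing$ directly. You instead prove explicitly the one inclusion that is actually needed, $\mathrm{int}(\overline{\pi^{-1}(B)}) \subseteq \pi^{-1}(\mathrm{int}(\overline{B}))$, using the equivalent form $\mathrm{int}(\overline{S}) \subseteq S$ from Remark~\ref{rmk: tame boundary}; this is a slightly more economical and fully justified version of the same argument.
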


\begin{proof}
    Since $\pi$ is continuous and open, the operations of taking the interior (respectively closure/boundary) and taking the preimage commute. Thus
    \[
    \partial (\pi^{-1}(B)) \cap \interior(\overline{\pi^{-1}(B)}) = \pi^{-1} (\partial B \cap \interior(\overline{B}) ) = \varnothing \,\, .
    \]
\end{proof}

\noindent The next lemma is concerned with the relation between $f \in \Cinfty(\Sigma)$ and $H \in \Cinfty(\R \times \Sigma)$ if $\mathrm{Gr}(f) = H^{-1}(0) \,$. As usual, we denote by $\partial_h$ the coordinate vector field on $\R \times \Sigma$ in $\R$-direction.

\begin{lemma}
\label{lem: Gr(f) C^0 small <--> f C^0-small}
Let $\Sigma$ be a closed manifold and let $ c , h_0 > 0$ be positive constants.
\begin{enumerate}[label=(\alph*)]
    \item\label{it: level set is graph} If $H \in \Cinfty(\R \times \Sigma)$ satisfies $\d H(\partial_h) \geq c $ on $[-h_0,h_0] \times \Sigma$
 and $|H(0,z)| < h_0 c$ for every $z \in \Sigma \,$, then
    \begin{equation}
    \label{eq: H-level set = Gr(f)}
    H^{-1}(0) \cap ((-h_0,h_0) \times \Sigma) = \mathrm{Gr}(f)
    \end{equation}
    is the graph of a uniquely determined function $f \in \Cinfty(\Sigma) \,$.
    \item\label{it: level set is graph - C^1 bound f <--> C^1-bound H}
    For every $C^1$-bounded subset $\mathscr{B} \subseteq \Cinfty([-h_0,h_0] \times \R)$ there exists a $C^1$-bounded subset $\mathscr{B}_{\Sigma} \subseteq \Cinfty(\Sigma)$ with the following significance. Given $H \in \mathscr{B}$ and $f \in \Cinfty(\Sigma)$ so that $\d H (\partial_h) \geq c$ on $[-h_0,h_0] \times \Sigma$ and \eqref{eq: H-level set = Gr(f)} holds, then $f \in \mathscr{B}_\Sigma \,$.
\end{enumerate}
\end{lemma}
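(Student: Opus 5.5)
For part \ref{it: level set is graph}, I will argue fibrewise. Fix $z \in \Sigma$ and consider $\phi_z(h) := H(h,z)$ on $[-h_0,h_0]$. By assumption $\phi_z' = \d H(\partial_h) \geq c > 0$, so $\phi_z$ is strictly increasing and has at most one zero in $(-h_0,h_0)$. The mean value theorem gives
\[
\phi_z(h_0) \geq H(0,z) + h_0 c > 0 \quad\text{and}\quad \phi_z(-h_0) \leq H(0,z) - h_0 c < 0 ,
\]
where I use $|H(0,z)| < h_0 c$. Hence there is exactly one zero $f(z) \in (-h_0,h_0)$, and this defines $f$ uniquely with $H^{-1}(0) \cap ((-h_0,h_0)\times\Sigma) = \mathrm{Gr}(f)$. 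For smoothness I apply the implicit function theorem at $(f(z),z)$, which is possible because $\partial_h H \neq 0$ there. It produces a smooth local solution $\tilde f$ near $z$ with values in $(-h_0,h_0)$, and by uniqueness $\tilde f = f$ locally. So $f \in \Cinfty(\Sigma)$.

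For part \ref{it: level set is graph - C^1 bound f <--> C^1-bound H}, I read the domain as $[-h_0,h_0]\times\Sigma$ (the ``$\R$'' in the statement is presumably a typo). I fix a Riemannian metric on $\Sigma$ and use Remark \ref{rmk: C^1-bounded subset} together with the product metric on $[-h_0,h_0]\times\Sigma$. Let $A$ bound $\lVert H\rVert_\infty + \lVert \nabla H \rVert_\infty$ for all $H \in \mathscr{B}$. The $C^0$ bound is immediate: $f(z) \in (-h_0,h_0)$, so $\lVert f\rVert_\infty \leq h_0$. For the derivative, I differentiate the identity $H(f(z),z)=0$ in a direction $v \in T_z\Sigma$:
\[
\partial_h H(f(z),z)\, \d f_z(v) + \d_\Sigma H_{(f(z),z)}(v) = 0 ,
\]
which gives $|\d f_z(v)| \leq c^{-1} |\d_\Sigma H|\,|v| \leq c^{-1} A |v|$. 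Therefore $\mathscr{B}_\Sigma := \{ f \,:\, \lVert f\rVert_\infty \leq h_0,\ \lVert \nabla f\rVert_\infty \leq A/c\}$ is $C^1$-bounded, depends only on $\mathscr{B}$, $c$ and $h_0$, and contains every $f$ of the required kind.

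I do not expect a genuine obstacle. The only points needing care are the bookkeeping that $\mathscr{B}_\Sigma$ is independent of the particular $H$, and the observation that the graph condition \eqref{eq: H-level set = Gr(f)} forces $|f| < h_0$, which supplies the $C^0$ bound.
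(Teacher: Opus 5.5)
Your proposal is correct and follows the paper's proof essentially step for step. In part (a) you use fibrewise strict monotonicity in $h$, and for smoothness you invoke the implicit function theorem where the paper cites Lee's criterion for a submanifold meeting each fibre $\R\times\{z\}$ transversally in one point. In part (b) you use the identity $\d f_z(v)\,\d H(\partial_h) + \d H(v) = 0$, which gives $\lVert \d f\rVert_\infty \leq c^{-1}\lVert \d H\rVert_\infty$, together with $\lVert f\rVert_\infty \leq h_0$ coming from the graph condition; this is exactly the paper's argument.
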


\begin{proof}
For part \ref{it: level set is graph} notice that for every fixed $z \in \Sigma$ the function $(-h_0,h_0) \rightarrow \R \comma h \mapsto H(h,z) \,$, is strictly increasing with a unique zero $h_z \in (-h_0,h_0)$ since $H(0,z) \in (- h_0 c , h_0 c)$ and $\tfrac{\d}{\d h} H(h,z) \geq c \,$. Thus the submanifold $H^{-1}(0) \cap ((-h_0,h_0) \times \Sigma)$ intersects $\R \times \{z\}$ transversally in precisely one point, for every $z \in \Sigma$, so that it must be the graph of a smooth function, see \cite[Thm.~6.32]{Lee}.

For part \ref{it: level set is graph - C^1 bound f <--> C^1-bound H}, we fix a Riemannian metric on $\Sigma$ which allows us to speak of the norm of the differential of a function on $\Sigma$ respectively on $[-h_0,h_0] \times \Sigma \,$. Given a constant $D_1 > 0$ we must find $D_2 > 0$ so that, for every $H\in \Cinfty([-h_0,h_0] \times \Sigma)$ and every $f \in \Cinfty(\Sigma)$ satisfying \eqref{eq: H-level set = Gr(f)}, it holds
\[
\lVert \d H \rVert_\infty \leq D_1 \text{ and } \d H(\partial_h) \geq c \,\, \Longrightarrow \,\, \lVert \d f \rVert_\infty \leq D_2 \,\, .
\]
(Note that the $C^0$-norm of $f$ is bounded by $h_0$ due to \eqref{eq: H-level set = Gr(f)}.) We claim that $D_2 := c^{-1}D_1$ is a suitable constant. Indeed, given $z \in \Sigma$ and $v \in T_z \Sigma$ with $|v| \leq 1$, we know that $\d f_z (v) \, \partial_h + v \in T_{(f(z),z)} \mathrm{Gr}(f) = \ker \d H_{(f(z),z)} \,$, hence
\[
0 = \d f_z (v) \, \d H_{(f(z),z)}(\partial_h) + \d H_{(f(z),z)}(v)
\]
and therefore
\begin{align*}
    |\d f_z (v)| = \big| - \frac{1}{\d H_{(f(z),z)} (\partial_h)} \, \d H_{(f(z),z)} (v)\big| \leq \frac{ \lVert \d H \rVert_\infty }{c} \leq D_2 \,\, . 
\end{align*}
\end{proof}

\subsection{Proof of Theorem \ref{mthm: electromagnetic Hamiltonians - periodic solution}}
\label{subsec: Electromagnetic Hamiltonians - proof mthm}

We fix a Zoll Riemannian manifold $(M,g_0)$ of dimension $n \geq 2$ as well as $E_0 > 0$ and let
\begin{align*}
    \Sigma := \set{(q,p) \in T^*M}{ \tfrac{1}{2} |p|^2_{g_0^*} = E_0}
\end{align*}
denote the $g_0$-cotangent sphere bundle of radius $\sqrt{2E_0} \,$, endowed with the contact form $\alpha := \lambdacan|_{\Sigma} \,$. It is well-known, see \cite[Thm.~1.5.2]{Geiges_Contact_Topology}, that the Reeb orbits of $(\Sigma,\alpha)$ are reparametrizations with factor $2E_0$ of the Hamiltonian orbits for the kinetic Hamiltonian $(q,p) \mapsto \tfrac{1}{2} |p|_{g_0^*}^2 \,$. The latter project to the geodesics of $(M,g_0)$ of energy $E_0 \,$. Therefore, since $(M,g_0)$ is Zoll Riemannian, $(\Sigma,\alpha)$ is Zoll contact.

 Justified by Example \ref{example: Symplectization in cotangent bundle}, we identify the symplectization $S \Sigma$ with the cotangent bundle minus the zero section, $S \Sigma \cong T^*M \backslash O_{T^*M} \,$, via the exact symplectomorphism displayed in \eqref{eq: embedding symplectization in cotangent bdl}. In particular we identify $\Sigma \subseteq T^*M$ with $\{0\} \times \Sigma \subseteq S \Sigma \,$. Via the identification \eqref{eq: embedding symplectization in cotangent bdl}, we may consider the electromagnetic Hamiltonian $H_{(g,\theta,V)} \in \Cinfty(T^*M)$ from \eqref{eq: electromagnetic Hamiltonian} as being defined on $S \Sigma \,$.

\begin{lemma}
\label{lem: Electromagnetic Hamiltonians - g mapsto g^*}
Given a metric $g$ on $M$, denote by $\flat_g : TM \overset{\cong}{\longrightarrow} T^*M \comma (q,v) \mapsto g_q(v,\,\cdot\,) \,$, the induced bundle isomorphism and by $g^*$ the bundle metric on $T^*M$ dual to $g$, defined via $g^* = g(\flat_g^{-1} \cdot \, , \flat_g^{-1} \cdot \,) $. Then the map 
\begin{align*}
    \mathrm{Met}(M) = \mathrm{BundleMet}(TM) \longrightarrow \mathrm{BundleMet}(T^*M) \comma \quad g \mapsto g^* \,\, ,
\end{align*}
is continuous in the respective $C^0$-topologies.
\end{lemma}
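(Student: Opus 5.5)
The plan is to reduce the statement to a pointwise computation in local trivializations, using compactness of $M$ to make all estimates uniform. Since $M$ is closed, I fix a finite atlas of coordinate charts $(U_i,\varphi_i)$ together with compact sets $K_i \subseteq U_i$ covering $M$. The $C^0$-topology on bundle metrics of $TM$ (respectively $T^*M$) is then induced by the norm
\[
\max_i \sup_{q \in K_i} \lVert G_i(q) \rVert \,\, ,
\]
where $G_i(q)$ is the symmetric matrix representing the metric in the coordinate frame $\partial_{x^1},\dots,\partial_{x^n}$ (respectively in the dual frame $\d x^1,\dots,\d x^n$) and $\lVert\cdot\rVert$ is any matrix norm. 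By definition of $\flat_g$ and $g^*$, the matrix of $g^*$ in the dual frame is exactly the inverse matrix $G_i(q)^{-1}$. So on each chart the map $g \mapsto g^*$ is pointwise matrix inversion on the open cone $\mathrm{Sym}^+_n$ of positive definite matrices.

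Next I fix $g_1 \in \mathrm{Met}(M)$ and use the compactness of the $K_i$. The eigenvalues of $G_i(q)$ for $g_1$ lie in some interval $[m,m^{-1}]$ with $m>0$, uniformly over all $q \in \bigcup_i K_i$. Hence there is a compact set $\mathcal{K}\subseteq \mathrm{Sym}^+_n$ containing a closed neighbourhood of size $\rho>0$ of all matrices $G_i(q)$ associated with $g_1$. Inversion $A\mapsto A^{-1}$ is smooth on $\mathrm{Sym}^+_n$, and therefore uniformly continuous on $\mathcal{K}$. Explicitly, I can use
\[
A^{-1}-B^{-1}=A^{-1}(B-A)B^{-1} \,\, ,
\]
which gives $\lVert A^{-1}-B^{-1}\rVert \le C_{\mathcal K}\lVert A-B\rVert$ for all $A,B\in\mathcal K$.

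Given $\eps>0$, I take $g$ with $\max_i\sup_{K_i}\lVert G_i^{g}-G_i^{g_1}\rVert<\min(\rho,\eps/C_{\mathcal K})$. Then every matrix $G_i^g(q)$ lies in $\mathcal K$, and the dual metrics differ by less than $\eps$ in the chosen $C^0$-norm. This proves continuity at $g_1$, and since $g_1$ was arbitrary, continuity everywhere.

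The only step needing care is making the bound uniform. This means showing that a $C^0$-small perturbation of $g_1$ stays uniformly positive definite, so that the inverse is controlled; this is exactly what the choice of $\mathcal K$ and $\rho$ ensures, via the compactness of $M$. Everything else is bookkeeping of the identification of $g^*$ with the inverse matrix. Equivalently, one can phrase the argument globally: $g\mapsto g^*$ is induced by a continuous fibre map $\mathrm{Sym}^+(TM)\to\mathrm{Sym}^+(T^*M)$ between open subsets of vector bundles, and composing sections with a continuous fibre map is $C^0$-continuous on a compact base.
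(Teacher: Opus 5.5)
Your proposal is correct and follows essentially the paper's route: the paper's entire proof is your closing remark, namely that $g \mapsto g^*$ is the pushforward of sections under a continuous fibre-preserving map between open subsets of $\mathrm{Sym}^2(T^*M)$ and $\mathrm{Sym}^2(TM)$ over the compact base $M$. Your chart-wise argument, via matrix inversion and the uniform bound from $A^{-1}-B^{-1}=A^{-1}(B-A)B^{-1}$ on a compact set of positive definite matrices, just makes explicit the standard fact the paper invokes.
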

\begin{proof}
   The above map is the pushforward map of sections induced by the continuous fiber-preserving map between open subsets of the bundles $\mathrm{Sym}^2(T^*M)$ and $\mathrm{Sym}^2(TM) $ over the compact base $M$, sending an inner product $g$ on $T_q M$ to the inner product $g^* = g(\flat_g^{-1}\cdot\,,\flat_g^{-1} \cdot)$ on $T^*_q M \,$.
\end{proof}

\begin{lemma}
\label{lem: Electromagnetic Hamiltonians - key lemma}
Abbreviate $\mathfrak{M} := \mathrm{Met}(M) \times \Omega^1(M) \times \Cinfty(M) \,$.
\begin{enumerate}[label=(\alph*)]
    \item\label{it: Electromagnetic Hamiltonians key lemma - dH(partial_h)}
    There exist $c = c(E_0) > 0$ and a $C^0$-neighborhood $\mathscr{B}^{C^0}_M \subseteq \mathfrak{M}$ of $(g_0,0,0)$ with
    \[
     \d H_{(g,\theta,V)}(\partial_h ) \geq c \,\,\, \text{on } [-1,1] \times \Sigma \comma \quad \Forall (g,\theta,V) \in \mathscr{B}^{C^0}_M \,\, .
    \]
    \item\label{it: Electromagnetic Hamiltonians key lemma - C^0 small} For every $\delta > 0$ there exists a $C^0$-neighborhood $\mathscr{B}^{C^0}_M(\delta) \subseteq \mathfrak{M}$ of $(g_0,0,0)$ so that 
    \[
    \sup_{ \Sigma} |H_{(g,\theta,V)} - E_0| < \delta \qquad \Forall (g,\theta,V) \in \mathscr{B}^{C^0}_M(\delta) \,\, .
    \]
    \item\label{it: Electromagnetic Hamiltonians key lemma - C^1}
    Given a $C^1$-bounded subset $\mathscr{B}^{C^1}_M \subseteq \mathfrak{M}$, there exist a $C^0$-neighborhood $\mathscr{B}_{M}^{C^0} \subseteq \mathfrak{M}$ of $(g_0,0,0)$ and a $C^1$-bounded subset $\mathscr{B}^{C^1}_{S \Sigma} \subseteq C^\infty([-1,1] \times \Sigma)$ with
    \[
    H_{(g,\theta,V)}|_{[-1,1] \times \Sigma} \in \mathscr{B}_{S \Sigma}^{C^1} \qquad \Forall (g,\theta,V) \in \mathscr{B}^{C^1}_M \cap \mathscr{B}^{C^0}_M \, .
    \]
\end{enumerate}
\end{lemma}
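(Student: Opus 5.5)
We work throughout in the identification $S\Sigma \cong T^*M\backslash O_{T^*M}$ from \eqref{eq: embedding symplectization in cotangent bdl}, where $\Phi(h,(q,p)) = (q,e^h p)$. In these coordinates
\[
H_{(g,\theta,V)}(h,(q,p)) = \tfrac12 \, |e^h p + \theta_q|^2_{g^*} + V(q) \comma \qquad |p|_{g_0^*} = \sqrt{2E_0} \, .
\]
The strategy is to compare $H_{(g,\theta,V)}$ with the unperturbed Hamiltonian $H_0(h,(q,p)) = e^{2h}E_0$ on the compact set $[-1,1]\times\Sigma$. All three claims should follow from explicit formulas together with Lemma \ref{lem: Electromagnetic Hamiltonians - g mapsto g^*}, which says that $g\mapsto g^*$ is $C^0$-continuous.

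\textbf{Part (a).} Differentiating in $h$ gives
\[
\d H(\partial_h) = g^*_q\big(e^h p + \theta_q ,\, e^h p\big) = e^{2h}|p|^2_{g^*} + e^h g^*_q(\theta_q, p) \, .
\]
For $(g,\theta,V)=(g_0,0,0)$ this equals $2E_0 e^{2h} \geq 2E_0 e^{-2}$ on $[-1,1]\times\Sigma$. Now suppose $\lVert g^* - g_0^* \rVert_{C^0}$ and $\lVert \theta \rVert_{C^0}$ are small; by Lemma \ref{lem: Electromagnetic Hamiltonians - g mapsto g^*}, the first is controlled by $\lVert g-g_0\rVert_{C^0}$. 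Since $|p|_{g_0^*}$ is fixed on $\Sigma$ and $h\in[-1,1]$, both terms are then uniformly close to their unperturbed values. So we may take $c := E_0e^{-2}$ together with a suitable $C^0$-neighborhood. The mechanic potential $V$ does not enter this expression at all.

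\textbf{Part (b).} On $\Sigma$ we have $h=0$. Hence
\[
|H - E_0| \leq \tfrac12 \, \big| |p|^2_{g^*} - |p|^2_{g_0^*} \big| + |g^*(p,\theta)| + \tfrac12 |\theta|^2_{g^*} + |V| \, ,
\]
and each term is bounded uniformly on the compact set $\Sigma$ by a quantity tending to $0$ as $(g,\theta,V)\to(g_0,0,0)$ in $C^0$.

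\textbf{Part (c).} This is the only place where derivatives appear, and it is the main (if modest) obstacle. $C^0$-bounds on $H$ follow as in (a) and (b). The $h$-derivative is the formula from (a). For derivatives in the $\Sigma$-directions, we work in finitely many local trivializations of $T^*M$ over a finite atlas of $M$, so that $\Sigma$ is locally described by coordinates $(q,p)$. The derivatives of $H$ are then polynomial expressions in $g^*_{ij}$, $\partial g^*_{ij}$, $\theta_i$, $\partial\theta_i$, $\partial V$, $e^h$ and $p$. Note that $\partial g^* = -g^*(\partial g)g^*$ is controlled by $C^0$-bounds on $g^{-1}$ times $C^1$-bounds on $g$. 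The $C^0$-neighborhood is needed precisely to keep $g^*$ uniformly bounded, that is, to keep $g$ uniformly nondegenerate, since a $C^1$-bound on $g$ alone does not prevent degeneration. On the compact set $[-1,1]\times\Sigma$, where $p$ is bounded, this yields a uniform bound on $\lVert \d H\rVert_\infty$. This bound is exactly the $C^1$-boundedness required, by Remark \ref{rmk: C^1-bounded subset}.
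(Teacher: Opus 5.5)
Your proposal is correct and follows the paper's proof essentially step by step. In (a) you use the same formula $\d H(\partial_h) = e^{2h}|p|^2_{g^*} + e^h g^*(\theta_q,p)$ and the same constant $c = E_0 e^{-2}$. In (b) you make the same termwise comparison of $g^*$ with $g_0^*$ on $\Sigma$. In (c) you give the same local-coordinate argument, using $\partial g^{ij} = -g^{ir}(\partial g_{rs})g^{sj}$ and the $C^0$-neighborhood to keep $g^*$ uniformly bounded; the paper merely writes out the explicit lower bound in (a) that your phrase ``uniformly close to their unperturbed values'' leaves implicit.
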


\begin{proof}
    For notational ease let us stipulate that, by default, the norm of any co- or contravariant tensor is understood with respect to our fixed Zoll metric $g_0 \,$.
    
    For \ref{it: Electromagnetic Hamiltonians key lemma - dH(partial_h)}, we abridge $H := H_{(g,\theta,V)} \,$. At $(h_0,(q,p)) \in [-1,1] \times \Sigma \,$, which corresponds to the point $(q,e^{h_0}p) \in T^* M \,$, we compute 
    \begin{align*}
        \d H(\partial_h) \big|_{(h_0,(q,p))} &= \frac{\d}{\d h} \Big|_{h_0}  H(q, e^{h} p) =  \frac{\d}{\d h} \Big|_{h_0} \Big( \tfrac{1}{2} g^*(e^h p + \theta_q , e^h p +\theta_q) + V(q) \Big) \\
        &= e^{2 h_0} g^*(p,p) + e^{h_0} g^*(p,\theta_q) \,\, .
    \end{align*}
   This can be estimated from below by
    \begin{align*}
        &\hspace*{5mm}e^{2 h_0} g^*(p,p) + e^{h_0} g^*(p,\theta_q) \\
        &=e^{2h_0} \Big( g_0^*(p,p) + (g^* - g_0^*)(p,p) \Big) 
        \hspace*{4mm}+ e^{h_0} \Big( g_0^*(p,\theta_q) + (g^*-g_0^*)(p,\theta_q)\Big) \\
        &\geq e^{ 2 h_0} \big( |p|^2 - \lVert g^*-g_0^* \rVert_\infty \, |p|^2 \big) - e^{h_0} \big( \lVert \theta \rVert_\infty \, |p| + \lVert g^*-g_0^* \rVert_\infty \, \lVert \theta \rVert_\infty \, |p| \big) \\
        &\geq  2 E_0 \, e^{-2} \, ( 1 - \lVert g^*- g^*_0 \rVert_\infty) -  \sqrt{2E_0}  \, e \, (\lVert \theta \rVert_\infty + \lVert g^* - g_0^* \rVert_\infty \, \lVert \theta \rVert_\infty) \,\, ,
    \end{align*}
    using that $h_0 \in [-1,1]$ and $\tfrac{1}{2} |p|^2 = \tfrac{1}{2} g_0^*(p,p) = E_0$ since $(q,p) \in \Sigma \,$.
    The last term can be bounded from below by $c(E_0) := E_0 \, e^{-2} \,$, provided $(g - g_0, \theta,V)$ is sufficiently $C^0$-small (taking also Lemma \ref{lem: Electromagnetic Hamiltonians - g mapsto g^*} into account). This shows part \ref{it: Electromagnetic Hamiltonians key lemma - dH(partial_h)}.

    To prove part \ref{it: Electromagnetic Hamiltonians key lemma - C^0 small}, for arbitrary $(q,p) \in \Sigma$ we estimate
    \begin{align*}
       &\hspace*{6mm}| H_{(g,\theta,V)}(q,p) - E_0 | = \big| \tfrac{1}{2} g^*(p + \theta_q,p + \theta_q) +V(q) - \tfrac{1}{2} g_0^*(p,p) \big| \\
       &\leq \big| \tfrac{1}{2}(g^*-g_0^*)(p+\theta_q,p+\theta_q) \big| + \big| \tfrac{1}{2} g_0^*(p+\theta_q, p + \theta_q) - \tfrac{1}{2} g_0^*(p,p) \big| + \lVert V \rVert_\infty \\
       &\leq \lVert g^* - g_0^* \rVert_\infty \, \big( \tfrac{1}{2} |p|^2 + \lVert \theta \rVert_\infty \, |p| + \tfrac{1}{2} \lVert \theta \rVert_\infty^2 \big) + \lVert \theta \rVert_\infty \, |p| + \tfrac{1}{2} \lVert \theta \rVert_\infty^2 + \lVert V \rVert_\infty \\
       &=  \lVert g^* - g_0^* \rVert_\infty \big(  E_0 + \lVert \theta \rVert_\infty \sqrt{2E_0} + \tfrac{1}{2} \lVert \theta \rVert_\infty^2  \big) + \lVert \theta \rVert_\infty \, \sqrt{2E_0} + \tfrac{1}{2} \lVert \theta \rVert_\infty^2 + \lVert V \rVert_\infty \,\, .
    \end{align*}
    The right-hand side can be made arbitrarily small, provided $(g-g_0,\theta,V)$ is sufficiently $C^0$-small. This shows part \ref{it: Electromagnetic Hamiltonians key lemma - C^0 small}.
    
    In canonical coordinates on $T^*M$ induced by local coordinates on $M$ we have
    \begin{equation}
    \label{eq: Electromagnetic Hamiltonians key lemma - H coordinate representation}
    H_{(g,\theta,V)} (q,p) = \tfrac{1}{2} \sum_{i,j=1}^{n} g^{ij}(q) (p_i + \theta_i(q)) \, (p_j + \theta_j(q)) + V(q)
    \end{equation}
    with inverse matrix $(g^{ij})_{ij} := (g_{ij})_{ij}^{-1} \,$. Since $\partial_{q_k} g^{ij} = - \sum_{r,s=1}^n g^{i r} \, (\partial_{q_k} g_{rs} ) \, g^{sj} \,$, we can bound the $C^1$-norm of $(g^{ij})_{ij}$ in terms of the $C^0$-norm of $(g^{ij})_{ij}$ and the $C^1$-norm of $(g_{ij})_{ij} \,$. Then \eqref{eq: Electromagnetic Hamiltonians key lemma - H coordinate representation} shows that the $C^1$-norm of $H_{(g,\theta,V)}$ on $[-1,1] \times \Sigma \subseteq T^*M$ can be bounded in terms of the $C^1$-norms of $(g,\theta, V)$ and the $C^0$-norm of $g^*$ (which is bounded uniformly for sufficiently $C^0$-small $g-g_0$). This concludes part \ref{it: Electromagnetic Hamiltonians key lemma - C^1}.
\end{proof}

\noindent We can now finish the proof of Theorem \ref{mthm: electromagnetic Hamiltonians - periodic solution}.

\begin{proof}[Proof of Theorem \ref{mthm: electromagnetic Hamiltonians - periodic solution}]
 Let $\mathscr{B}^{C^1}_{M} \subseteq \mathrm{Met}(M) \times \Omega^1(M) \times \Cinfty(M)$ be an arbitrary given $C^1$-bounded subset. For this $\mathscr{B}_M^{C^1}$ choose a $C^0$-neighborhood $\mathscr{B}_{M}^{C^0} \subseteq \mathrm{Met}(M) \times \Omega^1(M) \times \Cinfty(M)$ of $(g_0,0,0)$ and a $C^1$-bounded subset $\mathscr{B}^{C^1}_{S\Sigma } \subseteq \Cinfty([-1,1] \times \Sigma)$ as in Lemma \ref{lem: Electromagnetic Hamiltonians - key lemma} \ref{it: Electromagnetic Hamiltonians key lemma - C^1}. Making $\mathscr{B}_{M}^{C^0}$ smaller if necessary, we can choose $c = c(E_0) > 0$ so that Lemma \ref{lem: Electromagnetic Hamiltonians - key lemma} \ref{it: Electromagnetic Hamiltonians key lemma - dH(partial_h)} holds with $\mathscr{B}_M^{C^0}$ and $c$.

 By Lemma \ref{lem: Gr(f) C^0 small <--> f C^0-small} \ref{it: level set is graph - C^1 bound f <--> C^1-bound H}, we can choose a $C^1$-bounded subset $\mathscr{B}_{\Sigma}^{C^1} \subseteq \Cinfty(\Sigma)$ so that for every $H \in \Cinfty(S \Sigma)$ with $\d H (\partial_h ) \geq c$ on $[-1,1] \times \Sigma$ and every $f \in \Cinfty(\Sigma)$ 
 with
 \[
 H^{-1}(E_0) \cap ((-1,1) \times \Sigma ) = \mathrm{Gr}(f)
 \]
 it holds
 \begin{align}
 \label{eq: Electromagnetic Hamiltonians - proof thm C^1-bounded subsets}
 H|_{[-1,1] \times \Sigma} \in \mathscr{B}^{C^1}_{S \Sigma}  \,\, \Longrightarrow \,\, f \in \mathscr{B}^{C^1}_{\Sigma} \,\, .
 \end{align}
 Now choose a constant $\delta = \delta(\mathscr{B}^{C^1}_\Sigma) > 0$ as in Theorem \ref{mthm: rescaling Zoll contact form}. Choose a $C^0$-neighborhood $\mathscr{B}^{C^0}_M(\delta c) \subseteq \mathscr{B}^{C^0}_M$ of $(g_0,0,0)$ as in Lemma \ref{lem: Electromagnetic Hamiltonians - key lemma} \ref{it: Electromagnetic Hamiltonians key lemma - C^0 small}. We contend that $\mathscr{B}^{C^0}_M(\delta c)$ is the $C^0$-neighborhood of $(g_0,0,0)$ having the significance stated in Theorem \ref{mthm: electromagnetic Hamiltonians - periodic solution}.

 To this end, let $\varnothing \not=  B \subseteq M$ be a subset with tame boundary and assume $(g,\theta,V) \in \mathscr{B}^{C^1}_M \cap \mathscr{B}^{C^0}_M(\delta c)$ satisfies \eqref{eq: electromagnetic Hamiltonians - no perturbation on M-B}. We must show that there exists a periodic solution $\gamma$ of \eqref{eq: Euler-Lagrange electromagnetic Lagrangian (g,theta,V)} of energy $E_{(g,V)}(\gamma) = E_0$ which intersects $B$.
 
 We abbreviate $H := H_{(g,\theta,V)} \,$. Then 
 \begin{equation}
 \label{eq: electromagnetic Hamiltonians - proof thm dH partial_h bound}
  \d H(\partial_h) \geq c \qquad \text{on } [-1,1] \times \Sigma
 \end{equation}
 by choice of $c$ and $\mathscr{B}^{C^0}_M \ni (g,\theta,V) \,$. Moreover $\sup_\Sigma |H - E_0| < \delta c$ by choice of $\mathscr{B}^{C^0}_M (\delta c) \ni (g,\theta,V) \,$. By Lemma \ref{lem: Gr(f) C^0 small <--> f C^0-small} \ref{it: level set is graph} hence
 \begin{equation}
 \label{eq: electromagnetic Hamiltonians - proof thm graph f}
H^{-1}(E_0) \cap ((-1,1) \times \Sigma) \overset{\text{\eqref{eq: electromagnetic Hamiltonians - proof thm dH partial_h bound}}}{=} H^{-1}(E_0) \cap ((-\delta,\delta) \times \Sigma) = \mathrm{Gr}(f)
 \end{equation}
 is the graph of some $f \in \Cinfty(\Sigma) \,$. We make the following observations.
 \begin{enumerate}[label=(\roman*)]
     \item Clearly $\lVert f \rVert_\infty \leq \delta $ by \eqref{eq: electromagnetic Hamiltonians - proof thm graph f}.
     \item Writing $\pi : \Sigma \rightarrow M$ for the restriction of the basepoint projection $\pi_{T^*M} : T^*M \rightarrow M \,$, notice that the non-empty set 
     \[
     D:= \pi^{-1}(B) \subseteq \Sigma
     \]
     has tame boundary by Lemma \ref{lem: preimage tame boundary}.
     \item Because $(g-g_0,\theta,V)$ vanishes on $M \backslash B $ by assumption \eqref{eq: electromagnetic Hamiltonians - no perturbation on M-B}, it holds
\[
H = H_{(g,\theta,V)} = \tfrac{1}{2} |\cdot|_{g_0^*}^2 \qquad  \text{on } (\pi_{T^*M})^{-1}(M\backslash B) \,\, ,
\]
so that $\Sigma \backslash D = \pi^{-1}(M \backslash B) \subseteq H^{-1}(E_0) \,$. In the symplectization, this reads $\{0\} \times (\Sigma \backslash D) \subseteq H^{-1}(E_0)$ and
consequently $f$ vanishes on $\Sigma \backslash D \,$, using the defining equation \eqref{eq: electromagnetic Hamiltonians - proof thm graph f} for $f$. See also Figure \ref{fig: Graph Unit Cotangent Bundle}.
    \item Since $(g,\theta,V) \in \mathscr{B}_M^{C^1} \cap \mathscr{B}_M^{C^0} \,$, we have
    $H|_{[-1,1] \times \Sigma} \in \mathscr{B}_{S \Sigma}^{C^1}$ by choice of $\mathscr{B}_{S\Sigma}^{C^1} \,$. By choice of $\mathscr{B}_\Sigma^{C^1}$ and
    \eqref{eq: Electromagnetic Hamiltonians - proof thm C^1-bounded subsets}, \eqref{eq: electromagnetic Hamiltonians - proof thm dH partial_h bound} and \eqref{eq: electromagnetic Hamiltonians - proof thm graph f}, thus $f \in \mathscr{B}_\Sigma^{C^1}\,$.
 \end{enumerate}
Thus, having chosen $\delta = \delta(\mathscr{B}_\Sigma^{C^1}) $ as in Theorem \ref{mthm: rescaling Zoll contact form}, there exists a periodic Reeb orbit $\overline{\gamma}$ of $(\Sigma,e^f \alpha)$ intersecting $D \,$. By Lemma \ref{lem: Reeb orbits Gr(f) <--> e^f alpha Reeb orbits}, hence $(f \times \id_{\Sigma}) \circ \overline{\gamma}$ is a closed characteristic on $\mathrm{Gr}(f) \subseteq H^{-1}(E_0) $ intersecting $\R \times D$ and thus a reparametrization of a periodic $H$-Hamiltonian flow line $\overline{\gamma}_1$ in $H^{-1}(E_0)$ intersecting 
\[
\R \times D = \R \times \pi^{-1}(B) = \pi_{T^*M}^{-1}(B) \backslash O_{T^*M} \,\, ,
\]
where the last identity is of course understood via the identification \eqref{eq: embedding symplectization in cotangent bdl}.

Finally, from the discussion in Subsection \ref{subsec: Electromagnetic Hamiltonians}, the basepoint projection $\gamma := \pi_{T^*M } \circ \overline{\gamma}_1$ is a periodic solution of \eqref{eq: Euler-Lagrange electromagnetic Lagrangian (g,theta,V)} of energy $E_0$ intersecting $B \,$.
\end{proof}

\begin{figure}
    \centering
    \includegraphics[width=0.8\linewidth]{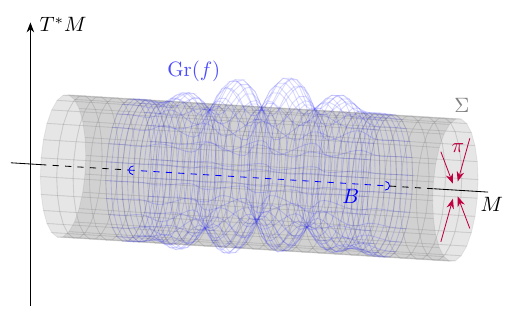}
    \caption{The cotangent bundle $T^*M$ minus the zero section $O_{T^*M} \cong M$ is identified with the symplectization of the cotangent sphere bundle $\Sigma = \set{(q,p) \in T^*M}{\tfrac{1}{2} |p|^2_{g_0^*} = E_0} \,$. Over $M \backslash B $ the graph $\mathrm{Gr}(f)=(H_{(g,\theta,V)})^{-1}(E_0) \cap ((-\delta,\delta) \times \Sigma)$ agrees with $\Sigma \,$.}
    \label{fig: Graph Unit Cotangent Bundle}
\end{figure}

\section{An Existence Theorem for Rabinowitz-Floer Gradient Flow Lines of Perturbed Hamiltonians}
\label{sec: Rabinowitz AF existence crit pts}

\noindent In Subsection \ref{subsec: Gradient Flow Lines for Perturbed Hamiltonians} we state the actual main result of this work, Theorem \ref{mthm: Rabinowitz gradient flow line}, and use it to derive Theorems \ref{mthm: rescaling Zoll contact form} and \ref{mthm: multiplicity result} from it in Subsections \ref{subsec: Proof mthm rescaling Zoll contact form} and \ref{subsec: Proof of mthm multiplicity result}. To this end, we first need to introduce some additional terminology in Subsections \ref{subsec: Defining Hamiltonians and C^1-Data} and \ref{subsec: Rabinowitz AF}.

\subsection{Defining Hamiltonians and $C^1$-data}
\label{subsec: Defining Hamiltonians and C^1-Data}

Let $(\Sigma,\alpha)$ be a closed, connected contact manifold of dimension $\dim(\Sigma) \geq 3 \,$. We work on the symplectization $(S \Sigma = \R \times \Sigma, \omega = \d \lambda )$ with primitive $\lambda = e^h \alpha $ and corresponding Liouville vector field $\partial_h \,$, where $h : \R \times \Sigma \rightarrow \R$ denotes the projection. Recall from Subsection \ref{subsec: Level Sets of Hamiltonians} that the Hamiltonian vector field of a Hamiltonian $H : S\Sigma \rightarrow \R$ is defined by $\iota_{X_H} \omega = - \d H$ and that $\d H(\partial_h) = \lambda(X_H) \,$. Recall moreover that we identify $\Sigma $ with $\{0\} \times \Sigma \subseteq S \Sigma \,$.

Motivated by \cite{Cieliebak_Frauenfelder}, we now introduce the notion of a defining Hamiltonian.

\begin{definition}[Defining Hamiltonian]
\label{def: defining Hamiltonian}
Let $H : S \Sigma \rightarrow \R$ be a smooth function.
\begin{enumerate}[label=(\alph*)]
    \item $H$ is called \textit{almost-defining Hamiltonian} if 
    \[
    H^{-1}(0) = \Sigma \,\text{ and }\, \d H(\partial_h) \equiv 1 \text{ on }\Sigma \, .
    \]
    \item $H$ is called \textit{defining Hamiltonian} if it is an almost-defining Hamiltonian and $\d H$ has compact support.
\end{enumerate}
\end{definition}

\begin{remark}
\label{rmk: almost-defining Hamiltonian}
Suppose $H$ is an almost-defining Hamiltonian. Then:
\begin{enumerate}[label=(\roman*)]
    \item $0$ is a regular value of $H$ with regular level set $\Sigma$.
    \item $H > 0$ on $(0,\infty) \times \Sigma$ and $H < 0$ on $(-\infty,0) \times \Sigma \,$.
    \item $X_H = R_\alpha$ along $\Sigma$ by \eqref{eq: X_H = lambda(X_H) Reeb}.
\end{enumerate}
The converse also holds:
\begin{enumerate}[label=(\roman*),resume]
    \item If $H \in \Cinfty(S\Sigma)$ satisfies $H^{-1}(0) = \Sigma$ and $X_H = R_\alpha$ along $\Sigma$, then $H$ is an almost-defining Hamiltonian.
\end{enumerate}
\end{remark}

\begin{example}
\label{example: almost-defining Hamiltonian}
The projection $h : S \Sigma = \R \times \Sigma \rightarrow \R$ is an almost-defining Hamiltonian. If $\beta : \R \rightarrow [-1,1]$ is a monotone increasing function with $\beta(\pm t) = \pm 1 $ for $t \geq 1$ and $\beta(t) = t$ for $t$ around zero, then $\beta \circ h$ is a defining Hamiltonian.
\end{example}

\noindent Next we define the notion of a $C^1$-datum. For this recall from Definition \ref{def: C^1-bounded subset} that a $C^1$-bounded subset is just a subset of the space of smooth functions that is bounded with respect to any $C^1$-norm.

\begin{definition}[$C^1$-datum]
\label{def: C^1 datum} \hfill
\begin{enumerate}[label=(\alph*)]
    \item\label{it: C^1-datum} A \textit{$C^1$-datum} is a tuple $\Dscr = (U_\Sigma , \, c , \, \BCOne)$ consisting of
    \begin{enumerate}[label=\arabic*)]
        \item an open neighborhood $U_\Sigma \subseteq S \Sigma$ of $\Sigma$ whose closure $\overline{U_\Sigma}$ is a compact smooth manifold with boundary,
        \item a positive constant $c > 0$ and
        \item a $C^1$-bounded subset $\BCOne \subseteq \Cinfty(\overline{U_\Sigma}) \,$.
    \end{enumerate}
    \item\label{it: complying with C^1-datum} A function $H \in \Cinfty(S\Sigma)$ \textit{complies with the $C^1$-datum} $\Dscr =(U_\Sigma , \, c , \, \BCOne)$ if its restriction to $\overline{U_\Sigma}$ lies in $\BCOne \,$, that is $H|_{\overline{U_\Sigma}} \in \BCOne \,$, and 
    \[
    \d H (\partial_h) = \lambda(X_H) \geq c > 0 \quad \text{ on } U_\Sigma \,\, .
    \]
\end{enumerate}
\end{definition}

\noindent Intuitively, one should think of a small neighborhood $U_\Sigma =(-\eps,\eps) \times \Sigma \,$, a small constant $c > 0$ and of $\BCOne$ being large but bounded. A Hamiltonian $H$ complies with a $C^1$-datum if its derivative close to $\Sigma$ is bounded by a large constant and its partial derivative in $\R$-direction is bounded below by some small positive constant.

\begin{remark}
\label{rmk: C^1-datum partial order}
We can define a partial order on the set of $C^1$-data by declaring for $C^1$-data $\Dscr = (U_\Sigma, c , \BCOne)$ and $\Dscr' = (U_\Sigma', c', \BCOne')$ that
\begin{align*}
   \Dscr \preceq \Dscr' \,\,\, :\Longleftrightarrow \,\,\, U_\Sigma \supseteq U'_{\Sigma} \comma c \geq c' \text{ and } \, \forall f \in \BCOne : \, f|_{\overline{U'_\Sigma}} \in \BCOne' \,\, .
\end{align*}
If $\mathscr{D} \preceq \mathscr{D}'$ and $H$ complies with $\mathscr{D} \,$, then $H$ also complies with $\mathscr{D}' \,$. 
Notice moreover that, $\Sigma$ being compact, the subset of $C^1$-data of the form
\begin{equation*}
\Bigset{\Dscr = \big(  (-\eps,\eps) \times \Sigma ,\,\eps ,\, \BCOne \big) }{ \eps  > 0 \comma \BCOne \subseteq \Cinfty([-\eps,\eps] \times \Sigma) \text{ } C^1\text{-bounded} }\, ,
\end{equation*}
is cofinal in the set of $C^1$-data. 
\end{remark}

\subsection{The Rabinowitz action functional}
\label{subsec: Rabinowitz AF}

Denote by $\loopspace := \Cinfty(\bbS^1,S\Sigma)$ the free loop space of $S \Sigma \,$. The Rabinowitz action functional, rediscovered by Cieliebak-Frauenfelder \cite{Cieliebak_Frauenfelder}, for an autonomous Hamiltonian $H : S \Sigma \rightarrow \R$ is the functional
\begin{equation}
\label{eq: Rabinowitz action functional def}
\RabinowitzH : \, \loopspace \times \R \rightarrow \R \comma \quad \RabinowitzH(v,\tau) := \int_{\bbS^1} v^* \lambda - \tau \int_0^1 H(v(t)) \, \d t \,\, .
\end{equation}
An $\omega$-compatible almost complex structure $J$ on $S \Sigma$ induces an $L^2$-metric $g_J$ on $\loopspace \times \R$ via
\[
(g_J)_{(v,\tau)} \big( (\Hat{v}_1, \Hat{\tau}_1) ,  (\Hat{v}_2, \Hat{\tau}_2) \big) = \int_0^1 \omega_v (\Hat{v}_1 , J(v) \, \Hat{v}_2) \, \d t + \Hat{\tau}_1 \cdot \Hat{\tau}_2 \,\, 
\]
where $(\Hat{v}_j, \Hat{\tau}_j) \in T_{(v,\tau)} (\loopspace \times \R) = \Gamma(v^*T(S \Sigma)) \oplus \R   \,$, \ie $\Hat{v}_j$ are vector fields along the loop $v$.
The gradient of $\RabinowitzH$ with respect to this metric is given by
\begin{equation}
\label{eq: gradient(Rabinowitz)}
 \nabla^{g_J} \RabinowitzH (v,\tau) = { {- J(v) (\partial_t v - \tau X_H(v))} \choose {-\int_0^1  H(v) \, \d t} } \,\, ,
\end{equation}
so that the critical points are
\begin{equation}
 \label{eq: Crit(Rabinowitz)}
 (v,\tau) \in \Crit(\RabinowitzH) \,\, \Longleftrightarrow \,\, \begin{cases}
     \partial_t v = \tau X_H (v) \\
     v \subseteq H^{-1}(0) \,\, ,
 \end{cases}
\end{equation}
(using that $H$ is constant along its Hamiltonian flow lines) and the $(\nabla^{g_J} \RabinowitzH)$-flow lines $w = (u,\eta) : \R \rightarrow \loopspace \times \R$ are the solutions of the \textit{Rabinowitz-Floer equation}
\begin{equation}
\label{eq: Rabinowitz-Floer eq}
\begin{cases}
  0 =  \partial_s u + J(u) (\partial_t u - \eta \, X_H(u) ) \\
0 = \partial_s \eta (s) + \int_0^1 H(u(s,t)) \, \d t  \,\, ,
\end{cases}
\end{equation}
where we regard $u : \R \rightarrow \loopspace$ as cylinder $u \in \Cinfty(\R \times \bbS^1,S \Sigma)\,$.

\begin{remark}
\label{rmk: Rabinowitz action value defining Hamiltonian}
We record some immediate consequences of \eqref{eq: Crit(Rabinowitz)}.
\begin{enumerate}[label=(\roman*)]
    \item\label{it: Rabinowitz action functional - action value} The action value of a critical point $(v,\tau) \in \Crit(\RabinowitzH)$ is 
    \[
    \RabinowitzH(v,\tau) = \tau \int_0^1 \lambda_v(X_H(v)) \, \d t \,\, .
    \] 
    In particular, $\RabinowitzH(v,\tau) = \tau$ if $H$ is an almost-defining Hamiltonian.
    \item\label{it: Rabinowitz action functional - constant loops} $(v,0) \in \loopspace \times \R$ is a critical point iff $v$ is a constant loop with $v \subseteq H^{-1}(0) \,$.
    \item\label{it: Rabinowitz action functional - reparametrized Hamiltonian flow lines} For $\tau \not= 0 \,$, the critical points $(v,\tau) \in \Crit(\RabinowitzH)$ are in one-to-one correspondence with $\tau$-periodic $H$-Hamiltonian flow lines contained in $H^{-1}(0) \,$.
    \item\label{it: Rabinowitz action functional - spectrum Zoll case} Let $(\Sigma,\alpha)$ be Zoll contact with minimal Reeb period $\tau(\alpha) > 0$ and $H$ an almost-defining Hamiltonian. By \ref{it: Rabinowitz action functional - action value} and \ref{it: Rabinowitz action functional - reparametrized Hamiltonian flow lines}, $\RabinowitzH$ has \textit{action spectrum}
    \[
     \RabinowitzH(\Crit(\RabinowitzH)) = \tau(\alpha) \Z \,\, .
    \]
    \item\label{it: Rabinowitz action functional - critical point tower} A critical point $(v,\tau) \in \Crit(\RabinowitzH) \,$, with $v$ being an embedded loop, gives rise to the \textit{critical point tower}
    \[
    \bigsqcup_{k \in \Z} \set{(v(t + k \, \cdot\,) \comma k \tau)}{t \in \bbS^1 = \R/\Z} \subseteq \Crit(\RabinowitzH) \,\, ,
    \]
    see \cite[Rmk.~5.1.9]{Weber}. Two critical points $(v_1,\tau_1) , \, (v_2,\tau_2) \,$, for which $\tau_j X_H(v_j(0)) \not= 0 \comma j =1,2 \,$, belong to the same critical point tower iff their corresponding Hamiltonian flow lines $v_j(\frac{\cdot}{\tau_j})$ are geometrically equal.
\end{enumerate}
\end{remark}

\noindent Define the \textit{energy} $E_J(w) \in [0,\infty]$ of a $(\nabla^{g_J} \RabinowitzH)$-flow line $w  \in \Cinfty(\R,\loopspace \times \R)$ by
\begin{align*}
E_J(w) := \int_{-\infty}^{\infty} |\partial_s w(s)|_{g_J}^2 \, \d s = \int_{-\infty}^{\infty} \frac{\d}{\d s} \RabinowitzH(w(s)) \, \d s = \lim_{s \rightarrow \infty} \Big( \RabinowitzH(w(s)) - \RabinowitzH(w(-s)) \Big) \, .
\end{align*}
We will work with almost complex structures on $S \Sigma$ of SFT-type (\textit{SFT} stands for \textit{Symplectic Field Theory}):

\begin{definition}[SFT-type]
\label{def: SFT-type almost complex structure}
An almost complex structure $J$ on $S \Sigma = \R \times \Sigma$ is of \textit{SFT-type} if the following conditions hold:
\begin{enumerate}[label=\arabic*)]
    \item $J$ is invariant under the translation $(h,z) \mapsto (h + h_0, z) \,$, for every $h_0 \in \R \,$.
    \item $\xi = \ker(\alpha)$ is $J$-invariant and $J|_\xi$ is $\d \alpha|_\xi$-compatible.
    \item $J \partial_h = R_\alpha \,$.
\end{enumerate}
\end{definition}

\begin{remark}
\label{rmk: SFT-type almost complex structure}
An almost complex structure of SFT-type is $(\omega = \d(e^h \alpha)$)-compatible. Moreover, SFT-type almost complex structures on $S \Sigma$ are in bijection with compatible complex structures on the symplectic vector bundle $(\xi, \d \alpha|_\xi ) $ with base $\Sigma \,$. In particular, there exist SFT-type almost complex structures.
\end{remark}

\subsection{A theorem about gradient flow lines for perturbed Hamiltonians}
\label{subsec: Gradient Flow Lines for Perturbed Hamiltonians}

Let $(\Sigma,\alpha)$ be a Zoll contact manifold whose Reeb flow has minimal period $\tau(\alpha) > 0 \,$.
For every constant $h_0 > 0$ we define the class of functions
\[
\Hcal^{h_0} := \set{H \in \Cinfty(S \Sigma)}{ \d H = 0 \,\text{ on } S \Sigma \backslash ( (-h_0,h_0) \times \Sigma )} \,\, .
\]
Modifying Example \ref{example: almost-defining Hamiltonian} one sees that $\Hcal^{h_0}$ contains defining Hamiltonians for every $h_0 >0 \,$. For positive constants $h_0 , \delta > 0 \,$, a $C^1$-datum $\Dscr$ (see Definition \ref{def: C^1 datum}) and a defining Hamiltonian $H_0 \in \Hcal^{h_0}$ we consider the class of admissible perturbations of $H_0$, defined by
\begin{equation}
\label{eq: def Hclassparam}
\Hclassparam := \set{H \in \Hcal^{h_0}}{H \text{ complies with } \Dscr \comma \, \lVert H - H_0 \rVert_{\infty} \leq \delta} \,\, ,
\end{equation}
with sup-norm
\[
\lVert H - H_0 \rVert_\infty := \sup_{S\Sigma} |H - H_0| = \max_{[-h_0,h_0] \times \Sigma} |H - H_0| < \infty \,\, .
\]
With the partial order on the set of $C^1$-data introduced in Remark \ref{rmk: C^1-datum partial order}, it obviously holds
\begin{align}
\label{eq: Hclassparam inclusion for varying parameters}
   h_0 \leq h_0' \comma \Dscr \preceq \Dscr' \text{ and } \delta \leq \delta' \,\,\, \Longrightarrow \,\,\, \Hclassparam \subseteq \Hcal^{h_0'}(H_0, \, \Dscr' , \, \delta') \,\, . 
\end{align}
The next theorem lies at the heart of this work.

\begin{MainThm}
\label{mthm: Rabinowitz gradient flow line}
Suppose $(\Sigma,\alpha)$ is a Zoll contact manifold with minimal Reeb period $\tau(\alpha) >0 \,$. Let $J$ be an almost complex structure of SFT-type on the symplectization $S \Sigma $ and let $H_0 \in \Hcal^{h_0}$ be a defining Hamiltonian. 
For every $\eps > 0 $ and every $C^1$-datum $\Dscr$ there exists a constant $\delta = \delta(J,H_0, h_0,\eps,\Dscr) > 0$ with the following significance: 
\\
For every $H \in \Hclassparam$ and every $z \in \Sigma$ there exist a $(\nabla^{g_J} \RabinowitzH)$-flow line $w = (u,\eta) \in \Cinfty(\R,\loopspace\times\R)$, a sequence $(s_n)_{n \in \N}$ of positive numbers tending to infinity and critical points $(v^-,\tau^-), \, (v^+,\tau^+) \in \Crit(\RabinowitzH)$ with
\begin{enumerate}[label=\arabic*), itemsep=0.75ex]
     \item\label{it: mthm Rabinowitz gradient flow line - tau not= 0} $\tau^-, \tau^+ \not= 0 \,$,
    \item\label{it: mthm Rabinowitz gradient flow line - convergence} $w(\pm s_n) \overset{n \rightarrow \infty}{\longrightarrow} (v^\pm, \tau^\pm)$ in $\Cinfty(\bbS^1,S\Sigma) \times \R \,$,
    \item\label{it: mthm Rabinowitz gradient flow line - action estimate} $\RabinowitzH(w(s)) , \, \RabinowitzHZero(w(s)) \in [\tau(\alpha) - \eps, \, \tau(\alpha) + \eps]$ for every $s \in \R \,$,
    \item\label{it: mthm Rabinowitz gradient flow line - u(0,0)} $u(0,0) \in \R \times \{z\} \subseteq S \Sigma \,$.
\end{enumerate}
\end{MainThm}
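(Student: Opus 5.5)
We will prove Theorem \ref{mthm: Rabinowitz gradient flow line} by a homotopy stretching argument in the spirit of the classical approach to critical points of perturbed Morse--Bott functionals. Since $\Sigma$ is Zoll and $H_0$ is defining, $\Crit(\RabinowitzHZero)$ consists of the constant loops on $\Sigma$ (at $\tau=0$) and Morse--Bott components $\mathscr{C}^{k\tau(\alpha)}$ diffeomorphic to $\Sigma$ at actions $k\tau(\alpha)$, $k\neq 0$. The relevant component is $\CCrit\cong\Sigma$, parametrized by $z\mapsto (v_z,\tau(\alpha))$ with $v_z(t)=\phi^{t\tau(\alpha)}_{R_\alpha}(z)$. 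We fix a cutoff $\beta:\R\to[0,1]$ and interpolate, for $R\ge 0$, via the $s$-dependent functional $\RabinowitzHParam{R,s}$ whose Hamiltonian equals $H_0+\beta_R(s)(H-H_0)$, where $\beta_R$ equals $1$ on $[-R,R]$ and $0$ for $|s|\ge R+1$. For $z\in\Sigma$ we consider the moduli space $\moduliH{[0,R]}(z)$ of finite-energy solutions of the $s$-dependent Rabinowitz--Floer equation that are asymptotic at both ends to points of $\CCrit$ and satisfy the point constraint $u(0,0)\in\R\times\{z\}$.

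The first step is to obtain uniform a priori bounds, independent of $R$ and of $H\in\Hclassparam$ once $\delta$ is small.
\begin{enumerate}[label=(\roman*)]
\item \emph{Energy.} The homotopy is supported on $|s|\le R+1$, but its $s$-derivative is $\beta_R'(s)(H-H_0)$, which is $C^0$-small. The energy is therefore bounded by the action gap plus a term of order $\delta\int|\eta|$. The difficulty is that this integral grows with $R$ unless $\eta$ is controlled.
\item \emph{Lagrange multiplier.} We bound $\eta$ by a Cieliebak--Frauenfelder type estimate: if $\|\nabla\RabinowitzH(v,\tau)\|$ is small, then $|\tau|\lesssim|\RabinowitzH(v,\tau)|+1$. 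This uses that $H$ complies with $\Dscr$ (i.e.\ $\d H(\partial_h)\ge c$ and the $C^1$-bound near $\Sigma$). Combining it with the action window $[\tau(\alpha)-\eps,\tau(\alpha)+\eps]$ yields a uniform bound on $\eta$, and hence on the energy.
\item \emph{Confinement.} Since $J$ is of SFT-type and $\d H=0$ outside $(-h_0,h_0)\times\Sigma$, a maximum principle for $h\circ u$ confines the cylinder components to a fixed compact set.
\end{enumerate}
Crucially, the action values stay in the window around $\tau(\alpha)$. This is because $\|H-H_0\|_\infty\le\delta$ perturbs actions by $O(\delta|\eta|)$, and the action spectrum is $\tau(\alpha)\Z$, so it has a gap.

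With these bounds in hand, we show that $\moduliH{[0,R]}(z)\ne\varnothing$ for every $R$; this is the key step. At $R=0$ the homotopy is trivial, and the moduli space contains the constant solutions $w\equiv(v_{z'},\tau(\alpha))$ with $z'$ in the Reeb orbit of $z$. We would like a cobordism or degree argument, via the evaluation map $\ev:\moduliH{[0,R]}\to\Sigma$, $w\mapsto\pi_\Sigma u(0,0)$, to show that $\ev$ remains surjective (of degree one mod $2$) for all $R$. The parametrized moduli space over $[0,R]$ is compact by the bounds above together with Gromov--Floer compactness. Breaking is excluded because all actions lie in $[\tau(\alpha)-\eps,\tau(\alpha)+\eps]$, an interval containing only one spectral value. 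Bubbling is excluded by exactness.

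The main obstacle is transversality. It is needed to make this argument rigorous without assuming $H$ is generic, while keeping surjectivity of $\ev$. We would first try an abstract perturbation of the Fredholm section on a Banach manifold $\Bcal$ of cylinders with Morse--Bott asymptotics: perturb generically, count mod $2$, and pass back to the unperturbed equation by compactness. The trivial solutions at $R=0$ are Morse--Bott regular, which gives the base case.

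Finally, we let $R\to\infty$. We take $w_R\in\moduliH{[0,R]}(z)$ and extract a $\Cinftyloc$-limit by Arzelà--Ascoli, using the uniform bounds. On $[-R,R]$ the equation is the honest $(\nabla^{g_J}\RabinowitzH)$-flow, so the limit $w$ is an $\RabinowitzH$-gradient flow line with finite energy. It satisfies $u(0,0)\in\R\times\{z\}$ and the action bounds in \ref{it: mthm Rabinowitz gradient flow line - action estimate}; the bound for $\RabinowitzHZero$ follows from $|\RabinowitzH-\RabinowitzHZero|\le\delta|\eta|$. Finite energy then produces the sequences $\pm s_n$ along which $\|\nabla\RabinowitzH\|\to0$. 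The $\eta$-bound together with the confinement gives subsequential convergence to critical points $(v^\pm,\tau^\pm)$. These have action near $\tau(\alpha)\neq0$, which forces $\tau^\pm\neq0$ because critical points with $\tau=0$ have action $0$.
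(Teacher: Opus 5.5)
Your architecture (interpolation via $\beta_R$, moduli spaces asymptotic to $\CCrit$ with the constraint $u(0,0)\in\R\times\{z\}$, an abstract-perturbation cobordism for non-emptiness, then $R\to\infty$) is the paper's. However, two of the a priori bounds it rests on are not delivered by the arguments you give.

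The serious gap is confinement. A maximum principle gives only the upper bound $h\circ u\le h_0$. Where $u$ is $J$-holomorphic, $h\circ u$ is subharmonic, so it has no interior maximum, but it may have interior minima. On the negative end $(-\infty,-h_0)\times\Sigma$ the cylinder is again merely $J$-holomorphic, and nothing stops it from dipping arbitrarily far down. Small energy does not help either: a piece resembling a trivial cylinder over a Reeb orbit of period $T$ has only about $Te^{h}$ of $\d(e^h\alpha)$-energy below level $h$. Since $S\Sigma$ has a non-compact negative end (unlike the completed Liouville domain in Cieliebak--Frauenfelder), the lower bound is the non-trivial part. Without it, Gromov compactness fails, and with it the compactness of $\moduliH{[0,R]}$ and the cobordism argument. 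The paper instead shows that $Z=u^{-1}((-\infty,h_{\mathrm{reg}}]\times\Sigma)$ has Hofer energy at most $e^{h_0}E(w)\le 2e^{h_0}\delta T^*\le\tfrac12\tau(\alpha)$. It then invokes an Albers--Fuchs--Merry compactness theorem: curves reaching down to $-\infty$ under this Hofer bound would produce a Reeb orbit of period at most $\tfrac12\tau(\alpha)$, contradicting minimality of $\tau(\alpha)$. So confinement needs the Zoll minimal period, the small energy bound, and therefore the uniform $\eta$-bound. It also imposes the extra condition $\delta\le e^{-h_0}\tau(\alpha)/(4T^*)$.

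Second, your $\eta$-bound is circular as written. You feed the action window into the Cieliebak--Frauenfelder estimate, but the window is itself a consequence of the $\eta$-bound. The energy identity gives $E(w)\le 2\Delta$ and $\RabinowitzHrs(w(s))\in[\tau(\alpha)-2\Delta,\tau(\alpha)+2\Delta]$ with $\Delta=\sup_{\mathrm{im}(w)}|\RabinowitzH-\RabinowitzHZero|\le\delta\lVert\eta\rVert_\infty$. This holds uniformly in $R$, because $\beta_R'$ has total variation $2\beta_R(0)\le 2$. So, contrary to your item (i), there is no growth in $R$ to fight. The gap in the action spectrum also plays no role here; it only enters later, through $E(w)\le\eps<\tau(\alpha)$, to exclude breaking.

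What is missing is a self-improving estimate, built in three steps:
\begin{itemize}
\item For each $\sigma$, let $s\ge\sigma$ be the first time at which $|\nabla\RabinowitzHParam{r,s}(w(s))|_{g_J}$ drops below the Cieliebak--Frauenfelder threshold $\delta_{\mathrm{CF}}$. Then $s-\sigma\le E(w)/\delta_{\mathrm{CF}}^2\le 2\delta\lVert\eta\rVert_\infty/\delta_{\mathrm{CF}}^2$.
\item At that time the estimate gives $|\eta(s)|\le C(\tau(\alpha)+2\delta\lVert\eta\rVert_\infty)+C$.
\item The bound $|\partial_s\eta|\le\lVert H_0\rVert_\infty+\delta$ carries this back to $\sigma$.
\end{itemize}
Together these give $\lVert\eta\rVert_\infty\le P(\delta)\lVert\eta\rVert_\infty+C(\tau(\alpha)+1)$ with $P(\delta)\to0$. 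The inequality closes once $P(\delta)<1$, using that $\lVert\eta\rVert_\infty<\infty$ by the asymptotics. Note that the Cieliebak--Frauenfelder estimate must be proved for the interpolated Hamiltonians $H_0+\beta_r(s)(H-H_0)$, uniformly in $(r,s)$, not just for $\RabinowitzH$.

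A minor slip: at $R=0$ the constrained moduli space is the single constant solution through $z$, not a Reeb orbit's worth of solutions. That is exactly why the transverse count at $r=0$ is one.
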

\noindent In point \ref{it: mthm Rabinowitz gradient flow line - u(0,0)} we evaluate the cylinder $u \in \Cinfty(\R \times \bbS^1,S \Sigma)$ at $(0,0) \in \R \times \bbS^1 \,$.

\begin{remark}
\label{rmk: Rabinowitz gradient flow line}
For a $(\nabla^{g_J} \RabinowitzH)$-flow line $w=(u,\eta)$ as in the theorem, observe that
\[
0 \leq E_J(w) = \lim_{n \rightarrow \infty} \Big( \RabinowitzH(w(s_n)) - \RabinowitzH(w(-s_n)) \Big) = \RabinowitzH(v^+,\tau^+) - \RabinowitzH(v^-,\tau^-) \,\, .
\]
If $E_J(w) > 0 \,$, then $\RabinowitzH(v^-,\tau^-) < \RabinowitzH(v^+,\tau^+) \,$. Otherwise $E_J(w) = 0 \,$, so that $w(s) = (v^-,\tau^-) = (v^+,\tau^+)$ for every $s \in \R \,$. Consequently, by property \ref{it: mthm Rabinowitz gradient flow line - u(0,0)}, the critical point $(v^+,\tau^+)$ satisfies $v^+(0) \in \R \times \{z\} \,$.
\end{remark}

\noindent We will prove Theorem \ref{mthm: Rabinowitz gradient flow line} in Sections \ref{sec: Proof MThm - Preparation}\,-\,\ref{sec: Non-Empty Moduli Spaces}. The rest of the present section is devoted to deriving Theorems \ref{mthm: rescaling Zoll contact form} and \ref{mthm: multiplicity result} from it.

\subsection{Cutting off away from $\Sigma$}
\label{subsec: Cutting off away from Sigma}

To overcome the additional assumption in Theorem \ref{mthm: Rabinowitz gradient flow line} that both the almost-defining Hamiltonian $H_0$ and the perturbed Hamiltonian $H$ have compactly supported differential, we cut them off uniformly.

\begin{proposition}
\label{prop: cut-off}
Given an almost-defining Hamiltonian $H_0 \in \Cinfty(S\Sigma)\,$, a constant $h_0 > 0$ and a $C^1$-datum
\begin{equation}
\label{eq: C^1 datum U_Sigma = product}
\Dscr = \big(  (-h_1, h_1) \times \Sigma \comma c \comma \BCOne \big) \quad \text{ with } h_1 \in (0,h_0) \,\, .
\end{equation}
Then there exist a defining Hamiltonian $\hat{H}_0 \in \Hcal^{h_0}$ and for every $\delta > 0$ a map
\begin{align*}
 \mathsf{Cut}_{\delta} : \,\, \Bigset{H \in \Cinfty(S\Sigma)}{ \begin{array}{c}
    |H - H_0| \leq \delta \text{ on } [-h_0,h_0] \times \Sigma \\
      \text{ and } H \text{ complies with } \Dscr 
 \end{array}} \longrightarrow \Hcal^{h_0}(\hat{H}_0,  \Dscr,  \delta)
\end{align*}
with the following property:\\
For every $H$ in the domain of $\mathsf{Cut}_\delta \,$, setting $\hat{H} := \mathsf{Cut}_\delta(H) \,$, it holds
\begin{enumerate}[label=\arabic*)]
    \item\label{it: cut-off - Hhat = H} $\hat{H}_0 = H_0$ and $\hat{H} = H$ on $[-h_1,h_1] \times \Sigma \,$,
    \item\label{it: cut-off - equal to 1 near infty} $\hat{H}_0 (h,z) = \hat{H} (h,z) = \pm 1$ for $\pm h \geq h_0 \comma z \in \Sigma \,$,
    \item\label{it: cut-off - Hhat - Hhat_0 < H - H_0} $|\hat{H} - \hat{H}_0| \leq |H - H_0|$ on $S \Sigma \,$.
\end{enumerate}
For $\delta$ sufficiently small (depending on $H_0 , \, h_0, \, h_1$), it moreover holds
\begin{enumerate}[label=\arabic*),resume]
    \item\label{it: cut-off - zero set Hhat} $\hat{H}^{-1}(0) \subseteq  (-h_1,h_1) \times \Sigma \,$.
\end{enumerate}
\end{proposition}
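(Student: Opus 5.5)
The plan is to glue $H$ to the constant functions $\pm 1$ by a single cut-off function in the $h$-variable. Doing the same to $H_0$ with the same cut-off makes the difference $\hat{H}-\hat{H}_0$ simply a damped copy of $H - H_0$, which gives property \ref{it: cut-off - Hhat - Hhat_0 < H - H_0} at once.

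\textbf{Construction.} Choose $h_1' \in (h_1,h_0)$ and a smooth function $\chi : \R \rightarrow [0,1]$ with $\chi \equiv 1$ on $[-h_1',h_1']$ and $\chi(h) = 0$ for $|h| \geq h_0 - \eps_0$, for some small $\eps_0 > 0$, so that $\supp \chi \subseteq (-h_0,h_0)$. Let $\sigma : \R\backslash\{0\} \rightarrow \{\pm 1\}$ be the sign function. Since $1-\chi$ vanishes near $h=0$, the product $(1-\chi(h))\,\sigma(h)$ extends to a smooth function on $\R$. For $H$ in the domain of $\mathsf{Cut}_\delta$ define
\[
\mathsf{Cut}_\delta(H) = \hat{H} := \chi(h)\, H + (1-\chi(h))\,\sigma(h) \, , \qquad \hat{H}_0 := \chi(h)\, H_0 + (1-\chi(h))\,\sigma(h) \, .
\]

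\textbf{Routine properties.}
\begin{enumerate}[label=(\arabic*)]
\item Property \ref{it: cut-off - Hhat = H} holds because $\chi = 1$ on a neighbourhood of $[-h_1,h_1]$.
\item Property \ref{it: cut-off - equal to 1 near infty} holds because $\chi = 0$ for $|h| \geq h_0$. In particular $\d\hat{H} = \d \hat{H}_0 = 0$ outside $(-h_0,h_0)\times\Sigma$, so both functions lie in $\Hcal^{h_0}$.
\item We have $\hat{H} - \hat{H}_0 = \chi\,(H-H_0)$ with $0 \leq \chi \leq 1$ and $\supp\chi \subseteq [-h_0,h_0]$. This gives property \ref{it: cut-off - Hhat - Hhat_0 < H - H_0}, and also $\lVert \hat{H}-\hat{H}_0\rVert_\infty \leq \delta$.
\item $\hat{H}$ complies with $\Dscr$. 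Both the $C^1$-bound and the inequality $\d\hat{H}(\partial_h) \geq c$ are required only on $[-h_1,h_1]\times\Sigma$, respectively on $U_\Sigma$, and there $\hat{H} = H$.
\end{enumerate}
Hence $\hat{H} \in \Hcal^{h_0}(\hat{H}_0,\Dscr,\delta)$, once we know that $\hat{H}_0$ is a defining Hamiltonian.

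\textbf{$\hat{H}_0$ is defining.} Near $\Sigma$ we have $\hat{H}_0 = H_0$, so $\d\hat{H}_0(\partial_h) = 1$ on $\Sigma$. By Remark \ref{rmk: almost-defining Hamiltonian}, $H_0$ is positive for $h>0$ and negative for $h<0$. Hence for $h>0$ the function $\hat{H}_0$ is a convex combination of the positive numbers $H_0$ and $1$, so it is positive; symmetrically, it is negative for $h<0$. Therefore $\hat{H}_0^{-1}(0) = \Sigma$. Moreover $\d\hat{H}_0$ has support in $[-h_0,h_0]\times\Sigma$, which is compact.

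\textbf{Property \ref{it: cut-off - zero set Hhat}.} This is the only step where smallness of $\delta$ enters, and it is the one step needing an argument. Set $m := \min\set{|H_0(h,z)|}{h_1 \leq |h| \leq h_0,\ z\in\Sigma}$. This is positive by compactness and the sign property of $H_0$, and it depends only on $H_0, h_0, h_1$. Take $\delta < m$. Then on $h_1 \leq |h| \leq h_0$ the inequality $|H - H_0| \leq \delta < m$ forces $H$ to have the same sign as $\sigma(h)$. So $\hat{H}$ is there a convex combination of two numbers of equal sign, hence nonzero. For $|h| \geq h_0$ we have $\hat{H} = \pm 1 \neq 0$. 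Consequently $\hat{H}^{-1}(0) \subseteq (-h_1,h_1)\times\Sigma$.
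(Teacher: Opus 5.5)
Your proposal is correct and follows the paper's proof essentially verbatim: the paper sets $\hat{H} = (1-\beta(|h|))\,H + \beta(|h|)\,\rho(h)$ (and likewise for $\hat{H}_0$), so that $\hat{H}-\hat{H}_0$ is a damped copy of $H-H_0$, and it obtains property 4) by taking $\delta$ smaller than $\min |H_0|$ over $h_1 \leq |h| \leq h_0$. The only cosmetic difference is that you use the sign function where the paper uses a smooth odd monotone $\rho$ with $\rho = 1$ for $h \geq h_0$; this is harmless because your $1-\chi$ vanishes near $h=0$, so $(1-\chi)\sigma$ is smooth.
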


\begin{proof}
    Fix a monotone function $\beta \in \Cinfty(\R_{\geq 0}, \, [0,1])$ with $\beta(h) = 0$ for $h \leq h_1$ and $\beta(h) = 1$ for $h \geq h_0 \,$. Moreover fix a monotone function $\rho \in \Cinfty(\R,[-1,1])$ with $\rho(h) =1$ for $h \geq h_0 \,$, with $\rho(h) > 0$ for $h > 0$ and $\rho(-h) = - \rho(h) \,$. Define
    \[
    \hat{H}_0(h,z) := (1- \beta(|h|)) \, H_0(h,z) + \beta(|h|) \, \rho(h) \quad \text{ for } (h,z) \in \R \times \Sigma = S \Sigma  
    \]
    and observe that $\hat{H}_0^{-1}(0) = \Sigma$ since both $H_0(h,z)$ and $\rho(h)$ are positive for $h > 0 $ respectively negative for $h < 0 \,$, see also Figure \ref{fig: Cut delta function}. Similarly, we set
    \begin{equation}
     \label{eq: cutting of away from Sigma - def hatH}   
      \hat{H}(h,z) := \mathsf{Cut}_\delta(H)(h,z) := (1- \beta(|h|)) \, H(h,z) + \beta(|h|) \, \rho(h) \,\, ,
    \end{equation}
    which is independent of $\delta$ (though the domain of $\mathsf{Cut}_\delta$ depends on $\delta$).
    
    Properties \ref{it: cut-off - Hhat = H}\,-\,\ref{it: cut-off - Hhat - Hhat_0 < H - H_0} are readily verified, which also shows that $\hat{H}_0$ is a defining Hamiltonian and that $\hat{H}$ indeed lies in $\Hcal^{h_0}(\hat{H}_0,  \Dscr,  \delta )\,$. 
    
    Lastly, we show property \ref{it: cut-off - zero set Hhat}. Choose a constant $\delta_0  > 0$ so that $|H_0(h,z)| \geq \delta_0$ for every $(h,z) \in \R \times \Sigma$ with $h_1 \leq |h| \leq h_0 \,$. Then, for every $\delta < \delta_0 \,$, any Hamiltonian $H$ with $|H - H_0| \leq \delta$ on $[-h_0,h_0] \times \Sigma$ is strictly positive on $[h_1,h_0] \times \Sigma$ and strictly negative on $[-h_0,-h_1] \times \Sigma$ and this carries over to $\hat{H}$ in view of \eqref{eq: cutting of away from Sigma - def hatH}. Hence property \ref{it: cut-off - zero set Hhat} holds for every $\delta < \delta_0 \,$.
\end{proof}

\begin{figure}
    \centering
    \includegraphics[width=0.7\linewidth]{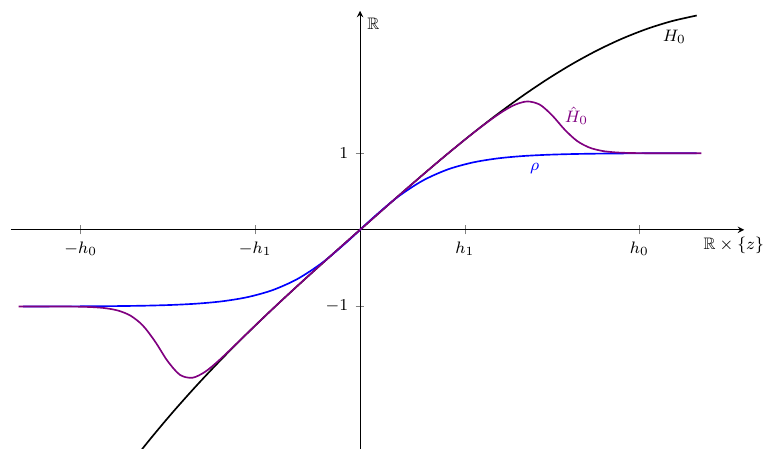}
    \caption{Cross section of the graph of $\hat{H}_0$ on the slice $\R \times \{z\} \subseteq \R \times \Sigma \comma z \in \Sigma \,$.}
    \label{fig: Cut delta function}
\end{figure}

\noindent The cutoff allows us to infer the following corollary from Theorem \ref{mthm: Rabinowitz gradient flow line}.

\begin{corollary}
\label{cor: Rabinowitz gradient flow line}
Suppose $(\Sigma,\alpha)$ is a Zoll contact manifold with minimal Reeb period $\tau(\alpha) \,$. Let $H_0$ be an almost-defining Hamiltonian. Then for every $h_0 >0 \,$, every $\eps > 0$ and every $C^1$-datum $\Dscr = (U_\Sigma,\, c, \, \BCOne)$ there exists a constant $\delta = \delta(H_0,h_0, \eps,\Dscr) > 0$ with the following significance:\\
For every $H \in \Cinfty(S \Sigma)$ satisfying
\begin{enumerate}[label=\arabic*)]
    \item\label{it: cor Rabinowitz crit pts - complying} $H$ complies with $\Dscr \,$,
    \item\label{it: cor Rabinowitz crit pts - H-H_0} $|H - H_0| \leq \delta$ on $[-h_0,h_0] \times \Sigma \,$,
\end{enumerate}
one of the following cases holds:
\begin{enumerate}[label=(\alph*)]
    \item\label{it: cor Rabinowitz crit pts - two crit pts} There exist critical points $(v^-,\tau^-) , \, (v^+,\tau^+) \in \Crit(\RabinowitzH)$ with $\tau^\pm \not= 0 \comma v^\pm \subseteq U_\Sigma \,,$ and
    \[
    \RabinowitzH(v^\pm,\tau^\pm) ,\, \RabinowitzHZero(v^\pm,\tau^\pm) \in [\tau(\alpha) - \eps, \, \tau(\alpha) + \eps] 
    \]
    as well as $ \RabinowitzH(v^-,\tau^-) < \RabinowitzH(v^+,\tau^+) \,$.
    \item\label{it: cor Rabinowitz crit pts - one crit pt} For every $z \in \Sigma$ there exists a critical point $(v,\tau) \in \Crit(\RabinowitzH)$ with $\tau \not= 0 \comma v \subseteq U_\Sigma$ and
    \[
    \RabinowitzH(v,\tau) , \, \RabinowitzHZero(v,\tau) \in [\tau(\alpha) - \eps, \, \tau(\alpha) + \eps] 
    \]
    as well as $v(0) \in \R \times \{z\} \,$.
\end{enumerate}
\end{corollary}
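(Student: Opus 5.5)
We reduce Corollary \ref{cor: Rabinowitz gradient flow line} to Theorem \ref{mthm: Rabinowitz gradient flow line} via the cutoff of Proposition \ref{prop: cut-off}. First, by Remark \ref{rmk: C^1-datum partial order}, we may shrink the datum: choose $h_1 \in (0,h_0)$ with $[-h_1,h_1]\times\Sigma \subseteq U_\Sigma$ and replace $\Dscr$ by $\Dscr' = ((-h_1,h_1)\times\Sigma, c, \BCOne')$, where $\BCOne'$ consists of restrictions of elements of $\BCOne$. Then $\Dscr \preceq \Dscr'$, so every $H$ complying with $\Dscr$ complies with $\Dscr'$, and $\Dscr'$ has the product form \eqref{eq: C^1 datum U_Sigma = product}. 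Proposition \ref{prop: cut-off} then yields a defining Hamiltonian $\hat H_0 \in \Hcal^{h_0}$ and maps $\mathsf{Cut}_\delta$. Fix any SFT-type $J$ (Remark \ref{rmk: SFT-type almost complex structure}). Let $\delta$ be the minimum of $\delta(J,\hat H_0,h_0,\eps,\Dscr')$ from Theorem \ref{mthm: Rabinowitz gradient flow line} and the threshold in Proposition \ref{prop: cut-off} ensuring property \ref{it: cut-off - zero set Hhat}.

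Given $H$ satisfying \ref{it: cor Rabinowitz crit pts - complying} and \ref{it: cor Rabinowitz crit pts - H-H_0}, set $\hat H := \mathsf{Cut}_\delta(H) \in \Hcal^{h_0}(\hat H_0,\Dscr',\delta)$. For each $z\in\Sigma$, Theorem \ref{mthm: Rabinowitz gradient flow line} provides a flow line $w_z$ with asymptotic critical points $(v_z^\pm,\tau_z^\pm)$ of $\Rabinowitz{\hat H}$, where $\tau_z^\pm\neq0$. Since these are critical points, $v_z^\pm \subseteq \hat H^{-1}(0) \subseteq (-h_1,h_1)\times\Sigma \subseteq U_\Sigma$ by property \ref{it: cut-off - zero set Hhat}. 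On $[-h_1,h_1]\times\Sigma$ we have $\hat H = H$ and $\hat H_0 = H_0$. The action functionals are local in the loop: $\Rabinowitz{\hat H}(v,\tau)$ involves only $\hat H$ along $v$. Hence critical points of $\Rabinowitz{\hat H}$ lying in this region are critical points of $\RabinowitzH$, since $X_{\hat H}=X_H$ there, with the same action. Likewise $\Rabinowitz{\hat H_0}(v,\tau) = \RabinowitzHZero(v,\tau)$ there. The action bounds in property \ref{it: mthm Rabinowitz gradient flow line - action estimate} pass to the limit points by the $C^\infty$-convergence in \ref{it: mthm Rabinowitz gradient flow line - convergence} and continuity of the functionals, giving the stated window $[\tau(\alpha)-\eps,\tau(\alpha)+\eps]$ for both $\RabinowitzH$ and $\RabinowitzHZero$.

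Now split into cases using Remark \ref{rmk: Rabinowitz gradient flow line}. If for some $z$ the energy satisfies $E_J(w_z)>0$, then $\RabinowitzH(v_z^-,\tau_z^-)<\RabinowitzH(v_z^+,\tau_z^+)$, which is case \ref{it: cor Rabinowitz crit pts - two crit pts}. Otherwise $E_J(w_z)=0$ for every $z$, so each $w_z$ is constant equal to a critical point $(v,\tau)$ with $v(0)=u(0,0)\in\R\times\{z\}$, which is case \ref{it: cor Rabinowitz crit pts - one crit pt}.

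The only delicate point is the locality step. The Hamiltonian vector field depends only on $\d H$ near the loop, so equality of $\hat H$ and $H$ on the open set $(-h_1,h_1)\times\Sigma$ suffices there. The condition $\int H(v)=0$ holds automatically, because the loop lies in $\hat H^{-1}(0)\cap\{\hat H=H\}$. Care is needed to take $\delta$ small enough for property \ref{it: cut-off - zero set Hhat}, and to note that hypothesis \ref{it: cor Rabinowitz crit pts - H-H_0} is exactly the domain condition of $\mathsf{Cut}_\delta$.
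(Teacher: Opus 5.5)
Your proposal is correct and follows the paper's proof essentially step by step: shrink the datum to product form via the partial order, cut off with Proposition \ref{prop: cut-off}, apply Theorem \ref{mthm: Rabinowitz gradient flow line} to $\hat H$, transfer the critical points and action values to $\RabinowitzH$ and $\RabinowitzHZero$ by locality, and split on whether the energy vanishes. Your case split on the existence of some $z$ with $E_J(w_z)>0$ is the direct form of the paper's contrapositive (``suppose case (a) fails''). You also make explicit the limit passage giving the action window at the asymptotic critical points, which the paper leaves implicit.
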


\begin{proof}
    It suffices to show the corollary for a $C^1$-datum $\Dscr$ of the form in \eqref{eq: C^1 datum U_Sigma = product} because such $C^1$-data are cofinal in the set of $C^1$-data (Remark \ref{rmk: C^1-datum partial order}). So assume $U_\Sigma = (-h_1,h_1) \times \Sigma$ with $h_1 < h_0 \,$. Fix a defining Hamiltonian $\hat{H}_0$ and a family of cutoff functions $(\mathsf{Cut}_\delta)_{\delta > 0}$ as in the preceding Proposition \ref{prop: cut-off}. Additionally, fix an almost complex structure $J$ on $S\Sigma$ of SFT-type. Now choose a positive constant $\delta = \delta(J, \hat{H}_0, h_0, \eps, \Dscr)$ as in Theorem \ref{mthm: Rabinowitz gradient flow line}. 
    Choosing $\delta$ smaller if necessary (preserving the property stated in Theorem \ref{mthm: Rabinowitz gradient flow line}), we can additionally assume that property \ref{it: cut-off - zero set Hhat} in Proposition \ref{prop: cut-off} holds for $\delta \,$. Let us verify that this $\delta$ has the desired property.

    Given $H \in \Cinfty(S \Sigma)$ which complies with $\Dscr$ and with $|H - H_0| \leq \delta$ on $[-h_0,h_0] \times \Sigma \,$. Set $\hat{H} := \mathsf{Cut}_\delta(H) \in \Hcal^{h_0}(\hat{H}_0, \Dscr, \delta) \,$. Suppose case \ref{it: cor Rabinowitz crit pts - two crit pts} does not hold. We must show case \ref{it: cor Rabinowitz crit pts - one crit pt}. To this end, let $z \in\Sigma$ be given. Because $\delta$ has the significance stated in Theorem \ref{mthm: Rabinowitz gradient flow line}, we find a $(\nabla^{g_J} \Rabinowitz{\hat{H}})$-flow line $w \,$, a sequence $(s_n)_n$ tending to infinity and critical points $(v^\pm,\tau^\pm) \in \Crit(\Rabinowitz{\hat{H}})$ satisfying \ref{it: mthm Rabinowitz gradient flow line - tau not= 0}\,-\,\ref{it: mthm Rabinowitz gradient flow line - u(0,0)} in Theorem \ref{mthm: Rabinowitz gradient flow line}, with $H_0$ replaced by $\hat{H}_0$ and $H$ replaced by $\hat{H} \,$. Observe that $v^\pm \subseteq \hat{H}^{-1}(0) \subseteq U_\Sigma$ by the critical point equation \eqref{eq: Crit(Rabinowitz)} and Proposition \ref{prop: cut-off} \ref{it: cut-off - zero set Hhat}. Since $\hat{H}_0$ and $H_0$ respectively $\hat{H} $ and $H$ agree on $U_\Sigma$ by Proposition \ref{prop: cut-off} \ref{it: cut-off - Hhat = H}, the points $(v^\pm,\tau^\pm)$ are also critical for $\RabinowitzH$ and their action values agree, that is
    \[
    \Rabinowitz{\hat{H}}(v^\pm,\tau^\pm) = \RabinowitzH(v^\pm,\tau^\pm) \quad \text{ and } \quad \Rabinowitz{\hat{H}_0}(v^\pm,\tau^\pm)  = \Rabinowitz{H_0}(v^\pm,\tau^\pm) \,\, .
    \]
    Taking Remark \ref{rmk: Rabinowitz gradient flow line} into account, the gradient flow line $w$ must in fact have energy zero, otherwise we would obtain different action values, \ie $\Rabinowitz{\hat{H}}(v^-,\tau^-) < \Rabinowitz{\hat{H}}(v^+,\tau^+) \,$, contradicting our assumption that case \ref{it: cor Rabinowitz crit pts - two crit pts} does not hold. Hence $w$ is $s$-independent and $(v,\tau) := w(0) = (v^\pm,\tau^\pm)$ is a critical point of $\RabinowitzH$ with $v(0) \in \R \times \{z\} \,$. As $z$ was arbitrary, this shows case \ref{it: cor Rabinowitz crit pts - one crit pt}.
\end{proof}

\subsection{Proof of Theorem \ref{mthm: rescaling Zoll contact form}}
\label{subsec: Proof mthm rescaling Zoll contact form}

Corollary \ref{cor: Rabinowitz gradient flow line} is all we need to prove Theorem \ref{mthm: rescaling Zoll contact form}.

Let $(\Sigma,\alpha)$ be a Zoll contact manifold with minimal Reeb period $\tau(\alpha) > 0 \,$. To every function $f \in \Cinfty(\Sigma)$ we associate the smooth Hamiltonian
\[
H_f : S \Sigma \rightarrow \R \comma \,\, H_f(h,x) := h - f(x) \text{ for } (h,x) \in \R \times \Sigma \, .
\]
As discussed in Example \ref{example: Level Set Hamiltonian H = h - f(x)}, on $H_f^{-1}(0) = \mathrm{Gr}(f)$ the Hamiltonian vector field $X_{H_f}$ agrees with the Reeb vector field of $\lambda|_{\mathrm{Gr}(f)} = (e^h \alpha)|_{\mathrm{Gr}(f)} \,$. The $H_f\,$-Hamiltonian flow lines on $H_f^{-1}(0)$ are conjugated via the projection $\pi_\Sigma : S \Sigma \rightarrow \Sigma$ to Reeb orbits of $(\Sigma,e^f \alpha) \,$, see Lemma \ref{lem: Reeb orbits Gr(f) <--> e^f alpha Reeb orbits}. The Hamiltonian $H_0$ associated to the zero function $0 \in \Cinfty(\Sigma)$ is just the projection to the $\R$-factor and is an almost-defining Hamiltonian.

\begin{lemma}
\label{lem: C^1-bounded subset Cinfty(Sigma) --> C^1-bounded subset Cinfty(S Sigma)}
For every $C^1$-bounded subset $\mathscr{B}_\Sigma \subseteq \Cinfty(\Sigma)$ there exists a $C^1$-bounded subset $\mathscr{B}_{S \Sigma} \subseteq \Cinfty([-1,1] \times \Sigma)$ with $H_f|_{[-1,1] \times \Sigma} \in \mathscr{B}_{S \Sigma}$ for every $f \in \mathscr{B}_\Sigma\,$.
\end{lemma}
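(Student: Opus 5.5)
The plan is to bound the $C^1$-norm of $H_f$ on $[-1,1]\times\Sigma$ directly in terms of the $C^1$-norm of $f$. Then I take $\mathscr{B}_{S\Sigma}$ to be the set of all $H_f|_{[-1,1]\times\Sigma}$, $f\in\mathscr{B}_\Sigma$, and show that this set is bounded in some norm inducing the $C^1$-topology. By Definition \ref{def: C^1-bounded subset}, boundedness in one such norm suffices.

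To fix a norm, I choose a Riemannian metric $g_\Sigma$ on $\Sigma$ and equip $[-1,1]\times\Sigma$ with the product metric $\d h^2 + g_\Sigma$. As in Remark \ref{rmk: C^1-bounded subset}, I then use the norm $\lVert F\rVert_\infty + \lVert \nabla F\rVert_\infty$ on $\Cinfty([-1,1]\times\Sigma)$; Remark \ref{rmk: C^1-bounded subset} is stated for compact manifolds possibly with boundary, so it applies here. Since $\mathscr{B}_\Sigma$ is $C^1$-bounded, there is $A\ge 0$ with $\lVert f\rVert_\infty+\lVert \nabla f\rVert_\infty\le A$ for all $f\in\mathscr{B}_\Sigma$.

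For the $C^0$-part, I estimate $|H_f(h,x)| \le |h|+|f(x)| \le 1 + A$ on $[-1,1]\times\Sigma$. For the $C^1$-part, I compute $\d H_f = \d h - \pi_\Sigma^*\d f$. With the product metric, $\d h$ and $\pi_\Sigma^*\d f$ are orthogonal, so $|\nabla H_f|^2 = 1 + |\nabla f|^2 \le 1+A^2$. Together these give $\lVert H_f\rVert_{C^1}\le 2+2A$ uniformly in $f\in\mathscr{B}_\Sigma$.

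I expect no real obstacle. The only point that needs a remark is that $[-1,1]\times\Sigma$ has boundary, and Remark \ref{rmk: C^1-bounded subset} already covers compact manifolds with boundary.
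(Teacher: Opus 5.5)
Your proof is correct and follows the same approach as the paper, which only remarks that $\d H_f = \d h - \d f$ can be bounded in terms of $\d f$. You make this explicit with the product metric, and you also record the $C^0$ bound $|H_f| \le 1 + A$, which the paper leaves implicit.
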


\begin{proof}
    One clearly can bound $\d H_f = \d h - \d f$ in terms of $\d f \,$.
\end{proof}

\noindent Let us now prove Theorem \ref{mthm: rescaling Zoll contact form} and its minor upgrade regarding the period, see Remark \ref{rmk: mthm rescaling Zoll contact form} \ref{it: mthm rescaling Zoll contact form - period}.

\begin{proof}[Proof of Theorem \ref{mthm: rescaling Zoll contact form}]
Suppose we are given a $C^1$-bounded subset $\mathscr{B}_{\Sigma} \subseteq \Cinfty(\Sigma)$ and $\eps > 0 \,$. We must find a constant $\delta >0$ so that for every subset $\varnothing \not= D \subseteq \Sigma$ with tame boundary and every $f \in \mathscr{B}_\Sigma$ with $\lVert f \rVert_\infty \leq \delta$ vanishing on $\Sigma \backslash D$ there exists a periodic Reeb orbit of $(\Sigma, e^f \alpha)$ intersecting $D$ of period $\tau \in [\tau(\alpha) - \eps, \tau(\alpha)+\eps] \,$. It obviously suffices to show this for sufficiently small $\eps \,$, so without loss of generality assume $\eps \in (0, \tau(\alpha)) \,$.

Because $H_0$ is an almost-defining Hamiltonian and $\eps < \tau(\alpha) \,$, it follows from Remark \ref{rmk: Rabinowitz action value defining Hamiltonian} \ref{it: Rabinowitz action functional - action value} and \ref{it: Rabinowitz action functional - spectrum Zoll case} that
 \begin{align}
    \label{eq: proof rescaling Zoll contact form - specgap}
    \Forall (v,\tau) \in \Crit(\RabinowitzHZero): \,\, | \RabinowitzHZero(v,\tau) - \tau(\alpha) | \leq \eps  \,\, \Longrightarrow \,\, \RabinowitzHZero(v,\tau) = \tau = \tau(\alpha) \, .
    \end{align}
For the given $C^1$-bounded subset $\mathscr{B}_\Sigma$ choose a $C^1$-bounded subset $\mathscr{B}_{S \Sigma} \subseteq \Cinfty([-1,1] \times \Sigma)$ as in Lemma \ref{lem: C^1-bounded subset Cinfty(Sigma) --> C^1-bounded subset Cinfty(S Sigma)} above. Now define the $C^1$-datum
\[
\mathscr{D} := ( U_\Sigma := (-1,1) \times \Sigma, \, c := 1 , \, \mathscr{B}_{S \Sigma})
\]
and choose a constant $\delta = \delta(H_0 , h_0 := 1 , \eps, \mathscr{D}) > 0$ as in Corollary \ref{cor: Rabinowitz gradient flow line}. We will verify that this $\delta$ has the claimed significance.

To this end, suppose we are given $\varnothing \not= D \subseteq \Sigma$ with tame boundary as well as $f \in \mathscr{B}_\Sigma$ with $\lVert f \rVert_\infty \leq \delta$ and vanishing on $\Sigma \backslash D \,$.\footnote{
The reader should check that the subsequent argument works in particular for $D = \Sigma \,$.
} 
Then $H_f$ complies with $\mathscr{D}$ due to our choice of $\mathscr{B}_{S\Sigma}$ and since $\d H_f (\partial_h) \equiv 1 \,$. Observe moreover that $H_0 - H_f = f \circ \pi_\Sigma \,$, so that $\sup_{S \Sigma} |H_f - H_0| = \lVert f \rVert_\infty \leq \delta \,$. Having chosen $\delta$ with the significance stated in Corollary \ref{cor: Rabinowitz gradient flow line}, we conclude that case \ref{it: cor Rabinowitz crit pts - two crit pts} or case \ref{it: cor Rabinowitz crit pts - one crit pt} therein must hold with $H = H_f$ and $\eps\,$. 
\begin{claimI}
\label{claim: proof mthm rescaling Zoll - functionals equal}
    For $(v,\tau) \in \loopspace \times \R$ with $v \subseteq \R \times (\Sigma \backslash D)$ it holds
    \[
    \Rabinowitz{H_f}(v,\tau) = \RabinowitzHZero(v,\tau) \,\, \text{ and } \,\, \d \Rabinowitz{H_f} = \d \RabinowitzHZero(v,\tau) \,\, .
    \]
\end{claimI}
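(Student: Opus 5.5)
The claim is local, so I plan to compare the two functionals pointwise along loops staying in $\R \times (\Sigma \backslash D)$. Recall $H_f(h,x) = h - f(x)$ and $H_0(h,x) = h$, so that $H_f - H_0 = -f\circ \pi_\Sigma$. By assumption $f \equiv 0$ on $\Sigma \backslash D$, hence $H_f = H_0$ on the set $\R \times (\Sigma \backslash D)$.

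For the equality of the functionals, I insert into \eqref{eq: Rabinowitz action functional def}. The term $\int_{\bbS^1} v^*\lambda$ does not involve the Hamiltonian at all. The second term is $\tau\int_0^1 H_f(v(t))\,\d t = \tau\int_0^1 H_0(v(t))\,\d t$, because $v(t) \in \R \times (\Sigma\backslash D)$ for all $t$ and the two Hamiltonians agree there. This gives $\Rabinowitz{H_f}(v,\tau) = \RabinowitzHZero(v,\tau)$.

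For the differentials, at $(v,\tau)$ in direction $(\hat v,\hat\tau)$ the difference of the two differentials is
\[
\d\Rabinowitz{H_f}(v,\tau)[\hat v,\hat\tau] - \d\RabinowitzHZero(v,\tau)[\hat v,\hat\tau] = \hat\tau\int_0^1 f(\pi_\Sigma v)\,\d t + \tau\int_0^1 \d f_{\pi_\Sigma v}(\d\pi_\Sigma \hat v)\,\d t .
\]
The first term vanishes because $f = 0$ along $\pi_\Sigma v$. The second term is the main point. Since $\d H_f - \d H_0 = -\d f \circ \d\pi_\Sigma$, I need $\d f = 0$ at every point of $\Sigma\backslash D$, and vanishing of $f$ alone does not give this.

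Here the tame boundary assumption enters. By Definition \ref{def: tame boundary}\ref{it: tame boundary - ii}, $\Sigma\backslash D \subseteq \overline{\interior(\Sigma\backslash D)}$. On the open set $\interior(\Sigma\backslash D)$ the function $f$ vanishes identically, so $\d f = 0$ there. By continuity of $\d f$, it follows that $\d f = 0$ on the closure, and hence on all of $\Sigma\backslash D$. Therefore $\d H_f = \d H_0$ along $\R\times(\Sigma\backslash D)$, the second term vanishes, and the two differentials agree. The remaining step is routine, namely differentiating under the integral.
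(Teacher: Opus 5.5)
Your proof is correct and follows the paper's argument. Equality of values comes from $H_f=H_0$ on $\R\times(\Sigma\backslash D)$, and equality of differentials comes from the tame-boundary condition plus continuity of $\d f$, which give $\d H_f=\d H_0$ there; the only cosmetic difference is that you compute the difference of differentials directly, whereas the paper reads it off the gradient formula \eqref{eq: gradient(Rabinowitz)} via $X_{H_f}(v)=X_{H_0}(v)$.
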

\begin{proofClaim}
    Since $f$ vanishes on $\Sigma \backslash D$, clearly $H_f = H_0$ on $\R \times (\Sigma \backslash D)  \,$. In particular $H_f$ and $H_0$ agree along the loop $v$, so, in view of the definition of the Rabinowitz action functional in \eqref{eq: Rabinowitz action functional def}, we have $\Rabinowitz{H_f}(v,\tau) = \RabinowitzHZero(v,\tau) \,$. Again because $f$ vanishes on $\Sigma \backslash D \,$, its differential vanishes on $\mathrm{int}(\Sigma \backslash D)$ and by continuity also on the closure $\overline{\mathrm{int}(\Sigma \backslash D)} \,$. The latter contains $\Sigma \backslash D$ since $D$ has tame boundary, so $\d f$ vanishes on $\Sigma \backslash D \,$. Consequently $\d H_f = \d H_0$ on $\R \times (\Sigma \backslash D) \,$, so that $X_{H_f}(v) = X_{H_0}(v) \,$. In view of formula \eqref{eq: gradient(Rabinowitz)} for the gradient, we conclude that $\d \Rabinowitz{H_f}(v,\tau) = \d \RabinowitzHZero(v,\tau) \,$.
\end{proofClaim}
\noindent We now assume by contradiction that there does not exist a periodic Reeb orbit of $(\Sigma,e^f \alpha)$ intersecting $D$ of period $\tau \in [\tau(\alpha)-\eps,\tau(\alpha)+ \eps] \,$.
\begin{claimI}
\label{claim: proof mthm rescaling Zoll - critical sets equal}
Under this assumption, for every critical point $(v,\tau) \in \Crit(\Rabinowitz{H_f})$ with $|\Rabinowitz{H_f}(v,\tau) - \tau(\alpha)| \leq \eps$ it holds $v \subseteq \R \times (\Sigma \backslash D)$ and $(v,\tau) \in \Crit(\RabinowitzHZero)$ as well as 
\[
\tau = \Rabinowitz{H_f}(v,\tau) = \RabinowitzHZero(v,\tau) = \tau(\alpha) \, .
\]
\end{claimI}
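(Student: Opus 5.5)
Let $(v,\tau) \in \Crit(\Rabinowitz{H_f})$ with $|\Rabinowitz{H_f}(v,\tau) - \tau(\alpha)| \leq \eps$. I will first identify $\tau$ with the action, then show that $v$ avoids $\R \times D$, and finally transfer the critical point to $\RabinowitzHZero$ using Claim \ref{claim: proof mthm rescaling Zoll - functionals equal}.

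\textbf{Action equals period.} Since $H_f^{-1}(0) = \mathrm{Gr}(f)$ and $\d H_f(\partial_h) \equiv 1$, we have $\lambda(X_{H_f}) = 1$ along $\mathrm{Gr}(f)$. The critical point equation \eqref{eq: Crit(Rabinowitz)} forces $v \subseteq H_f^{-1}(0)$. Remark \ref{rmk: Rabinowitz action value defining Hamiltonian} \ref{it: Rabinowitz action functional - action value} then gives $\Rabinowitz{H_f}(v,\tau) = \tau$, so $|\tau - \tau(\alpha)| \leq \eps$. In particular $\tau \neq 0$, because $\eps < \tau(\alpha)$.

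\textbf{$v$ avoids $\R \times D$.} The loop $t \mapsto v(t/\tau)$ is a $|\tau|$-periodic Hamiltonian flow line in $\mathrm{Gr}(f)$. It is an integral curve of $\pm X_{H_f}$, and $X_{H_f}$ is the Reeb field of $\lambda|_{\mathrm{Gr}(f)}$. By Lemma \ref{lem: Reeb orbits Gr(f) <--> e^f alpha Reeb orbits}, $\pi_\Sigma \circ v$ is then, up to orientation, a periodic Reeb orbit of $(\Sigma, e^f\alpha)$ of period $|\tau|$. If $\tau < 0$ I reverse time, which does not change the image or the period. This orbit has period in $[\tau(\alpha) - \eps, \tau(\alpha) + \eps]$, so by our standing assumption it does not intersect $D$. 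Hence $v \subseteq \R \times (\Sigma \backslash D)$.

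\textbf{Transfer to $\RabinowitzHZero$.} Claim \ref{claim: proof mthm rescaling Zoll - functionals equal} now gives $\d \RabinowitzHZero(v,\tau) = \d \Rabinowitz{H_f}(v,\tau) = 0$ and $\RabinowitzHZero(v,\tau) = \Rabinowitz{H_f}(v,\tau)$. So $(v,\tau) \in \Crit(\RabinowitzHZero)$ with $|\RabinowitzHZero(v,\tau) - \tau(\alpha)| \leq \eps$. The spectral gap statement \eqref{eq: proof rescaling Zoll contact form - specgap} yields $\RabinowitzHZero(v,\tau) = \tau = \tau(\alpha)$, which completes the chain of equalities.

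The only delicate point is the orientation issue when $\tau < 0$, so that the Reeb orbit must be taken backwards. I also need that the differential identity in Claim \ref{claim: proof mthm rescaling Zoll - functionals equal} holds at $(v,\tau)$ even though $v$ may touch $\partial D$. This is exactly what the tame boundary hypothesis supplied in that claim.
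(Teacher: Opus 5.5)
Your proof is correct and follows the paper's argument. The paper likewise shows that the action equals $\tau$ because $\lambda(X_{H_f})=1$, uses the standing assumption to rule out a Reeb orbit of $e^f\alpha$ through $D$, and then applies the preceding claim together with the spectral-gap implication. The orientation issue you flag is vacuous: $|\tau-\tau(\alpha)|\le\eps<\tau(\alpha)$ already forces $\tau>0$, and in any case $t\mapsto v(t/\tau)$ is an integral curve of $X_{H_f}$ itself, not of $-X_{H_f}$.
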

\begin{proofClaim}
    Given $(v,\tau) \in \Crit(\Rabinowitz{H_f})$ with $|\Rabinowitz{H_f}(v,\tau)-\tau(\alpha)| \leq \eps \,$. Notice that $\Rabinowitz{H_f}(v,\tau) = \tau$ by Remark \ref{rmk: Rabinowitz action value defining Hamiltonian} \ref{it: Rabinowitz action functional - action value} and since $ \lambda(X_{H_f}) = \d H_f(\partial_h) = 1 \,$. In particular $\tau = \Rabinowitz{H_f}(v,\tau) \geq \tau(\alpha) - \eps > 0 \,$. 
    
    We contend that the loop $v$ is contained in $\R \times (\Sigma \backslash D)$: Otherwise, by the critical point equation \eqref{eq: Crit(Rabinowitz)}, the reparametrized loop $v(\frac{\cdot}{\tau}) \subseteq H_f^{-1}(0)$ would be $\tau$-periodic $H_f\,$-Hamiltonian flow line intersecting $\R \times D$ and its projection $\pi_{\Sigma} \circ v(\frac{\cdot}{\tau})$ would be a $\tau$-periodic Reeb orbit of $(\Sigma,e^f \alpha)$ intersecting $D$ with $|\tau - \tau(\alpha)|\leq \eps \,$, contradicting our assumption that such do not exist. This shows $v \subseteq \R \times (\Sigma \backslash D) \,$.
    
    By Claim \ref{claim: proof mthm rescaling Zoll - functionals equal}, thus $(v,\tau)$ is also a critical point of $\RabinowitzHZero$ and moreover $\Rabinowitz{H_f}(v,\tau) = \RabinowitzHZero(v,\tau) \,$. Lastly, it follows from \eqref{eq: proof rescaling Zoll contact form - specgap} that $\RabinowitzHZero(v,\tau) = \tau(\alpha) \,$.
\end{proofClaim}
\noindent Claim \ref{claim: proof mthm rescaling Zoll - critical sets equal} shows that every critical point $(v,\tau) \in \Crit(\Rabinowitz{H_f})$ with $\Rabinowitz{H_f}(v,\tau) \in [\tau(\alpha)-\eps,\tau(\alpha)+\eps]$ has the same action value $\Rabinowitz{H_f}(v,\tau) =\tau(\alpha) \,$. This rules out case \ref{it: cor Rabinowitz crit pts - two crit pts} in Corollary \ref{cor: Rabinowitz gradient flow line}, so case \ref{it: cor Rabinowitz crit pts - one crit pt} must hold. For any choice of $z \in D \not= \varnothing \,$, combining case \ref{it: cor Rabinowitz crit pts - one crit pt} with Claim \ref{claim: proof mthm rescaling Zoll - critical sets equal}, we find $(v,\tau) \in \Crit(\Rabinowitz{H_f})$ with $v(0) \in \R \times \{z\} \subseteq \R \times D$ and $v \subseteq \R \times (\Sigma \backslash D) \,$, an obvious contradiction.

This contradiction shows that our assumption was false and there does indeed exist a periodic Reeb orbit of $(\Sigma,e^f \alpha)$ intersecting $D$ with period $\tau \in [\tau(\alpha)-\eps,\tau(\alpha)+\eps] \,$. This concludes the proof of Theorem \ref{mthm: rescaling Zoll contact form}.
\end{proof}

\subsection{Proof of Theorem \ref{mthm: multiplicity result}}
\label{subsec: Proof of mthm multiplicity result}

To show Theorem \ref{mthm: multiplicity result}, we invoke Corollary \ref{cor: Rabinowitz gradient flow line} again, adopting also the notation from the previous subsection. 

We recall from Remark \ref{rmk: Rabinowitz action value defining Hamiltonian} the notion of the critical point tower. Let $H : S \Sigma \rightarrow \R$ be a Hamiltonian. We call a critical point $(v,\tau) \in \Crit(\RabinowitzH)$ a \textit{prime loop} if $v \in \loopspace$ is an embedded loop. Notice that this implies that $\tau \not= 0$ and that $X_H(v)$ vanishes nowhere. The prime loop $(v,\tau)$ induces an injection
\[
i_{(v,\tau)} : \Z \times \bbS^1 \rightarrow \Crit(\RabinowitzH) \comma \quad (k,t) \mapsto (v(t + k\,\cdot\,),\, k \tau) \,\, .
\]
Its image $i_{(v,\tau)}(\Z \times \bbS^1)$ is called the \textit{critical point tower} of $(v,\tau)$ and $i_{(v,\tau)}(\{k\} \times \bbS^1)$ the \textit{$k$-th level of the critical point tower}.

\begin{lemma}
\label{lem: action k-th level crit pt tower}
Let $H \in \Cinfty(S \Sigma)$ be a Hamiltonian and $(v,\tau) \in \Crit(\RabinowitzH)$ be a prime loop. The $\RabinowitzH$-action value on the $k$-th level of the critical point tower of $(v,\tau)$ is $k \RabinowitzH(v,\tau) \,$.
\end{lemma}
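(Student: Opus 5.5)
Throughout, write $\lambda = e^h \alpha$ and $\RabinowitzH(v,\tau) = \int_{\bbS^1} v^*\lambda - \tau \int_0^1 H(v(t))\,\d t$. The proof is a direct computation from the definition of the Rabinowitz action functional \eqref{eq: Rabinowitz action functional def}. Fix $k \in \Z$ and $t_0 \in \bbS^1$, and consider the point $i_{(v,\tau)}(k,t_0) = (v_k, k\tau)$ of the $k$-th level, where $v_k(t) := v(t_0 + kt)$. First we check that this point is indeed critical, which is already implicit in the definition of $i_{(v,\tau)}$. By \eqref{eq: Crit(Rabinowitz)} we have $\partial_t v_k(t) = k\,(\partial_t v)(t_0+kt) = k\tau X_H(v_k(t))$. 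Moreover $v_k \subseteq H^{-1}(0)$, since its image is contained in the image of $v$.

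Next we evaluate the two terms of the action separately. The Hamiltonian term vanishes for every critical point, because $H \equiv 0$ along the loop. For the same reason the original term $\tau \int_0^1 H(v)\,\d t$ vanishes, so $\RabinowitzH(v,\tau) = \int_{\bbS^1} v^*\lambda$. It therefore remains to show
\[
\int_{\bbS^1} v_k^*\lambda = k \int_{\bbS^1} v^*\lambda .
\]
This follows because $t \mapsto t_0 + kt$ is a map $\bbS^1 \to \bbS^1$ of degree $k$. Explicitly, substitute $s = t_0 + kt$ in $\int_0^1 \lambda_{v(t_0+kt)}\big(k\,\partial_t v(t_0+kt)\big)\,\d t$. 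The substitution turns the integral into $\int_{t_0}^{t_0+k} \lambda_{v(s)}(\partial_s v(s))\,\d s$. The integrand is $1$-periodic in $s$, so this equals $k$ times the integral over one period. The case $k<0$ is covered by the same substitution, since the orientation reversal is exactly what produces the sign of $k$. The case $k=0$ gives a constant loop with $\tau = 0$ and hence action $0$, consistent with the formula.

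Alternatively, one can use Remark \ref{rmk: Rabinowitz action value defining Hamiltonian} \ref{it: Rabinowitz action functional - action value}, which gives the action of a critical point as $\tau \int_0^1 \lambda(X_H(v))\,\d t$. Along the level set, the function $\lambda(X_H) = \d H(\partial_h)$ is invariant under the Hamiltonian flow. Hence the average of this function over $v_k$ equals its average over $v$, and the prefactor $k\tau$ gives the factor $k$.

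There is no real obstacle here. The only points needing care are the $k \le 0$ cases and the remark that the value is independent of $t_0$.
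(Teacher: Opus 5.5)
Your main computation is correct and is essentially the paper's proof. The paper plugs the critical-point formula $\RabinowitzH(v,\tau)=\tau\int_0^1\lambda_v(X_H(v))\,\d t$ into the tower element $(v(t_0+k\,\cdot\,),k\tau)$ and changes variables. That is the same substitution $s=t_0+kt$ you perform on $\int_{\bbS^1}v_k^*\lambda$ after discarding the vanishing Hamiltonian term.

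One correction to your alternative paragraph: $\lambda(X_H)=\d H(\partial_h)$ is in general not constant along Hamiltonian flow lines, so that justification is false. It is also unnecessary. For $k\neq 0$ the average of any function over $v_k$ equals its average over $v$ by that same substitution, since $v_k$ traverses $v$ exactly $|k|$ times.
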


\begin{proof}
   This follows readily from Remark \ref{rmk: Rabinowitz action value defining Hamiltonian} \ref{it: Rabinowitz action functional - action value} and change of variables. 
\end{proof}

\noindent The lemma allows us to compare action values of critical points whose Hamiltonian flow lines are geometrically equal.

\begin{lemma}
\label{lem: action difference for crit pts not geometrically distinct}
Suppose the Hamiltonian $H \in \Cinfty(S \Sigma)$ complies with the $C^1$-datum $\COneDatum$. Let $(v^-,\tau^-) , \, (v^+,\tau^+) \in \Crit(\RabinowitzH)$ be critical points with $v^\pm \subseteq U_\Sigma$ and $\tau^\pm \not= 0 \,$, having different action value
\[
\Delta := \RabinowitzH(v^+,\tau^+) - \RabinowitzH(v^-,\tau^-) > 0 \,\, .
\]
If the Hamiltonian flow lines corresponding to $(v^-,\tau^-)$ and $(v^+,\tau^+)$ are geometrically equal, then $(v^-,\tau^-)$ and $(v^+,\tau^+)$ lie in the critical point tower of some prime loop $(v,\tau) \in \Crit(\RabinowitzH)$ with $0 < \tau\leq \frac{\Delta}{c} \,$.
\end{lemma}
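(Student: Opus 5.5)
We plan to reduce everything to a common prime loop underlying both flow lines and then read off the action values via Lemma \ref{lem: action k-th level crit pt tower}. Since $v^\pm \subseteq U_\Sigma$ and $\tau^\pm \not= 0$, the reparametrized loops $\gamma_\pm := v^\pm(\tfrac{\cdot}{\tau^\pm})$ are $|\tau^\pm|$-periodic (after reversing time if $\tau^\pm<0$; more precisely $t\mapsto v^\pm(t/\tau^\pm)$ is an integral curve of $X_H$ of period $|\tau^\pm|$) $X_H$-flow lines contained in $H^{-1}(0)\cap U_\Sigma$. On $U_\Sigma$ we have $\lambda(X_H) = \d H(\partial_h) \geq c > 0$, so $X_H$ vanishes nowhere along these orbits. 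Hence each $\gamma_\pm$ is a nonconstant periodic orbit of an autonomous vector field, and it has a well-defined minimal period $T_\pm > 0$ with $|\tau^\pm| = k_\pm T_\pm$ for some $k_\pm \in \N_{\geq 1}$.

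Because the two flow lines are geometrically equal, $\gamma_+ = \gamma_-(\cdot + s_0)$ for some $s_0$, so they share the same minimal period $T := T_+ = T_-$ and the same image. We then define the prime loop $v(t) := \gamma_-(T t)$, $t \in \bbS^1$, with $\tau := T > 0$. This $v$ is embedded, since $T$ is the minimal period, and $(v,T)$ is a critical point by \eqref{eq: Crit(Rabinowitz)}. Next we check that $(v^\pm,\tau^\pm) = (v(t_\pm + k^\pm \,\cdot\,), k^\pm T)$ with $k^\pm := \tau^\pm / T \in \Z\backslash\{0\}$ and suitable $t_\pm \in \bbS^1$. This is a direct verification: $v^\pm(t) = \gamma_\pm(\tau^\pm t) = \gamma_-(s_\pm + \tau^\pm t) = v(s_\pm/T + k^\pm t)$. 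Therefore both critical points lie in the critical point tower of $(v,T)$.

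For the period bound, Remark \ref{rmk: Rabinowitz action value defining Hamiltonian} \ref{it: Rabinowitz action functional - action value} gives $\RabinowitzH(v,T) = T\int_0^1 \lambda(X_H(v))\,\d t \geq cT$, because $v \subseteq U_\Sigma$. By Lemma \ref{lem: action k-th level crit pt tower}, $\Delta = (k^+ - k^-)\RabinowitzH(v,T)$. Since $\Delta > 0$ and $\RabinowitzH(v,T) > 0$, the integer $k^+-k^-$ is at least $1$, so $\Delta \geq \RabinowitzH(v,T) \geq cT$, i.e.\ $0<T\leq \Delta/c$.

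The only delicate step is the bookkeeping with signs of $\tau^\pm$ and the choice of a consistent base point and orientation for $v$. This is needed so that both critical points land on integer levels of the same tower rather than on the tower of the reversed loop. Taking $v$ oriented along $X_H$ (i.e.\ with $T>0$) handles this, since negative $\tau^\pm$ then correspond to negative $k^\pm$.
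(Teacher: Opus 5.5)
Your proof is correct and is the paper's argument. The paper cites Remark~\ref{rmk: Rabinowitz action value defining Hamiltonian}~\ref{it: Rabinowitz action functional - critical point tower} to place both critical points in the tower of a prime loop $(v,\tau)$ with $\tau>0$; you instead construct that prime loop explicitly from the minimal period, and then both arguments conclude via $\RabinowitzH(v,\tau)\geq c\tau$ on $U_\Sigma$ and Lemma~\ref{lem: action k-th level crit pt tower}.
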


\begin{proof}
    Notice first that, since $H$ complies with $\Dscr \,$, its Hamiltonian vector field $X_H$ does not vanish on $U_\Sigma \,$, so that $v^\pm$ are not constant. By assumption, $(v^\pm,\tau^\pm)$ lie in the same critical point tower of some prime loop $(v,\tau)$ with $\tau > 0 \,$, see Remark \ref{rmk: Rabinowitz action value defining Hamiltonian} \ref{it: Rabinowitz action functional - critical point tower}. Since $H$ complies with $\Dscr \,$, we can estimate
    \[
    \RabinowitzH(v,\tau) = \tau \int_0^1 \lambda(X_H) \circ v \,\, \d t \geq \tau c > 0 \,\, .
    \]
    We conclude with Lemma \ref{lem: action k-th level crit pt tower} that $\Delta = k \RabinowitzH(v,\tau) \geq k \tau c$ for some $k \in \N_{\geq 1} \,$.
\end{proof}

\noindent The estimate in the previous lemma allows us to infer Theorem \ref{mthm: multiplicity result}.

\begin{proof}[Proof of Theorem \ref{mthm: multiplicity result}]
    Let $(\Sigma,\alpha)$ be a Zoll contact manifold with minimal Reeb period $\tau(\alpha) \,$. We are given a $C^1$-bounded subset $\mathscr{B}_\Sigma \subseteq \Cinfty(\Sigma)$ and $\eps > 0 \,$. Choose a $C^1$-bounded subset $\mathscr{B}_{S \Sigma} \subseteq \Cinfty([-1,1] \times \Sigma)$ as in Lemma \ref{lem: C^1-bounded subset Cinfty(Sigma) --> C^1-bounded subset Cinfty(S Sigma)} and consider the $C^1$-datum 
    \[
    \Dscr = ( U_\Sigma := (-1,1) \times \Sigma \comma c := 1 \comma \mathscr{B}_{S \Sigma}) \,\, .
    \]
    Recall that $H_0 = h$ is the coordinate function.
    We now choose a constant $\delta = \delta(H_0, \, h_0 := 1 , \, \tfrac{\eps}{2} \comma \Dscr) > 0$ as in Corollary \ref{cor: Rabinowitz gradient flow line} and claim that $\delta$ has the property stated in Theorem \ref{mthm: multiplicity result}.

    To this end, let $f \in \mathscr{B}_\Sigma$ with $\lVert f\rVert_\infty \leq \delta $ be given. We suppose that $e^f \alpha$ does not have two geometrically distinct periodic Reeb orbits and have to show that it has a periodic Reeb orbit of period at most $\eps \,$. We have already seen in the proof of Theorem \ref{mthm: rescaling Zoll contact form} that $H_f$ complies with $\Dscr$ and that $\sup_{S\Sigma}| H_f - H_0 | = \lVert f \rVert_\infty \leq \delta \,$. So, by choice of $\delta \,$, case \ref{it: cor Rabinowitz crit pts - two crit pts} in Corollary \ref{cor: Rabinowitz gradient flow line} must hold with $H = H_f \,$. Indeed, otherwise case \ref{it: cor Rabinowitz crit pts - one crit pt} in the corollary would hold, so that (by Lemma \ref{lem: Reeb orbits Gr(f) <--> e^f alpha Reeb orbits}) there is a periodic Reeb orbit of $e^f \alpha$ through every point of $\Sigma \,$, contradicting our assumption that $e^f \alpha$ does not have two geometrically distinct periodic Reeb orbits (this uses $\dim \Sigma > 1$). So, since case \ref{it: cor Rabinowitz crit pts - two crit pts} holds, we find two critical points $(v^\pm,\tau^\pm) \in \Crit(\Rabinowitz{H_f})$ with $\tau^\pm \not= 0 \comma v_j^\pm \subseteq U_\Sigma$ and
    \[
    0 < \Rabinowitz{H_f}(v^+,\tau^+) - \Rabinowitz{H_f}(v^-,\tau^-) \leq  \eps \,\, .
    \]
    Because $(\Sigma,e^f \alpha)$ does not have two geometrically distinct periodic Reeb orbits, the $H_f\,$-Hamiltonian flow lines $v^-(\frac{\cdot}{\tau^-}) \comma v^+(\frac{\cdot}{\tau^+})$ are geometrically equal (using Lemma \ref{lem: Reeb orbits Gr(f) <--> e^f alpha Reeb orbits}). Now Lemma \ref{lem: action difference for crit pts not geometrically distinct} implies that $(v^\pm,\tau^\pm)$ lie in the critical point tower of some prime loop $(v,\tau) \in \Crit(\Rabinowitz{H_f})$ with $0 < \tau \leq \frac{\eps}{c} = \eps \,$. By Lemma \ref{lem: Reeb orbits Gr(f) <--> e^f alpha Reeb orbits}, this prime loop corresponds to a periodic Reeb orbit of $e^f \alpha$ of minimal period $\tau \leq \eps \,$.
\end{proof}

\section{Proof of Theorem \ref{mthm: Rabinowitz gradient flow line}: Preparation}
\label{sec: Proof MThm - Preparation}

\noindent Ultimately, the proof of Theorem \ref{mthm: Rabinowitz gradient flow line} will rely on a homotopy stretching argument. In this section, we pursue the first steps in this direction.

For the present section, we fix once and for all a Zoll contact manifold $(\Sigma,\alpha)$, whose minimal Reeb period we denote by $\tau(\alpha) > 0 \,$, a positive constant $h_0 > 0$ and a defining Hamiltonian $H_0 \in \Hcal^{h_0} \subseteq \Cinfty(S\Sigma)$. We moreover fix an almost complex structure $J$ on the symplectization $S \Sigma \,$, compatible with $\d (e^h \alpha) \,$. We do not yet require $J$ to be of SFT-type in this section. For convenience we abbreviate
\[
(W ,\lambda, \omega) := (S\Sigma, \, e^h \alpha , \, \d (e^h \alpha)) \,\, .
\]

\subsection{The critical submanifold $\CCrit$}
\label{subsec: Preparation - Critical Submanifold Cscr}

Since $(\Sigma,\alpha)$ is a Zoll contact manifold and $H_0$ is a defining Hamiltonian, the critical set of the Rabinowitz action functional $\RabinowitzHZero$ is a disjoint union of copies of $\Sigma$, as the next lemma explicates.

The \textit{spectral gap} of a connected component $\mathscr{C} \subseteq \Crit(\RabinowitzHZero)$ by definition is
\begin{equation}
\label{eq: specgap def}
\mathrm{specgap}(\RabinowitzHZero,\mathscr{C}) := \inf\set{|\RabinowitzHZero (v,\tau) - \RabinowitzHZero(\mathscr{C})|}{(v,\tau) \in \Crit(\RabinowitzHZero) \backslash \mathscr{C}} \,\, ,
\end{equation}
where $\RabinowitzHZero(\mathscr{C})$ is the common value of $\RabinowitzHZero$ on $\mathscr{C} \,$.

\begin{lemma}
\label{lem: Crit(RabinowitzHZero)}
There is a natural diffeomorphism $\Crit(\RabinowitzHZero) \cong \Sigma \times \Z$ given by
\begin{equation}
\label{eq: Crit(RabinowitzHzero) -> Sigma times Z}
\Crit(\RabinowitzHZero) \overset{\cong}{\longrightarrow} \Sigma \times \Z \comma \quad (v,\tau) \mapsto \left(v(0), \tfrac{\tau}{\tau(\alpha)} \right) \,\, .
\end{equation}
Moreover, $\RabinowitzHZero$ is Morse-Bott along $\Crit(\RabinowitzHZero)$, that is 
\[
T_{(v,\tau)} \Crit(\RabinowitzHZero) = \ker \mathrm{Hess} (\RabinowitzHZero) (v,\tau ) \qquad \Forall (v,\tau) \in \Crit(\RabinowitzHZero) \,\, .
\]
The common value of $\RabinowitzHZero$ on the component of $\Crit(\RabinowitzHZero)$ corresponding to $\Sigma \times \{k\}$ is $k \tau(\alpha) \,$. In particular, each component of $\Crit(\RabinowitzHZero)$ has spectral gap $\tau(\alpha)\,$.
\end{lemma}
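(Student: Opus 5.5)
First I would describe the critical set explicitly using \eqref{eq: Crit(Rabinowitz)}. Since $H_0$ is a defining Hamiltonian, $H_0^{-1}(0)=\Sigma$ and $X_{H_0}=R_\alpha$ along $\Sigma$ by Remark \ref{rmk: almost-defining Hamiltonian}. A critical point $(v,\tau)$ is therefore a loop $v\subseteq\Sigma$ with $\partial_t v=\tau R_\alpha(v)$, i.e.\ $v(t)=\phi^{\tau t}_{R_\alpha}(v(0))$. The loop closes up iff $\phi^\tau_{R_\alpha}(v(0))=v(0)$. By the Zoll property every point has minimal period exactly $\tau(\alpha)$, so this holds iff $\tau\in\tau(\alpha)\Z$; the case $\tau=0$ gives the constant loops of Remark \ref{rmk: Rabinowitz action value defining Hamiltonian} \ref{it: Rabinowitz action functional - constant loops}. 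Hence \eqref{eq: Crit(RabinowitzHzero) -> Sigma times Z} is a bijection, with inverse $(z,k)\mapsto(\phi^{k\tau(\alpha)\,\cdot}_{R_\alpha}(z),k\tau(\alpha))$. Both maps are smooth: the inverse is smooth as a map into $\loopspace\times\R$ because the Reeb flow is smooth, and the forward map is evaluation. So $\Crit(\RabinowitzHZero)$ is a submanifold diffeomorphic to $\Sigma\times\Z$. The action values follow directly from Remark \ref{rmk: Rabinowitz action value defining Hamiltonian} \ref{it: Rabinowitz action functional - action value}, which gives $\RabinowitzHZero=\tau=k\tau(\alpha)$ on the $k$-th component. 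The spectral gap statement is then immediate, because distinct components have values differing by nonzero multiples of $\tau(\alpha)$.

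The main step is the Morse-Bott property. The inclusion $T\Crit\subseteq\ker\mathrm{Hess}$ is automatic, so I need $\dim\ker\mathrm{Hess}(v,\tau)\le\dim\Sigma$. I would linearize the critical point equations at $(v,\tau)$ with $v(t)=\phi^{\tau t}(z)$. A kernel element $(\hat v,\hat\tau)$ satisfies
\[
\nabla_t\hat v-\tau\nabla_{\hat v}X_{H_0}-\hat\tau X_{H_0}(v)=0,\qquad \int_0^1 \d H_0(\hat v)\,\d t=0 .
\]
Since $X_{H_0}$ is autonomous, the first equation integrates to
\[
\hat v(t)=\d\phi_{H_0}^{\tau t}\,\hat v(0)+t\,\hat\tau\,X_{H_0}(v(t)) .
\]
Periodicity then forces $(\d\phi^\tau_{H_0}-\id)\hat v(0)=-\hat\tau X_{H_0}(z)$.

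To solve this I decompose $T_zW=\R\partial_h\oplus\R R_\alpha\oplus\xi_z$. The Zoll property gives $\phi^{\tau}_{R_\alpha}=\id$ on $\Sigma$, so $\d\phi^\tau_{H_0}$ is the identity on $T_z\Sigma$. On the normal direction, $\d\phi^\tau_{H_0}\partial_h=\partial_h+a\,R_\alpha+(\text{terms in }T\Sigma)$ for some $a$ determined by how the period of the nearby level sets $H_0^{-1}(\epsilon)$ varies; I would compute it in the collar where $H_0$ depends only on $h$, or directly. The remaining condition $\int \d H_0(\hat v)=0$, using $\d H_0(\partial_h)=1$ on $\Sigma$ and the preservation of $H_0$ under the flow, forces the $\partial_h$-component of $\hat v(0)$ to vanish. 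Then $(\d\phi^\tau-\id)\hat v(0)=0$ gives $\hat\tau X_{H_0}(z)=0$, so $\hat\tau=0$, and $\hat v(0)\in T_z\Sigma$ is arbitrary. The kernel is thus $\{(\d\phi^{\tau t}\hat v(0),0)\}$, which is exactly $T_{(v,\tau)}\Crit$.

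I expect the careful bookkeeping of the $\partial_h$-component, and the proof that it is killed by the $\hat\tau$-equation, to be the main obstacle; the rest is standard, following \cite{Cieliebak_Frauenfelder}. The case $k=0$ of constant loops needs a separate but easier check: there $\hat v$ satisfies $\partial_t\hat v=\hat\tau X_{H_0}(z)$, which forces $\hat\tau=0$ and $\hat v$ constant with $\d H_0(\hat v)=0$, i.e.\ $\hat v\in T_z\Sigma$.
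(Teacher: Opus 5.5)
Your proof is correct. For the bijection, the smoothness of the inverse $(z,k)\mapsto(\phi^{k\tau(\alpha)\,\cdot}_{R_\alpha}(z),k\tau(\alpha))$, the action values and the spectral gap, it matches the paper.

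The genuine difference is the Morse--Bott property. The paper does not compute anything there: it cites \cite[\S3.2]{Cieliebak_Frauenfelder} and \cite[Lemma~20]{FauckThesis}, noting that the Zoll condition implies their Morse--Bott hypotheses. You instead compute $\ker\mathrm{Hess}$ directly, and your computation closes more simply than you expect. Since $H_0\circ\phi^{\tau t}_{H_0}=H_0$ and $\d H_0(X_{H_0})=0$, the function $t\mapsto\d H_0(\hat v(t))$ is constant, equal to $\d H_0(\hat v(0))$, which is the $\partial_h$-component of $\hat v(0)$. So the equation $\int_0^1\d H_0(\hat v)\,\d t=0$ kills that component outright. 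Then $\hat v(0)\in T_z\Sigma$, where $\d\phi^\tau_{H_0}=\d\phi^\tau_{R_\alpha}=\id$, and periodicity gives $\hat\tau R_\alpha(z)=0$. In particular, the coefficient $a$ (the period variation of nearby level sets) never enters and need not be computed. The argument also uses only $H_0^{-1}(0)=\Sigma$ and $\d H_0(\partial_h)=1$ on $\Sigma$, so it works for almost-defining $H_0$ as well.

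What each route buys: the citation is shorter and places the lemma inside the general Morse--Bott framework, which also covers the non-Zoll condition (MB$_\tau$) of Section~\ref{sec: Outlook}. Your route is self-contained and shows exactly where the Zoll hypothesis is used, namely $\d\phi^\tau_{R_\alpha}|_{T_z\Sigma}=\id$. In the (MB$_\tau$) setting your argument would need one extra observation: $R_\alpha(z)\notin\image(\d\phi^\tau_{R_\alpha}-\id)|_{T_z\Sigma}$. This holds because the Reeb flow preserves $\alpha$, so $\alpha$ annihilates that image.
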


\begin{proof}
As $R_\alpha = X_{H_0}$ along $\Sigma = H_0^{-1}(0)$ since $H_0$ is a defining Hamiltonian, critical points of $\RabinowitzHZero$ are $1$-periodic constant reparametrizations of closed Reeb orbits, see \eqref{eq: Crit(Rabinowitz)}.
Denote by $H^1(\bbS^1,W)$ the Hilbert manifold of absolutely continuous loops with square integrable weak derivatives and by $(\phi_{R_\alpha}^t)_{t \in \R} $ the Reeb flow. Notice that $\phi_{R_\alpha}^{t} = \phi_{R_\alpha}^{t + \tau(\alpha)} \,$. The inverse of \eqref{eq: Crit(RabinowitzHzero) -> Sigma times Z} is given explicitly by
\begin{equation}
\label{eq: Sigma times Z -> Crit(RabinowitzHzero) embedding}
i : \Sigma \times \Z \hookrightarrow H^1(\bbS^1, W) \times \R \comma \quad (z,k) \mapsto \big( \big[ t \mapsto  \phi_{R_\alpha}^{k\tau(\alpha)t}(z) \big] , \, k\tau(\alpha) \big) \,\, .
\end{equation}
This is a smooth embedding onto the submanifold $\Crit(\RabinowitzHZero) \subseteq H^1(\bbS^1,W) \times \R \,$.

That $\RabinowitzHZero$ is Morse-Bott is asserted in \cite[\S3.2]{Cieliebak_Frauenfelder} and \cite[Lemma 20]{FauckThesis}. It relies on $H_0$ being a defining Hamiltonian and on the Zoll assumption, which implies assumption (A) in \cite{Cieliebak_Frauenfelder} respectively (MB) in \cite{FauckThesis}.

The assertion about the action value holds by Remark \ref{rmk: Rabinowitz action value defining Hamiltonian} \ref{it: Rabinowitz action functional - action value}.
\end{proof}

\begin{remark}
\label{rmk: topology on Crit(RabinowitzHzero)}
The map \eqref{eq: Sigma times Z -> Crit(RabinowitzHzero) embedding} is also a topological embedding $i : \Sigma \times \Z \hookrightarrow H^k(\bbS^1,W) \times \R$ for every $k \geq 1$ as well as a topological embedding $i : \Sigma \times \Z \hookrightarrow \loopspace \times \R = \Cinfty(\bbS^1,W) \times \R \,$. This follows from compactness of $\Sigma$ and the fact that continuous maps from compact spaces to Hausdorff spaces are topological embeddings. Hence the topology on $\Crit(\RabinowitzHZero)$ is independent of the ambient space of maps.
\end{remark}

Subsequently, we will denote by $\CCrit$ the connected component of $\Crit(\RabinowitzHZero)$ of loops with Lagrange multiplier $\tau(\alpha) \,$. Explicitly, this means
\begin{align*}
\CCrit &= \set{(v,\tau) \in \Crit(\RabinowitzHZero)}{\tau= \tau(\alpha)}      \\
&= \set{(v,\tau(\alpha)) \in \loopspace \times \R}{v(0) \in \Sigma \comma v(t) = \phi_{R_\alpha}^{\tau(\alpha) t} (v(0)) \,\,\, \Forall t \in \bbS^1} \, ,
\end{align*}
where $(\phi_{R_\alpha}^t)_t$ denotes the Reeb flow. As the previous lemma shows, $\CCrit$ is a smooth manifold without boundary naturally diffeomorphic to $\Sigma$ via
\begin{equation}
\label{eq: CCrit diffeomorphism Sigma}
\CCrit \overset{\cong}{\longrightarrow} \Sigma \comma \quad (v,\tau(\alpha)) \mapsto v(0) \,\, .
\end{equation}
In particular, $\CCrit$ is connected and compact. The common value of $\RabinowitzHZero$ on $\CCrit$ is $\tau(\alpha)$ and $\CCrit$ has spectral gap $\tau(\alpha) \,$.

\subsection{The moduli space $\moduliH{r}$}
\label{subsec: Preparation - moduli space}

Let us fix once and for all a family $(\beta_r)_{r \geq 0} \subseteq \Cinfty(\R,[0,1])$ of bump functions with the following properties.
\begin{enumerate}[label=($\beta$.\arabic*)]
    \item\label{it: beta_r choice - smooth} The map $\R_{\geq 0 } \times \R \rightarrow \R \comma (r,s) \mapsto \beta_r(s) \,$, is smooth.
    \item\label{it: beta_r choice - beta_0} $\beta_0 \equiv 0$
    \item\label{it: beta_r choice - zero at infinity} $\beta_r (s) = 0$ for $s \notin (-r-1, r+1)$
    \item\label{it: beta_r choice - one on [-r,r]} $\beta_R(s) = 1$ for $s \in [-r,r]$ and $r \geq 1$
    \item\label{it: beta_r choice - derivatives} $\frac{d}{ds}\beta_r (s) \geq 0$ for $s \leq 0$ and $\frac{d}{ds}\beta_r (s) \leq 0$ for $s \geq 0 \,$.
\end{enumerate}
A possible choice of such a family is depicted in Figure \ref{fig: beta_r bump functions}.
\begin{figure}
    \centering
    \includegraphics[width=0.8\linewidth]{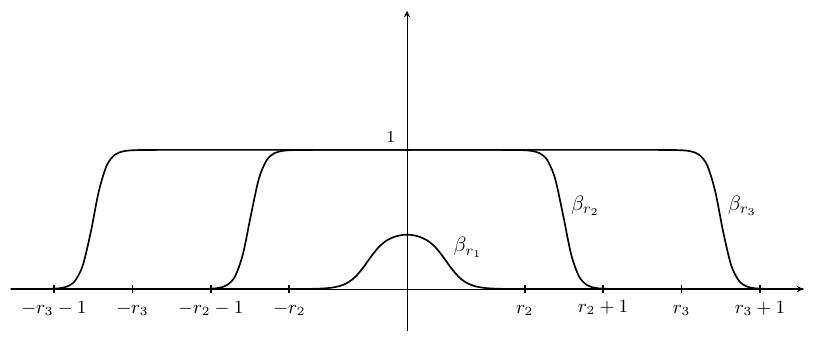}
    \caption{Bump functions $\beta_r$ for parameters $0 < r_1 < 1 < r_2 < r_2+1 < r_3 \,$. (This figure is adapted from \cite[Fig.~2]{Abstract_Perturbations}.)}
    \label{fig: beta_r bump functions}
\end{figure}
For every Hamiltonian $H \in \Cinfty(W)$ and parameters $(r,s) \in \R_{\geq 0} \times \R$ we define the functional
\begin{align}
\label{eq: def RabinowitzH_rs}
\RabinowitzHrs : \loopspace \times \R \rightarrow \R \comma \quad \RabinowitzHrs := (1 - \beta_r(s)) \RabinowitzHZero + \beta_r(s) \RabinowitzH \,\, ,
\end{align}
which interpolates between $\RabinowitzHZero$ and $\RabinowitzH \,$. Its gradient is given by
\[
\nabla \RabinowitzHrs(v,\tau) = (1-\beta_r(s)) \nabla \RabinowitzHZero(v,\tau) + \beta_r(s) \nabla \RabinowitzH(v,\tau) \,\, .
\]
(As $J$ is fixed throughout, all gradients appearing are understood to be $g_J$-gradients.) The \textit{energy} of any $w = (u,\eta) : \R \rightarrow \Cinfty(\loopspace \times \R)$, not necessarily a gradient flow line, is defined to be
\[
E(w) := \int_{-\infty}^{\infty} |\partial_s w(s)|_{g_J}^2 \, \d s \in [0,\infty] \,\, .
\]
Again we write $E(w)$ instead of $E_J(w)$ since $J$ is fixed.
For convenience, given a cylinder $u \in \Cinfty(\R \times \bbS^1,W)  \,$, we also write 
\[
u_s := u(s,\,\cdot\,) \in \loopspace 
\]
for its loop at $s \in \R \,$.

\begin{definition}[Homotopic rel $\CCrit$]
\label{def: homotopic rel C^tau(alpha)} Write $\Rbar := \R \cup \{\pm \infty\} \,$.
\begin{enumerate}[label=(\alph*)]
    \item Let $u^0,u^1\in C^0(\Rbar \times \bbS^1,W)$ be two cylinders with asymptotics $(u^{0,1}_{\pm \infty} , \tau(\alpha)) \in \CCrit \,$. We say $u^0$ and $u^1$ are \textit{homotopic rel $\CCrit$}, in symbols $u^0 \simeq u^1$ rel $\CCrit$, if there exists a continuous homotopy $(u^{\nu})_{0 \leq \nu \leq 1}$ of cylinders from $u^0$ to $u^1$ so that
\[
(u^\nu_{\pm \infty}, \tau(\alpha)) \in \CCrit \quad \Forall \nu \in [0,1] \,\, .
\]
    \item For a cylinder $u$ we write $u \simeq 0 \text{ rel } \CCrit$ if $u \simeq u_v$ rel $\CCrit \,$, with $u_v$ the $s$-independent cylinder $u_v(s,t) := v(t)$, for any choice of $(v,\tau(\alpha)) \in \CCrit \,$. Since $\CCrit \cong \Sigma$ is connected, this is independent of the choice of loop $v$.
\end{enumerate}
\end{definition}

\begin{remark}
\label{rmk: homotopic rel C^tau(alpha)}
Write the components of a cylinder as $u = (u_\R, u_\Sigma) \subseteq W = \R \times \Sigma \,$. Viewing $\bbS^2$ as $\Rbar \times \bbS^1$ with ends $\{\pm \infty\} \times \bbS^1$ collapsed (\ie as unreduced suspension of the circle), it holds $(u_{\pm\infty},\tau(\alpha)) \in \CCrit$ if and only if $(u_\R)_{\pm \infty} = 0$ and
    \[
    \Rbar \times \bbS^1 \rightarrow \Sigma \comma \quad (s,t) \mapsto \phi_{R_\alpha}^{-\tau(\alpha)t} (u_\Sigma(s,t)) 
    \]
    descends to a map $\Bar{u} : \bbS^2 \rightarrow \Sigma \,$. Now, two cylinders $u^0$ and $u^1$ are homotopic rel $\CCrit$ if and only if the descended maps $\Bar{u}^0$ and $\Bar{u}^1$ are freely homotopic in $C^0(\bbS^2,\Sigma) \,$. In particular, $u \simeq 0 \text{ rel } \CCrit$ if and only if $\Bar{u}$ is null-homotopic.
\end{remark}

\noindent We now define the moduli spaces of $s$-dependent gradient flow lines of the interpolation functionals $\RabinowitzHrs$ by setting for each $r \geq 0$
\begin{align*}
    \moduliH{r} &:= \Bigset{w = (u,\eta) \in \Cinfty(\R,\loopspace \times \R)}{\begin{array}{c}
        \partial_s w(s) = \nabla \RabinowitzHrs(w(s)) \quad \Forall s \in \R \comma \\
      \lim_{s \rightarrow \pm \infty} w(s) \in \CCrit \comma \, u \simeq 0 \text{ rel } \CCrit \,\, .
    \end{array}}
\end{align*}
The $s$-dependent gradient equation $0 = \partial_s w - \nabla \RabinowitzHrs (w)$ spelled out reads
\begin{equation}
\label{eq: s-dependent gradient eq}
\begin{cases}
    0 = \partial_s u + J(u) \big( \partial_t u - \eta(s) \big( (1-\beta_r(s)) X_{H_0}(u) + \beta_r(s) X_H(u) \big) \big) & \Forall (s,t) \in \R \times \bbS^1 \\
    0 = \partial_s \eta(s) + \int_0^1 \big( (1-\beta_r(s)) H_0(u(s,t)) + \beta_r(s) H(u(s,t)) \big) \, \d t & \Forall s \in \R \, . 
\end{cases}
\end{equation}
Notice that, by \ref{it: beta_r choice - zero at infinity} and \ref{it: beta_r choice - one on [-r,r]}, an element $w \in \moduliH{r}$ is a gradient flow line of $\RabinowitzHZero$ on $\R \backslash (-r-1,r+1) $ and, for $r \geq 1 \,$, a gradient flow line of $\RabinowitzH$ on $[-r,r] \,$. In particular $w$ has finite energy, being asymptotically an ordinary gradient flow line of $\RabinowitzHZero$ with existing limits $w(\pm \infty) \,$. 

\begin{remark}
\label{rmk: Preparation - moduli^H_r limits}
The limit $w(s) \overset{s \rightarrow \pm \infty}{\longrightarrow}w(\pm \infty)$ appearing in the definition of $\moduliH{r}$ is understood to be with respect to the $(\Cinfty(\bbS^1,W) \times \R)$-topology. However, requiring merely convergence in the $(C^0(\bbS^1,W) \times \R)$-topology and finite energy, $E(w) < \infty \,$, already implies convergence in the $(\Cinfty(\bbS^1,W) \times \R)$-topology. Indeed, for every sequence $(s_n)_n \subseteq \R$ tending to $\pm \infty \,$, a subsequence of $(w(s_n))_n$ converges in $\Cinfty(\bbS^1,W) \times \R$ to a critical point of $\RabinowitzHZero$ (see Corollary \ref{cor: Gromov compactness - single gradient flow line crit pts at ends} later), which must agree with the $(C^0(\bbS^1,W) \times \R)$-limit.
\end{remark}

\noindent We combine the various moduli spaces $\moduliH{r}$ to moduli spaces
\begin{align}
    \moduli^H &:= \set{(r,w) \in \R_{\geq 0} \times \Cinfty(\R,\loopspace \times \R) }{ w \in \moduliH{r}} \label{eq: def moduli^H}\\
    \moduliH{[0,R]} &:= \set{(r,w) \in \moduli^H}{0 \leq r \leq R} \comma \quad \text{ for } R \geq 0 \,\, . \notag
\end{align}
The next lemma, which is completely analogous to Lemmata 3.3 and 3.4 in \cite{Albers_Hein}, bounds the action value and the energy of a trajectory in the moduli space.

\begin{lemma}
\label{lem: action estimate for w in moduli}
For $(r,w)  \in \moduli^H$ let $\mathrm{im}(w) = w(\R) \subseteq \loopspace \times \R$ denote the image of $w$ and define
\[
\Delta := \sup_{\image(w)} |\RabinowitzH - \RabinowitzHZero|  \in [0,\infty] \,\, .
\]
Then $E(w) \leq 2\Delta$ and moreover
\begin{align}
    \label{eq: action estimate}
    \RabinowitzHZero(w(s)) \comma \RabinowitzHrs(w(s)) \in [\tau(\alpha) - 2\Delta,\, \tau(\alpha) + 2\Delta] \qquad \Forall s \in \R \,\, .
\end{align}
\end{lemma}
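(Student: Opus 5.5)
The plan is the standard computation along an $s$-dependent gradient flow line, as in Albers--Hein. Write $\beta := \beta_r$ and $A_s := \RabinowitzHrs$. Since $\partial_s w = \nabla A_s(w)$, we get
\[
\frac{\d}{\d s} A_s(w(s)) = |\partial_s w(s)|^2 + \beta'(s)\,\big(\RabinowitzH - \RabinowitzHZero\big)(w(s)) .
\]
The boundary values are fixed. Since $w(\pm\infty) \in \CCrit$ and $\beta$ vanishes outside $(-r-1,r+1)$, we have $A_s(w(s)) \to \RabinowitzHZero(\CCrit) = \tau(\alpha)$ as $s \to \pm\infty$.

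For the energy bound, integrate the identity over $\R$. The left side contributes $\tau(\alpha) - \tau(\alpha) = 0$. This gives
\[
E(w) = -\int_\R \beta'(s)\,(\RabinowitzH - \RabinowitzHZero)(w(s))\,\d s \le \Delta \int_\R |\beta'(s)|\,\d s .
\]
Property \ref{it: beta_r choice - derivatives} says $\beta$ is monotone increasing on $(-\infty,0]$ and decreasing on $[0,\infty)$, with values in $[0,1]$ and limit $0$ at both ends. Hence $\int_\R |\beta'| = 2\beta(0) \le 2$, and so $E(w) \le 2\Delta$. If $\Delta = \infty$ there is nothing to show.

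For the action bound on $A_s$, integrate from $-\infty$ to $s$ and from $s$ to $+\infty$. From the left end,
\[
A_s(w(s)) - \tau(\alpha) = \int_{-\infty}^s |\partial_\sigma w|^2\,\d\sigma + \int_{-\infty}^s \beta'\,(\RabinowitzH - \RabinowitzHZero)(w) \,\d\sigma .
\]
The first term is nonnegative and the second is $\ge -\Delta\int_{-\infty}^{s} |\beta'|$. From the right end,
\[
\tau(\alpha) - A_s(w(s)) = \int_s^\infty |\partial_\sigma w|^2\,\d\sigma + \int_s^\infty \beta'\,(\RabinowitzH - \RabinowitzHZero)(w)\,\d\sigma ,
\]
which gives $A_s(w(s)) \le \tau(\alpha) + \Delta\int_s^\infty |\beta'|$. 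Both $\int_{-\infty}^{s} |\beta'|$ and $\int_s^\infty |\beta'|$ are at most $\int_\R|\beta'| \le 2$, so $A_s(w(s)) \in [\tau(\alpha) - 2\Delta, \tau(\alpha) + 2\Delta]$.

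For $\RabinowitzHZero(w(s))$, use
\[
\RabinowitzHZero = A_s - \beta(s)\,(\RabinowitzH - \RabinowitzHZero) .
\]
This only gives an a priori error of $\Delta$ on top of the previous bound, so a sharper estimate is needed. Suppose $s \le 0$. Then $\int_{-\infty}^s |\beta'| = \beta(s)$ and $\int_s^\infty |\beta'| = 2\beta(0) - \beta(s) \le 2 - \beta(s)$. The two one-sided bounds become
\[
\tau(\alpha) - \Delta\beta(s) \le A_s(w(s)) \le \tau(\alpha) + \Delta\,(2 - \beta(s)) .
\]
Adding $-\beta(s)(\RabinowitzH - \RabinowitzHZero)(w(s)) \in [-\Delta\beta(s), \Delta\beta(s)]$ gives
\[
\RabinowitzHZero(w(s)) \in [\tau(\alpha) - 2\Delta\beta(s),\, \tau(\alpha) + 2\Delta] \subseteq [\tau(\alpha) - 2\Delta,\, \tau(\alpha) + 2\Delta] .
\]
The case $s \ge 0$ is symmetric.

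The main subtlety is the bookkeeping of $\int|\beta'|$ split at $s$ using the monotonicity \ref{it: beta_r choice - derivatives}, which is needed to get the constant $2$ rather than $3$ for $\RabinowitzHZero$. One must also justify the limits of $A_s(w(s))$ at $\pm\infty$. This follows from $C^\infty$-convergence to $\CCrit$ together with $\beta = 0$ near the ends.
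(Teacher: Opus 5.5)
Your proof is correct and is essentially the paper's argument. You use the same identity $\frac{\d}{\d s}\RabinowitzHrs(w(s)) = |\partial_s w(s)|^2_{g_J} + \beta_r'(s)\,(\RabinowitzH - \RabinowitzHZero)(w(s))$, integrated over $\R$ and over the half-lines on either side of $s$ with limit $\tau(\alpha)$ at both ends, and the same monotonicity of $\beta_r$ to compute $\int|\beta_r'|$ on each side before subtracting $\beta_r(s)\,(\RabinowitzH - \RabinowitzHZero)(w(s))$; the only cosmetic difference is that you write out the case $s \le 0$ where the paper writes out $s \ge 0$.
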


\begin{proof}
    There is nothing to show if $\Delta = \infty \,$, so we may assume $\Delta < \infty \,$.
    The gradient flow equation yields
    \begin{align*}
        \frac{\d}{\d s} \RabinowitzHrs(w(s)) &=  \d \RabinowitzHrs (w(s)) \cdot \partial_s w(s) + \frac{\partial \RabinowitzHrs}{\partial s}(w(s)) \notag\\
        &=|\partial_s w(s)|^2_{g_J} + \beta_r'(s) \, ( \RabinowitzH - \RabinowitzHZero )(w(s)) \,\, .  
    \end{align*}
    Thus, for fixed $a \leq b \,$, the energy on $[a,b]$ computes to
    \begin{align}
        0 \leq E^{b}_a (w) &= \int_a^b |\partial_s w(s)|_{g_J}^2 \, \d s \notag\\
        &=  \RabinowitzHParam{r,b}(w(b)) - \RabinowitzHParam{r,a}(w(a)) - \int_{a}^{b} \beta_r'(s) \, (\RabinowitzH - \RabinowitzHZero)(w(s)) \, \d s \, . \label{eq: action estimate - E_a^b(w) for w in moduli^H}
    \end{align}
    Due to properties \ref{it: beta_r choice - zero at infinity} and \ref{it: beta_r choice - derivatives} and since $0 \leq \beta_r(s) \leq 1 \,$, the absolute value of the second summand can be bounded by
    \begin{align}
     &\phantom{=} \,\big| \int_{a}^{b} \beta_r'(s) \, (\RabinowitzH - \RabinowitzHZero)(w(s)) \, \d s \big|  \notag\\
     &\leq \int_{-r-1}^{0} \mathbbm{1}_{[a,b]}(s) \, \beta_r'(s) \, \big|(\RabinowitzH - \RabinowitzHZero)(w(s)) \big| \, \d s \notag\\
     &\phantom{\leq} + \int_{0}^{r+1} \mathbbm{1}_{[a,b]}(s) \, (-\beta_r'(s)) \, \big|(\RabinowitzH - \RabinowitzHZero)(w(s))\big| \, \d s \notag\\
     &\leq \big( \sup_{\image(w)} |\RabinowitzH - \RabinowitzHZero| \big) \big( \int_{-r-1}^{0} \beta_r'(s) \, \d s - \int_{0}^{r+1} \beta_r'(s) \, \d s \big) \notag\\
     &= 2 \Delta  \, \beta_r(0)  \leq 2 \Delta \,\, . \label{eq: action estimate - int beta'_r (RabinowitzH - RabinowitzHZero)} 
    \end{align}
The asymptotic condition $w(\pm \infty) \in \CCrit$ implies
\begin{align}
\RabinowitzHParam{r,\pm\infty}(w(\pm\infty)) = \RabinowitzHZero(w(\pm\infty)) = \tau(\alpha) \,\, . \label{eq: action estimate - asymptotic action}
\end{align}
This, together with \eqref{eq: action estimate - E_a^b(w) for w in moduli^H} and \eqref{eq: action estimate - int beta'_r (RabinowitzH - RabinowitzHZero)}, yields $E(w) \leq 2 \Delta \,$.

Taking separately the limits $a \rightarrow - \infty$ respectively $b \rightarrow \infty$ in \eqref{eq: action estimate - E_a^b(w) for w in moduli^H} and combining this with the estimate in \eqref{eq: action estimate - int beta'_r (RabinowitzH - RabinowitzHZero)} and the asymptotic condition \eqref{eq: action estimate - asymptotic action} shows
\[
\tau(\alpha) - 2 \Delta \leq \RabinowitzHrs(w(s)) \leq \tau(\alpha) + 2 \Delta \qquad \Forall s \in \R \,\, .
\]
To conclude \eqref{eq: action estimate}, we also need to show that $\RabinowitzHZero(w(s))$ is at most $2\Delta$ apart from $\tau(\alpha) \,$. First assume $s \geq 0 \,$, so that $\beta'_r(\Bar{s}) \leq 0$ for every $\Bar{s} \geq s \,$. Rearranging \eqref{eq: action estimate - E_a^b(w) for w in moduli^H} with $a:=s$ and $b :=\infty\,$, we see
\[
\RabinowitzHrs(w(s)) -\tau(\alpha) \leq - \int_{s}^\infty \beta_r'(\Bar{s}) \, (\RabinowitzH - \RabinowitzHZero)(w(\Bar{s})) \, d \Bar{s} \leq -\Delta \int_{s}^{\infty}  \beta_r'(\Bar{s}) \, \d \Bar{s} = \beta_r(s) \, \Delta \,\, .
\]
Substituting the definition \eqref{eq: def RabinowitzH_rs} of $\RabinowitzHrs$ and rearranging yields
\[
\RabinowitzHZero(w(s)) - \tau(\alpha) \leq \beta_r(s)  \Delta  - \beta_r(s) \, (\RabinowitzH-\RabinowitzHZero)(w(s)) \leq 2 \beta_r(s) \, \Delta \leq 2 \Delta \,\, .
\]
Next we show that also $\RabinowitzHZero(w(s)) - \tau(\alpha) \geq - 2 \Delta \,$. To this end, rearrange \eqref{eq: action estimate - E_a^b(w) for w in moduli^H} with $a := -\infty$ and $b := s$ to obtain
\begin{align*}
    \RabinowitzHrs(w(s)) - \tau(\alpha) &\geq \int_{-\infty}^s \beta_r'(\Bar{s}) \, (\RabinowitzH - \RabinowitzHZero)(w (\Bar{s})) \, \d \Bar{s} \\
    &\overset{\text{\ref{it: beta_r choice - derivatives}}}{\geq} - \Delta \int_{-\infty}^0 \beta_r'(\Bar{s}) \, \d \Bar{s} + \Delta \int_0^s \beta_r'(\Bar{s}) \, \d \Bar{s} 
     = \Delta ( \beta_r(s) - 2 \beta_r(0)) \,\, ,
\end{align*}
Hence, unraveling again the definition of $\RabinowitzHrs \,$,
\begin{align*}
    \RabinowitzHZero(w(s)) - \tau(\alpha) \geq \underbrace{\beta_r(s)}_{\geq 0} \, \big(\underbrace{\Delta - (\RabinowitzH - \RabinowitzHZero)(w(s)) }_{\geq 0}  \big) - 2 \underbrace{\beta_r(0)}_{\leq 1} \, \Delta \geq - 2 \Delta \,\, .
\end{align*}
The case $s \leq 0$ works analogously, swapping the roles of $a$ and $b$ in \eqref{eq: action estimate - E_a^b(w) for w in moduli^H} in the above argument.
\end{proof}

\subsection{Uniform bound on the Lagrange multiplier}
\label{subsec: Preparation - Lagrange Multiplier Bound}

We now prove a uniform bound on the Lagrange multiplier $\eta$ of $(r,w) = (r,(u,\eta)) \in \moduli^H\,$. This has a twofold purpose: On the one hand, such a Lagrange multiplier bound is required for the compactness of the moduli spaces $\moduli^H_{[0,R]} \,$. This problem motivated the original Lagrange multiplier bound by Cieliebak-Frauenfelder \cite{Cieliebak_Frauenfelder}. On the other hand, and this is a new problem, we also need the Lagrange multiplier bound to control the energy $E(w)$ for $(r,w) \in \moduli^H \,$, uniform for every $H$ sufficiently $C^0$-close to $H_0 \,$. Let us now state our result precisely. 

\begin{proposition}
\label{prop: uniform Lagrange multiplier bound}
For every $C^1$-datum $\Dscr$ there exist constants $T^* , \delta^* > 0$ with
\begin{align*}
    \Forall H \in \Hclassdelta{\delta^*} : \,\, \Forall (r,(u,\eta)) \in \moduli^H : \quad \lVert \eta \rVert_\infty \leq T^* \,\, .
\end{align*}
\end{proposition}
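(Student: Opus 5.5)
The plan follows the Cieliebak--Frauenfelder scheme. It combines a fundamental lemma, that $\eta$ is controlled by action and gradient norm, with the uniform action and energy bounds of Lemma~\ref{lem: action estimate for w in moduli}. The new feature is that $\Delta$ in that lemma is a priori not small. Indeed $|\RabinowitzH - \RabinowitzHZero|(v,\tau) \le |\tau|\,\lVert H-H_0\rVert_\infty \le |\tau|\delta$, since $\int v^*\lambda$ cancels. So $\Delta \le \delta \lVert\eta\rVert_\infty$, and the energy bound itself depends on the quantity we want to bound. The argument therefore has to be a bootstrap: a linear inequality $\lVert\eta\rVert_\infty \le A + B\delta\lVert\eta\rVert_\infty$, which closes once $\delta^* < 1/(2B)$.

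\textbf{Step 1 (Cieliebak--Frauenfelder type estimate).} Following the lemma deferred to Appendix~\ref{app sec: Cieliebak-Frauenfelder estimate}, I would show the following. For a $C^1$-datum $\Dscr = (U_\Sigma,c,\BCOne)$ there are $\eps_0, C_0>0$ and $\delta_0>0$ such that for every $H\in\Hclassdelta{\delta_0}$, every $\rho\in[0,1]$, the interpolated Hamiltonian $K=(1-\rho)H_0+\rho H$, and every $(v,\tau)$,
\[
\lVert \nabla \Rabinowitz{K}(v,\tau)\rVert_{g_J} \le \eps_0 \,\,\Longrightarrow\,\, |\tau| \le C_0\big(|\Rabinowitz{K}(v,\tau)| + \lVert\nabla\Rabinowitz{K}(v,\tau)\rVert_{g_J}\big).
\]
Note that $\RabinowitzHrs = \Rabinowitz{K}$ with $\rho=\beta_r(s)$, since $\Rabinowitz{}$ is affine in $H$. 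The proof is the usual one. A small gradient forces $\int_0^1 K(v)$ to be small, and hence the loop to stay near $K^{-1}(0)$, which for $\delta$ small lies in $U_\Sigma$. This step uses the $C^0$-closeness and the fact that $\d H$ vanishes outside $[-h_0,h_0]\times\Sigma$, so that $H$ is bounded away from $0$ there. Near $K^{-1}(0)$ we have $\lambda(X_K) \ge c$ from complying with $\Dscr$, and $|X_K|$ is bounded via $\BCOne$, because $J$ and $\omega$ are fixed on the compact $\overline{U_\Sigma}$. This is exactly where the $C^1$-datum enters. We then write $\Rabinowitz{K}(v,\tau) = \int \lambda(\partial_t v - \tau X_K) + \tau\int(\lambda(X_K) - K)$ and solve for $\tau$. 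The constants are uniform in $\rho$ and $H$ because only $c$, the $C^1$-bound and $\delta_0$ enter.

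\textbf{Step 2 (bounding $\eta$ on intervals).} Fix $(r,w)\in\moduli^H$ and set $T:=\lVert\eta\rVert_\infty$, where finiteness follows from the asymptotics. Then $\Delta\le\delta T$. Lemma~\ref{lem: action estimate for w in moduli} gives $|\RabinowitzHrs(w(s))|\le \tau(\alpha)+2\delta T$ and $E(w)\le 2\delta T$. For each $s$, let $s'\ge s$ be the first time with $\lVert\partial_s w(s')\rVert<\eps_0$. Then $s'-s\le E(w)/\eps_0^2 \le 2\delta T/\eps_0^2$. By Step 1, $|\eta(s')|\le C_0(\tau(\alpha)+2\delta T+\eps_0)$. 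Finally $|\eta(s)-\eta(s')| \le \int_s^{s'}|\partial_s\eta| \le \sqrt{s'-s}\,\sqrt{E(w)} \le 2\delta T/\eps_0$.

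\textbf{Step 3 (closing the bootstrap).} Taking the supremum over $s$ gives
\[
T \le C_0(\tau(\alpha)+\eps_0) + \big(2C_0 + 2/\eps_0\big)\delta T.
\]
We choose $\delta^*\le\delta_0$ with $(2C_0+2/\eps_0)\delta^*\le \tfrac12$ and set $T^*:=2C_0(\tau(\alpha)+\eps_0)$. The main obstacle is Step 1 with fully uniform constants, that is, uniform over $\rho$, over $H$ in the class, and under only $C^0$-closeness. The delicate point is ensuring that a small gradient pins the loop inside $U_\Sigma$, where the lower bound $\lambda(X_K)\ge c$ is available. A secondary subtlety is that in the integral of $\partial_s\eta$ the time interval and the energy both scale with $T$; the product still comes out linear in $T$, so the bootstrap closes.
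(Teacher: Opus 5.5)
Your proposal is correct and is essentially the paper's proof. Step 1 is Lemma~\ref{lem: Cieliebak-Frauenfelder}; note that its proof in Appendix~\ref{app sec: Cieliebak-Frauenfelder estimate} pins the loop inside $U_\kappa$ not merely through the small mean of $K(v)$, but also by using $\d K(X_K)=0$, the $L^1$-smallness of $\partial_t v-\tau X_K(v)$ and the $C^1$-bound to control the oscillation of $K$ along $v$, which is where the $C^1$-datum really enters. Steps 2--3 are the paper's argument: take the first time with small gradient, feed $\Delta\le\delta\lVert\eta\rVert_\infty$ into Lemma~\ref{lem: action estimate for w in moduli}, and close the resulting self-referential inequality by shrinking $\delta^*$. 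The only deviation is that you bound $\int_s^{s'}|\partial_s\eta|$ by Cauchy--Schwarz against the energy, whereas the paper uses the pointwise bound $|\partial_s\eta|\le\lVert H_0\rVert_\infty+\delta^*$ from the second gradient equation; this makes your closing inequality linear in $\delta$ rather than the paper's quadratic $P(\delta^*)$, and both versions work.
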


\noindent Recall that we have fixed $h_0 >0$ and $H_0 \in \Hcal^{h_0}$ at the beginning of the section. The class of functions $\Hclassdelta{\delta^*}$ has been defined in \eqref{eq: def Hclassparam}. 

The proof of the proposition remains close to the proof of \cite[Cor.~3.7]{Cieliebak_Frauenfelder}. It starts with a lemma, which is the analog of \cite[Prop.~3.4]{Cieliebak_Frauenfelder}.

\begin{lemma}
\label{lem: Cieliebak-Frauenfelder}
   Given a $C^1$-datum $\Dscr$, there exist constants $C, \delta > 0$ such that for every $H \in \Hclassparam$, every $(v,\tau) \in \loopspace \times \R$ and every $(r,s) \in \R_{\geq 0} \times \R$ it holds
   \begin{align*}
      | \nabla \RabinowitzHrs (v,\tau) |_{g_J} \leq \delta \,\, \Longrightarrow \,\, |\tau| \leq C \, |\RabinowitzHrs(v,\tau)| + C \,\, . 
   \end{align*}
\end{lemma}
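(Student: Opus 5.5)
We adapt the proof of \cite[Prop.~3.4]{Cieliebak_Frauenfelder} to the interpolated functional. Abbreviate $K := K_{r,s} := (1-\beta_r(s)) H_0 + \beta_r(s) H$, so that $\RabinowitzHrs = \Rabinowitz{K}$ with $\Rabinowitz{K}(v,\tau) = \int v^*\lambda - \tau \int_0^1 K(v)\,\d t$. The first step is to record that $K$ inherits uniform properties from the $C^1$-datum $\Dscr = (U_\Sigma, c, \BCOne)$ and from $H_0$. Since both $H$ and $H_0$ satisfy $\d(\cdot)(\partial_h) \geq c$ on $U_\Sigma$ (after shrinking $c$ to $\min(c,1)$ and $U_\Sigma$ so that $H_0$ complies as well), the convex combination $K$ satisfies $\lambda(X_K) = \d K(\partial_h) \geq c$ on $U_\Sigma$. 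Moreover $\lVert X_K \rVert_\infty$ on $\overline{U_\Sigma}$ is bounded by a constant $C_1$ depending only on $\BCOne$, $H_0$ and $J$. Finally, $\d K$ vanishes outside $[-h_0,h_0]\times\Sigma$, so $X_K$ is globally bounded by some $C_1'$ depending only on the $C^1$-data of $H$ on the compact region $[-h_0,h_0]\times \Sigma$. I suspect one needs a uniform bound there too; failing that, one works only near $\Sigma$, which the argument below permits. The key additional point is that $|K - H_0| \leq \delta$. Choose $\eps_0>0$ with $|H_0| \geq 2\eps_0$ on $S\Sigma \setminus U_\Sigma$; this is possible since $H_0^{-1}(0)=\Sigma$ and $H_0 = $ const $\neq 0$ near infinity. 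Then, for $\delta \leq \eps_0$, we get $|K| \geq \eps_0$ off $U_\Sigma$.

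The second step is the standard three-case estimate. Suppose $|\nabla \Rabinowitz{K}(v,\tau)| \leq \delta'$, with $\delta'$ still to be fixed, so that $\lVert \partial_t v - \tau X_K(v)\rVert_{L^2} \leq \delta'$ and $|\int_0^1 K(v)\,\d t| \leq \delta'$.

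\textbf{Case 1:} $v(t) \in U_\Sigma'$ for all $t$, where $U_\Sigma' \subseteq U_\Sigma$ is a smaller neighborhood on which $|K| \leq \eps_0$ fails outside. Then
\[
\Rabinowitz{K}(v,\tau) = \tau\int_0^1 \lambda(X_K(v))\,\d t + \int_0^1 \lambda(v)\big(\partial_t v - \tau X_K(v)\big)\,\d t - \tau\int_0^1 K(v)\,\d t .
\]
The first term is at least $c|\tau|$ in absolute value. The second term is bounded by $\lVert\lambda\rVert_{\overline{U_\Sigma}}\,\delta'$, and the third by $|\tau|\delta'$. Taking $\delta' \leq c/2$ yields $|\tau| \leq \tfrac{2}{c}\big(|\Rabinowitz{K}(v,\tau)| + C_2\big)$.

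\textbf{Case 2:} $v$ leaves $U_\Sigma'$. Since $|\int K(v)| \leq \delta' < \eps_0$ small, $v$ must also meet a region where $|K|$ is small, hence pass through the annular region $U_\Sigma \setminus U_\Sigma'$ of fixed width. Here I would use the mean-value trick of Cieliebak--Frauenfelder: if $v$ stays long (in $t$-measure) where $|K| \geq \eps_0$, the integral constraint forces large $|K|$ somewhere else. Either way $v$ crosses from $\{|K|\leq \eps_0/2\}$ to $\{|K|\geq \eps_0\}$ along a segment of length $\ell \geq \ell_0 > 0$. Along this segment, the derivative of $K(v)$ is controlled by $\lVert \d K\rVert \,\lVert\partial_t v\rVert$, and $\d K(X_K) = 0$. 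Hence
\[
\tfrac{\eps_0}{2} \leq \int |\d K(v)\,(\partial_t v - \tau X_K(v))|\,\d t \leq C_1\,\delta' .
\]
This is a contradiction for $\delta' < \eps_0/(2C_1)$. This is where the $C^1$-bound on $K$ near $U_\Sigma$ is essential, and it explains why $C^1$-data are needed. Case 2 therefore cannot occur for small $\delta'$.

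The main obstacle is making Case 2 uniform in $H$: the estimate must use only $\lVert \d K\rVert_\infty$ on the compact region where $K$ passes between the two levels, and the levels $\eps_0/2,\eps_0$ must be chosen independently of $H$. This is exactly why we take $\delta$ (the $C^0$-closeness) small relative to $\eps_0$, so that these level regions of $K$ lie in fixed sets $\{\eps_0/4 \leq |H_0| \leq 2\eps_0\}$ contained in $\overline{U_\Sigma}$, where $\BCOne$ supplies the $C^1$-bound. With these choices, the constants $C := \tfrac{2}{c}\max(1,C_2)$ and $\delta' := \min\big(c/2, \eps_0/(2C_1)\big)$ are independent of $H$, $r$ and $s$.
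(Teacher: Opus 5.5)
Your proposal is correct and follows essentially the same route as the paper's proof (itself an adaptation of Cieliebak--Frauenfelder). It combines a bound on $|\tau|$ in terms of $|\RabinowitzHrs(v,\tau)|$ for loops contained in a fixed neighborhood of $\Sigma$, using $\lambda(X_K)\ge c$ there, with the argument that a small gradient forces the loop into that neighborhood; that argument uses the sign argument for $\int_0^1 K(v)\,\d t$ and the crossing estimate based on $\d K(X_K)=0$ and the uniform $C^1$-bound from $\BCOne$.

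The differences are cosmetic. You bound $|\tau\int_0^1 K(v)\,\d t|$ by $|\tau|\,\delta'$ via the gradient, where the paper uses $2\kappa|\tau|$ via smallness of $H_{r,s}$ on $U_\kappa$. Your lower bound $\ell_0$ on the segment length is unjustified as stated but also unnecessary. What actually matters, as in the paper's footnote, is to start the crossing segment at the last time with $|K(v)|\ge\eps_0$, so that it stays in $\overline{U_\Sigma}$ where $|\d K|$ is uniformly bounded. You should also impose $\delta'\le\eps_0/2$ and take the final $\delta$ to be the minimum of your closeness and gradient thresholds.
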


\noindent The proof of the lemma is entirely analogous to the one of \cite[Prop.~3.4]{Cieliebak_Frauenfelder} but, for the sake of completeness, presented in Appendix \ref{app sec: Cieliebak-Frauenfelder estimate}. It relies crucially on the fact that we restrict ourselves to Hamiltonians $H$ that comply with $\Dscr \,$.

\begin{proof}[Proof of Proposition \ref{prop: uniform Lagrange multiplier bound}]
    Let $\Dscr$ be an arbitrary $C^1$-datum which we keep fixed throughout. Choose constants $C = C(\Dscr) , \,\delta=\delta(\Dscr) > 0$ as in Lemma \ref{lem: Cieliebak-Frauenfelder}. The parabola
    \begin{align}
    \label{eq: Lagrange multiplier bound - parabola}
    P(x) :=  \frac{2}{\delta^2} \, x^2 +x \left( 2 C + \frac{2 \lVert H_0 \rVert_\infty}{\delta^2} \right)  \quad , \,\, x \in \R \, ,
    \end{align}
    goes through the origin and is positive for $x > 0 \,$. Hence we can choose $\delta^* \in (0,\delta)$ with $P(\delta^*) \in (0,1) \,$. We define
    \[
    T^* := \frac{C (\tau(\alpha) +1)}{1-P(\delta^*)} > 0 \,\, .
    \]
    We claim that $\delta^*$ and $T^*$ have the significance stated in Proposition \ref{prop: uniform Lagrange multiplier bound}. To this end, let $H \in \Hcal^{h_0}(H_0,\Dscr, \delta^*)$ and $(r,w) = (r,(u,\eta)) \in \moduli^H$ be arbitrary. We have to show that $\lVert \eta \rVert_\infty \leq T^* \,$.

    We start by observing that, because $\lVert H - H_0 \rVert_\infty \leq \delta^* \,$, we have
    \begin{align*}
        |\RabinowitzH(v,\tau) - \RabinowitzHZero(v,\tau)| &= \big| \tau \int_0^1 (H-H_0)(v) \, \d t \big| \leq \delta^* \, |\tau| \qquad \Forall (v,\tau) \in \loopspace \times \R 
    \end{align*}
    and therefore
    \begin{equation}
    \label{eq: Lagrange multiplier bound - Delta estimate}
      \sup_{\mathrm{im}(w)} |\RabinowitzH - \RabinowitzHZero| \leq \delta^* \, \lVert \eta \rVert_\infty  \,\, .
    \end{equation}
    Now let $\sigma \in \R$ be arbitrary but fixed. We define
    \begin{align*}
        \tau_\sigma := \inf \set{s \geq 0}{ | \nabla \RabinowitzHParam{r,\sigma + s} (w(\sigma +s ) ) |_{g_J} < \delta} \in [0,\infty] \,\, .
    \end{align*}
    Observe that 
    \begin{align*}
        \infty > E(w) &= \int_{-\infty}^{\infty} |\nabla \RabinowitzHrs(w(s))|_{g_J}^2 \, \d s \geq \int_{\sigma}^{\sigma + \tau_\sigma} |\nabla \RabinowitzHrs(w(s))|_{g_J}^2 \, \d s \geq \tau_\sigma \, \delta^2 \,\, ,
    \end{align*}
    so that $\tau_\sigma$ is finite and
    \begin{equation*}
    \tau_\sigma \leq \frac{E(w)}{\delta^2} \,\, .
    \end{equation*}
    Now Lemma \ref{lem: action estimate for w in moduli} and estimate \eqref{eq: Lagrange multiplier bound - Delta estimate} imply that
    \[
    E(w) \leq 2 \sup_{\mathrm{im}(w)} |\RabinowitzH - \RabinowitzHZero| \leq 2 \, \delta^* \, \lVert \eta \rVert_\infty 
    \]
    and hence
    \begin{equation}
    \label{eq: Lagrange multiplier bound - tau_sigma}
    \tau_\sigma \leq  \frac{2 \delta^*}{\delta^2} \, \lVert \eta \rVert_\infty \,\, .
    \end{equation}
   Setting $a := -\infty$ and $b := \sigma + \tau_\sigma $ in \eqref{eq: action estimate - E_a^b(w) for w in moduli^H} and using \eqref{eq: action estimate - int beta'_r (RabinowitzH - RabinowitzHZero)}, \eqref{eq: action estimate - asymptotic action} and \eqref{eq: Lagrange multiplier bound - Delta estimate}, we infer
   \begin{align*}
       \tau(\alpha) - 2 \delta^* \, \lVert \eta \rVert_\infty \leq \RabinowitzHParam{r,\sigma + \tau_\sigma} (w(\sigma + \tau_\sigma)) \,\, .
   \end{align*}
   Repeat the same argument with $a := \sigma + \tau_\sigma$ and $b := \infty \,$, to obtain 
   \[
  \RabinowitzHParam{r,\sigma + \tau_\sigma} (w(\sigma + \tau_\sigma)) \leq \tau(\alpha) + 2 \delta^* \, \lVert \eta \rVert_\infty  \,\, ,
   \]
   so that
   \begin{equation}
    \label{eq: Lagrange multiplier bound - RabinowitzHrs bound}
    |\RabinowitzHParam{r,\sigma + \tau_\sigma} (w(\sigma + \tau_\sigma)) | \leq \tau(\alpha) +  2 \delta^* \, \lVert \eta \rVert_\infty \,\, .
   \end{equation}
   Since $\delta^* < \delta \,$, we know that $H \in \Hclassdelta{\delta^*} \subseteq \Hclassdelta{\delta} \,$. Observe moreover that, by definition of $\tau_\sigma \,$, it holds
   \[
   |\nabla \RabinowitzHParam{r,\sigma + \tau_\sigma} (w(\sigma + \tau_\sigma))|_{g_J} \leq \delta \,\, .
   \]
   Having chosen $C$ and $\delta$ as in Lemma \ref{lem: Cieliebak-Frauenfelder}, we conclude that
   \begin{align}
   \label{eq: Lagrange multiplier bound - eta(sigma + tau_sigma) bound}
    |\eta(\sigma + \tau_\sigma)| \leq C \, |\RabinowitzHParam{r,\sigma + \tau_\sigma}(w(\sigma + \tau_\sigma))| + C \,\, .
   \end{align}
   The Lagrange multiplier component in the gradient equation \eqref{eq: s-dependent gradient eq} allows us to estimate
   \begin{align*}
       |\eta(\sigma)| &\leq |\eta(\sigma + \tau_\sigma)| + \int_{\sigma}^{\sigma + \tau_\sigma} |\partial_s \eta(s)| \, \d s \\
       &= |\eta(\sigma + \tau_\sigma)| + \int_{\sigma}^{\sigma + \tau_\sigma} \Big| \int_0^1 H_0(u(s,t)) \, \d t + \underbrace{\beta_r(s)}_{ 0 \leq \bullet \leq 1} \int_0^1 (H- H_0)(u(s,t)) \, \d t \Big| \, \d s \\
       &\leq |\eta(\sigma + \tau_\sigma)| + \tau_\sigma \big( \lVert H_0 \rVert_\infty + \underbrace{\lVert H - H_0 \rVert_\infty}_{\leq \delta^*} \big) \\
       &\leq C \left( \tau(\alpha) + 2 \delta^* \, \lVert \eta \rVert_\infty\right) + C + \frac{2 \delta^*}{\delta^2} \, \lVert \eta \rVert_\infty \, \big( \lVert H_0 \rVert_\infty + \delta^* \big)\,\, ,
   \end{align*}
   where for the last inequality we use the estimates \eqref{eq: Lagrange multiplier bound - tau_sigma}, \eqref{eq: Lagrange multiplier bound - RabinowitzHrs bound} and \eqref{eq: Lagrange multiplier bound - eta(sigma + tau_sigma) bound}. Notice that the right-hand side is independent of $\sigma \,$. 
    
    Since $\sigma$ was arbitrary, we have shown
    \begin{align*}
      \lVert \eta \rVert_\infty &\leq C \left( \tau(\alpha) + 2 \delta^* \, \lVert \eta \rVert_\infty\right) + C + \frac{2 \delta^*}{\delta^2} \, \lVert \eta \rVert_\infty \, \big( \lVert H_0 \rVert_\infty + \delta^* \big)  \\
      &= P(\delta^*) \, \lVert \eta \rVert_\infty + C (\tau(\alpha) + 1) \,\, ,
    \end{align*}
    where we remind the reader of the definition of the parabola $P$ in \eqref{eq: Lagrange multiplier bound - parabola}. Rearranging and dividing by $1 - P(\delta^*) \in (0,1)$ yields
    \begin{align*}
        \lVert \eta \rVert_\infty \leq \frac{C(\tau(\alpha) +1)}{1 - P(\delta^*)} = T^* \,\, .
    \end{align*}
    This finishes the proof of the proposition.
   \end{proof}

\begin{corollary}
\label{cor: uniform Lagrange multiplier bound}
For given $C^1$-datum $\Dscr$ let $T^* = T^*(\Dscr), \, \delta^* = \delta^*(\Dscr)>0$ be as in Proposition \ref{prop: uniform Lagrange multiplier bound}. Then for every $0 < \delta \leq \delta^* \,$, every $H \in \Hclassdelta{\delta}$ and every $(r,w) \in \moduli^H$ it holds
\[
\sup_{\mathrm{im}(w)} |\RabinowitzH - \RabinowitzHZero| \leq \delta \,T^*
\]
and moreover $E(w) \leq 2 \delta \,T^* \,$.
\end{corollary}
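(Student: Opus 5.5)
The plan is to combine the pointwise comparison of the two functionals, already used in the proof of Proposition \ref{prop: uniform Lagrange multiplier bound}, with the uniform bound on the Lagrange multiplier from that proposition, and then feed the result into Lemma \ref{lem: action estimate for w in moduli}.

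First, fix $0 < \delta \leq \delta^*$, a Hamiltonian $H \in \Hclassdelta{\delta}$ and a pair $(r,w) = (r,(u,\eta)) \in \moduli^H$. By the inclusion \eqref{eq: Hclassparam inclusion for varying parameters}, with $h_0$ and $\Dscr$ unchanged and $\delta \leq \delta^*$, we have $H \in \Hclassdelta{\delta^*}$. Proposition \ref{prop: uniform Lagrange multiplier bound} therefore applies and gives $\lVert \eta \rVert_\infty \leq T^*$.

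Next, for an arbitrary $(v,\tau) \in \loopspace \times \R$ the $\lambda$-terms of the two functionals cancel, so
\[
|\RabinowitzH(v,\tau) - \RabinowitzHZero(v,\tau)| = \Big|\tau \int_0^1 (H - H_0)(v(t)) \, \d t\Big| \leq |\tau| \, \lVert H - H_0 \rVert_\infty \leq \delta \, |\tau| \,\, .
\]
Now evaluate this at the points $w(s) = (u_s, \eta(s))$ of the image of $w$. Then $|\tau| = |\eta(s)| \leq T^*$, and taking the supremum over $s \in \R$ yields the first claim, $\sup_{\mathrm{im}(w)} |\RabinowitzH - \RabinowitzHZero| \leq \delta T^*$.

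Finally, Lemma \ref{lem: action estimate for w in moduli} states $E(w) \leq 2\Delta$ with $\Delta = \sup_{\mathrm{im}(w)} |\RabinowitzH - \RabinowitzHZero|$. Inserting $\Delta \leq \delta T^*$ gives $E(w) \leq 2\delta T^*$.

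There is no real obstacle here. The only point needing care is that $T^*$ is independent of $\delta$: it comes from $\delta^*$ alone, and this is exactly what the monotonicity of the classes $\Hclassdelta{\delta}$ in $\delta$ ensures.
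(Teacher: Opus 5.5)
Your proposal is correct and follows the paper's proof essentially verbatim: the Lagrange multiplier bound $\lVert \eta \rVert_\infty \leq T^*$ turns the pointwise estimate $|\RabinowitzH - \RabinowitzHZero|(w(s)) \leq |\eta(s)| \, \lVert H - H_0 \rVert_\infty$ into the bound $\delta T^*$, and Lemma \ref{lem: action estimate for w in moduli} then gives $E(w) \leq 2\delta T^*$. Your explicit appeal to the inclusion \eqref{eq: Hclassparam inclusion for varying parameters} to place $H$ in $\Hclassdelta{\delta^*}$ makes precise a step the paper leaves implicit.
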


\begin{proof}
    Write $w = (u,\eta) \in \Cinfty(\R,\loopspace \times \R) \,$. For every $s \in \R$ we have
    \[
    |\RabinowitzH(w(s)) - \RabinowitzHZero(w(s))| = \Big| \eta(s) \int_0^1 (H-H_0)(u(s,t)) \, \d t \Big| \leq T^* \, \lVert H - H_0 \rVert_\infty \leq T^* \delta \,\, ,
    \]
    showing the first inequality. (This is the same computation as for the estimate in \eqref{eq: Lagrange multiplier bound - Delta estimate}.) The energy estimate now follows from Lemma \ref{lem: action estimate for w in moduli}.
\end{proof}

\subsection{Cylinders are confined to a compactum}
\label{subsec: Preparation - Cylinders in compact region}

\noindent For the desired compactness of the moduli spaces $\moduli^H_{[0,R]}$ we not only need to bound the Lagrange multiplier of its elements uniformly but also ensure that the cylinders $u = (u_\R,u_\Sigma) \subseteq \R \times \Sigma$ are contained in a common compact region. The non-trivial part here is bounding $u_\R$ from below. To this end we will borrow an argument given by Albers-Fuchs-Merry in the proof of \cite[Thm.\ 3.9]{Albers_Fuchs_Merry_2015}.

In this subsection we additionally assume that the almost complex structure $J$ on $S \Sigma$ is of SFT-type (Definition \ref{def: SFT-type almost complex structure}). Below, we will use the notion of the \textit{Hofer energy} $E_{\mathrm{Hof}}(u)$ of a $(j,J)$-holomorphic map $u : Z \rightarrow S \Sigma \,$, whose domain is a Riemann surface $(Z,j)$ (possibly with boundary, disconnected). It is defined by
\[
E_{\mathrm{Hof}}(u) := \sup_\nu  \int_Z u^* \d (\nu \alpha) \in [0,\infty] \,\, ,
\]
where the supremum runs over all $\nu \in \Cinfty(\R,[0,1])$ with $\nu' \geq 0 \,$. Note that, since $J$ is of SFT-type and $u$ is $(j,J)$-holomorphic, the integrand is an everywhere non-negatively oriented 2-form, that is
\[
u^* \d (\nu \alpha) = u^*(\nu \, \d \alpha + \nu'(h) \, \d h \wedge \alpha) \geq 0 \quad \text{ on } Z \, .
\]

\begin{proposition}
\label{prop: cylinders in compact region}
Assume additionally that $J$ is of SFT-type. For every $C^1$-datum $\Dscr$ there exist constants $\delta^*_1 > 0$ and $h_0^- < 0$ with the following significance. For every $H \in \Hclassdelta{\delta^*_1} $ and every $(r,(u,\eta)) \in \moduli^H \,$, the cylinder $u \in \Cinfty(\R \times \bbS^1,S\Sigma)$ is confined to $[h_0^-,h_0] \times \Sigma\,$, that is
\[
\image(u) \subseteq [h_0^-,h_0] \times \Sigma \subseteq S \Sigma \,\, .
\]
\end{proposition}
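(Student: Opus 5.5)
The upper bound $u_\R \leq h_0$ and the lower bound $u_\R \geq h_0^-$ need different arguments; the upper bound is a maximum principle, the lower bound an energy estimate in the spirit of \cite[Thm.~3.9]{Albers_Fuchs_Merry_2015}.

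\emph{Upper bound.} On $S\Sigma \backslash ((-h_0,h_0)\times\Sigma)$ both $H_0$ and $H$ have vanishing differential, since they lie in $\Hcal^{h_0}$. Hence $X_{H_0}=X_H=0$ there, and \eqref{eq: s-dependent gradient eq} says that $u$ is $J$-holomorphic on $u^{-1}((h_0,\infty)\times\Sigma)$. Because $J$ is of SFT-type, $u_\R$ is subharmonic wherever $u$ is $J$-holomorphic; indeed $\Delta u_\R = |\partial_s u|^2$ up to the $\d\alpha$-part, which is nonnegative. The asymptotics $(u_{\pm\infty},\tau(\alpha))\in\CCrit$ force $u_\R \to 0$ at both ends. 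So if $u_\R$ exceeded $h_0$ somewhere, the open set where $u_\R > h_0$ would be relatively compact in $\R\times\bbS^1$, and $u_\R$ would attain an interior maximum there. The strong maximum principle then makes $u_\R$ constant on that component, which contradicts $u_\R = h_0$ on its boundary.

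\emph{Lower bound.} Here the plan is to combine the Hofer energy with monotonicity.
\begin{enumerate}[label=(\roman*)]
\item Restrict $\delta^*_1 \leq \delta^*$, so that Corollary \ref{cor: uniform Lagrange multiplier bound} applies. This gives $\lVert\eta\rVert_\infty\leq T^*$ and $E(w)\leq 2\delta^*_1 T^*$.
\item Suppose $u$ reaches some level $h_0^- \ll -h_0$. Consider the part $Z := u^{-1}((-\infty,-h_0)\times\Sigma)$. On $Z$ the map $u$ is $J$-holomorphic, and its boundary lies on $\{-h_0\}\times\Sigma$.
\item Bound the Hofer energy of $u|_Z$ uniformly. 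The $\d\alpha$-part $\int_Z u^*\d\alpha$ should be controlled by $e^{h_0}$ times the Floer energy $E(w)$ on $Z$. The $\d h\wedge\alpha$-part should be controlled through Stokes' theorem on the boundary loops: the boundary sits at height $-h_0$, and there $u^*\alpha$ is controlled by $\partial_t u$, which the gradient equation bounds in terms of $|\eta|\leq T^*$, $\lVert X_{H}\rVert$ (bounded via $\Dscr$) and $\partial_s u$.
\item Choose $\delta^*_1$ so small that $E(w)$, and hence $\int_Z u^*\d\alpha$, is below the minimal Reeb period $\tau(\alpha)$. This is exactly where the spectral gap enters.
\item Apply SFT-compactness or monotonicity. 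The lemma of Albers--Fuchs--Merry says that a $J$-holomorphic curve in the negative end which travels far down from its boundary must either carry $\d\alpha$-energy of at least the minimal period or have long boundary. Translation invariance of $J$ makes this uniform. Hence there is a depth $h_0^-$ beyond which no component of $Z$ can reach.
\end{enumerate}

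The main obstacle is step (iii): bounding the boundary contribution, that is, the lengths of the loops of $u$ at level $-h_0$ or their $\alpha$-integrals, uniformly in $H$, $r$ and $w$. I would handle it as Albers--Fuchs--Merry do, choosing the cut level generically and averaging over levels in $[-2h_0,-h_0]$, so that the boundary $\alpha$-integral is bounded by the energy on a slab plus $T^*\sup|X_H|$ terms. The slab energy is small by Corollary \ref{cor: uniform Lagrange multiplier bound}, and $X_H$ on it equals $X_{H_0}$ modulo the bound from $\Dscr$.
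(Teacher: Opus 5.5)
Your architecture matches the paper's proof. The upper bound comes from the maximum principle. For the lower bound, the part of the cylinder below $-h_0$ gets a bound $E_{\mathrm{Hof}}(u|_Z)\le e^{h_0}E(w)$. This is combined with Corollary~\ref{cor: uniform Lagrange multiplier bound} and $\delta_1^*=\min\{\delta^*,\,e^{-h_0}\tau(\alpha)/(4T^*)\}$, which makes the Hofer energy at most $\tau(\alpha)/2$. In your step (iv) the smallness must therefore be imposed on $e^{h_0}E(w)$, not on $E(w)$. Then \cite[Thm.~5.3]{Albers_Fuchs_Merry_2015} is applied by contradiction to a sequence with $\inf h\circ u_n\to-\infty$, cut at a common regular value and shifted using translation invariance. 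This produces a Reeb orbit of period at most $\tau(\alpha)/2$, contradicting minimality of $\tau(\alpha)$. As the paper uses that theorem, it needs only three inputs: a uniform Hofer bound, depth tending to $-\infty$, and that the pieces $Z_n$ are compact genus-zero surfaces with boundary on a fixed level (true as subsurfaces of the cylinder). No ``long boundary'' alternative enters.

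Where your plan goes wrong is step (iii). The ``main obstacle'' you single out does not exist, and the pointwise route you first propose for it would fail. Since $\partial Z$ lies in a single level $\{h=c\}$, with $c\le -h_0$ a regular value of $h\circ u$, one has $u^*(\nu\alpha)|_{\partial Z}=\nu(c)\,e^{-c}\,u^*\lambda|_{\partial Z}$. Stokes twice then gives
\[
\int_Z u^*\d(\nu\alpha)=\nu(c)\,e^{-c}\int_{\partial Z}u^*\lambda=\nu(c)\,e^{-c}\int_Z u^*\omega\le e^{-c}\int_Z|\partial_s u|^2\,\d s\,\d t\le e^{-c}E(w) .
\]
Here $u$ is $J$-holomorphic on $Z$ because $X_H=X_{H_0}=0$ there, so $u^*\omega=|\partial_s u|^2\,\d s\wedge\d t$.

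This handles the $\d h\wedge\alpha$-part and the $\d\alpha$-part at once. Indeed $\int_Z u^*\d(\nu\alpha)=\nu(c)\int_Z u^*\d\alpha$, so bounding the $\d\alpha$-part already bounds the Hofer energy. Using $u^*\d(\nu\alpha)\ge 0$ and letting $c\uparrow -h_0$ through regular values gives $E_{\mathrm{Hof}}\le e^{h_0}E(w)$ for every sublevel set cut at a regular value $\le -h_0$, which is exactly the paper's claim. No boundary lengths, no averaging over levels and no $T^*\sup|X_H|$ terms are involved; the latter vanish below $-h_0$ anyway.

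By contrast, controlling $u^*\alpha$ on $\partial Z$ through $\partial_t u$ does not work, for three reasons:
\begin{itemize}
\item $\partial Z$ consists of level curves of $h\circ u$, not $t$-loops.
\item The length of these curves in the domain is uncontrolled.
\item $\partial_t u=J\partial_s u$ has only an $L^2$ bound, not a uniform pointwise one. The paper's Gromov compactness, Proposition~\ref{prop: Gromov Compactness}, presupposes exactly the compact region you are trying to establish.
\end{itemize}
Likewise, a pointwise estimate $u^*\d\alpha\le e^{-h}u^*\omega$ on $Z$ is useless, since $e^{-h}$ is unbounded there. The factor $e^{h_0}$ arises only through Stokes at constant height.
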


\begin{proof}
   Establishing the upper bound is a standard argument.

   \begin{claim}
    For every Hamiltonian $H \in \Hcal^{h_0}$ and every $(r,(u,\eta)) \in \moduli^H$ the image of the cylinder $u$ is contained in $(-\infty,h_0] \times \Sigma \,$.
   \end{claim}
   \begin{proofClaim}
        Assume by contradiction that $V := u^{-1}((h_0,\infty) \times \Sigma)$ is non-empty. Notice that $V$ is precompact in $\R \times \bbS^1$ since $u$ has asymptotics in $\{0\} \times \Sigma \subseteq (-\infty ,h_0) \times \Sigma \,$. The restriction $u|_V$ is $J$-holomorphic by the gradient equation \eqref{eq: s-dependent gradient eq} and because the Hamiltonian vector fields $X_{H_0}$ and $X_H$ vanish on $(\R \backslash (-h_0,h_0)) \times \Sigma $ as $H_0,H \in \Hcal^{h_0}$. Thus, using that $J$ is of SFT-type, the $\R$-component $u_\R = h \circ u$ is subharmonic on $V$, see \cite[Prop.~3.4]{Beginners_Symplectic_Homology}. By the maximum principle, $u_\R$ must attain its global maximum on the boundary of $V$, an obvious contradiction.
   \end{proofClaim}

   \noindent Next we deal with the bound from below.
   \begin{claim}
       Let $H \in \Hcal^{h_0}$ and $(r,w) = (r,(u,\eta)) \in \moduli^H$ be arbitrary. Let $h_{\mathrm{reg}} \leq - h_0$ be a regular value of the $\R$-component $h \circ u$ for which $Z := u^{-1}((-\infty,h_{\mathrm{reg}}] \times \Sigma)$ is non-empty. Then the Hofer energy of $u|_Z$ is bounded by
       \[
       E_{\mathrm{Hof}}(u|_Z) \leq e^{h_0} E(w) \,\, .
       \]
   \end{claim}
   \noindent Notice that $Z$ is a compact Riemann surface with boundary $\partial Z = u^{-1}(\{h_{\mathrm{reg}}\} \times \Sigma) \,$. Moreover, $u|_Z$ is $J$-holomorphic and non-constant on each component of $Z$.
   \begin{proofClaim}
   Choose an increasing sequence $(h_n)_n \subseteq [h_{\mathrm{reg}}, -h_0]$ of regular values approaching $-h_0 \,$. We write $Z_n := u^{-1}((-\infty,h_n] \times \Sigma) \,$. 
      For arbitrary $\nu \in \Cinfty(\R,[0,1])$ with $\nu' \geq 0 \,$, using Stokes' theorem twice,  we estimate
      \begin{align*}
          &\phantom{\leq} \,\int_Z u^* \d (\nu \alpha) \leq \int_{Z_n} u^* \d (\nu \alpha) = \int_{\partial Z_n}  u^* (\nu \alpha) = \nu(h_n) \, e^{-h_n} \int_{Z_n} u^*\d (e^h \alpha) \\
          &\leq \nu(h_n) \, e^{-h_n} \int_{-\infty}^{\infty} \int_0^1 \omega(\partial_s u , \, J(u) \partial_s u) \, \d t \, \d s \leq   e^{-h_n} E(w) \,\, .
      \end{align*}
    Thus $E_{\mathrm{Hof}}(u|_Z) \leq e^{-h_n} E(w)$ and the claim follows by taking the limit $n \rightarrow \infty \,$.
   \end{proofClaim}
   \noindent Now let $\Dscr$ be a given $C^1$-datum. Choose constants $T^* = T^*(\Dscr) > 0$ and $\delta^* = \delta^*(\Dscr)> 0$ as in Proposition \ref{prop: uniform Lagrange multiplier bound} and set
   \[
   \delta_1^* := \mathrm{min}\left\{\delta^* \comma \frac{e^{-h_0} \tau(\alpha)}{4 T^*}\right\} > 0 \,\, .
   \]
   Assume by contradiction that, for his $\delta_1^* \,$, there does not exist a lower bound $h_0^- < 0$ with the significance stated in the proposition. Then there exist sequences $H_n \in \Hclassdelta{\delta_1^*}$ and $(r_n,w_n) = (r_n,(u_n,\eta_n)) \in \moduli^{H_n}$ so that
   \[
   \inf_{\R \times \bbS^1} h \circ u_n \searrow -\infty \,\,\,\text{ as } n \rightarrow \infty \,\, .
   \]
    Choose a common regular value $h_{\mathrm{reg}} < - h_0$ of the $h \circ u_n$ (possible by Sard's theorem). Then, for almost every $n \in \N \,$,
    \[
    Z_n := u_n^{-1}((-\infty, h_{\mathrm{reg}} ] \times \Sigma)
    \]
    is a non-empty, compact Riemann surface with boundary of genus zero.\footnote{
    Given an orientable compact surface $S$ of genus $g$ and a compact subsurface $S' \subseteq S$ of genus $g' \,$, it holds $0 \leq g' \leq g \,$, see \cite[Thm.\ 4.3]{Axon_EndSumSurfaces}. The compact cylinder $\Rbar \times \bbS^1$ has genus zero.
    } 
    By the claim and Corollary \ref{cor: uniform Lagrange multiplier bound}, we can bound the Hofer energy by
    \[
    E_{\mathrm{Hof}}(u_n|_{Z_n}) \leq e^{h_0} E(w_n) \leq 2 \,\delta_1^* \, T^* \, e^{h_0} \leq \tfrac{1}{2} \tau(\alpha) \,\, .
    \]
    Due to this uniform bound on the Hofer energy and because $\inf_{Z_n} h \circ u_n$ tends to minus infinity, we can now apply \cite[Thm.\ 5.3]{Albers_Fuchs_Merry_2015} to the sequence of $J$-holomorphic curves $u_n|_{Z_n}$ (shifted by $h_{\mathrm{reg}}$ in the $\R$-component) and obtain existence of a periodic Reeb orbit on $(\Sigma,\alpha)$ of period at most $\tfrac{1}{2} \tau(\alpha) \,$. This is an obvious contradiction, $\tau(\alpha)$ being the minimal Reeb period.
\end{proof}

\subsection{Gromov compactness}
\label{subsec: Preparation - Gromov Compactness}

Having found uniform bounds on the energy, the Lagrange multiplier and the cylinder component, we obtain Gromov compactness for sequences in the moduli space $\moduli^H_{[0,R]} \,$. 

We record this in the next proposition in a slightly more general fashion. In fact, the proposition holds true for arbitrary exact symplectic manifolds $(W,\lambda)$ with fixed $\omega = \d \lambda$-compatible almost complex structure $J$.

\begin{convention}
\label{convention: CinftylocConvergence on loopspace times R}
Given a sequence 
\[
(w_n)_n = (u_n,\eta_n)_n \in \Cinfty(\R,\loopspace \times \R) = \Cinfty(\R \times \bbS^1,W) \times \Cinfty(\R,\R) \,\, ,
\]
we say \textit{the sequence $(w_n)_n$ converges in} $\Cinftyloc(\R,\loopspace \times \R)$ if the sequence $(u_n,\eta_n)_n$ converges in $\Cinftyloc(\R \times \bbS^1,W) \times \Cinftyloc(\R,\R) \,$.  
\end{convention}

\begin{proposition}[Gromov compactness]
\label{prop: Gromov Compactness}
    Let $H \in \Cinfty( [0,R] \times \R \times W,\R)$ be a Hamiltonian, which we also write in the form $H_{r,s}(x) :=  H(r,s,x) \,$, and suppose there exists $s_0 > 0$ so that $H$ is $s$-independent for $|s| \geq s_0 \,$, that is
    \[
    H_{r,s} = H_{r,\, \pm s_0} \qquad \text{ for } \pm s \geq s_0 \comma r \in [0,R] \,\, .
    \]
    Let $(r_n,w_n)_n = (r_n, (u_n,\eta_n))_n \subseteq [0,R] \times \Cinfty(\R,\loopspace \times \R)$ be a sequence of gradient flow lines, that is
    \begin{align*}
        \partial_s w_n(s) = \nabla \Rabinowitz{H_{r_n,s}}(w_n(s)) \qquad \Forall s \in \R \comma n \in \N \, ,
    \end{align*}
    with
    \begin{enumerate}[label=\arabic*)]
        \item\label{it: Gromov Compactness - uniform energy bound} a uniform energy bound $E(w_n) = \int_{-\infty}^{\infty} |\partial_s w_n|_{g_J}^2 \, \d s \leq E_0 \comma n \in \N \,$,
         \item\label{it: Gromov Compactness - Lagrange multiplier bound} a uniform Lagrange multiplier bound $\lVert \eta_n \rVert_\infty \leq T^* \comma n \in \N \,$,
        \item\label{it: Gromov Compactness - compact region} there exists a compact region $K \subseteq W$ containing $\mathrm{im}(u_n)$ for every $n\in \N \,$.
    \end{enumerate}
Then a subsequence of $(r_n,w_n)_n$ converges in $[0,R] \times \Cinftyloc(\R,\loopspace \times \R) \,$.
\end{proposition}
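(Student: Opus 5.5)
The plan is the standard Floer-theoretic compactness argument: obtain uniform $C^1$-bounds on $(u_n,\eta_n)$ on compact subsets of $\R \times \bbS^1$, bootstrap them to $C^k_{\mathrm{loc}}$-bounds by elliptic regularity, and conclude with \ArzelaAscoli plus a diagonal argument. Since $[0,R]$ is compact, I first pass to a subsequence with $r_n \to r_\infty \in [0,R]$. Because $H$ is smooth on $[0,R] \times \R \times W$ and $s$-independent for $|s| \geq s_0$, the functions $H_{r,s}$, $X_{H_{r,s}}$ and all their derivatives are uniformly bounded on $[0,R] \times \R \times K$. Combined with hypotheses \ref{it: Gromov Compactness - Lagrange multiplier bound} and \ref{it: Gromov Compactness - compact region}, the second equation $\partial_s \eta_n = -\int_0^1 H_{r_n,s}(u_n)\,\d t$ gives uniform bounds on $\eta_n$ and $\partial_s\eta_n$. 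Differentiating this equation in $s$ expresses $\partial_s^k \eta_n$ through derivatives of $u_n$ up to order $k-1$. Hence the $\eta$-component is controlled as soon as $u_n$ is.

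The main step is a uniform gradient bound $\sup |\partial_s u_n| + |\partial_t u_n| < \infty$. I argue by contradiction via bubbling. Suppose there are points $z_n=(s_n,t_n)$ with $|\d u_n(z_n)| \to \infty$. Since the $s$-equation is translation-covariant and $H$ is $s$-independent for $|s|\geq s_0$, I can shift the points into a bounded window. Hofer's lemma then lets me rescale at those points. The rescaled maps $v_n(z) := u_n(z_n + z/R_n)$ satisfy $\partial_s v_n + J(v_n)\partial_t v_n = R_n^{-1}\eta_n X_{H_{r_n,\cdot}}(v_n)$. The right-hand side tends to zero uniformly because $\eta_n$ is bounded and $X_H$ is bounded on $K$. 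The rescaled maps have bounded gradient and stay in $K$, so by the standard elliptic estimates for the perturbed Cauchy-Riemann operator they converge in $\Cinftyloc$ to a nonconstant $J$-holomorphic plane $v:\C\to K$. Its $\omega$-energy is finite, being bounded by the Floer energy $\leq E_0$ up to the bounded Hamiltonian term. Removal of singularities then extends $v$ to a nonconstant $J$-holomorphic sphere in $W$. This contradicts exactness: the energy of the sphere is $\int_{\bbS^2} v^*\d\lambda = 0$ by Stokes. One technical point needs care here. The energy functional $E(w)=\int|\partial_s w|^2$ controls $\int\omega(\partial_s u, J\partial_s u)$, and the latter equals the local $\omega$-energy of $u$ up to a term bounded by $\lVert\eta\rVert_\infty \sup_K|X_H|$ times the area. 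Under rescaling this area term vanishes, so the limit plane indeed has finite energy.

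With the gradient bound in hand, I write the first equation locally as $\bar\partial_J u_n = \eta_n J X_{H}(u_n)$ with uniformly bounded right-hand side. Elliptic bootstrapping on nested discs gives $W^{k,p}_{\mathrm{loc}}$-bounds for all $k$, using the $C^1$-bounds, compactness of $K$, and the $s$-derivative bounds on $\eta_n$ from above, iterated alternately. Alternatively, I can quote the standard regularity results, e.g.\ of McDuff--Salamon. This yields uniform $C^k$-bounds on every compact subset of $\R\times\bbS^1$, respectively $\R$. \ArzelaAscoli on an exhaustion $[-m,m]\times\bbS^1$ together with a diagonal subsequence then gives convergence of $(u_n,\eta_n)$ in $\Cinftyloc$. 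Together with $r_n\to r_\infty$, this is the claimed convergence in $[0,R]\times \Cinftyloc(\R,\loopspace\times\R)$. The limit automatically solves the gradient equation for $H_{r_\infty,\cdot}$. I expect the bubbling step, and specifically the energy bookkeeping that makes the limit plane have finite symplectic energy, to be the main obstacle; the rest is routine.
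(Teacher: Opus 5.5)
Your proposal is correct and follows essentially the same route as the paper. In both, uniform derivative bounds come from a Hofer-type rescaling that would otherwise produce a nonconstant finite-energy $J$-holomorphic plane, hence after removal of the singularity a sphere, which exactness rules out; this is followed by simultaneous elliptic bootstrapping of both components, Arzel\`a--Ascoli and a diagonal argument. One minor overstatement, which the paper's footnote points out: the rescaled maps only converge in $C^1_{\mathrm{loc}}$ (via $W^{2,p}_{\mathrm{loc}}$-bounds), not in $C^\infty_{\mathrm{loc}}$, because higher $s$-derivatives of $\eta_n$ involve $t$-integrals of derivatives of $u_n$ over whole loops where no bounds are yet available; this suffices, since the limit solves the local equation $\partial_s v + J(v)\partial_t v = 0$ and is smooth by elliptic regularity.
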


\begin{remark}
\label{rmk: Gromov compactness - limit gradient flow line}
The limit $(r,w)$ of the convergent subsequence of $(r_n,w_n)_n$ necessarily is again a gradient flow line, that is, it satisfies
    \[
    \partial_s w (s) = \nabla \Rabinowitz{H_{r,s}}(w(s)) \qquad \Forall s \in \R \,\, .
    \]
Moreover, by Fatou's lemma, its energy is bounded by $E(w) \leq E_0 \,$.
\end{remark}

\noindent Similar versions of the proposition can be found in \cite[Thm.~3.1]{Cieliebak_Frauenfelder} and in \cite[Thm.~2.9]{Albers_Frauenfelder}. Since it is well-known how to prove such a result, we just point out the key steps.

\begin{proof}
    It suffices to show that the cylinders $u_n$ have uniformly bounded derivatives. If this is accomplished, then the \ArzelaAscoli theorem allows us to extract a $C^0_{\mathrm{loc}}$-convergent subsequence of $(u_n,\eta_n)_n \,$. Bootstrapping both components of the gradient equation simultaneously and using elliptic regularity shows that a further subsequence converges in fact in $\Cinftyloc\,$.

    Now assume by contradiction that the $u_n$ do not have uniformly bounded derivatives. Then, using the assumptions \ref{it: Gromov Compactness - uniform energy bound}\,-\,\ref{it: Gromov Compactness - compact region}, one can construct a non-constant $J$-holomorphic plane of finite energy.\footnote{
    In more detail, as in \cite[Ch.~6.6]{Audin-Damian}, suitable reparametrizations $\widetilde{u}_n = u_n(s_n + \frac{\cdot}{R_n}, t_n + \frac{\cdot}{R_n})$ have uniformly bounded derivatives on the ball $B_{\eps_n R_n}(s_n,t_n) \subseteq \C \,$, where $\eps_n \rightarrow 0$ and $\eps_n R_n \rightarrow \infty \,$. A subsequence of $(\widetilde{u}_n)_n$ will converge in $C^1_{\mathrm{loc}}(\C,W)$ to a non-constant $J$-holomorphic plane, which is smooth by elliptic regularity and has energy bounded by $E_0 \,$.
    (We only get $C^1_{\mathrm{loc}}$-convergence because the Lagrange multiplier component in the gradient flow equation, $\partial_s \eta_n = -\int_0^1 H_{r_n, s}(u_n) \, \d t \,$, is not local, so that elliptic regularity only carries us up to $W^{2,p}_{\mathrm{loc}}$-convergence.)
    }
    Removing the singularity at infinity, one obtains a non-constant $J$-holomorphic sphere. However, such do not exist on an exact symplectic manifold.
\end{proof}

\begin{remark}
\label{rmk: Gromov compactness - half-open interval}
The following version of the proposition holds if the gradient flow lines are only defined on an infinite half-open interval, say $(r_n,w_n)_n \subseteq [0,R] \times \Cinfty(\R_{\geq a}, \loopspace \times \R) \,$. Suppose assumptions \ref{it: Gromov Compactness - uniform energy bound}\,-\,\ref{it: Gromov Compactness - compact region} in Proposition \ref{prop: Gromov Compactness} hold, where the energy is defined by $E(w_n) := \int_{a}^{\infty} |\partial_s w_n(s)|^2_{g_J} \, \d s \,$. Then 
\begin{align*}
    \text{a subsequence of } (r_n,w_n)_n \text{ converges in } [0,R] \times \Cinftyloc(\R_{>a} ,\loopspace \times \R) \, .
\end{align*}
Note that we only obtain $\Cinftyloc$-convergence on $\R_{> a}$ and not on $\R_{\geq a } $ because in the proof of the uniform bound for the derivatives of the cylinders $u_n$ we need to stay away from the boundary of the half-cylinder $\R_{\geq a} \times \bbS^1$ to obtain a $J$-holomorphic plane $u : \C \rightarrow W$ in the limit.
\end{remark}

\noindent We next state a version of the above proposition where there are no parameters $(r,s)$ involved anymore but the gradient flow lines are only defined on bounded intervals of increasing size. 

\begin{lemma}
\label{lem: Gromov compactness - sequence on (-R_n, R_n)}
Let $H \in \Cinfty(W,\R)$ be a Hamiltonian and $(w_n)_n  \subseteq \Cinfty(\R,\loopspace \times \R)$ be a sequence satisfying \ref{it: Gromov Compactness - uniform energy bound}\,-\,\ref{it: Gromov Compactness - compact region} in Proposition \ref{prop: Gromov Compactness}. Suppose moreover that for some sequence $(R_n)_n \subseteq \R_{> 0}$ tending to infinity it holds
\[
\partial_s w_n (s) = \nabla \Rabinowitz{H}(w_n(s)) \qquad \Forall s \in (-R_n,R_n) \comma n \in \N \,\, .
\]
Then a subsequence of $(w_n)_n$ converges in $\Cinftyloc(\R,\loopspace \times \R)$ to a gradient flow line $w$ of $\RabinowitzH $ having energy bounded by $E(w) \leq \sup_n E(w_n) < \infty\,$.
\end{lemma}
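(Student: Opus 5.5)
The plan is to reduce the lemma to the local argument behind Proposition \ref{prop: Gromov Compactness}. That argument is local in $s$: it uses the gradient equation on a neighbourhood of the point under consideration, together with assumptions \ref{it: Gromov Compactness - uniform energy bound}\,-\,\ref{it: Gromov Compactness - compact region}.

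First fix a compact interval $[-L,L]\subseteq\R$. For all $n$ with $R_n>L+1$, the restriction of $w_n$ to $(-L-1,L+1)$ solves the autonomous Rabinowitz--Floer equation \eqref{eq: Rabinowitz-Floer eq} for $H$. On this window we have the uniform energy bound $E_0$, the Lagrange multiplier bound $T^*$ and the compact region $K$. We then run the bubbling argument from the proof of Proposition \ref{prop: Gromov Compactness}. If $\sup_{[-L,L]\times\bbS^1}|\d u_n|$ were unbounded, rescaling around points of maximal gradient would produce, in the limit, a non-constant finite-energy $J$-holomorphic plane in $K$. The rescaled balls stay inside $(-L-1,L+1)\times\bbS^1$, since their radii $\eps_nR_n'\to\infty$ in rescaled coordinates still correspond to shrinking balls in the original ones. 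Removal of singularities would then give a $J$-holomorphic sphere, which cannot exist in the exact manifold $W$. Hence $|\d u_n|$ is uniformly bounded on $[-L,L]\times\bbS^1$. The bounds $|\eta_n|\le T^*$ and $|\partial_s\eta_n|\le\max_K|H|$ hold there as well.

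Next comes compactness. By \ArzelaAscoli we get $C^0$-convergence on $[-L,L]$ along a subsequence. Bootstrapping both components of the gradient equation with elliptic regularity upgrades this to $C^\infty$-convergence on $[-L+1,L-1]$: the $\eta$-equation gives $C^k$ control of $\eta_n$ from $C^{k-1}$ control of $u_n$, which feeds back into the Cauchy--Riemann-type equation for $u_n$. Applying this with $L=1,2,3,\dots$ and taking a diagonal subsequence yields convergence in $\Cinftyloc(\R,\loopspace\times\R)$ to some $w$. Passing to the limit in the equation on each compact interval, which is eventually contained in $(-R_n,R_n)$, shows that $w$ is a gradient flow line of $\RabinowitzH$ on all of $\R$.

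Finally, the energy bound follows from Fatou's lemma, or directly: for fixed $L$ we have $\int_{-L}^L|\partial_sw|^2\,\d s=\lim_n\int_{-L}^L|\partial_sw_n|^2\,\d s\le\sup_nE(w_n)$, using $C^\infty_{\mathrm{loc}}$-convergence. Letting $L\to\infty$ gives $E(w)\le\sup_n E(w_n)$.

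The only step needing care is the gradient bound, namely checking that the bubbling analysis needs the equation only on a slightly larger window. This is exactly the locality already exploited in Remark \ref{rmk: Gromov compactness - half-open interval}, so I expect no genuine obstacle.
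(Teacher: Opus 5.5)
Your proposal is correct and follows essentially the same route as the paper: Gromov compactness on the growing compact intervals $[-N,N]$ (which lie in $(-R_n,R_n)$ for large $n$), then a diagonal subsequence, with the energy bound coming from $\Cinftyloc$-convergence or Fatou. You also make explicit why the bubbling argument only needs the equation on a slightly larger window, which the paper uses implicitly when it appeals to Proposition \ref{prop: Gromov Compactness} on bounded intervals.
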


\begin{proof}
  For $N \in \N$ we abridge $I_N := [-N,N]$. We iteratively extract subsequences: If $(w_n^{(N)})_n$ is a subsequence converging in $\Cinfty(I_N, \loopspace \times \R)$, use Gromov compactness to extract a subsequence $(w_n^{(N+1)})_n$ of $(w_n^{(N)})_n$ converging in $\Cinfty(I_{N+1}, \loopspace \times \R)$. On $I_N$ the limits of $(w_n^{(N+1)})_n$ and of $(w_n^{(N)})_n $ agree. Finally, define $w$ by $w|_{I_N} = \lim_{n \rightarrow \infty} w_n^{(N)}|_{I_N} $ and take the diagonal subsequence $(w_N^{(N)})_N \,$.
\end{proof}

\noindent By a well-known argument, Gromov compactness also yields existence of critical points at the asymptotic ends of a finite-energy gradient flow line.

\begin{corollary}
\label{cor: Gromov compactness - single gradient flow line crit pts at ends}
Let $H \in \Cinfty(W,\R)$ be a Hamiltonian and $w = (u,\eta) \in \Cinfty(\R,\loopspace \times \R)$ be a gradient flow line of $\RabinowitzH$ having finite energy, with $\lVert \eta \rVert_\infty < \infty$ and so that the image of $u$ is precompact in $W$. Then for every sequence $(s_n)_n \subseteq \R$ tending to $+\infty$ there exists a subsequence $(s_{n_k})_k$ and critical points $(v^-,\tau^-) \comma (v^+,\tau^+) \in \Crit(\RabinowitzH)$ so that
\[
w(\pm s_{n_k}) \overset{k \rightarrow \infty}{\longrightarrow} (v^\pm,\tau^\pm) \quad \text{ in } \Cinfty(\bbS^1,W) \times \R \,\, .
\]
\end{corollary}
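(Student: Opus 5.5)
We apply Lemma \ref{lem: Gromov compactness - sequence on (-R_n, R_n)} to shifted copies of $w$. We treat the $+$ end; the $-$ end is identical with $-s_n$ in place of $s_n$. Define $w_n := w(\,\cdot\, + s_n) = (u(\cdot + s_n,\cdot), \eta(\cdot + s_n))$. Each $w_n$ is a gradient flow line of $\RabinowitzH$ on all of $\R$. Energy is translation invariant, so $E(w_n) = E(w) < \infty$. Moreover $\lVert \eta_n \rVert_\infty = \lVert \eta \rVert_\infty$, and $\mathrm{im}(u_n) = \mathrm{im}(u)$ lies in the compact set $K := \overline{\mathrm{im}(u)}$. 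Thus hypotheses \ref{it: Gromov Compactness - uniform energy bound}\,-\,\ref{it: Gromov Compactness - compact region} of Proposition \ref{prop: Gromov Compactness} hold, and Lemma \ref{lem: Gromov compactness - sequence on (-R_n, R_n)} (with $R_n := n$, say) yields a subsequence $w_{n_k}$ converging in $\Cinftyloc(\R,\loopspace\times\R)$ to a gradient flow line $w_\infty$ of $\RabinowitzH$.

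Next we show $w_\infty$ is constant, hence a critical point. For every fixed $S>0$, by Fatou's lemma,
\[
\int_{-S}^{S} |\partial_s w_\infty|^2_{g_J} \, \d s \leq \liminf_{k} \int_{s_{n_k}-S}^{s_{n_k}+S} |\partial_s w|^2_{g_J}\, \d s .
\]
The right-hand side is $0$, since it is the tail of the convergent integral $E(w) = \int_\R |\partial_s w|^2\,\d s$ and $s_{n_k}\to\infty$. Hence $\partial_s w_\infty \equiv 0$, so $w_\infty(s) \equiv (v^+,\tau^+)$. By the gradient equation, $\nabla \RabinowitzH(v^+,\tau^+) = 0$, i.e.\ $(v^+,\tau^+) \in \Crit(\RabinowitzH)$. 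Evaluating the $\Cinftyloc$-convergence at $s=0$ gives $w(s_{n_k}) = w_{n_k}(0) \to (v^+,\tau^+)$ in $\Cinfty(\bbS^1,W)\times\R$.

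For both ends simultaneously, first extract a subsequence using the shifts $w(\cdot + s_n)$. Then extract a further subsequence using the shifts $w(\cdot - s_n)$; the first limit is preserved under passing to a subsequence.

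There is no real obstacle: the only point needing care is that $\Cinftyloc$-convergence on a compact $s$-window controls all $t$-derivatives, which is built into Convention \ref{convention: CinftylocConvergence on loopspace times R}.
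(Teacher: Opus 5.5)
Your proof is correct and follows the paper's argument: shift $w$ by $\pm s_n$, apply Gromov compactness to get a $\Cinftyloc$-limit gradient flow line, show that this limit is constant (hence a critical point), and pass to a further subsequence for the other end. The only difference is cosmetic. You obtain constancy from the vanishing energy tail via Fatou's lemma, whereas the paper notes that $\RabinowitzH \circ w$ is monotone with a finite limit, so $\RabinowitzH$ is constant along the limit. Both rest on the same identity $\frac{\d}{\d s}\RabinowitzH(w) = |\partial_s w|^2_{g_J}$.
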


\begin{proof}
    Finite energy implies that the increasing function $\RabinowitzH \circ w$ has finite limits $(\RabinowitzH \circ w)(\pm \infty) \,$.
    Consider the sequence $(w(\,\cdot\, + s_n))_n$ of gradient flow lines of $\RabinowitzH \,$. Note that $E(w(\,\cdot\, + s_n)) = E(w) < \infty \,$. By Gromov compactness, a subsequence $(w(\,\cdot\, + s_{n_k}))_k$ converges in $\Cinftyloc(\R,\loopspace \times \R)$ to a gradient flow line $w^+\,$. Notice that
    \[
    \RabinowitzH(w^+(s)) = \lim_{k \rightarrow \infty} \RabinowitzH(w(s+s_{n_k})) = (\RabinowitzH \circ w)(+\infty) \qquad \Forall s \in \R \, ,
    \]
    so that $\RabinowitzH \circ w^+$ is constant. Consequently $w^+$ is $s$-independent and sitting at a critical point $(v^+,\tau^+) := w^+(0) \in \Crit(\RabinowitzH) \,$. Moreover, clearly $(w(s_{n_k}))_k$ converges to $w^+(0) = (v^+,\tau^+) \,$. To also obtain a critical point at minus infinity, we extract a further subsequence, considering this time $(w(\,\cdot\, - s_{n_k}))_k \,$, and repeat the above arguments.
\end{proof}

\subsection{Spectrum of the Hessian}
\label{subsec: Preparation - Spectrum Hessian}

We denote by $\nabla^2 \RabinowitzHZero(v,\tau)$ the covariant Hessian of $\RabinowitzHZero$ at $(v,\tau) \in \loopspace \times \R $ with respect to the metric $\omega(\,\cdot\, , J \, \cdot \,)$ on $W$. This is a bounded linear operator
\[
\nabla^2 \RabinowitzHZero (v,\tau) : \, H^1(\bbS^1,v^*TW) \oplus \R \longrightarrow L^2(\bbS^1,v^*TW) \oplus \R \,\, ,
\]
which we also consider as selfadjoint unbounded operator on $L^2(\bbS^1,v^*TW) \oplus \R $ with compact resolvent. From the theory of selfadjoint unbounded operators, see for example \cite[Thm.~6.3.13]{Functional_Analysis_Salamon}, we know that the spectrum of $\nabla^2 \RabinowitzHZero(v,\tau)$ is a discrete, closed, countably infinite subset of $\R$ and that it consists of eigenvalues only, each with finite multiplicity.
Because $\RabinowitzHZero$ is Morse-Bott (see Lemma \ref{lem: Crit(RabinowitzHZero)}), $0$ is an eigenvalue of $\nabla^2\RabinowitzHZero(v,\tau(\alpha))$ for $(v,\tau(\alpha)) \in \CCrit$ and the corresponding eigenspace is precisely the tangent space $T_{(v,\tau(\alpha))}\CCrit \,$. In particular, the multiplicity of the eigenvalue $0$ is the same for every critical point $(v,\tau(\alpha)) \in \CCrit \,$, namely $\dim(\CCrit) = \dim(\Sigma) \,$. Therefore
\begin{equation}
\label{eq: min spec Hessian > 0}
0 < \min\set{|\mu|}{0 \not= \mu \text{ is eigenvalue of } \nabla^2\RabinowitzHZero(v,\tau(\alpha)) \comma (v,\tau(\alpha)) \in \CCrit }  \, ,
\end{equation}
owing to the stability of the spectrum under small perturbations by symmetric bounded operators, see \cite[V.\S4.3]{Kato}, and the compactness of $\CCrit \,$.

\subsection{Exponential decay}
\label{subsec: Preparation - Exponential Decay}

Recall from Lemma \ref{lem: Crit(RabinowitzHZero)} that $\RabinowitzHZero$ is a Morse-Bott functional. It is well-known that this implies exponential decay of its gradient flow lines converging asymptotically to points in $\CCrit \subseteq \Crit(\RabinowitzHZero)$. To later prove compactness of the relevant moduli spaces, we need the following stronger version of exponential decay.

\begin{proposition}[Exponential decay]
\label{prop: Uniform Exponential Decay}
There exists a constant $\kappa > 0$ so that for every $(v,\tau(\alpha)) \in \CCrit$ there exists a $(\Cinfty \times \R)$-open neighborhood $\mathscr{U} \subseteq \loopspace \times \R$ of $(v,\tau(\alpha))$ and a $(C^\infty \times \R)$-continuous function $\Xi : \mathscr{U} \rightarrow \R_{\geq 0}$ vanishing on $\mathscr{U} \cap \CCrit$ with the following significance: For every $s_0 \in \R$ and every gradient flow line $w = (u,\eta) \in \Cinfty(\R_{\geq s_0} ,\mathscr{U})$ of $\RabinowitzHZero$ with existing limit (in the $(\Cinfty \times \R)$-topology) $w(\infty) \in \CCrit \cap \mathscr{U}$ it holds
\begin{align}
    |\partial_s u(s,t)| + |\partial_s \eta(s)| \leq \Xi(w(s_0)) \, e^{-\kappa(s-s_0)} \qquad \Forall s \geq s_0 \comma t \in \bbS^1 \,\, . \label{eq: Uniform Exponential Decay - partial_s}
\end{align}
\end{proposition}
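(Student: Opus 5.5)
The plan is the standard Morse–Bott exponential decay argument, made uniform by working in a fixed neighborhood of the compact critical manifold $\CCrit$, with $\Xi$ built from the local energy. Fix $(v,\tau(\alpha)) \in \CCrit$. Let $\mu_0>0$ be the spectral gap from \eqref{eq: min spec Hessian > 0}, and choose $\kappa \in (0,\mu_0)$, say $\kappa := \mu_0/2$. Since $\mu_0$ is uniform over $\CCrit$, this $\kappa$ does not depend on the point.

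The first step is an $L^2$ decay estimate. For a gradient flow line $w$ of $\RabinowitzHZero$ staying in $\mathscr{U}$, set
\[
f(s) := \tfrac12 \lVert \partial_s w(s)\rVert_{L^2}^2 = \tfrac12 \lVert \nabla \RabinowitzHZero(w(s))\rVert^2 .
\]
Differentiating twice and using the gradient equation gives
\[
f'' \geq 2\langle \nabla^2\RabinowitzHZero(w)\,\partial_s w, \nabla^2\RabinowitzHZero(w)\,\partial_s w\rangle - (\text{error}),
\]
where the error is controlled by the distance of $w(s)$ from $\CCrit$. The key point is a uniform estimate, valid near $\CCrit$,
\[
\lVert \nabla^2\RabinowitzHZero(w)\,\xi\rVert \geq \mu_0' \lVert \xi\rVert \quad \text{for } \xi = \nabla\RabinowitzHZero(w),\ \mu_0'>\kappa .
\]
This holds because the gradient is, up to quadratic error, orthogonal to $T\CCrit$: write $w = \exp_{c}(\zeta)$ with $c$ the normal projection onto $\CCrit$, note that $\nabla\RabinowitzHZero(w) = \nabla^2\RabinowitzHZero(c)\zeta + O(|\zeta|^2)$, and observe that $\nabla^2\RabinowitzHZero(c)$ is invertible on $(T_c\CCrit)^\perp$ with inverse bounded by $\mu_0^{-1}$. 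Together these give $f'' \geq 4\kappa^2 f$ on $[s_0,\infty)$. Since $f(s)\to 0$ as $s\to\infty$ (the limit exists in $\CCrit$), the maximum principle yields
\[
f(s) \leq f(s_0)\, e^{-2\kappa(s-s_0)} .
\]

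The second step upgrades this to pointwise $C^0$ decay of $|\partial_s u|+|\partial_s\eta|$. For $\eta$ this is immediate: $\partial_s\eta = -\int H_0(u)$ is one component of the gradient. For $u$, one uses interior elliptic estimates for the linearized (Floer-type) equation satisfied by $\partial_s u$ on $[s-1,s+2]\times\bbS^1$. These give
\[
\lVert \partial_s u\rVert_{C^0([s,s+1]\times \bbS^1)} \leq C \lVert \partial_s w\rVert_{L^2([s-1,s+2])},
\]
with $C$ uniform because $\mathscr{U}$ is $C^\infty$-close to the compact $\CCrit$. Combined with the first step, this yields a bound of the form $C' e^{\kappa}\sqrt{f(s_0)}\,e^{-\kappa(s-s_0)}$ for $s \geq s_0+1$.

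To cover $s\in[s_0,s_0+1]$, and to make the constant depend only on $w(s_0)$, I define
\[
\Xi(x) := C'' \, \sup\bigl\{\text{derivative bounds at } x\bigr\},
\]
which amounts roughly to $\Xi(x) := C''\lVert \nabla\RabinowitzHZero(x)\rVert_{C^k}$ for a suitable $k$. Near $s_0$ I control $\partial_s w$ by $w(s_0)$ itself: $\partial_s w(s_0) = \nabla\RabinowitzHZero(w(s_0))$ is a continuous function of $w(s_0)$ in the $C^\infty\times\R$ topology and vanishes on $\CCrit$, and the short-time flow near $s_0$ depends continuously on it. Doing this rigorously is delicate, since the gradient flow is not a well-posed ODE on the loop space. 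The fallback is to take $\Xi$ as a sup over all flow lines starting at $x$ within $\mathscr{U}$, and then prove its continuity and its vanishing on $\CCrit$ via the uniform $C^k$-closeness encoded in $\mathscr{U}$.

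The main obstacle is the uniformity in the first step: obtaining the Hessian lower bound with a single $\kappa$ on a whole neighborhood. This needs the Morse–Bott normal form together with the spectral stability \cite[V.\S4.3]{Kato}. I would handle it by a contradiction/compactness argument over the compact $\CCrit$, shrinking each $\mathscr{U}$ as needed.
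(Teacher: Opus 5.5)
Your Steps 1--2 take a genuinely different route from the paper. The paper does not use convexity of $\tfrac12|\nabla\RabinowitzHZero|^2$. Instead it works in the tubular coordinates of Lemma~\ref{lem: Analytic Set-Up - tubular nbhd} and writes the flow with Fauck's point-dependent operators $\mathsf{A}_{(v,\tau)}$. It then uses that every $\mathsf{A}_{(v,\tau)}$ annihilates the fixed subspace of constant loops $(X,0)$ with $X_{2N}=0$. This subspace equals $\ker\nabla^2\RabinowitzHZero(v_0,\tau_0)$, and projecting onto it kills the Lagrange multiplier component. With this, the kernel-splitting argument of \cite{Compact_Moduli_Spaces_Gradient_Floer_Lines} yields the estimate directly in the $\Xi$-form, and $\kappa$ is made uniform by compactness of $\CCrit$.

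Your argument (a gradient inequality $|\nabla^2\RabinowitzHZero(w)\,\xi|\geq\mu_0'|\xi|$ for $\xi=\nabla\RabinowitzHZero(w)$, then $f''\geq 4\kappa^2 f$, then interior elliptic estimates) is the standard alternative and is sound. It also gives a uniform $\kappa$ directly from \eqref{eq: min spec Hessian > 0}. One point should be made explicit. With $A:=\nabla^2\RabinowitzHZero(w(s))$, the error $\langle(\nabla_s A)\xi,\xi\rangle$ contains terms like $\langle(\nabla_\xi J)\nabla_t\xi,\xi\rangle$, so it is not bounded by $|\xi|^2$ alone. You have to absorb it as $C\lVert\xi\rVert_{C^0}\bigl(|A\xi|^2+|\xi|^2\bigr)$ using $\lVert\nabla_t\xi\rVert_{L^2}\lesssim|A\xi|+|\xi|$, which is fine on a $C^1$-small $\mathscr{U}$.

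There is a genuine gap in your last step, and it is exactly what makes this proposition stronger than ordinary exponential decay. The bound must hold for all $s\geq s_0$, with a constant that is a continuous function of $w(s_0)$ alone and vanishes on $\CCrit$.
\begin{itemize}
\item Interior elliptic estimates need room on both sides of $s$, so they say nothing on $[s_0,s_0+1]$.
\item Your claim that the short-time flow depends continuously on $w(s_0)$ is unsupported; as you note, the flow is ill-posed.
\item The fallback of taking a supremum over all flow lines starting at $x$ leaves precisely the needed continuity unproved.
\end{itemize}

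The repair is already contained in your Step 1, because that estimate is pointwise in $s$. Since $|\partial_s w(s)|_{g_J}^2=2f(s)$, you get
$\lVert\partial_s u(s,\cdot)\rVert_{L^2(\bbS^1)}+|\partial_s\eta(s)|\leq 2\sqrt{f(s_0)}\,e^{-\kappa(s-s_0)}$ for every $s\geq s_0$. Here $f(s_0)=\tfrac12|\nabla\RabinowitzHZero(w(s_0))|_{g_J}^2$ is continuous in $w(s_0)$ and vanishes on $\CCrit$, so this already settles $\partial_s\eta$.

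For $\partial_s u$, shrink $\mathscr{U}$ so that $M:=\sup_{\mathscr{U}}\lVert\nabla\RabinowitzHZero\rVert_{H^1(\bbS^1)\oplus\R}<\infty$. Apply $\lVert\zeta\rVert_{C^0(\bbS^1)}\leq C\bigl(\lVert\zeta\rVert_{L^2}^{1/2}\lVert\zeta\rVert_{H^1}^{1/2}+\lVert\zeta\rVert_{L^2}\bigr)$ to $\zeta=\partial_s u(s,\cdot)$. This gives $|\partial_s u(s,t)|\leq C\bigl(M^{1/2}f(s_0)^{1/4}+f(s_0)^{1/2}\bigr)$ on $[s_0,s_0+1]$. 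Combined with your Step 2 for $s\geq s_0+1$, you can take $\Xi:=C'\bigl(f^{1/4}+f^{1/2}\bigr)$.

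Applying the interpolation for all $s$ would even let you drop Step 2, at the cost of halving the rate. That is harmless, since the proposition only asks for some $\kappa>0$ and the paper later chooses $\kappa_0$ below it. Alternatively, elliptic estimates up to the boundary $\{s_0\}\times\bbS^1$, using the data $\partial_s w(s_0)=\nabla\RabinowitzHZero(w(s_0))$, would also close the gap.
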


\noindent This is proven at length for the classical action functional by the author in \cite[Thm.~C, Cor.~4.4]{Compact_Moduli_Spaces_Gradient_Floer_Lines}. The proof in the Rabinowitz-Floer case is a straightforward adaption, which we only sketch briefly.

\begin{proof}[Proof sketch]
At fixed $(v_0,\tau_0) \in \CCrit \,$, choose tubular coordinates $U \cong \bbS^1 \times \R^{2N-1}$ around the loop $v_0$ as in the subsequent Lemma \ref{lem: Analytic Set-Up - tubular nbhd}.
We show the analog of \cite[Prop.~5.6]{Compact_Moduli_Spaces_Gradient_Floer_Lines}.
Replace the $\loopspace$-dependent family of operators $\mathsf{A}_v$ in \cite[eq.~(5.4)]{Compact_Moduli_Spaces_Gradient_Floer_Lines}, where we omit the shift operator, by the $(\loopspace \times \R)$-dependent family of operators 
  \[
  \mathsf{A}_{(v,\tau)} :  H^k := H^k(\bbS^1,\R^{2N}) \oplus \R \longrightarrow H^{k-1} := H^{k-1}(\bbS^1,\R^{2N}) \oplus \R
  \]
  displayed in \cite[eq.~(16)]{FauckThesis} but with minus sign. The operator $\mathsf{A}_{(v_0,\tau_0)}$ agrees with the covariant Hessian $\nabla^2 \RabinowitzHZero(v_0,\tau_0) \,$, see \cite[Lemma~30]{FauckThesis}. This shows (Fact 1) in \cite[\S5.1]{Compact_Moduli_Spaces_Gradient_Floer_Lines}. For (Fact 2), observe that the kernel of $\mathsf{A}_{(v,\tau)}$ contains every $(X,0) \in H^k= H^k(\bbS^1,\R^{2N}) \oplus \R $ for which $X$ is a constant loop with vanishing last component, \ie $X_{2N} = 0 \,$. Moreover, every element of the kernel of $\mathsf{A}_{(v_0,\tau_0)} = \nabla^2 \RabinowitzHZero(v_0,\tau_0)$ is of this form by the Morse-Bott property of $\RabinowitzHZero \,$, see Lemma \ref{lem: Crit(RabinowitzHZero)} or \cite[Lemma~20]{FauckThesis}. (Fact 3) holds in the present setting as well due to \cite[Lemma~22]{FauckThesis}, in the proof of which also $(L^2(\bbS^1,\R^{2N}) \oplus \R)$-orthogonality is shown. With these facts established, Part I in the proof of \cite[Prop.~5.6]{Compact_Moduli_Spaces_Gradient_Floer_Lines} goes through verbatim.
  For Part II one needs to rearrange only the cylinder component of the Rabinowitz-Floer equation \eqref{eq: Rabinowitz-Floer eq}, as the image of $(X,T) \in H^k$ under the orthogonal projection $\mathsf{P} : H^k \rightarrow \ker(\mathsf{A}_{(v_0,\tau_0)})$ has Lagrange multiplier component zero by (Fact 2). This finishes the proof of the analog of \cite[Prop.~5.6]{Compact_Moduli_Spaces_Gradient_Floer_Lines}. 
  
  From this one infers the proposition as in the proof of \cite[Thm.~C]{Compact_Moduli_Spaces_Gradient_Floer_Lines}.
  Lastly, since $\CCrit$ is compact, we can choose a constant $\kappa $ independent of the base point $(v_0,\tau_0) \in \CCrit \,$. 
\end{proof}

\begin{remark}
\label{rmk: Uniform Exponential Decay}
Regarding the above proposition, we record the following.
\begin{enumerate}[label=(\roman*)]
    \item\label{it: Uniform Exponential Decay - Xi bounded} Due to the continuity of $\Xi \,$, choosing $\mathscr{U}$ smaller if necessary, we can always assume that $\Xi $ is a bounded function on $\mathscr{U} \,$.
    \item\label{it: Uniform Exponential Decay - integration} For $w = (u,\eta) \in \Cinfty(\R_{\geq s_0}, \mathscr{U})$ as in Proposition \ref{prop: Uniform Exponential Decay} we can integrate the exponential decay estimates and obtain
    \begin{align}
       d_{W} (u(s_1,t), u(\infty,t)) &\leq \int_{s_1}^{\infty} |\partial_s u(s,t)| \, \d s 
        \leq \Xi(w(s_0)) \int_{s_1}^{\infty} e^{-\kappa(s-s_0)} \, \d s \notag\\
        &\leq \tfrac{1}{\kappa} \Xi(w(s_0)) \, e^{-\kappa(s_1 - s_0)}\qquad \Forall s_1 \geq s_0 \comma t \in \bbS^1 \,\, , \label{eq: Uniform Exponential Decay - d(u, u_infty)}\\
       |\eta(s_1) - \tau(\alpha)| &= |\eta(s_1) - \eta(\infty) | \leq \int_{s_1}^{\infty} |\partial_s \eta(s)| \, \d s \notag\\
       &\leq \tfrac{1}{\kappa} \Xi(w(s_0)) \, e^{-\kappa(s_1 - s_0)} \qquad \Forall s_1 \geq s_0 \,\, . \label{eq: Uniform Exponential Decay - |eta - eta(infty)|}
    \end{align}
    So we have an analogous exponential decay assertion for the pointwise distances of $u$ respectively $\eta$ to its limits at $s = \infty \,$.
    \item\label{it: Uniform Exponential Decay - negative ends} There is an analogous version of the proposition for gradient flow lines $w \in \Cinfty(\R_{\leq s_0}, \mathscr{U})$ and converging to $w(-\infty) \in \CCrit \cap \mathscr{U} \,$.
    \item\label{it: Uniform Exponential Decay - delta < min spec Hessian} Every gradient flow line $w$ of $\RabinowitzHZero$ converging asymptotically to a point in $w(\infty ) \in \CCrit$ exhibits exponential decay for arbitrary exponential coefficient $\kappa$ smaller than the absolute value of a non-zero eigenvalue of the covariant Hessian $\nabla^2 \RabinowitzHZero(w(\infty)) \,$. It is therefore likely that every
    \begin{align*}
    \hspace*{12mm}\kappa < \min \set{|\mu|}{0 \not= \mu \text{ eigenvalue of } \nabla^2 \RabinowitzHZero(v,\tau(\alpha)) \comma (v,\tau(\alpha)) \in \CCrit }
    \end{align*}
    would work in Proposition \ref{prop: Uniform Exponential Decay} but this is not proven in \cite{Compact_Moduli_Spaces_Gradient_Floer_Lines}.
    \item\label{it: Uniform Exponential Decay - Fauck} There is a similar exponential decay result for Rabinowitz-Floer gradient flow lines in \cite[Prop.~32]{FauckThesis}. However, I believe its proof to be flawed.\footnote{
    In the proof of \cite[Prop.~32]{FauckThesis}, the family of operators $A(s)$ is derived in $t$-direction, which seems absurd.
    }
\end{enumerate}
\end{remark}

\section{Proof of Theorem \ref{mthm: Rabinowitz gradient flow line}}
\label{sec: Proof MThm}

\noindent In this section, we prove Theorem \ref{mthm: Rabinowitz gradient flow line}, based on the preparatory steps in the previous section. We state the key result (Proposition \ref{prop: moduli^H_[0,R](z) non-empty}) from which we conclude the theorem. Its rather involved proof is postponed to Section \ref{sec: Non-Empty Moduli Spaces}.
\\

\noindent Let $(\Sigma,\alpha)$ be a Zoll contact manifold with minimal Reeb period $\tau(\alpha) \,$. Let $J$ be an SFT-type almost complex structure on the symplectization $S \Sigma\,$. Suppose we are given $h_0 > 0 \comma \eps > 0 \,$, a defining Hamiltonian $H_0 \in \Hcal^{h_0} $ and a $C^1$-datum $\Dscr \,$, which we keep fixed throughout. 
Since it obviously suffices to show Theorem \ref{mthm: Rabinowitz gradient flow line} for sufficiently small $\eps$ only, without loss of generality we may assume
\begin{equation}
\label{eq: Proof MThm - eps < tau(alpha)}
    0 < \eps < \tau(\alpha) \,\, .
\end{equation}

\subsection{Choice of $\delta$}
\label{subsec: Proof MThm - delta choice}

For the given $C^1$-datum $\Dscr$ choose constants $T^* = T^*(\Dscr) >0$ and $\delta^* = \delta^*(\Dscr) > 0$ as in Proposition \ref{prop: uniform Lagrange multiplier bound}. Moreover, choose constants $\delta_1^* = \delta_1^*(\Dscr) > 0$ and $h_0^- = h_0^-(\Dscr) < 0$ as in Proposition \ref{prop: cylinders in compact region}. Now define $\delta > 0$ by
\[
\delta := \min \left\{  \delta^* \comma \delta^*_1 \comma \frac{\eps}{2 T^*} \right\} \,\, .
\]
We claim that this $\delta$ has the property stated in Theorem \ref{mthm: Rabinowitz gradient flow line}. We will verify this in the following.

\subsection{$\delta$ is as desired}
\label{subsec: Proof MThm - delta has claimed property}

We are given a Hamiltonian $H \in \Hclassparam $ and a point $z \in \Sigma \,$. We have to show that there exist a gradient flow line $w=(u,\eta)$ of $\RabinowitzH \,$, a sequence $(s_n)_n$ of positive numbers tending to infinity and critical points $(v^\pm,\tau^\pm) \in \Crit(\RabinowitzH)$ having properties \ref{it: mthm Rabinowitz gradient flow line - tau not= 0}\,-\,\ref{it: mthm Rabinowitz gradient flow line - u(0,0)} in Theorem \ref{mthm: Rabinowitz gradient flow line}.

We have the following bounds for elements in the moduli space $\moduli^H \,$, defined in equation \eqref{eq: def moduli^H}.

\begin{lemma}
\label{lem: Proof MThm - bounds on energy, Lagrange multiplier, cylinder}
For every $(r,w) = (r,(u,\eta)) \in \moduli^H$ the following bounds hold:
\begin{enumerate}[label=\arabic*)]
    \item\label{it: Proof MThm - bound on Delta} $\sup_{\mathrm{im}(w)} |\RabinowitzH - \RabinowitzHZero| \leq \tfrac{\eps}{2}  \,$,
    \item\label{it: Proof MThm - bound on energy} $E(w) \leq \eps \,$,
    \item\label{it: Proof MThm - bound on action value} $\RabinowitzHrs(w(s)) , \, \RabinowitzHZero(w(s)) \in [\tau(\alpha) - \eps, \, \tau(\alpha) + \eps]$ for every $s \in \R \,$,
    \item\label{it: Proof MThm - bound on Lagrange multiplier} $\lVert \eta \rVert_\infty \leq T^* \,$,
    \item\label{it: Proof MThm - bound on cylinder} $\mathrm{im}(u) \subseteq [h_0^-, h_0] \times \Sigma \,$.
\end{enumerate}
\end{lemma}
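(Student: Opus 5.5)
The lemma follows by combining the preparatory results of Section \ref{sec: Proof MThm - Preparation} with the choice $\delta = \min\{\delta^*, \delta_1^*, \tfrac{\eps}{2T^*}\}$ from Subsection \ref{subsec: Proof MThm - delta choice}. Since $H \in \Hclassparam$ and $\delta \leq \delta^*$, we have $H \in \Hclassdelta{\delta^*}$ by the monotonicity \eqref{eq: Hclassparam inclusion for varying parameters}. Proposition \ref{prop: uniform Lagrange multiplier bound} therefore applies to every $(r,(u,\eta)) \in \moduli^H$ and gives $\lVert \eta \rVert_\infty \leq T^*$, which is \ref{it: Proof MThm - bound on Lagrange multiplier}.

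For \ref{it: Proof MThm - bound on Delta} and \ref{it: Proof MThm - bound on energy}, Corollary \ref{cor: uniform Lagrange multiplier bound} with this $\delta \leq \delta^*$ yields
\[
\sup_{\mathrm{im}(w)} |\RabinowitzH - \RabinowitzHZero| \leq \delta T^* \leq \tfrac{\eps}{2}
\quad\text{and}\quad
E(w) \leq 2\delta T^* \leq \eps .
\]
For \ref{it: Proof MThm - bound on action value}, Lemma \ref{lem: action estimate for w in moduli} holds with $\Delta := \sup_{\mathrm{im}(w)}|\RabinowitzH - \RabinowitzHZero| \leq \tfrac{\eps}{2}$. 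Estimate \eqref{eq: action estimate} then places both $\RabinowitzHrs(w(s))$ and $\RabinowitzHZero(w(s))$ in $[\tau(\alpha)-2\Delta, \tau(\alpha)+2\Delta] \subseteq [\tau(\alpha)-\eps, \tau(\alpha)+\eps]$.

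For \ref{it: Proof MThm - bound on cylinder}, we use $\delta \leq \delta_1^*$ to get $H \in \Hclassdelta{\delta_1^*}$. Proposition \ref{prop: cylinders in compact region}, which needs $J$ to be of SFT-type as assumed in this section, then gives $\mathrm{im}(u) \subseteq [h_0^-, h_0] \times \Sigma$.

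There is no real obstacle. The only points to check are that each class inclusion goes the right way under $\delta \leq \delta^*, \delta_1^*$, and that $h_0^-$ and $T^*$ depend only on $\Dscr$ (and the fixed data), not on $H$. Both hold by the way the constants were chosen.
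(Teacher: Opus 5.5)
Your proof is correct and follows the paper's argument essentially verbatim. Items 1)--3) come from Corollary~\ref{cor: uniform Lagrange multiplier bound} and Lemma~\ref{lem: action estimate for w in moduli} using $\delta T^* \leq \eps/2$, and items 4)--5) come from Propositions~\ref{prop: uniform Lagrange multiplier bound} and~\ref{prop: cylinders in compact region} via the inclusion $\Hclassparam \subseteq \Hclassdelta{\delta^*} \cap \Hclassdelta{\delta_1^*}$.
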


\begin{proof}
    Points \ref{it: Proof MThm - bound on Delta}\,-\,\ref{it: Proof MThm - bound on action value} follow from Lemma \ref{lem: action estimate for w in moduli} and Corollary \ref{cor: uniform Lagrange multiplier bound}, using that $\delta \, T^* \leq \tfrac{\eps}{2} \,$.
    
    Points \ref{it: Proof MThm - bound on Lagrange multiplier} and \ref{it: Proof MThm - bound on cylinder} hold by choice of $T^* , \delta^*$ as in Proposition \ref{prop: uniform Lagrange multiplier bound} respectively $\delta_1^* , \, h_0^-$ as in Proposition \ref{prop: cylinders in compact region} and since 
    \[
    H \in \Hclassparam \subseteq \Hclassdelta{\delta^*}\cap \Hclassdelta{\delta^*_1} 
    \]
    because $\delta \leq \min\{\delta^*,\delta^*_1\} \,$; see also \eqref{eq: Hclassparam inclusion for varying parameters} for the inclusion.
\end{proof}

\noindent For each $R \geq 0$ we now define the moduli space
\begin{align*}
    \moduli^H_{R}(z) := \Bigset{(u,\eta) \in \moduliH{R}}{u(0,0) \in \R \times \{z\}} \,\, .
\end{align*}
An element of the moduli space is depicted schematically in Figure \ref{fig: moduli space moduliH_[0,R](z) element}.

The following proposition is the key step to prove Theorem \ref{mthm: Rabinowitz gradient flow line}. Its proof will be presented in detail in Section \ref{sec: Non-Empty Moduli Spaces}.

\begin{proposition}
\label{prop: moduli^H_[0,R](z) non-empty}
For every $R \geq 0$ the moduli space $\moduliH{R}(z)$ is non-empty.
\end{proposition}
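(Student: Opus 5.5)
We will argue by a cobordism/degree argument on the parametrized moduli space $\moduliH{[0,R]}$ together with the evaluation map
\[
\ev : \moduliH{[0,R]} \longrightarrow \Sigma \comma \quad (r,(u,\eta)) \mapsto \pi_\Sigma(u(0,0)) \,\, .
\]
Non-emptiness of $\moduliH{R}(z)$ means that $z$ lies in the image of $\ev$ restricted to the slice $r = R$. At $r=0$ the functional $\Rabinowitz{H}_{0,s} = \RabinowitzHZero$ is $s$-independent. Every element of $\moduliH{0}$ is then a gradient flow line of $\RabinowitzHZero$ with both ends on $\CCrit$, and by Lemma \ref{lem: action estimate for w in moduli} it has energy zero (here $\Delta = 0$). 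So $\moduliH{0} \cong \CCrit \cong \Sigma$ consists of the constant flow lines, and $\ev|_{r=0}$ is the diffeomorphism \eqref{eq: CCrit diffeomorphism Sigma}, which has degree one. The aim is to show that this degree persists up to $r = R$.

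\textbf{Compactness.} Take a sequence $(r_n,w_n) \in \moduliH{[0,R]}$. Lemma \ref{lem: Proof MThm - bounds on energy, Lagrange multiplier, cylinder} gives uniform bounds on the energy ($\leq \eps$), on the Lagrange multiplier ($\leq T^*$) and on the images of the cylinders ($\subseteq [h_0^-,h_0]\times\Sigma$). Proposition \ref{prop: Gromov Compactness} then yields a $\Cinftyloc$-convergent subsequence. To exclude breaking we use the ends. Outside $[-R-1,R+1]$ each $w_n$ is a $\RabinowitzHZero$-flow line, and its $\RabinowitzHZero$-action lies in $[\tau(\alpha)-\eps,\tau(\alpha)+\eps]$ with $\eps<\tau(\alpha)$. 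Any broken-off piece would therefore connect critical points of $\RabinowitzHZero$ of action $\tau(\alpha)$, that is, points of $\CCrit$, and so would have zero energy. Concretely, we will use the uniform exponential decay of Proposition \ref{prop: Uniform Exponential Decay}, with the neighbourhoods $\mathscr{U}$ covering the compact set $\CCrit$. Because the energy on the ends is small, the loops $w_n(\pm s)$ for $|s|\geq R+1+S_0$ enter and remain in such a neighbourhood for a uniform $S_0$. The estimates \eqref{eq: Uniform Exponential Decay - d(u, u_infty)} and \eqref{eq: Uniform Exponential Decay - |eta - eta(infty)|} then show that the limit has its asymptotics in $\CCrit$. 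The homotopy condition $u\simeq 0$ rel $\CCrit$ survives the limit, since the convergence is uniform up to the ends. Together this shows that $\moduliH{[0,R]}$ is compact, so $\ev$ is proper.

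\textbf{Transversality and degree.} We set up the Fredholm section of Section \ref{sec: Non-Empty Moduli Spaces} on a Banach manifold $\Bcal$ of pairs $(r,w)$ with exponentially weighted asymptotics in $\CCrit$, allowing the endpoints to vary in $\CCrit$ (Appendix \ref{app sec: Banach Mfd Bcal}). The linearization at the constant solutions at $r=0$ is surjective by the Morse-Bott property (Lemma \ref{lem: Crit(RabinowitzHZero)}), and its index is $\dim\Sigma$ for each fixed $r$.

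We cannot expect transversality for the given $J$ and $H$. We therefore invoke an abstract perturbation result: we add a small compactly supported perturbation, supported in $s\in[-R-1,R+1]$ and vanishing at $r=0$, so that the section becomes transverse. The zero set is then a compact $(\dim\Sigma+1)$-dimensional cobordism with boundary consisting of the slices at $r=0$ and $r=R$. Compactness persists for small perturbations, since the bounds above are open conditions.

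The cobordism shows that the $\Z_2$-degree of $\ev$ on the slice $r = R$ equals $1$. Hence every $z$ is hit by a perturbed solution. Letting the perturbation tend to zero and using the compactness argument again produces an element of $\moduliH{R}(z)$.

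\textbf{Main obstacle.} The hardest step is compactness up to the ends, uniformly in $r\in[0,R]$ and in the perturbation. The Morse-Bott ends require the uniform exponential decay in order to exclude cylinders escaping along $\CCrit$ in $s$. The second delicate point is carrying out the abstract perturbation while keeping the evaluation map and the constraint $u \simeq 0$ rel $\CCrit$ under control.
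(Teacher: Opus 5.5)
Your strategy is the paper's. Compactness of $\moduliH{[0,R]}$ comes from the bounds of Lemma~\ref{lem: Proof MThm - bounds on energy, Lagrange multiplier, cylinder}, Gromov compactness, $\eps<\tau(\alpha)$ and uniform exponential decay. The rest is a Fredholm section on a weighted Banach manifold with ends moving in $\CCrit$, transversality at $r=0$, a perturbation vanishing at $r=0$, and a cobordism argument for the evaluation map. The paper packages the last steps into Theorem~\ref{thm: Non-Empty Moduli Space - abstract perturbation thm} with $Z=\R\times\{z\}$ and $\ev(u,\eta)=u(0,0)$. Your mod~$2$ degree of $\pi_\Sigma\circ\ev$ is an equivalent formulation.

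\textbf{The step that fails as written is the passage to and from the perturbed problem.} The energy bound, the Lagrange multiplier bound and the confinement to $[h_0^-,h_0]\times\Sigma$ are not open conditions. They are derived from the gradient-flow structure: the energy identity in Lemma~\ref{lem: action estimate for w in moduli}, Lemma~\ref{lem: Cieliebak-Frauenfelder}, and the maximum-principle and Hofer-energy arguments of Proposition~\ref{prop: cylinders in compact region}. A zero of $\Fsection+\sigma$ for an abstract section $\sigma$ satisfies none of these. So neither these bounds nor Gromov compactness are available for perturbed solutions, and ``using the compactness argument again'' to let $\sigma\to 0$ does not work.

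The correct mechanism is local properness of Fredholm sections. Suppose $\Fsection^{-1}(0)$ is compact in the topology of $[0,R]\times\Bcal$. Then perturbations that are small and supported near it have compact zero sets. Moreover, zeros of $\Fsection+\sigma_k$ with $\sigma_k\to 0$ subconverge in $\Bcal$ to zeros of $\Fsection_R$, and the constraint passes to the limit by continuity of $\ev$. This needs two things your sketch does not supply:
\begin{enumerate}
\item Compactness must hold in the chart topology of $\Bcal$, i.e.\ in the weighted $W^{1,p}_{\kappa_0}$ norms. $\Cinftyloc$ convergence with asymptotics in $\CCrit$ is not enough. The paper gets the stronger convergence from uniform exponential decay at a rate $\kappa>\kappa_0$ and dominated convergence (proof of Proposition~\ref{prop: Non_Empty Moduli Spaces - Compactness}).
\item The perturbations must be constructible on $\Bcal$ and keep the section Fredholm. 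This is why the paper checks that $\Bcal$ admits smooth bump functions ($p$ even) and that $\Ecal$ extends to a B-bundle pair before invoking the abstract theorem.
\end{enumerate}
Restricting the supports to $s\in[-R-1,R+1]$ is unnecessary, and you would need a unique-continuation argument to still reach transversality.

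Smaller points:
\begin{enumerate}
\item Morse--Bott alone only gives $\dim\ker\Dvert\Fsection_0(w_0)\leq\dim\Sigma$. Surjectivity at $r=0$ then follows from the index being $\dim\Sigma$, which comes from spectral flow at the constant solutions.
\item For $r>0$ the paper obtains the index from local constancy on the connected set $\Bcal^\infty$ together with compactness of $\D\Fsection_r-\D\Fsection_0$.
\item At $r=0$ the energy vanishes because both ends have action $\tau(\alpha)$, not because $\Delta=0$.
\end{enumerate}
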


\noindent Admitting the proposition, we can finalize the proof of Theorem \ref{mthm: Rabinowitz gradient flow line}.

\begin{proof}[Finishing the proof of Theorem \ref{mthm: Rabinowitz gradient flow line}]
    For each $n \in \N$ choose $w_n =(u_n,\eta_n) \in \moduliH{n}(z) \,$. Lying in the appropriate moduli space, $w_n$ is a gradient flow line of $\RabinowitzH$ on the interval $[-n,n] \,$. Due to the uniform bounds in Lemma \ref{lem: Proof MThm - bounds on energy, Lagrange multiplier, cylinder}, we can apply Lemma \ref{lem: Gromov compactness - sequence on (-R_n, R_n)} and extract a subsequence $(w_{n_k})_k$ converging in $\Cinftyloc(\R,\loopspace \times \R)$ to a gradient flow line $w = (u,\eta)$ of $\RabinowitzH $ having energy bounded by $E(w) \leq \eps \,$. By $\Cinftyloc$-convergence and the bounds in Lemma \ref{lem: Proof MThm - bounds on energy, Lagrange multiplier, cylinder} again, the Lagrange multiplier of the limit is bounded by $\lVert \eta \rVert_\infty \leq T^*$ and 
    \begin{equation}
    \label{eq: Proof Mthm - RabinowitzH(limit w) close to tau(alpha)}
    \RabinowitzH (w(s)) = \lim_{k \rightarrow \infty }\Rabinowitz{H}_{n_k ,s}(w_{n_k}(s)) \in [\tau(\alpha) -\eps, \, \tau(\alpha)+\eps ] \qquad \Forall s \in \R \,\, .
    \end{equation}
    Similarly $\RabinowitzHZero(w(s))$ has distance from $\tau(\alpha)$ at most $\eps $ for every $s \in \R \,$.
    The image of the cylinder $u$ is contained in $[h_0^-,h_0] \times \Sigma \,$, in particular is precompact, and $u(0,0) = \lim_{k \rightarrow \infty} u_{n_k}(0,0) \in \R \times \{z\} \,$. By Corollary \ref{cor: Gromov compactness - single gradient flow line crit pts at ends}, there exist a sequence $(s_n)_{n \in \N} $ of positive numbers tending to infinity and critical points $(v^-,\tau^-) , \, (v^+,\tau^+) \in \Crit(\RabinowitzH)$ so that
    \[
    w(\pm s_n) \overset{n \rightarrow \infty}{\longrightarrow} (v^\pm,\tau^\pm) \qquad \text{in } \Cinfty(\bbS^1,S\Sigma) \times \R \,\, .
    \]
    Lastly, we show that $\tau^-,\tau^+ \not= 0 \,$. If $\tau^\pm = 0 \,$, then $v^\pm$ must be a constant loop by the critical point equation \eqref{eq: Crit(Rabinowitz)} and hence
    \[
    \RabinowitzH(v^\pm,\tau^\pm) = \int_{0}^1 \lambda_{v^\pm}(\partial_t v^\pm) \, \d t - \tau^\pm \int_0^1 H(v^\pm) \, \d t = 0 \,\, ,
    \]
    contradicting \eqref{eq: Proof Mthm - RabinowitzH(limit w) close to tau(alpha)} since $\tau(\alpha)- \eps > 0 $ by \eqref{eq: Proof MThm - eps < tau(alpha)}. Thus $\tau^\pm \not= 0 \,$. This shows that, with the previous choices, properties \ref{it: mthm Rabinowitz gradient flow line - tau not= 0}\,-\,\ref{it: mthm Rabinowitz gradient flow line - u(0,0)} in Theorem \ref{mthm: Rabinowitz gradient flow line} hold.
\end{proof}

\noindent Modulo the proof of Proposition \ref{prop: moduli^H_[0,R](z) non-empty}, the proof of Theorem \ref{mthm: Rabinowitz gradient flow line} is now complete.

\begin{figure}
    \centering
    \includegraphics[width=0.9\linewidth]{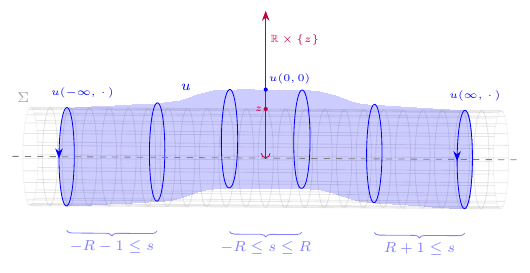}
    \caption{Cylinder $u$ of $w=(u,\eta) \in \moduliH{R}(z)$. $w$ is a gradient flow line of $\RabinowitzHZero$ for $|s| \geq R+1$ and a gradient flow line of $\RabinowitzH$ for $|s| \leq R\,$. The asymptotic orbits $u(\pm \infty,\,\cdot\,)$ are $1$-periodic integral curves of $\tau(\alpha) R_\alpha$ on $\Sigma \,$. 
    The symplectization $\R \times \Sigma$ is schematically represented by Euclidean 3-space minus the dashed line.}
    \label{fig: moduli space moduliH_[0,R](z) element}
\end{figure}

\section{Proof of Proposition \ref{prop: moduli^H_[0,R](z) non-empty}}
\label{sec: Non-Empty Moduli Spaces}

\noindent As in the previous section, we consider a Zoll contact manifold $(\Sigma,\alpha)$ with
\[
2N-1 := \dim(\Sigma) \geq 3 \,\, ,
\]
an almost complex structure $J$ on $S \Sigma$ of SFT-type, a constant $h_0 > 0$, a defining Hamiltonian $H_0 \in \Hcal^{h_0} \,$, a $C^1$-datum $\Dscr$ and $\eps \in (0,\tau(\alpha)) \,$. We fix the constants $\delta \comma T^* >0$ and $h_0^- < 0 \,$, which we have specified in Subsection \ref{subsec: Proof MThm - delta choice}. For this section we also fix $H \in \Hcal^{h_0}(H_0, \Dscr, \delta) \,$, for which in particular the bounds in Lemma \ref{lem: Proof MThm - bounds on energy, Lagrange multiplier, cylinder} hold, and a point $z \in \Sigma \,$. We aim to show Proposition \ref{prop: moduli^H_[0,R](z) non-empty}.

\subsection{Outline of the argument}
\label{subsec: Non-Empty Moduli Space - Outline}

We begin by observing that the moduli space $\moduliH{R}(z)$ for $R=0$ is a single point.

\begin{lemma}
\label{lem: moduli_0^H}
For $w = (u,\eta) \in \Cinfty(\R,\loopspace \times \R)$ it holds $w \in \moduliH{0}$ iff $w$ is constant and $w(s) = w(\pm \infty) \in \CCrit$ for any $s \in \R$. Consequently there is a natural identification $\moduliH{0} \cong \CCrit \cong \Sigma$ and, under this identification, $\moduliH{0}(z) \cong \{z\} \,$. 
\end{lemma}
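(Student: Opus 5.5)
For $r=0$ we have $\beta_0\equiv 0$ by \ref{it: beta_r choice - beta_0}. So $\RabinowitzHParam{0,s}=\RabinowitzHZero$ for every $s$, and $w\in\moduliH{0}$ is an ordinary gradient flow line of $\RabinowitzHZero$ with both asymptotic limits in $\CCrit$. The plan is an energy argument. Since $\RabinowitzHZero$ is $s$-independent along $w$, we have
\[
E(w)=\int_{-\infty}^{\infty}\frac{\d}{\d s}\RabinowitzHZero(w(s))\,\d s=\RabinowitzHZero(w(\infty))-\RabinowitzHZero(w(-\infty))=\tau(\alpha)-\tau(\alpha)=0 .
\]
The last equality uses that $\RabinowitzHZero$ takes the common value $\tau(\alpha)$ on $\CCrit$ (Lemma \ref{lem: Crit(RabinowitzHZero)}). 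Equivalently, one can apply Lemma \ref{lem: action estimate for w in moduli}: there $\RabinowitzH$ plays no role, and the computation in \eqref{eq: action estimate - E_a^b(w) for w in moduli^H} has vanishing $\beta_r'$-term.

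Hence $\partial_s w\equiv 0$, so $w$ is constant. Its constant value equals $w(\pm\infty)\in\CCrit$.

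For the converse, take a constant $w\equiv(v,\tau(\alpha))$ with $(v,\tau(\alpha))\in\CCrit$. It solves the gradient equation because $\nabla\RabinowitzHZero$ vanishes at critical points. Its limits lie in $\CCrit$. It remains to check the homotopy condition $u\simeq 0$ rel $\CCrit$. But $u=u_v$ is exactly the $s$-independent cylinder appearing in Definition \ref{def: homotopic rel C^tau(alpha)}, so the condition holds tautologically via the constant homotopy.

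Finally, the map $\moduliH{0}\to\CCrit$, $w\mapsto w(0)$, is a bijection by the above. Composing it with \eqref{eq: CCrit diffeomorphism Sigma}, $(v,\tau(\alpha))\mapsto v(0)$, gives the identification with $\Sigma$. Under this identification the condition $u(0,0)\in\R\times\{z\}$ becomes $v(0)\in\R\times\{z\}$. Since $v(0)\in\Sigma=\{0\}\times\Sigma$, this means $v(0)=z$. So $\moduliH{0}(z)$ corresponds to the single point $z$.

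There is no real obstacle here. The only point needing care is to use finite energy together with equal asymptotic actions, rather than any Morse–Bott analysis.
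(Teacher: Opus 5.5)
Your proof is correct and follows the paper's argument. Since $\beta_0\equiv 0$, an element of $\moduliH{0}$ is an ordinary $\RabinowitzHZero$-gradient flow line with both ends in $\CCrit$, where $\RabinowitzHZero\equiv\tau(\alpha)$; its energy therefore vanishes and it is constant. You also spell out the converse direction (including the tautological homotopy condition $u=u_v$) and the identification $\moduliH{0}(z)\cong\{z\}$, both of which the paper leaves implicit.
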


\begin{proof}
    Since $\beta_0 \equiv 0$ by \ref{it: beta_r choice - beta_0}, an element $w \in \moduliH{0}$ is just a gradient flow line of $\RabinowitzHZero \,$, which starts and ends in $\CCrit \subseteq \Crit(\RabinowitzHZero) \,$. Since $\RabinowitzHZero$ is constant on $\CCrit$ with common value $\tau(\alpha)\,$, the gradient flow line $w$ has zero energy and is constant.

    The bijection $\moduliH{0} \overset{\cong}{\rightarrow} \CCrit$ is given by sending $w \in \moduliH{0}$ to $w(s)$ for any $s \in \R$ and the bijection $\CCrit \overset{\cong}{\rightarrow} \Sigma \comma (v,\tau(\alpha)) \mapsto v(0) \,$, is the one from \eqref{eq: CCrit diffeomorphism Sigma}.
\end{proof}

\noindent Let us now fix an arbitrary $R > 0 \,$. We will define a Banach bundle $\Ecal \rightarrow \Bcal$ and a section $\Fsection : [0,R] \times \Bcal \rightarrow \EcalHat$ of the pullback bundle $\EcalHat$ so that the zero set of $\Fsection_r = \Fsection(r,\,\cdot\,) : \Bcal \rightarrow \Ecal$ is precisely $\moduliH{r}$ for every $0 \leq r \leq R \,$. The section $\Fsection$ is a Fredholm section and its zero set $\moduliH{[0,R]}$ is a compact subset of $[0,R] \times \Bcal \,$. Moreover, $\Fsection_0$ is transverse to the zero section. The evaluation
\[
e : [0,R] \times \Bcal \rightarrow S \Sigma \comma \quad e(r,(u,\eta)) :=e_r(u,\eta) :=  u(0,0)
\]
restricted to $\{0\} \times \moduliH{0}$ is an embedding onto $\{0\} \times \Sigma$ and agrees with the identification from Lemma \ref{lem: moduli_0^H}. In particular $e_0|_{\moduliH{0}}$ is transverse to $\R \times \{z\}$ in $S \Sigma \,$.
Adding a suitable generic section $\sigma : [0,R] \times \Bcal \rightarrow \EcalHat $ with $\sigma(0,\,\cdot\,)= 0 \,$, the perturbed section $\Fsection + \sigma$ is transverse to the zero section and its zero set is a compact manifold $\widetilde{\moduli}_{[0,R]}^H$ with boundary $\moduliH{0} \sqcup \widetilde{\moduli}_R^H \,$. Perturbing also the evaluation map, we obtain a new smooth map $\widetilde{e} : \widetilde{\moduli}_{[0,R]}^H \rightarrow S \Sigma \,$, which is transverse to $\R \times \{z\}$ and agrees with $ e_0$ on $\moduliH{0} \,$, where transversality is already achieved. Thus $\widetilde{e}^{\,-1}(\R \times \{z\})$ is a compact smooth cobordism between $(e_0|_{\moduliH{0}})^{-1}(\R \times \{z\}) = \moduliH{0}(z)  \cong \{z\}$ and $ \widetilde{e}_R^{\,-1}(\R \times \{z\}) \,$, hence the latter is non-empty. If the perturbations are chosen sufficiently small, then this implies that also the unperturbed moduli space $(e_R|_{\moduliH{R}})^{-1}(\R \times \{z\}) = \moduliH{R}(z)$ is non-empty.

The previous considerations regarding such abstract perturbations have been carried out in detail for a general class of Banach bundles in \cite{Abstract_Perturbations}. For this reason, instead of repeating the above arguments, we content ourselves with invoking the abstract perturbation result \cite[Thm.~D]{Abstract_Perturbations} and obtain the same conclusion.
For convenience of the reader, we now repeat this theorem and the relevant definitions.

\begin{definition}
\label{def: Non-Empty Moduli Spaces - admitting smooth bump fcts}
A smooth Banach manifold $\Bcal$ \textit{admits smooth bump functions} if for every $x \in \Bcal$ and every open neighborhood $\Ucal(x) \subseteq \Bcal$ of $x$ there exists $\psi \in \Cinfty(\Bcal,[0,1])$ with $\psi(x) = 1$ and $\supp(\psi) \subseteq \Ucal(x) \,$.
\end{definition}

\begin{definition}
\label{def: Non-Empty Moduli Spaces - Fredholm section}
Let $\Fsection : \Bcal \rightarrow \Ecal$ be a smooth section of a Banach bundle $\Ecal \,$. For a zero $x \in \Fsection^{\,-1}(0) \,$, we denote by $\Dvert \Fsection(x) : T_x \Bcal \rightarrow \Ecal_x$ its vertical differential at $x$.
\begin{enumerate}[label=(\alph*)]
    \item $\Fsection$ is a \textit{Fredholm section of index $d$} if for every $x \in \Fsection^{\,-1}(0) $ the vertical differential $\Dvert \Fsection(x) : T_x \Bcal \rightarrow \Ecal_x$ is a Fredholm operator of index $d \,$.
    \item $\Fsection$ is \textit{transverse to the zero section} if for every $x \in \Fsection^{\,-1}(0)$ the vertical differential $\Dvert \Fsection(x)$ is surjective and its kernel is a complemented subspace in $T_x \Bcal \,$.
\end{enumerate}
\end{definition}

\begin{definition}[B-bundle pair, \cf Def.~2.7 and 2.8 in \cite{Abstract_Perturbations}]
\label{def: Non-Empty Moduli Spaces - B-bundle pair} \hfill
\begin{enumerate}[label=(\alph*)]
    \item A \textit{B-pair} $(\mathbbm{F},\mathbbm{F}_1)$ is a pair of Banach(able) spaces, so that $\mathbbm{F}_1 \subseteq \mathbbm{F}$ is a dense linear subspace and so that the inclusion $\mathbbm{F}_1 \hookrightarrow \mathbbm{F}$ is a compact linear operator.
    \item A \textit{B-bundle pair $(\Ecal,\Ecal^1)$ over $\Bcal$} consists of two smooth Banach bundles $\Ecal$ and $\Ecal^1$ over $\Bcal \,$, so that
    \begin{enumerate}[label=\arabic*)]
        \item for every $x \in \Bcal$ the fiber $\Ecal^1_x$ is a linear subspace of $\Ecal_x \,$,
        \item for every $x \in \Bcal$ there exists a B-pair $(\mathbbm{F},\mathbbm{F}_1)$ and a trivialization $\Phi : \Ecal|_{\mathcal{U}} \overset{\cong}{\longrightarrow} \mathcal{U} \times \mathbbm{F}$ of $\Ecal$ over an open neighborhood $\mathcal{U}$ of $x$ which restricts to a trivialization $\Phi: \Ecal^1|_{\mathcal{U}} \overset{\cong}{\longrightarrow} \mathcal{U} \times \mathbbm{F}_1 $ of $\Ecal^1\,$.
    \end{enumerate}
\end{enumerate}
\end{definition}

\noindent We now quote the perturbation theorem alluded to above.

\begin{theorem}[Thm.~D in \cite{Abstract_Perturbations}]
\label{thm: Non-Empty Moduli Space - abstract perturbation thm}
Suppose we are given
\begin{enumerate}[label=\arabic*)]
    \item a Banach manifold $\Bcal$ (smooth, without boundary) which admits smooth bump functions and a Banach bundle $\Ecal \rightarrow \Bcal$ which can be extended to a B-bundle pair $(\Ecal,\Ecal^1)$ over $\Bcal \,$,
    \item a section $\Fsection : [0,R] \times \Bcal \rightarrow \EcalHat$ of the pullback bundle $\EcalHat$ of $\Ecal$ along $[0,R] \times \Bcal \rightarrow \Bcal \,$; We assume $\Fsection$ is a Fredholm section of index $d+1$ with compact zero set $\Fsection^{\,-1}(0)$ and that $\Fsection_0 = \Fsection(0,\,\cdot\,) : \Bcal \rightarrow \Ecal$ is transverse to the zero section.
    \item a finite-dimensional manifold $M$ and a submanifold $Z \subseteq M$ (both without boundary) of codimension $d$ which is a closed subset of $M$,
    \item a smooth map $e : [0,R] \times \Bcal \rightarrow M$ so that the restriction of $e_0 = e(0,\,\cdot\,) : \Bcal \rightarrow M$ to $\Fsection_0^{\,-1}(0)$ intersects $Z$ transversally in precisely one point, that is
    \[
    e_0|_{\Fsection_0^{\,-1}(0)} \pitchfork Z \text{ and } \, \# \big( e_0^{-1}(Z) \cap \Fsection_0^{\,-1}(0) \big) = 1 \,\, .
    \]
\end{enumerate}
Then there exists $x \in \Fsection_R^{\,-1}(0)$ with $e(R,x) \in Z \,$.
\end{theorem}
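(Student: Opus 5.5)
We argue by contradiction. Suppose there is no $x \in \Fsection_R^{\,-1}(0)$ with $e(R,x) \in Z$. Since $\Fsection^{\,-1}(0)$ is compact and $Z$ is closed, the set $K_R := \Fsection_R^{\,-1}(0) \cap e_R^{-1}(Z)$ is empty. The strategy is to perturb $\Fsection$ (and, if necessary, $Z$) by arbitrarily small finite-dimensional perturbations, so that the relevant zero sets become a compact $1$-dimensional cobordism. A mod $2$ count then forces solutions at $r=R$, and passing to the limit of vanishing perturbations contradicts $K_R = \varnothing$.

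\emph{Step 1 (local properness).} Fredholm sections are locally proper: every $(r,x) \in \Fsection^{\,-1}(0)$ has a closed neighborhood on which $\Fsection$ restricted to closed bounded sets is proper. Compactness of $\Fsection^{\,-1}(0)$ lets us cover it by finitely many such neighborhoods, with union $\mathcal{N}$. We then show that for a sufficiently $C^0$-small perturbation $\sigma$ supported in $\mathcal{N}$, the zero set of $\Fsection + \sigma$ is compact and contained in $\mathcal{N}$. Moreover, as $\sigma \to 0$, any sequence of such zeros has a subsequence converging to a zero of $\Fsection$. I expect this to be the main technical obstacle. The B-bundle pair hypothesis enters exactly here: we take perturbations with values in $\Ecal^1$, so that the compact inclusion $\Ecal^1 \hookrightarrow \Ecal$ makes $\sigma$ a compact perturbation. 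This preserves both the Fredholm property, with index $d+1$, and the local properness.

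\emph{Step 2 (generic finite-dimensional perturbation).} Because $\Fsection_0$ is transverse and transversality is open, $\Fsection$ is already transverse on $[0,r_0] \times \mathcal{N}$ for some $r_0 > 0$. Next, choose finitely many sections $s_1,\dots,s_k$ of $\Ecal^1$, localized with the smooth bump functions on $\Bcal$ near the compact set $\Fsection^{\,-1}(0)$, whose values span the cokernels of $\Dvert \Fsection$ at all points of $\Fsection^{\,-1}(0)$. Fix also a cutoff $\rho(r)$ with $\rho \equiv 0$ near $r=0$ and $\rho \equiv 1$ on $[r_0,R]$. Consider the universal section
\[
(r,x,p) \mapsto \Fsection(r,x) + \rho(r) \sum_{i=1}^{k} p_i \, s_i(x) , \qquad p \in B_\epsilon(0) \subseteq \R^k ,
\]
which is transverse near its zero set. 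In the same way, add a finite-dimensional family of diffeomorphisms $\phi_q$ of $M$, supported away from $e_0(\Fsection_0^{\,-1}(0))$ near $Z$, and replace $Z$ by $\phi_q(Z)$. By Sard's theorem applied to the finite-dimensional projection to the parameters $(p,q)$, for generic small $(p,q)$ the following hold:
\begin{itemize}
\item the perturbed zero set $\widetilde{\moduli}$ is a compact $(d+1)$-manifold with boundary $\Fsection_0^{\,-1}(0) \sqcup \widetilde{\moduli}_R$;
\item $e|_{\widetilde{\moduli}}$, $e_R|_{\widetilde{\moduli}_R}$ and $e_0|_{\Fsection_0^{\,-1}(0)}$ are all transverse to $\phi_q(Z)$.
\end{itemize}
Near $r=0$ nothing changes, so the transversality at $r=0$ holds by hypothesis.

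\emph{Step 3 (cobordism and limit).} The preimage $e^{-1}(\phi_q(Z)) \cap \widetilde{\moduli}$ is a compact $1$-manifold. Its boundary consists of one point over $r=0$, by hypothesis 4), together with $e_R^{-1}(\phi_q(Z)) \cap \widetilde{\moduli}_R$. A compact $1$-manifold has an even number of boundary points, so the latter set is non-empty. Now take a sequence of generic parameters $(p_n,q_n) \to 0$ and points $x_n$ in the corresponding sets $e_R^{-1}(\phi_{q_n}(Z)) \cap \widetilde{\moduli}_R$. By Step 1 a subsequence converges to some $x$ with $\Fsection_R(x) = 0$ and, since $Z$ is closed and $\phi_{q_n} \to \id$, with $e(R,x) \in Z$. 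This contradicts $K_R = \varnothing$ and proves the theorem.
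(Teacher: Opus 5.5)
Your architecture ($\Ecal^1$-valued finite-dimensional perturbations of $\Fsection$ switched off near $r=0$, local properness to keep zero sets compact and to pass to the limit, a compact $1$-dimensional cobordism with a mod~$2$ count) is the one sketched in Subsection~\ref{subsec: Non-Empty Moduli Space - Outline}. You also correctly identify what the B-bundle pair is for.

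The genuine gap is the transversality of the evaluation map in Step~2. You take $\phi_q$ supported away from $e_0(\Fsection_0^{\,-1}(0))$. So on a neighbourhood $\mathcal{V}$ of that set, $\phi_q(Z)\cap\mathcal{V}=Z\cap\mathcal{V}$ for every $q$, and the $q$-derivative of the universal map $(r,x,p,q)\mapsto\phi_q^{-1}(e(r,x))$ vanishes wherever $e(r,x)\in\mathcal{V}$. But only the domain stratum $\{r=0\}$ needs protecting. Points of $\widetilde{\moduli}$ with $r$ far from $0$ may well be mapped by $e$ into $Z\cap\mathcal{V}$, e.g.\ onto $e_0(x_0)$ itself. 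There Sard yields nothing: your $s_i$ were chosen only to span cokernels, and for an abstract $e$ no perturbation of $\Fsection$ need move $e$ normally to $Z$.

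This is exactly the relevant situation in the application, where $e_0(\Fsection_0^{\,-1}(0))=\{0\}\times\Sigma$ and $Z=\R\times\{z\}$, so $\phi_q=\id$ near $(0,z)$. A toy counterexample to your genericity claim: take $\Ecal=0$, $\Bcal=M=\bbS^d$, $Z=\{z\}$, $e_0=\id$, and let $e_r$ for intermediate $r$ be a degree-one map collapsing a small ball onto $z$. Then $e_0(\Fsection_0^{\,-1}(0))=M$ forces $\phi_q=\id$, and there are no $p$-directions. Hence $e^{-1}(Z)\cap\widetilde{\moduli}$ contains an open subset of $[0,R]\times\bbS^d$ and is not a $1$-manifold.

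The fix is to localize in the domain rather than the target, as the paper does: it perturbs $e$ to $\widetilde{e}$, agreeing with $e_0$ only on $\moduliH{0}$. There are two ways to do this.
\begin{enumerate}[label=(\alph*)]
\item Drop the support condition. Take a compactly supported family $\phi_q$ whose $q$-derivative at $q=0$ spans $T_yM$ for all $y$ in a compact neighbourhood of $e(\Fsection^{\,-1}(0))$, so the universal evaluation is a submersion. For all sufficiently small $q$, $e_0|_{\Fsection_0^{\,-1}(0)}$ is still transverse to $\phi_q(Z)$ with exactly one intersection point. This is stability of transverse intersections, using compactness of $\Fsection_0^{\,-1}(0)$ and closedness of $Z$.
\item Use $\phi_{\rho(r)q}$ with your cutoff $\rho$, and make the interval $\{\rho=0\}$ so short that, by compactness, all points of $e^{-1}(Z)\cap\Fsection^{\,-1}(0)$ there lie near $(0,x_0)$. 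Transversality holds there by openness.
\end{enumerate}
With either change Step~2 holds, and the rest of your argument, including the limit in Step~3, goes through.
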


\noindent Once we have verified all of the assumptions in the preceding theorem, we will apply it in Subsection \ref{subsec: Non-Empty Moduli Spaces - Concluding proof} to conclude the proof of Proposition \ref{prop: moduli^H_[0,R](z) non-empty}.

\subsection{Analytic set-up}
\label{subsec: Non-Empty Moduli Spaces - Analytic Set-Up}

The analytic set-up follows Fauck's thesis \cite{FauckThesis} closely.
We remind the reader of the notation
\[
(W, \lambda , \omega) = (S \Sigma , \, e^h \alpha , \, \d(e^h \alpha)) \quad \text{ and } \quad \loopspace = \Cinfty(\bbS^1,W) \,\, .
\]

\begin{lemma}[Tubular neighborhoods]
\label{lem: Analytic Set-Up - tubular nbhd}
For every $(v,\tau(\alpha)) \in \CCrit$ there exists a tubular neighborhood $U \subseteq W$ of $\image(v) = v(\bbS^1)$ together with smooth coordinates
\[
(\vartheta, y) = (\vartheta,y_1,\ldots , y_{2N-1})  : U \overset{\cong}{\longrightarrow} \bbS^1 \times \R^{2N-1}
\]
so that $((\vartheta,y) \circ v)(t) = (t,0)$ for every $t \in \bbS^1$ and along $U \cap \Sigma = \{y_{2N-1} = 0\}$ it holds $\tau(\alpha) R_{\alpha} = \partial_{\vartheta} \,$. In particular $U \cap \Sigma$ is invariant under the Reeb flow.
\end{lemma}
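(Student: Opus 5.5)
The tubular neighborhood lemma will be obtained by building coordinates adapted to the Reeb flow near a single closed orbit. I treat the $\R$-direction of $S\Sigma$ as the last coordinate and the transverse directions inside $\Sigma$ as the remaining ones.

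First I describe the orbit. Let $(v,\tau(\alpha)) \in \CCrit$ and set $z_0 := v(0) \in \Sigma$. Then $v(t) = \phi_{R_\alpha}^{\tau(\alpha) t}(z_0)$ is a closed Reeb orbit of minimal period $\tau(\alpha)$, since all orbits in the Zoll case have the same minimal period. So $v : \bbS^1 \rightarrow \Sigma$ is an embedded loop.

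Next I construct a local slice. I pick a small embedded disk $S \subseteq \Sigma$ of dimension $2N-2$ through $z_0$, transverse to $R_\alpha$, together with a diffeomorphism $y' = (y_1,\ldots,y_{2N-2}) : S \rightarrow \R^{2N-2}$ sending $z_0$ to $0$. For instance, $S$ can be the image under the exponential map of a small ball in $\xi_{z_0}$, rescaled to all of $\R^{2N-2}$.

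I then define the map
\[
\Psi : \bbS^1 \times S \rightarrow \Sigma \comma \quad (\vartheta, x) \mapsto \phi_{R_\alpha}^{\tau(\alpha)\vartheta}(x) \,\, .
\]
This is well defined on $\bbS^1 = \R/\Z$ because every point is $\tau(\alpha)$-periodic. By construction $\d\Psi(\partial_\vartheta) = \tau(\alpha) R_\alpha$.

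The key step is to show that $\Psi$ is a diffeomorphism onto an open neighborhood of $v(\bbS^1)$ once $S$ is small. Local diffeomorphism near $\bbS^1 \times \{z_0\}$ follows from transversality of $S$ to $R_\alpha$ and the inverse function theorem. Injectivity is where the Zoll hypothesis enters, and I expect it to be the main point.

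I argue injectivity by contradiction. Suppose there were points $(\vartheta_n,x_n) \neq (\vartheta_n',x_n')$ with $x_n, x_n' \rightarrow z_0$ and equal images. Then $x_n' = \phi^{\tau(\alpha)(\vartheta_n-\vartheta_n')}(x_n)$. Passing to a subsequence, $\phi^{\tau(\alpha)\theta}(z_0) = z_0$ with $\theta = \lim(\vartheta_n - \vartheta_n')$. Minimality of the common period $\tau(\alpha)$ at $z_0$ forces $\theta \equiv 0$ mod $1$. This contradicts local injectivity near $\theta = 0$, which holds by transversality of the slice. Hence, after shrinking $S$, $\Psi$ is an embedding onto a Reeb-invariant open neighborhood $U_\Sigma$ of $v(\bbS^1)$ in $\Sigma$. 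The invariance holds because images of flow lines are whole orbits.

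Finally I extend into the symplectization. I set $U := \R \times U_\Sigma \subseteq W$ with coordinates $(\vartheta, y_1,\ldots,y_{2N-2}, y_{2N-1} := h)$. Composing $y_{2N-1}$ with a diffeomorphism $\R \cong \R$ fixing $0$ is harmless and keeps the zero set. Then:
\begin{itemize}
\item $(\vartheta,y)\circ v(t) = (t,0)$, since $v(t) = \Psi(t,z_0)$ and $v$ lies in $\{0\}\times\Sigma$;
\item $U \cap \Sigma = \{y_{2N-1} = 0\}$;
\item on $U \cap \Sigma$ we have $\partial_\vartheta = \tau(\alpha) R_\alpha$.
\end{itemize}
Reeb invariance of $U \cap \Sigma = U_\Sigma$ was established above, which completes the lemma.
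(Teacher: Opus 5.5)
Your proof is correct and follows essentially the same route as the paper, which obtains $\bbS^1$-equivariant coordinates $(\vartheta,y_1,\ldots,y_{2N-2})$ on $\Sigma$ near $v$ by citing the slice theorem for the free $\bbS^1$-action generated by $\tau(\alpha)R_\alpha$ and then sets $y_{2N-1}:=h$, exactly as you do. The only difference is that you prove the required case of the slice theorem by hand: you flow out a transverse disk and get injectivity from minimality of the period. For the local-diffeomorphism step, the equivariance identity $\d\Psi_{(\vartheta,x)}=(\d\phi_{R_\alpha}^{\tau(\alpha)\vartheta})_x\circ\d\Psi_{(0,x)}$ shows that transversality of $S$ alone suffices along the whole circle.
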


\noindent Here, as usual, we identified $\{0\} \times \Sigma \subseteq S \Sigma = W$ with $\Sigma\,$

\begin{proof}
  Because $(\Sigma,\alpha)$ is Zoll with minimal period $\tau(\alpha)$, we have a well-defined free $\bbS^1$-action on $\Sigma$ by the flow of $\tau(\alpha) R_\alpha \,$. Hence we can choose $\bbS^1$-equivariant coordinates $(\vartheta,y_1,\ldots,y_{2N-2})$ around $v$ by the slice theorem, see \cite[Thm.~23.5]{Ana_Cannas_Silva}. As last coordinate set $y_{2N-1} := h \,$, the $\R$-coordinate in the symplectization.
\end{proof}

\noindent Due to the Morse-Bott situation, we have to work with weighted Sobolev spaces:

\begin{definition}[Weighted Sobolev space]
\label{def: Analytic Set-Up - weighted Sobolev space}
For $\Omega = I \times \bbS^1$ or $\Omega = I \,$, where $I \subseteq \R$ is an unbounded interval, and every $m \in \N_{\geq 0} \comma  p \geq 1 \comma \kappa \geq 0$ we define the \textit{$\kappa$-weighted Sobolev space} by
\begin{align*}
W^{m,p}_\kappa(\Omega) &:= \set{f \in W^{m,p}_{\mathrm{loc}}(\Omega)}{ e^{\kappa |s|} \partial^\beta f \in L^p(\Omega) \,\,\, \Forall |\beta| \leq m} \\
&= \set{f \in W^{m,p}_{\mathrm{loc}}(\Omega)}{e^{\kappa \rho(s) s} f \in W^{m,p}(\Omega)}
\end{align*}
for any choice of increasing function $\rho \in \Cinfty(\R,[-1,1])$ with $\rho(s) = -1$ for $s \leq -1$ and $\rho(s) = 1$ for $s \geq  1 \,$. For $m=0$ we also write $L^p_{\kappa}(\Omega) := W^{0,p}_{\kappa}(\Omega) \,$. We endow $W^{m,p}_\kappa(\Omega)$ with the norm $\lVert f \rVert_{W^{m,p}_\kappa} := \lVert e^{\kappa \rho(s) s} f \rVert_{W^{m,p}} \,$. This norm is equivalent to the norm $f \mapsto \sum_{|\beta| \leq m} \lVert e^{\kappa |s|} \partial^{\beta} f \rVert_{L^p} \,$.
\end{definition}

\noindent We fix, once and for all, an even integer $p > 2 \,$. (For example we could set $p = 4$.) Moreover, we fix an exponential decay constant $\kappa_0 > 0$ so that
\begin{align}
\label{eq: Analytic Set-Up - choice kappa_0 spec Hessian}
\kappa_0 < \min \set{|\mu|}{0 \not= \mu \text{ eigenvalue of } \nabla^2\RabinowitzHZero(v,\tau(\alpha)) \comma (v,\tau(\alpha)) \in \CCrit}
\end{align}
 and so that additionally there exists a larger constant having the exponential decay property as in Proposition \ref{prop: Uniform Exponential Decay}, that is
\begin{align}
\label{eq: Analytic Set-Up - choice kappa_0 exponential decay}
    \Exists \kappa > \kappa_0 : \,\, \kappa \text{ has the significance stated in Proposition \ref{prop: Uniform Exponential Decay}.}
\end{align}
Such $\kappa_0$ exists by \eqref{eq: min spec Hessian > 0} and Proposition \ref{prop: Uniform Exponential Decay}.

\begin{lemma}
\label{lem: Analytic Set-Up - exponential decay}
For $s_0 \in \R \,$, an $\RabinowitzHZero$-gradient flow line $w = (u,\eta) \in \Cinfty(\R_{\geq s_0} , \loopspace \times \R)$ with limit $w(\infty) \in \CCrit$ and every tubular neighborhood $(U, (\vartheta,y))$ around $u(\infty,\cdot\,)$ as in Lemma \ref{lem: Analytic Set-Up - tubular nbhd} there exists $s_1 \geq s_0$ with $u(s,t) \in U$ for $s \geq s_1 , t \in \bbS^1$, and 
\begin{align*}
\big(\vartheta ( u \,(s,t) ) - t \comma y ( u \, (s,t) ) \big) \in W^{1,p}_{\kappa_0}(\R_{\geq s_1} \times \bbS^1 ,\R^{2N}) \,\text{ and } \,\eta - \tau(\alpha) \in W^{1,p}_{\kappa_0}(\R_{\geq s_1},\R) \, .
\end{align*}
\end{lemma}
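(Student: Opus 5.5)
The plan is to combine the uniform exponential decay of Proposition \ref{prop: Uniform Exponential Decay} with the choice \eqref{eq: Analytic Set-Up - choice kappa_0 exponential decay} of $\kappa_0$, namely that some $\kappa > \kappa_0$ is an admissible decay rate.

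First I localize. Let $(v,\tau(\alpha)) := w(\infty) \in \CCrit$, and let $\mathscr{U}$ and $\Xi$ be the neighborhood and function from Proposition \ref{prop: Uniform Exponential Decay} for this $\kappa$, with $\Xi$ bounded as in Remark \ref{rmk: Uniform Exponential Decay} \ref{it: Uniform Exponential Decay - Xi bounded}. Shrinking $\mathscr{U}$, I arrange that every $(v',\tau') \in \mathscr{U}$ has $v'(\bbS^1)$ inside a compact subset of the tubular neighborhood $U$; this is possible because $U$ is an open neighborhood of $\image(v)$ and $C^0$-closeness is implied by $C^\infty$-closeness. Since $w(s) \to w(\infty)$ in $\Cinfty \times \R$, there is $s_1 \geq s_0$ with $w(s) \in \mathscr{U}$ for all $s \geq s_1$. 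In particular $u(s,t) \in U$ for $s \geq s_1$. Proposition \ref{prop: Uniform Exponential Decay}, applied on $\R_{\geq s_1}$, then gives
\[
|\partial_s u(s,t)| + |\partial_s \eta(s)| \leq C e^{-\kappa s},
\]
and Remark \ref{rmk: Uniform Exponential Decay} \ref{it: Uniform Exponential Decay - integration} gives $d_W(u(s,t),v(t)) + |\eta(s)-\tau(\alpha)| \leq C e^{-\kappa s}$.

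Next I convert to coordinates. Set $f(s,t) := (\vartheta(u(s,t)) - t, y(u(s,t)))$, where $\vartheta - t$ is read in $\R$ near $0$; this makes sense because $\vartheta(v(t)) = t$ and $u$ is $C^0$-close to $v$. All $u(s,t)$ with $s \geq s_1$ lie in a fixed compact subset of $U$, on which the coordinate chart is bi-Lipschitz. Hence $|f(s,t)| \leq C' d_W(u(s,t),v(t)) \leq C'' e^{-\kappa s}$, and $|\partial_s f| \leq C'|\partial_s u|$ is bounded the same way. Because $\kappa > \kappa_0$, the products $e^{\kappa_0 s} f$ and $e^{\kappa_0 s}\partial_s f$ decay like $e^{-(\kappa-\kappa_0)s}$ and so lie in $L^p(\R_{\geq s_1}\times\bbS^1)$. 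The same argument applies to $\eta - \tau(\alpha)$ and $\partial_s \eta$.

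The remaining term, which I expect to be the main obstacle, is $\partial_t f$, because Proposition \ref{prop: Uniform Exponential Decay} controls only $\partial_s$. I would use the gradient equation \eqref{eq: Rabinowitz-Floer eq} for $H_0$, which gives
\[
\partial_t u - \tau(\alpha) X_{H_0}(u) = J(u)\partial_s u + (\eta - \tau(\alpha)) X_{H_0}(u).
\]
The right-hand side is $O(e^{-\kappa s})$. In coordinates, $\partial_t v = \tau(\alpha)X_{H_0}(v) = \partial_\vartheta$, since $X_{H_0} = R_\alpha$ on $\Sigma$ by Lemma \ref{lem: Analytic Set-Up - tubular nbhd}. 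Therefore
\[
\partial_t f = (\partial_t u)_{\mathrm{coord}} - \partial_\vartheta = \bigl[\tau(\alpha)X_{H_0}(u) - \tau(\alpha)X_{H_0}(v)\bigr]_{\mathrm{coord}} + O(e^{-\kappa s}).
\]
The bracket is bounded by $C|f|$, because $X_{H_0}$ is Lipschitz on the compact set; altogether $\partial_t f = O(e^{-\kappa s})$. This shows $f \in W^{1,p}_{\kappa_0}$ on the half-cylinder.

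Finally, local integrability is automatic because $u$ and $\eta$ are smooth. The integrals run over the unbounded interval $[s_1,\infty)$, on which $e^{\kappa_0|s|}$ and $e^{\kappa_0 s}$ agree up to the factor accounting for the choice of $\rho$. The weight is therefore harmless, and the lemma follows.
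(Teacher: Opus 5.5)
Your proposal is correct and follows the paper's proof. Both take decay of $\partial_s u$ and $\partial_s\eta$ at some rate $\kappa>\kappa_0$ from the uniform exponential decay proposition and integrate it to get decay of $d_W(u(s,t),u(\infty,t))$ and $|\eta-\tau(\alpha)|$. Both then control the $\partial_t$-component through the first line of the Rabinowitz--Floer equation, using $\tau(\alpha)X_{H_0}=\tau(\alpha)R_\alpha=\partial_\vartheta$ along the limit orbit (note that $X_{H_0}=R_\alpha$ on $\Sigma$ comes from $H_0$ being a defining Hamiltonian, while the tubular lemma supplies $\tau(\alpha)R_\alpha=\partial_\vartheta$); you just write out the localization, chart-Lipschitz and weight details that the paper leaves implicit.
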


\noindent Notice that the component $(s,t) \mapsto \vartheta ( u(s,t) ) - t \,$, which takes on values in $\bbS^1 \,$, lifts to a map with values in $\R \,$. There is an analogous statement for gradient flow lines defined on $\R_{\leq s_0}$ with limit $w(-\infty) \in \CCrit$.

\begin{proof}[Proof of Lemma \ref{lem: Analytic Set-Up - exponential decay}]
   Due to \eqref{eq: Analytic Set-Up - choice kappa_0 exponential decay} it follows immediately that $\partial_s u$ and $\partial_s \eta$ have exponential decay as in \eqref{eq: Uniform Exponential Decay - partial_s} for some exponential coefficient $\kappa > \kappa_0 \,$. From \eqref{eq: Uniform Exponential Decay - d(u, u_infty)} and \eqref{eq: Uniform Exponential Decay - |eta - eta(infty)|} we obtain exponential decay as $s \rightarrow \infty$ with decay coefficient $\kappa$ for $|\eta(s) - \tau(\alpha)|$ and for $\mathrm{dist}(u(s,t), u(\infty,t)) \,$, uniform in $t \in \bbS^1 \,$. The exponential decay bounds on $\partial_s u$ and the first equation in \eqref{eq: Rabinowitz-Floer eq} yield an exponential decay estimate for $\partial_t u - \eta X_{H_0}(u) $ with same coefficient $\kappa \,$. From what we have just proven, we know that $\eta X_{H_0}(u)$ tends to 
    \[
    \eta(\infty) X_{H_0}(u(\infty,t)) = \tau(\alpha) R_\alpha(u(\infty,t)) = \partial_{\vartheta}
    \]
    exponentially fast. Thus $\partial_t u - \partial_\vartheta$ has exponential decay with coefficient $\kappa \,$.
\end{proof}

\begin{definition}
\label{def: Analytic Set-Up - Banach mfd} We define the underlying Banach manifold $\Bcal \,$.
\begin{enumerate}[label=(\alph*)]
    \item Let $\Bcal'$ be the set consisting of those 
    \[
    w=(u,\eta) \in W^{1,p}_{\mathrm{loc}}(\R \times \bbS^1, W) \times W^{1,p}_{\mathrm{loc}}(\R,\R)
    \]
    with existing $(C^0(\bbS^1,W) \times \R)$-limits $w(\pm \infty) \in \CCrit$ and so that, for any tubular neighborhoods $(U^\pm, (\vartheta^\pm,y^\pm))$ around $u(\pm \infty,\cdot\,)$ as in Lemma \ref{lem: Analytic Set-Up - tubular nbhd}, there exists $s_0 >0$ sufficiently large with
    \begin{align*}
     \big(\vartheta^+ (u(s,t)) - t \comma y^+ (u(s,t)) \big) &\in W^{1,p}_{\kappa_0}(\R_{\geq s_0} \times \bbS^1, \R^{2N}) \, ,\\
     \big(\vartheta^-( u (s,t)) - t \comma y^- (u(s,t)) \big) &\in W^{1,p}_{\kappa_0}(\R_{\leq - s_0} \times \bbS^1, \R^{2N}) \, , \\
     \eta - \tau(\alpha) &\in W^{1,p}_{\kappa_0}(\R,\R) \, ,
    \end{align*}
    If this condition holds for some pair of tubular neighborhoods $(U^\pm , (\vartheta^\pm, y^\pm)) \,$, then it holds for every such pair of tubular neighborhoods.
    \end{enumerate}
    There is a natural topology and smooth structure on $\Bcal' \,$, making it into a smooth Banach manifold without boundary, see Appendix \ref{app subsec: Bcal' Atlas}. With this topology, $\Bcal'$ is second-countable and metrizable, see Corollary \ref{app cor: Bcal' Atlas - Bcal' second countable}.
    \begin{enumerate}[label=(\alph*),resume]
    \item Let $\Bcal$ be the connected component of $\Bcal'$ containing $\CCrit \,$, \ie containing all constant $w \in \Bcal' \,$. In other words
    \begin{align*}
        \Bcal = \set{(u,\eta) \in \Bcal'}{u \simeq 0 \text{ rel } \CCrit} \,\, ,
    \end{align*}
    using the notation established in Definition \ref{def: homotopic rel C^tau(alpha)}.
\end{enumerate}
\end{definition}

\noindent We endow $\Rbar = [-\infty,\infty]$ with a smooth structure by requiring the map
\begin{equation}
\label{eq: Rbar smooth structure}
\Rbar \overset{\cong}{\longrightarrow} [-1,1] \comma \quad s \mapsto \frac{s}{\sqrt{1+s^2}}
\end{equation}
to be a diffeomorphism. Observe that the open subset $\R \subseteq \Rbar$ inherits the standard smooth structure. We emphasize that $\Bcal' $ is contained in $ C^0(\Rbar \times \bbS^1,W) \times C^0(\Rbar,\R)$ by the Sobolev embedding theorem, as $p > 2 \,$.

\begin{lemma}
\label{lem: Analytic Set-Up - Bcal smooth bump fcts}
The Banach manifold $\Bcal'$ admits smooth bump functions. Hence, so does its connected component $\Bcal \,$.
\end{lemma}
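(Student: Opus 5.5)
We plan to construct bump functions by composing a smooth bump on a model Banach space with a chart of $\Bcal'$. Since this property is local, we work in the atlas of Appendix \ref{app subsec: Bcal' Atlas}. Fix $x = (u,\eta) \in \Bcal'$ and an open neighborhood $\Ucal(x)$. Choose a chart $\varphi : \Ucal_\varphi \to V$ with $x \in \Ucal_\varphi$, where $V$ is an open subset of the model space $\mathbb{E}$. By the construction in the appendix, we expect $\mathbb{E}$ to be a finite product. Its factors are a finite-dimensional space $T_{(v^-,\tau(\alpha))}\CCrit \oplus T_{(v^+,\tau(\alpha))}\CCrit$, which records the asymptotic endpoints, and weighted Sobolev spaces $W^{1,p}_{\kappa_0}(\R\times\bbS^1,\R^{2N}) \times W^{1,p}_{\kappa_0}(\R,\R)$, which arise from exponential-map coordinates along a reference cylinder. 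Shrinking, we may assume $\varphi(x)=0$ and that $\varphi(\Ucal(x)\cap\Ucal_\varphi)$ contains a closed ball $\overline{B_r(0)}$ for a norm $\lVert\cdot\rVert$ on $\mathbb{E}$ inducing its topology.

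The key step is to show that $\mathbb{E}$ carries an equivalent norm that is smooth away from the origin. For the weighted Sobolev factors, we pass through the isometric isomorphism $f \mapsto e^{\kappa_0\rho(s)s}f$ onto ordinary $W^{1,p}$. With $p$ an even integer, the function
\[
N(f) := \sum_{|\beta|\leq 1} \int |\partial^\beta (e^{\kappa_0\rho(s)s} f)|^p
\]
is a continuous homogeneous polynomial of degree $p$ on $W^{1,p}_{\kappa_0}$. It is therefore $C^\infty$ in the Fréchet sense, and $N^{1/p}$ is an equivalent norm. This is exactly why $p$ was fixed even. On the finite-dimensional factor we take a Euclidean norm squared, and we set $Q := \sum (\text{factor norms})^p$, adding terms of the same degree $p$. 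Then $Q$ is smooth on $\mathbb{E}$ and satisfies $c\lVert\xi\rVert^p \leq Q(\xi) \leq C\lVert\xi\rVert^p$.

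Next we fix $\chi \in \Cinfty(\R,[0,1])$ with $\chi = 1$ near $0$ and $\chi(t)=0$ for $t \geq \epsilon$, where $\epsilon>0$ is small enough that $\{Q<\epsilon\} \subseteq B_r(0)$. Define $\psi := \chi\circ Q\circ\varphi$ on $\Ucal_\varphi$ and $\psi := 0$ elsewhere. Then $\psi(x)=1$ and $\supp\psi \subseteq \varphi^{-1}(\{Q\leq\epsilon\}) \subseteq \varphi^{-1}(\overline{B_r(0)})$. The remaining delicate point is that this closed set is closed in all of $\Bcal'$, so that $\psi$ is smooth globally. We handle it by choosing $r$ so small that $\varphi^{-1}(\overline{B_{2r}(0)})$ is contained in $\Ucal_\varphi$. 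Since $\Bcal'$ is metrizable, a sequence in $\varphi^{-1}(\overline{B_r})$ converging in $\Bcal'$ converges inside the chart, and the set is therefore closed. On the open set $\Bcal'\setminus \varphi^{-1}(\{Q\leq\epsilon\})$ the function $\psi$ vanishes, so it is smooth everywhere.

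For the component $\Bcal$, which is open in $\Bcal'$, we restrict such a $\psi$ associated with the neighborhood $\Ucal(x)\subseteq \Bcal$. I expect the main obstacle to be only bookkeeping: checking that the chart model space really is a product of the described spaces with these exact norms. This should follow from the explicit atlas in the appendix, where the endpoint directions enter through finitely many parameters cut off near $\pm\infty$.
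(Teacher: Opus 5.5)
Your route is the paper's: reduce to the chart model space $W^{1,p}_{\kappa_0}(\R\times\bbS^1,\R^{2N})\times\R^{2N-1}\times\R^{2N-1}\times W^{1,p}_{\kappa_0}(\R,\R)$, pass to unweighted $W^{1,p}$ by the isometry $f\mapsto e^{\kappa_0\rho(s)s}f$, and use that $p$ is even. The paper cites a theorem for smooth bump functions on $W^{1,p}$. You instead verify directly that the $p$-th power of the norm is a continuous homogeneous polynomial (polarize and apply H\"older). That is correct and makes this step self-contained.

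The step that fails as written is the globalization. Your condition $\varphi^{-1}(\overline{B_{2r}(0)})\subseteq\Ucal_\varphi$ only says $\overline{B_{2r}(0)}\subseteq V$. It does not force a sequence in $\varphi^{-1}(\overline{B_r(0)})$ that converges in $\Bcal'$ to have its limit in $\Ucal_\varphi$. In infinite dimensions $\overline{B_r(0)}$ is not compact, so the images $\varphi(y_n)$ need not converge, and the finite-dimensional argument breaks down. For a general chart the claim is in fact false. Take a Bessaga deleting diffeomorphism $h:E\to E\setminus\{0\}$ of an infinite-dimensional Hilbert space that is the identity outside a ball. The chart $\varphi=h^{-1}$ on $E\setminus\{0\}\subseteq E$ has $V=E$, yet $\varphi^{-1}(\overline{B_r(0)})=\overline{B_r(0)}\setminus\{0\}$ for large $r$, which is not closed in $E$.

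The fix uses the metrizability you invoke, but through regularity. Choose an open $W\ni x$ whose closure in $\Bcal'$ lies in $\Ucal(x)\cap\Ucal_\varphi$. Then choose $\epsilon$ so small that $\varphi^{-1}(\{Q\le\epsilon\})\subseteq W$, which is possible by continuity of $\varphi^{-1}$ at $0$. This set is closed in $\Ucal_\varphi$, and its $\Bcal'$-closure lies in $\overline{W}\subseteq\Ucal_\varphi$, so it is closed in $\Bcal'$. With that change the rest of your argument, including the restriction to the open component $\Bcal$, goes through. This regularity step is what the paper's remark that $\Bcal'$ is metrizable implicitly covers.
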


\begin{proof}
    Locally the metrizable Banach manifold $\Bcal'$ is modeled on the Banach space
    \[
    W^{1,p}_{\kappa_0}(\R \times \bbS^1,\R^{2N}) \times \R^{2N-1} \times \R^{2N-1} \times W^{1,p}_{\kappa_0}(\R,\R)
    \]
    and it suffices to show that this Banach space admits smooth bump functions. Since the $\kappa_0$-weighted Sobolev spaces are isometric to the ordinary Sobolev spaces (the isometry given by $W^{1,p}_{\kappa_0} \rightarrow W^{1,p} \comma f \mapsto e^{\kappa_0 \rho(s)s} f$), it suffices to show that $W^{1,p}(\R \times \bbS^1,\R^{2N})$ respectively $W^{1,p}(\R,\R)$ admit smooth bump functions. This holds indeed since $p$ is an even integer, see \cite[Thm. 1]{Lp_smooth_bump_fcts}.
\end{proof}

\noindent Let us consider the smooth Banach bundle $\Ecal$ over $\Bcal \,$, whose fiber over $w = (u,\eta) \in \Bcal$ is given by
\begin{align*}
    \Ecal_{(u,\eta)} := L^p_{\kappa_0}(\R \times \bbS^1, u^*TW) \oplus L^p_{\kappa_0}(\R,\R) \,\, .
\end{align*}
We briefly explain the definition of the appearing $\kappa_0$-weighted Sobolev space $L^p_{\kappa_0}(\R \times \bbS^1,u^*TW) \,$.
Observe that, because $W$ is orientable as symplectic manifold, the $C^0$-bundle $u^*TW$ over $\Rbar \times \bbS^1$ is orientable and hence trivial. Fixing a global $C^0$-trivialization $\Phi : u^*TW \overset{\cong}{\longrightarrow} (\Rbar \times \bbS^1) \times \R^{2N} \,$, the $\kappa_0$-weighted Sobolev space along $u^*TW$ is defined by
\[
L^p_{\kappa_0}(\R \times \bbS^1,u^*TW) := \set{(s,t) \mapsto \Phi^{-1}_{(s,t)} (f(s,t))}{f \in L^p_{\kappa_0}(\R \times \bbS^1,\R^{2N})}
\]
and this is independent of the chosen trivialization $\Phi \,$.

Let $R >0$ be arbitrary but fixed. We denote by $\EcalHat$ the pullback bundle of $\Ecal$ along the projection to the second component
\[
\mathrm{pr}_2 : \, \BcalHat := [0,R] \times \Bcal \longrightarrow \Bcal \,\, .
\]
Define the smooth section $\Fsection : \BcalHat \rightarrow \EcalHat$ of $\EcalHat$ by
\[
\Fsection(r,w) := \Fsection_r(w) := \partial_s w - \nabla^{g_J} \RabinowitzHrs (w) \qquad \text{ for } (r,w) \in [0,R] \times \Bcal \, .
\]
Writing the definition out, this means
\begin{align*}
  \Fsection_r(u,\eta) = { {\partial_s u + J(u) \partial_t u - \eta J(u) \big( (1- \beta_r) X_{H_0}(u) + \beta_r X_H(u) \big) }
  \choose 
  {
   \partial_s \eta + \int_0^1 \big((1-\beta_r) H_0 + \beta_r H \big)(u) \, \d t 
    }} \,\, .
\end{align*}

\begin{lemma}
\label{lem: Analytic Set-Up - zero set Fsection}
For every $r \in [0,R]$ the zero set $\Fsection_r^{\,-1}(0) \subseteq \Bcal$ is precisely $\moduliH{r} \,$. Consequently, $\Fsection^{-1}(0) = \moduliH{[0,R]} \,$.
\end{lemma}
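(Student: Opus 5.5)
We prove the two inclusions $\Fsection_r^{\,-1}(0) \subseteq \moduliH{r}$ and $\moduliH{r} \subseteq \Fsection_r^{\,-1}(0)$ separately. The identity $\Fsection^{-1}(0) = \moduliH{[0,R]}$ then follows immediately, since $\Fsection(r,w) = \Fsection_r(w)$.

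\emph{Step 1: $\moduliH{r} \subseteq \Fsection_r^{\,-1}(0)$.} Let $w = (u,\eta) \in \moduliH{r}$. By definition $w$ is smooth and solves \eqref{eq: s-dependent gradient eq}, and this system is exactly $\Fsection_r(w) = 0$. It remains to show $w \in \Bcal$.
\begin{enumerate}[label=(\roman*)]
    \item The limits $w(\pm\infty) \in \CCrit$ exist in the $\Cinfty$-topology, in particular in $C^0(\bbS^1,W) \times \R$.
    \item For $|s| \geq r+1$ the curve $w$ is an honest $\RabinowitzHZero$-gradient flow line, because $\beta_r = 0$ there by \ref{it: beta_r choice - zero at infinity}.
    \item Lemma \ref{lem: Analytic Set-Up - exponential decay}, applied on $\R_{\geq r+1}$ and its analogue on $\R_{\leq -r-1}$, gives the weighted $W^{1,p}_{\kappa_0}$-conditions at both ends in the tubular coordinates of Lemma \ref{lem: Analytic Set-Up - tubular nbhd}.
    \item On the compact middle part $[-s_0,s_0] \times \bbS^1$, smoothness gives $W^{1,p}_{\mathrm{loc}}$-regularity. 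Hence $\eta - \tau(\alpha) \in W^{1,p}_{\kappa_0}(\R,\R)$ globally.
\end{enumerate}
So $w \in \Bcal'$. Since $u \simeq 0$ rel $\CCrit$ by definition of $\moduliH{r}$, the description of $\Bcal$ in Definition \ref{def: Analytic Set-Up - Banach mfd} gives $w \in \Bcal$.

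\emph{Step 2: $\Fsection_r^{\,-1}(0) \subseteq \moduliH{r}$.} Let $w = (u,\eta) \in \Bcal$ with $\Fsection_r(w) = 0$. A priori $w$ is only in $W^{1,p}_{\mathrm{loc}}$, so the main task is regularity. This is the main obstacle, because the $\eta$-equation is nonlocal.

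We bootstrap both components simultaneously.
\begin{enumerate}[label=(\roman*)]
    \item Since $p > 2$, $u$ is continuous and $\eta$ is $W^{1,p}_{\mathrm{loc}}$. The second equation writes $\partial_s \eta$ as the integral over $t$ of the continuous function $(1-\beta_r)H_0(u) + \beta_r H(u)$. Hence $\eta \in C^1$, and inductively $\eta$ has as many derivatives as $u$ has, plus one.
    \item The first equation is a perturbed Cauchy--Riemann equation $\partial_s u + J(u)\partial_t u = \eta J(u)\big((1-\beta_r)X_{H_0}(u) + \beta_r X_H(u)\big)$. Its right-hand side lies in $W^{k,p}_{\mathrm{loc}}$ whenever $u \in W^{k,p}_{\mathrm{loc}}$ and $\eta \in W^{k,p}_{\mathrm{loc}}$.
    \item Standard elliptic regularity for $J$-holomorphic-type equations then gives $u \in W^{k+1,p}_{\mathrm{loc}}$.
\end{enumerate}
Alternating (i)--(iii) yields $u \in \Cinfty(\R \times \bbS^1, W)$ and $\eta \in \Cinfty(\R,\R)$. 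The gradient equation $\partial_s w = \nabla \RabinowitzHrs(w)$ then holds pointwise.

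\emph{Step 3: asymptotics and homotopy class.} By definition of $\Bcal'$, the limits $w(\pm\infty) \in \CCrit$ exist in $C^0(\bbS^1,W) \times \R$. The energy is finite: for $|s| \geq r+1$ the curve is an $\RabinowitzHZero$-flow line with convergent action, and on the compact middle part the integrand is bounded. By Remark \ref{rmk: Preparation - moduli^H_r limits}, $C^0$-convergence together with finite energy upgrades to $\Cinfty$-convergence to the limits in $\CCrit$. Finally, $w \in \Bcal$ gives $u \simeq 0$ rel $\CCrit$. Therefore $w \in \moduliH{r}$.
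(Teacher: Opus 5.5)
Your overall route is the same as the paper's. You use the exponential decay lemma to get $\moduliH{r}\subseteq\Bcal$, elliptic bootstrapping for regularity of zeroes, and finite energy plus the remark that $C^0$-limits together with finite energy upgrade to $\Cinfty$-limits. The one step that does not hold as written is the finite-energy claim in Step 3: that on $|s|\geq r+1$ the curve is an $\RabinowitzHZero$-flow line ``with convergent action''.

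Monotonicity of $s\mapsto\RabinowitzHZero(w(s))$ on these half-lines only gives limits in $[-\infty,\infty]$. The only information about the ends you have used at that point is $C^0$-convergence $w(s)\to w(\pm\infty)$. But $\RabinowitzHZero$ is not continuous in the $C^0(\bbS^1,W)\times\R$ topology, because the term $\int_{\bbS^1}v^*\lambda$ involves $\partial_t v$. So finiteness of these limits is exactly what has to be proved, and it does not follow from the $C^0$-asymptotics alone. Note that you cannot use $\Cinfty$-convergence here either, since you only obtain it afterwards from finite energy.

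The missing ingredient is the weighted Sobolev condition built into $\Bcal'$. There are two ways to use it.

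\emph{The paper's way.} The weighted condition yields sequences $s_n^\pm\to\pm\infty$ with $w(s_n^\pm)\to w(\pm\infty)$ in $W^{1,2}(\bbS^1,W)\times\R$. By $W^{1,2}$-continuity of $\RabinowitzHZero$ and monotonicity, the limits of $\RabinowitzHZero(w(s))$ as $s\to\pm\infty$ are $\RabinowitzHZero(w(\pm\infty))=\tau(\alpha)$, which gives finite energy.

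\emph{A direct estimate.} In the tubular coordinates at either end, $\partial_s$ of the coordinate functions of $u$ lies in $L^p_{\kappa_0}$ with $p>2$. Write $|\partial_s u|^2=e^{-2\kappa_0|s|}\,|e^{\kappa_0|s|}\partial_s u|^2$ and apply H\"older with exponents $p/2$ and $p/(p-2)$. This shows $\partial_s u\in L^2$ on the ends, using that $u$ stays in a compact part of the tubular neighbourhood, where $\omega(\cdot,J\cdot)$ and the Euclidean metric are uniformly equivalent. The same argument gives $\partial_s\eta\in L^2$, and smoothness handles the compact middle part.

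With either fix, the rest of your argument goes through.
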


\begin{proof}
    Notice that $\moduliH{r} \subseteq \Bcal$ due to Lemma \ref{lem: Analytic Set-Up - exponential decay}. It is then obvious that elements in $\moduliH{r}$ are zeroes of $\Fsection_r \,$. 
    
    Conversely, if $w=(u,\eta) \in \Bcal$ is a zero of $\Fsection_r \,$, then both $u$ and $\eta$ are smooth by elliptic bootstrapping. In view of Remark \ref{rmk: Preparation - moduli^H_r limits}, to prove $w \in \moduliH{r}$ it now suffices to show that $w$ has finite energy. As $w \in \Bcal'$, there exist sequences $(s_n^{\pm})_n \subseteq \R$ tending to $\pm\infty$ so that $w(s_n^{\pm}) \convergence{n} w(\pm \infty)$ in $W^{1,2}(\bbS^1,W) \times \R \,$. Consequently $w$ has finite energy because for $|s| \geq r+1$ it is a gradient flow line of $\RabinowitzHZero$ with finite limits
    \[
    \lim_{s \rightarrow \pm \infty} \RabinowitzHZero(w(s)) = \lim_{n \rightarrow \infty} \RabinowitzHZero(w(s_n^\pm)) = \RabinowitzHZero(w(\pm \infty))
    \]
    by $(W^{1,2}(\bbS^1,W) \times \R)$-continuity of $\RabinowitzHZero \,$.
\end{proof}

\noindent Next we show that the Banach bundle $\Ecal$ can be naturally extended to a B-bundle pair (see Definition \ref{def: Non-Empty Moduli Spaces - B-bundle pair}).

\begin{lemma}
\label{lem: Analytic Set-Up - B-bundle pair}
There exists a Banach bundle $\Ecal^1$ over $\Bcal$ so that $(\Ecal,\Ecal^1)$ is a B-bundle pair.
\end{lemma}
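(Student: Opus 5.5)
The natural choice is to take for $\Ecal^1$ the bundle of $W^{1,p}_{\kappa_1}$-sections with a strictly larger weight. Concretely, fix $\kappa_1 > \kappa_0$ and set
\[
\Ecal^1_{(u,\eta)} := W^{1,p}_{\kappa_1}(\R \times \bbS^1, u^*TW) \oplus W^{1,p}_{\kappa_1}(\R,\R) \,\, .
\]
The weighted Sobolev space along $u^*TW$ is defined through a global trivialization, exactly as for $\Ecal$. Since the inclusion is set up fiberwise, $\Ecal^1_{(u,\eta)} \subseteq \Ecal_{(u,\eta)}$ is immediate. Two points then remain. First, $\Ecal^1$ must be a smooth Banach bundle whose local trivializations are restrictions of those of $\Ecal$. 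Second, the model pair must be a B-pair, meaning that in a trivialization the fibers become
\[
\mathbbm{F} = L^p_{\kappa_0}(\R\times\bbS^1,\R^{2N}) \oplus L^p_{\kappa_0}(\R,\R), \qquad \mathbbm{F}_1 = W^{1,p}_{\kappa_1}(\R\times\bbS^1,\R^{2N}) \oplus W^{1,p}_{\kappa_1}(\R,\R).
\]

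For the trivializations I would follow the construction of the atlas of $\Bcal'$ in Appendix \ref{app sec: Banach Mfd Bcal}. Near $w_0=(u_0,\eta_0)$, points of $\Bcal$ are written as $u = \exp_{u_0}(\xi)$ with $\xi$ smooth in $(s,t)$ up to the ends: one uses the tubular coordinates of Lemma \ref{lem: Analytic Set-Up - tubular nbhd} near $s=\pm\infty$, so that $u_0$ can first be replaced by a smooth representative in the chart. Parallel transport along geodesics, for a metric that is of product form in the tubular coordinates near the ends, then identifies $u^*TW$ with $u_0^*TW$. I would arrange this identification to be given by a matrix-valued function $\Psi_u$ on $\Rbar\times\bbS^1$ with $\Psi_u, \Psi_u^{-1}$ in $L^\infty$ and $\nabla\Psi_u \in L^p_{\mathrm{loc}}$, with uniform bounds on the ends. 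Multiplication by such $\Psi_u$ preserves $L^p_{\kappa_0}$. It also preserves $W^{1,p}_{\kappa_1}$, because for $p>2$ the space $W^{1,p}$ on a unit square is a Banach algebra and acts on itself. Hence one and the same trivialization maps $\Ecal$ onto $\mathbbm{F}$ and $\Ecal^1$ onto $\mathbbm{F}_1$. Smoothness of the transition maps is checked exactly as for $\Ecal$ in the appendix. The $\R$-component is trivial.

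The B-pair property is the analytic heart of the argument. Density of $\mathbbm{F}_1$ in $\mathbbm{F}$ holds because compactly supported smooth functions lie in both and are dense in $L^p_{\kappa_0}$. For compactness of $\mathbbm{F}_1\hookrightarrow\mathbbm{F}$, take a bounded sequence $f_n$ in $W^{1,p}_{\kappa_1}$. On each compact piece $[-T,T]\times\bbS^1$, Rellich gives a subsequence that converges in $L^p$. On the tails, using $\kappa_1>\kappa_0$,
\[
\lVert e^{\kappa_0|s|} f_n \rVert_{L^p(|s|\geq T)} \leq e^{-(\kappa_1-\kappa_0)T} \lVert e^{\kappa_1 |s|} f_n \rVert_{L^p} \leq C e^{-(\kappa_1-\kappa_0)T},
\]
which is uniformly small. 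A diagonal argument then produces a subsequence that is Cauchy in $L^p_{\kappa_0}$. The same argument applies to the $\R$-factor.

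I expect the main obstacle to be the bundle-theoretic part, namely verifying that the transition maps of $\Ecal$ restrict to isomorphisms of the $W^{1,p}_{\kappa_1}$-fibers. The difficulty is the limited regularity of $u \in W^{1,p}_{\mathrm{loc}}$: the derivative of the transport matrix is only $L^p$, and one needs uniform control on the ends, where it should be handled in the coordinates $(\vartheta,y)$ in which the limits are the straight loops $(t,0)$. If this turns out to be awkward, I would instead define $\Ecal^1$ directly through the charts of the appendix, declaring the fiber to be the preimage of $\mathbbm{F}_1$, and then check that the resulting subspace does not depend on the chart.
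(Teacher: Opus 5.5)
Your proposal is correct and is essentially the paper's proof. The paper also fixes $\kappa_1>\kappa_0$ and defines $\Ecal^1_w$ exactly as in your fallback: it is the image of $W^{1,p}_{\kappa_1}(\R\times\bbS^1,u_0^*TW)\oplus W^{1,p}_{\kappa_1}(\R,\R)$ under the parallel-transport trivialization centered at a smooth $w_0$, and transition maps are checked to restrict to these spaces. The B-pair property is obtained by conjugating the pair $(L^p,W^{1,p}_{\kappa_1-\kappa_0})$ with the weight isometry $f\mapsto e^{-\kappa_0\rho(s)s}f$, which amounts to your Rellich-plus-tail argument.

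One caution: unlike for $\Ecal$, a merely continuous global trivialization of $u^*TW$ does not determine $W^{1,p}_{\kappa_1}(\R\times\bbS^1,u^*TW)$. Your intrinsic definition therefore only makes sense through $W^{1,p}$-regular trivializations such as your $\Psi_u$, which is why the chart-based definition is the cleaner choice.
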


\begin{proof}
    Given $w_0 = (u_0,\eta_0) \in \Bcal \cap \Cinfty(\Rbar,\loopspace \times \R)$ and a chart $(f_*^{-1},\Ucal)$ of $\Bcal$ centered at $w_0$ as in Appendix \ref{app subsec: Bcal' Atlas} so that $f_*^{-1}(\Ucal)$ is a convex subset of its ambient Banach space. For every $w = (u,\eta) \in \Ucal$ there is a natural choice of path $ [0,1] \rightarrow \Ucal \comma \sigma \mapsto (u_w^\sigma,\eta_w^\sigma)$ from $(u_w^0,\eta_w^0) = w_0$ to $(u_w^1,\eta_w^1) = w \,$, namely the straight line connecting $w_0$ to $w$ in the chart, that is $ (u_w^\sigma,\eta_w^\sigma) := f_* ( \sigma f_*^{-1}(w) ) \,$. For fixed $(s,t) \in \R \times \bbS^1$ consider the parallel transport
    \[
    \mathrm{P}_{w}^{(s,t)} : T_{u_0(s,t)} W \overset{\cong}{\longrightarrow} T_{u(s,t)} W 
    \]
     with respect to the metric $\omega(\,\cdot\, , J \,\cdot \,)$ along the path $[0,1] \rightarrow W \comma \sigma \mapsto u_w^\sigma(s,t) \,$.
    
     A trivialization $\Phi$ of $\Ecal$ over $\Ucal$ then is given by
     \begin{align*}
         &\Phi^{-1} : \Ucal \times \big( L^p_{\kappa_0}(\R \times \bbS^1,u_0^*TW) \oplus L^p_{\kappa_0}(\R,\R)  \big) \overset{\cong}{\longrightarrow} \Ecal|_{\Ucal} \\
         &\Phi^{-1}(w, \,( \mathfrak{u}, \mathfrak{n})) := \Phi_w^{-1}(\mathfrak{u},\mathfrak{n}) := \big( \big( (s,t) \mapsto \mathrm{P}_w^{(s,t)}(\mathfrak{u}(s,t)) \big) , \, \mathfrak{n} \big) \,\, .
     \end{align*}
    Such trivializations cover $\Bcal \,$.

    \begin{claim}
    For every $\kappa_1 > \kappa_0$ the pair
    \[
    \big( L^p_{\kappa_0}(\R \times \bbS^1,u_0^*TW) \oplus L^p_{\kappa_0}(\R,\R) \comma W^{1,p}_{\kappa_1}(\R \times \bbS^1,u_0^*TW) \oplus W^{1,p}_{\kappa_1}(\R,\R) \big)
    \]
    is a B-pair.
    \end{claim}
    \noindent This claim is for example also asserted in \cite[Lemma 5.7]{Wehrheim_Fredholm_Notions}, although therein in the language of scale theory.
    
    \begin{proofClaim}
     One readily verifies that, for every $\kappa > 0$ and $\Omega =\R \times \bbS^1$ or $\Omega = \R \,$, the pair $(L^p(\Omega), W^{1,p}_\kappa(\Omega))$ is a B-pair. Conjugating this B-pair with the isometry $L^p(\Omega) \rightarrow L^p_{\kappa_0}(\Omega) \comma \, f \mapsto e^{- \kappa_0 \rho(s)s} f \,$, gives a B-pair $(L^p_{\kappa_0}(\Omega) , W^{1,p}_{\kappa_0 + \kappa}(\Omega)) \,$.
    \end{proofClaim}

    \noindent We now fix $\kappa_1 > \kappa_0 \,$. For $w \in \Bcal$ we define $\Ecal^1_w \subseteq \Ecal_w$ by choosing a trivialization $(\Phi,\Ucal,w_0)$ as above with $w \in \Ucal$ and setting
    \[
    \Ecal^1_w := \Phi^{-1}_w \big( W^{1,p}_{\kappa_1}(\R \times \bbS^1, u_0^*TW) \oplus W^{1,p}_{\kappa_1}(\R,\R) \big) \,\, .
    \]
    Given another trivialization $(\widetilde{\Phi},\widetilde{\Ucal},\widetilde{w}_0)$ with $w \in \widetilde{\Ucal} \,$, the toplinear isomorphism
    \[
    \widetilde{\Phi}_w \circ \Phi_w^{-1} \in \mathcal{L}\big(  L^p_{\kappa_0}(u_0^*TW) \oplus L^p_{\kappa_0}(\R,\R) \comma L^p_{\kappa_0}(\widetilde{u}_0^*TW) \oplus L^p_{\kappa_0}(\R,\R) \big)
    \]
    restricts to a well-defined toplinear isomorphism
     \[
    \widetilde{\Phi}_w \circ \Phi_w^{-1} \in \mathcal{L}\big(  W^{1,p}_{\kappa_1}(u_0^*TW) \oplus W^{1,p}_{\kappa_1}(\R,\R) \comma W^{1,p}_{\kappa_1}(\widetilde{u}_0^*TW) \oplus W^{1,p}_{\kappa_1}(\R,\R) \big) \,\, .
    \]
     This shows that $\Ecal^1_w$ is independent of the trivialization chosen. Moreover the map
    \begin{align*}
        \Ucal \cap \widetilde{\Ucal} \rightarrow \mathcal{L}(W^{1,p}_{\kappa_1} \oplus W^{1,p}_{\kappa_1} \comma W^{1,p}_{\kappa_1} \oplus W^{1,p}_{\kappa_1}) \comma \quad w \mapsto \widetilde{\Phi}_w \circ \Phi_w^{-1}
    \end{align*}
    is smooth, which can be seen in the same way as one shows that the transition maps for $\Ecal $ are smooth.\footnote{
    In more detail, smoothness of the transition map follows from smoothness of the map
    \begin{align*}
        &W^{1,p}_{\kappa_0}(u_0^*TW) \times \R^{2(2N-1)} \longrightarrow \mathcal{L}(W^{1,p}_{\kappa_1}(u_0^*TW), \, W^{1,p}_{\kappa_1}(\widetilde{u}_0^*TW)) \\
        &(\xi,x) \mapsto A_*^{(\xi,x)} \comma \,\, \text{ with } \, (A_*^{(\xi,x)} \cdot \mathfrak{u}) (s,t) := A_{(s,t)}(\xi(s,t),x) \cdot \mathfrak{u}(s,t) \,\, ,
    \end{align*}
    where
    \[
    A : u_0^*TW \times \R^{2(2N-1)} \longrightarrow \mathrm{Hom}(u_0^*TW, \,\widetilde{u}_0^*TW)
    \]
    is a smooth fiber preserving map over $\Rbar \times \bbS^1$ arising from parallel transport.
    }
    Together with the claim, this proves that $(\Ecal,\Ecal^1)$ constitutes a B-bundle pair. 
\end{proof}

\subsection{Concluding the proof of Proposition \ref{prop: moduli^H_[0,R](z) non-empty}}
\label{subsec: Non-Empty Moduli Spaces - Concluding proof}

In this subsection, we are going to collect the missing pieces to conclude Proposition \ref{prop: moduli^H_[0,R](z) non-empty}. Some proofs will be outsourced to subsequent subsections.

\begin{proposition}[Compactness]
\label{prop: Non_Empty Moduli Spaces - Compactness}
The zero set $\moduliH{[0,R]}$ of $\Fsection$ is a compact subset of $\BcalHat = [0,R] \times \Bcal\,$.
\end{proposition}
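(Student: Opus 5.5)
Since $\Bcal$ is metrizable (Corollary \ref{app cor: Bcal' Atlas - Bcal' second countable}), it suffices to show sequential compactness. Take a sequence $(r_n,w_n)_n \subseteq \moduliH{[0,R]}$. By Lemma \ref{lem: Proof MThm - bounds on energy, Lagrange multiplier, cylinder} the $w_n=(u_n,\eta_n)$ satisfy uniform bounds: energy $E(w_n)\le \eps$, Lagrange multiplier $\lVert\eta_n\rVert_\infty\le T^*$, and $\image(u_n)\subseteq[h_0^-,h_0]\times\Sigma$. The Hamiltonian $H_{r,s}=(1-\beta_r(s))H_0+\beta_r(s)H$ is $s$-independent for $|s|\ge R+1$. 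Proposition \ref{prop: Gromov Compactness} therefore yields a subsequence with $r_n\to r$ and $w_n\to w$ in $\Cinftyloc$. By Remark \ref{rmk: Gromov compactness - limit gradient flow line}, $w$ is a gradient flow line of $\nabla\RabinowitzHrs$ with $E(w)\le\eps$.

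The real work is upgrading $\Cinftyloc$-convergence to convergence in the topology of $\Bcal$, that is, in the $\kappa_0$-weighted $W^{1,p}$ charts at both ends. This requires showing that no energy escapes to $\pm\infty$. The spectral gap gives this. On $s\ge R+1$ each $w_n$ is an $\RabinowitzHZero$-flow line. Its action at $s=R+1$ lies in $[\tau(\alpha)-\eps,\tau(\alpha)+\eps]$, and it increases to $\tau(\alpha)$.

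First I would show that $w(\pm\infty)$ exist and lie in $\CCrit$. Corollary \ref{cor: Gromov compactness - single gradient flow line crit pts at ends} gives critical points of $\RabinowitzHZero$ along sequences, with action within $\eps<\tau(\alpha)$ of $\tau(\alpha)$. By Lemma \ref{lem: Crit(RabinowitzHZero)} these lie in $\CCrit$ with action exactly $\tau(\alpha)$.

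Next I would prove uniform smallness of the tail energy: for every $\epsilon'>0$ there is $S$ with $\int_{|s|\ge S}|\partial_s w_n|^2\le\epsilon'$ for all large $n$. Suppose not. Then there are $s_n\to\infty$ with $\RabinowitzHZero(w_n(s_n))\le\tau(\alpha)-\epsilon'$. Shifting and applying Lemma \ref{lem: Gromov compactness - sequence on (-R_n, R_n)} to $w_n(\cdot+s_n)$ produces a nonconstant finite-energy $\RabinowitzHZero$-flow line whose ends are critical points. Its action window lies inside $[\tau(\alpha)-\eps,\tau(\alpha)]$, so both ends lie in $\CCrit$ and share the action $\tau(\alpha)$. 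A flow line with equal end actions has zero energy, a contradiction. The same argument handles the negative end.

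With the tails uniformly $C^\infty$-close to $\CCrit$, I would use that $\CCrit$ is compact. Then $w_n(s)$ lies, for $s\ge S$ and uniformly in $n$, in the neighbourhoods $\mathscr{U}$ of Proposition \ref{prop: Uniform Exponential Decay}, on which $\Xi$ is bounded (Remark \ref{rmk: Uniform Exponential Decay}). This gives exponential decay with a rate $\kappa>\kappa_0$ that is uniform in $n$, and convergence of the asymptotic limits $w_n(\pm\infty)\to w(\pm\infty)$. Together with \eqref{eq: Uniform Exponential Decay - d(u, u_infty)}, \eqref{eq: Uniform Exponential Decay - |eta - eta(infty)|} and the argument of Lemma \ref{lem: Analytic Set-Up - exponential decay}, the chart representatives of $w_n$ near the ends are dominated by $Ce^{-\kappa|s|}$ in $C^1$. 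Since $\kappa>\kappa_0$, the weighted $W^{1,p}_{\kappa_0}$ norms of the differences over $|s|\ge S'$ become arbitrarily small. Combined with $\Cinftyloc$ convergence on compact sets, this gives $w_n\to w$ in $\Bcal'$.

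Finally, $w\in\Bcal$ (rather than merely $\Bcal'$), because $\Bcal$ is a connected component and hence closed, or equivalently because the rel-$\CCrit$ homotopy class is preserved under this convergence. So $(r,w)\in\moduliH{[0,R]}$.

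I expect the main obstacle to be the uniformity of the exponential decay: ensuring that the $n$-independent constants of Proposition \ref{prop: Uniform Exponential Decay} apply requires entering a single neighbourhood $\mathscr{U}$ around the limit point, which needs the limit-point convergence $w_n(\pm\infty)\to w(\pm\infty)$ that the tail estimate provides.
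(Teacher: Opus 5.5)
Your overall architecture matches the paper's. Gromov compactness gives $\Cinftyloc$-convergence, the spectral gap $\eps<\tau(\alpha)$ rules out breaking, uniform exponential decay holds on the ends, and dominated convergence in the $\kappa_0$-weighted norms works inside a chart centred at the limit. Your tail-energy argument by shifting is also correct. The gap is exactly at the step you flag as the main obstacle, and your resolution of it is circular. Uniform smallness of the tail energy, and the resulting uniform $\Cinfty$-closeness of the tails to $\CCrit$, only place each tail in the \emph{union} of finitely many neighbourhoods $\mathscr{U}$. Since $\CCrit\cong\Sigma$ is positive-dimensional, this does not stop the tail of $w_n$ from drifting along $\CCrit$, from near $w(+\infty)$ to a far-away endpoint $w_n(+\infty)$, at little energy cost. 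Proposition~\ref{prop: Uniform Exponential Decay} applies only once the \emph{entire} tail and its limit lie in a \emph{single} $\mathscr{U}$. So you cannot use it to derive $w_n(\pm\infty)\to w(\pm\infty)$, and the tail-energy estimate does not give that convergence by itself. Your first step has the same issue: Corollary~\ref{cor: Gromov compactness - single gradient flow line crit pts at ends} yields only subsequential limits, not the existence of $w(\pm\infty)$.

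In the paper, this no-drift statement is the content of Lemma~\ref{lem: Non_Empty Moduli Spaces - Compactness C^0(loop)-convergence}. It is obtained from \cite[Thm.~2.12]{Compact_Moduli_Spaces_Gradient_Floer_Lines}, whose hypothesis (A2) is verified via $\Xi$ and the coarse $C^0$-metric. This yields convergence in $C^0(\Rbar,\loopspace\times\R)$, and only then is there an $s_0$ with $w_n(\Rbar_{\geq s_0})\subseteq\mathscr{U}$ for all $n$.

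Within your framework you can close the gap with a trapping argument:
\begin{enumerate}
\item Pass to a subsequence with $w_n(+\infty)\to z\in\CCrit$. Take $\mathscr{U},\Xi$ for $z$, and an open ball $\mathscr{V}$ around $z$ (for a metric inducing the $(\Cinfty\times\R)$-topology) with $\overline{\mathscr{V}}\subseteq\mathscr{U}$.
\item Set $\sigma_n:=\inf\{\sigma\geq S : w_n([\sigma,\infty))\subseteq\mathscr{V}\}$, where $S\geq R+1$.
\item Suppose $\sigma_n\to\infty$ along a subsequence. Then $x_n:=w_n(\sigma_n)\in\partial\mathscr{V}$ and $x_n$ is uniformly close to $\CCrit$. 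A further subsequence converges to some $y\in\CCrit\cap\partial\mathscr{V}\subseteq\mathscr{U}$, so $\Xi(x_n)\to\Xi(y)=0$.
\item The decay estimate on $[\sigma_n,\infty)$ (tail in $\overline{\mathscr{V}}$, limit in $\mathscr{U}$) gives $d_{C^0}\big(x_n,w_n(+\infty)\big)\leq\tfrac{2}{\kappa}\,\Xi(x_n)\to 0$, in the coarse metric. This forces $y=z$, contradicting $z\in\mathscr{V}$ and $y\in\partial\mathscr{V}$.
\item Hence the tails lie in $\mathscr{V}$ for all large $n$. Letting $\mathscr{V}$ shrink gives the existence of $w(+\infty)=z$, the convergence $w_n(+\infty)\to w(+\infty)$, and the single $\mathscr{U}$ you need.
\end{enumerate}
The negative end is handled in the same way. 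With this in place, the rest of your argument goes through.
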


\noindent The proof of Proposition \ref{prop: Non_Empty Moduli Spaces - Compactness} is deferred to Subsection \ref{subsec: Non_Empty Moduli Spaces - Compactness}. It relies on Gromov compactness (Proposition \ref{prop: Gromov Compactness}) and on the bounds in Lemma \ref{lem: Proof MThm - bounds on energy, Lagrange multiplier, cylinder}.

\begin{proposition}[Fredholm Property]
\label{prop: Non_Empty Moduli Spaces - Fredholm}
For every $(r,w) \in \moduliH{[0,R]}$ the vertical differential $\Dvert \Fsection_r(w) : T_w \Bcal \rightarrow \Ecal_w$ is a Fredholm operator of Fredholm index $\dim(\Sigma) = 2 N - 1\,$.
\end{proposition}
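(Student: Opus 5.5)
The plan is to reduce the Fredholm property to the standard weighted-Sobolev theory for Cauchy--Riemann type operators on the cylinder with Morse--Bott asymptotics, following \cite{FauckThesis}. Fix $(r,w)\in\moduliH{[0,R]}$ with $w=(u,\eta)$. Choose a global unitary trivialization of $u^*TW$ over $\Rbar\times\bbS^1$, adapted near the ends to the tubular coordinates of Lemma \ref{lem: Analytic Set-Up - tubular nbhd} around $u(\pm\infty,\cdot)$. In this trivialization the vertical differential takes the form
\[
\Dvert\Fsection_r(w)(\xi,\ell,a^-,a^+)=\partial_s(\xi,\ell)+\mathsf{A}(s)(\xi,\ell)+(\text{compact terms}).
\]
Here $T_w\Bcal\cong W^{1,p}_{\kappa_0}(\R\times\bbS^1,\R^{2N})\oplus W^{1,p}_{\kappa_0}(\R,\R)\oplus\R^{2N-1}\oplus\R^{2N-1}$, and the finite-dimensional summands $a^\pm\in T\CCrit\cong T\Sigma$ are the directions moving the asymptotic orbits, realized by cut-off constant sections near $\pm\infty$ (see Appendix \ref{app sec: Banach Mfd Bcal}). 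The operator $\mathsf{A}(s)$ is the family of Hessian-type operators $\mathsf{A}_{w(s)}$ from the proof of Proposition \ref{prop: Uniform Exponential Decay}. Its Lagrange multiplier row $\ell\mapsto\int_0^1 \d H_{r,s}(u)\xi\,\d t$ is non-local in $t$ but bounded. For $|s|\geq r+1$, $\mathsf{A}(s)$ converges to the covariant Hessian $\nabla^2\RabinowitzHZero(w(\pm\infty))$, exponentially fast by Lemma \ref{lem: Analytic Set-Up - exponential decay}.

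First, I restrict to the weighted part $W^{1,p}_{\kappa_0}\oplus W^{1,p}_{\kappa_0}$. Conjugating with the isometry $f\mapsto e^{\kappa_0\rho(s)s}f$ turns the operator into $\partial_s+\mathsf{A}(s)-\kappa_0\rho'(s)s-\kappa_0\rho(s)$ acting on unweighted spaces $W^{1,p}\to L^p$. Its asymptotic operators $\nabla^2\RabinowitzHZero(w(\pm\infty))\mp\kappa_0$ are invertible, because $\kappa_0$ is strictly smaller than the smallest nonzero eigenvalue modulus by \eqref{eq: Analytic Set-Up - choice kappa_0 spec Hessian}. The standard result for operators $\partial_s+\mathsf{A}(s)$ with nondegenerate self-adjoint limits then shows this restricted operator is Fredholm. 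The references are the Robbin--Salamon spectral flow theorem in $L^p$, or its Rabinowitz--Floer version in \cite[Lemma~30 ff.]{FauckThesis}. Since the non-local $\ell$-row is a bounded perturbation of the same form, the same theorem applies. Its index is minus the spectral flow of $\mathsf{A}(s)-\kappa_0\rho(s)$ from $s=-\infty$ to $+\infty$.

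Next, I compute this index by a homotopy. Deform $r$ to $0$ through $[0,R]$ (the asymptotics are unchanged) and $w$ to a constant element of $\CCrit$ inside the connected component $\Bcal$ (here $u\simeq0$ rel $\CCrit$ is used). The index is locally constant under such deformations. At a constant $w$, $\mathsf{A}(s)$ is the constant Hessian $\nabla^2\RabinowitzHZero(v,\tau(\alpha))$. The shift $-\kappa_0\rho(s)$ moves the $\dim\ker=2N-1$ zero eigenvalues from $+\kappa_0$ to $-\kappa_0$ and does not move any other eigenvalue across $0$. Hence the spectral flow has magnitude $2N-1$, and with the sign conventions of the $\pm\kappa_0$ weights the index is $-(2N-1)$. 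Adding back the $2(2N-1)$ finite-dimensional asymptotic directions $a^\pm$ changes neither the Fredholm property nor anything else except the index, which increases by $2(2N-1)$. The total index is therefore $-(2N-1)+2(2N-1)=2N-1=\dim\Sigma$. As a sanity check, at $r=0$ Lemma \ref{lem: moduli_0^H} shows the zero set is $\CCrit\cong\Sigma$, which has dimension $2N-1$, consistent with transversality there.

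The main obstacle I expect is rigorously justifying the Fredholm property in the $L^p$, $p>2$, setting with the non-local Lagrange multiplier coupling. Concretely, this means verifying that the linearization differs from $\partial_s+\mathsf{A}(s)$ only by terms that are compact or that decay exponentially to zero. I would try to handle this by quoting \cite{FauckThesis} and the $L^p$ theory of \cite{Compact_Moduli_Spaces_Gradient_Floer_Lines}. I would also check carefully the sign of the weight shift, since it determines whether the kernel directions are counted positively or negatively.
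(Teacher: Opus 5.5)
Your proposal is correct and follows essentially the same route as the paper. Both arguments split off the $2\dim(\Sigma)$ asymptotic directions and conjugate by the weight to get a spectral-flow Fredholm operator with invertible limits. Both then compute the index $-\dim(\Sigma)$ on the restricted space by local constancy plus connectedness (via $u \simeq 0$ rel $\CCrit$), reducing to a constant $w$ where the shift $-\kappa_0\rho(s)$ pushes the $\dim(\Sigma)$ kernel eigenvalues across zero.

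The differences are minor. The paper removes the $r$-dependence by noting that $\D\Fsection_r(w)-\D\Fsection_0(w)$ is compact, since it is zeroth order and supported in $|s|\le R+1$. The step in your sketch that needs the most care is ``the index is locally constant under such deformations''; the paper makes it rigorous by working in the path-connected subset $\Bcal^\infty$ of elements smooth up to $s=\pm\infty$ and proving operator-norm continuity of the trivialized operators there.
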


\noindent The proposition expresses that, for every $0 \leq r \leq R \,$, the section $\Fsection_r : \Bcal \rightarrow \Ecal$ is a Fredholm section of index $2N-1\,$. Consequently, the section $\Fsection : [0,R] \times \Bcal \rightarrow \EcalHat$ is a Fredholm section of index $2N \,$. We defer the proof of Proposition \ref{prop: Non_Empty Moduli Spaces - Fredholm} to Subsection \ref{subsec: Non_Empty Moduli Spaces - Fredholm}.

\begin{proposition}[Transversality for $r=0$]
\label{prop: Non_Empty Moduli Spaces - Transversality}
The section $\Fsection_0 : \Bcal \rightarrow \Ecal$ is transverse to the zero section.
\end{proposition}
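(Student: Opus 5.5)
The plan is to identify $\Fsection_0^{\,-1}(0)=\moduliH{0}$ with $\CCrit$ using Lemma \ref{lem: moduli_0^H}. Then, at each $w\in\moduliH{0}$, I will show that $\Dvert\Fsection_0(w)$ is surjective and has a complemented kernel. Since $\beta_0\equiv 0$, the section $\Fsection_0$ is just $w\mapsto \partial_s w-\nabla\RabinowitzHZero(w)$. At a constant $w\equiv(v,\tau(\alpha))\in\CCrit$ the vertical differential is the $s$-independent operator
\[
\Dvert\Fsection_0(w)=\partial_s - A, \qquad A:=\nabla^2\RabinowitzHZero(v,\tau(\alpha)).
\]
Here $A$ is the self-adjoint Hessian from Subsection \ref{subsec: Preparation - Spectrum Hessian}, viewed as an unbounded operator on $L^2(\bbS^1,v^*TW)\oplus\R$ with compact resolvent. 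In the tubular coordinates of Lemma \ref{lem: Analytic Set-Up - tubular nbhd} (cf.\ Appendix \ref{app subsec: Bcal' Atlas}), the tangent space $T_w\Bcal$ splits as $W^{1,p}_{\kappa_0}(\R\times\bbS^1,\R^{2N})\oplus W^{1,p}_{\kappa_0}(\R,\R)$ plus two copies of $\R^{2N-1}\cong T_{(v,\tau(\alpha))}\CCrit=\ker A$. These two extra summands record the asymptotic values at $\pm\infty$: a tangent vector has the form $\xi+\beta^-(s)\zeta^-+\beta^+(s)\zeta^+$ with $\zeta^\pm\in\ker A$, where $\beta^\pm$ are cutoffs equal to $1$ near $\pm\infty$.

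The core step is to solve $(\partial_s-A)\xi=\eta$ for every $\eta\in L^p_{\kappa_0}$. Let $P_0$ denote the orthogonal projection onto $\ker A$, and decompose $\eta$ along the spectral decomposition of $A$. This splits into two cases.

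\emph{Nonzero eigenvalues.} By \eqref{eq: Analytic Set-Up - choice kappa_0 spec Hessian}, every nonzero eigenvalue $\mu$ satisfies $|\mu|>\kappa_0$. Conjugating by $e^{\kappa_0\rho(s)s}$ changes $\partial_s-A$ into $\partial_s-A+\kappa_0\rho'$-type terms, which are shifts by at most $\kappa_0$. On the two half-lines this produces operators of the form $\partial_s-(A\mp\kappa_0)$. When restricted to $(\ker A)^\perp$ these are invertible, because $\mu\pm\kappa_0\neq0$. I then expect the standard argument (Salamon/Robbin–Salamon for translation-invariant operators, or the $L^p$-theory in \cite{FauckThesis}) to give a bounded inverse on $(1-P_0)L^p_{\kappa_0}\to(1-P_0)W^{1,p}_{\kappa_0}$.

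\emph{Kernel component.} For $P_0\eta$, which lies in $L^p_{\kappa_0}(\R,\ker A)$, I take the explicit primitive
\[
\xi_0(s)=\int_{-\infty}^s P_0\eta.
\]
This tends to $0$ at $-\infty$ and to the constant $\zeta^+:=\int_\R P_0\eta$ at $+\infty$, with exponential weighted convergence. It can therefore be written as an element of $W^{1,p}_{\kappa_0}$ plus $\beta^+\zeta^+$, so it lies in $T_w\Bcal$. This proves surjectivity.

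The kernel consists of the solutions of $\partial_s\xi=A\xi$ lying in $T_w\Bcal$. The components in the nonzero eigenspaces grow exponentially on one side, so they must vanish. What remains is the constant maps into $\ker A$, taken with $\zeta^-=\zeta^+$. Hence the kernel equals $T_w\CCrit$, which has dimension $2N-1$, consistent with Proposition \ref{prop: Non_Empty Moduli Spaces - Fredholm}. Being finite-dimensional, the kernel is automatically complemented, which completes the transversality claim.

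I expect the main obstacle to be the rigorous $L^p$ (rather than $L^2$) invertibility of $\partial_s-(A\mp\kappa_0)$ on the spectral complement, uniformly in the weight. My first attempt will be to reduce to the $L^2$ statement obtained from the spectral decomposition, and then upgrade to $L^p$ via elliptic regularity in the $\bbS^1$-direction together with translation invariance, as done for Morse–Bott Floer operators.
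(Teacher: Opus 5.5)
Your proposal is correct in outline, but it follows a different route from the paper.

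The paper never solves $(\partial_s-A)\xi=\eta$. It takes the index $\dim\Sigma$ of $\Dvert\Fsection_0(w_0)$ from Proposition~\ref{prop: Non_Empty Moduli Spaces - Fredholm}; at constant $w_0$ this is the spectral-flow count of Lemma~\ref{lem: Fredholm - index on moduli^H_0}. It then proves only the bound $\dim\ker\Dvert\Fsection_0(w_0)\le\dim\Sigma$, by injecting the kernel into $\ker A$ using contraction semigroups on the positive and negative spectral subspaces. Your mode-by-mode ODE argument is an elementary equivalent of that step. Surjectivity then follows by counting.

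You instead construct preimages. On $(\ker A)^\perp$ you invert, and on $\ker A$ you integrate explicitly and absorb $\int_\R P_0\eta$ into the asymptotic direction $\beta^+\zeta^+$. What your route buys:
\begin{itemize}
\item It shows why the finite-dimensional summands of $T_w\Bcal$ are needed. On $T_w\Bcal(w)$ alone the cokernel would be exactly $\ker A$, which matches the index $-\dim\Sigma$ in Lemma~\ref{lem: Fredholm D Fsection_0}.
\item It computes the kernel exactly rather than bounding it.
\item It makes transversality at $r=0$ independent of the index computation; the index at $\moduliH{0}$ then falls out as a byproduct.
\end{itemize}

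The cost is the analytic step you flag: a bounded inverse $(1-P_0)L^p_{\kappa_0}\to(1-P_0)W^{1,p}_{\kappa_0}$. Invertibility of the two asymptotic operators by itself yields only the Fredholm property. Also, the conjugation term is really $-\kappa_0(\rho(s)s)'$, which need not be bounded by $\kappa_0$ on $(-1,1)$. So you should argue through the $s$-independence of $A$ instead. Each mode $\mu$ has a one-sided Green's kernel decaying like $e^{-|\mu||s-\sigma|}$. Since $|\mu|-\kappa_0$ is bounded below uniformly, this kernel stays uniformly integrable after multiplication by the weight factor $e^{\kappa_0|s-\sigma|}$. That settles $p=2$, and you then upgrade to $p>2$ exactly as the paper does for Lemma~\ref{lem: Fredholm - ind D Fsection_0 = ind D_(w,Psi)}.

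You also use without checking that $\Dvert\Fsection_0(w_0)$ acts as $\partial_s-A$ on the cutoff directions $\beta^\pm\zeta^\pm$. The paper verifies this in Lemma~\ref{lem: Transversality - total differential}, using that these perturbed loops stay in $\CCrit$.

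The paper's route is more economical. The index is needed anyway for Theorem~\ref{thm: Non-Empty Moduli Space - abstract perturbation thm}, so only an easy kernel bound remains and no weighted $L^p$ inverse has to be built.
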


\noindent It is proven in \cite[App.~C]{FauckThesis} that $\Fsection_0 = \partial_s - \nabla \RabinowitzHZero$ has surjective vertical differential at every $s$-independent zero and these are all the zeroes of $\Fsection_0$ by Lemma \ref{lem: moduli_0^H}, hence $\Fsection_0$ is transverse to the zero section. For the sake of completeness, we present a different proof of Proposition \ref{prop: Non_Empty Moduli Spaces - Transversality} in Subsection \ref{subsec: Non-Empty Moduli Spaces - Transversality r=0}, which does not rely on \cite[App.~C]{FauckThesis}.

Combining the index formula in Proposition \ref{prop: Non_Empty Moduli Spaces - Fredholm} with the transversality assertion in Proposition \ref{prop: Non_Empty Moduli Spaces - Transversality}, we see that $\moduliH{0} = \Fsection_0^{\,-1}(0)$ is a smooth submanifold of $\Bcal$ of dimension $\dim(\Sigma) \,$.

Now consider the evaluation map
\[
\ev : \Bcal \longrightarrow W = S \Sigma \comma \quad \ev(u,\eta) := u(0,0) \,\, . 
\]
In local coordinates one readily checks that $\ev$ is smooth.

\begin{lemma}
\label{lem: Non_Empty Moduli Spaces - ev transverse}
The restriction $\ev|_{\moduliH{0}}$ is transverse to $\R \times \{z\} \subseteq S \Sigma $ and $ \moduliH{0}(z) = (\ev|_{\moduliH{0}})^{-1}(\R \times \{z\})$ consists of a single point.
\end{lemma}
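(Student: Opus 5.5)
We prove Lemma \ref{lem: Non_Empty Moduli Spaces - ev transverse} by making the identification $\moduliH{0} \cong \CCrit \cong \Sigma$ from Lemma \ref{lem: moduli_0^H} explicit, both as a set and at the level of tangent spaces. By that lemma every $w \in \moduliH{0}$ is the constant flow line $w(s) \equiv (v,\tau(\alpha))$ with $(v,\tau(\alpha)) \in \CCrit$. Hence $\ev(w) = v(0) \in \{0\} \times \Sigma$. Composing with the diffeomorphism \eqref{eq: CCrit diffeomorphism Sigma}, the map $\ev|_{\moduliH{0}}$ becomes the inclusion $\iota : \Sigma \hookrightarrow \{0\}\times\Sigma \subseteq S\Sigma$. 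In particular $\ev^{-1}(\R \times \{z\}) \cap \moduliH{0}$ is the single constant flow line at $(\phi^{\tau(\alpha)\,\cdot}_{R_\alpha}(z), \tau(\alpha))$. This gives the counting statement.

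The one point needing care is that this identification respects smooth structures. By Proposition \ref{prop: Non_Empty Moduli Spaces - Fredholm} and Proposition \ref{prop: Non_Empty Moduli Spaces - Transversality}, $\moduliH{0}$ is a smooth submanifold of $\Bcal$ of dimension $\dim \Sigma$. The map
\[
j : \Sigma \to \Bcal, \qquad x \mapsto \big(s \mapsto i(x,1)\big),
\]
with $i$ as in \eqref{eq: Sigma times Z -> Crit(RabinowitzHzero) embedding}, is smooth into $\Bcal$; one checks this in the charts of Appendix \ref{app subsec: Bcal' Atlas}, where constant cylinders correspond to the finite-dimensional asymptotic parameters. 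Since $j$ is smooth and $\ev \circ j = \iota$ is an immersion, $j$ is an injective immersion. It is a bijection onto $\moduliH{0}$ and $\Sigma$ is compact, so $j$ is an embedding. Because $\dim \Sigma = \dim \moduliH{0}$, $j$ is a diffeomorphism onto $\moduliH{0}$. Hence $\d(\ev|_{\moduliH{0}})$ at the unique point has image $T_{(0,z)}(\{0\}\times\Sigma)$.

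Transversality is then immediate. At $(0,z)$,
\[
T_{(0,z)}(\{0\}\times\Sigma) + T_{(0,z)}(\R\times\{z\}) = T_z\Sigma \oplus \R\partial_h = T_{(0,z)}S\Sigma.
\]
At every other point of $\moduliH{0}$ the image of $\ev$ misses $\R\times\{z\}$, so there is nothing to check there. The main obstacle is verifying the smoothness of $j$ into the weighted Banach manifold $\Bcal$, which reduces to an inspection of the atlas in the appendix.
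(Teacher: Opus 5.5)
Your proof is correct and follows the paper's argument. Both identify $\mathrm{ev}|_{\moduliH{0}}$ with the bijection onto $\{0\}\times\Sigma$ from Lemma \ref{lem: moduli_0^H}, note that its inverse $z_1 \mapsto \big(s \mapsto (\phi^{\tau(\alpha)\,\cdot}_{R_\alpha}(z_1),\tau(\alpha))\big)$ is smooth into $\Bcal$ and hence into the submanifold $\moduliH{0}$, and conclude that $\mathrm{ev}|_{\moduliH{0}}$ is an embedding onto $\{0\}\times\Sigma$, which is transverse to $\R\times\{z\}$. The only difference is that the paper reads the diffeomorphism directly off the smooth inverse, while you reach it through the immersion, compactness and dimension count, which is equivalent but slightly longer.
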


\begin{proof}
Notice that $\ev|_{\moduliH{0}}$ maps bijectively onto $\{0\} \times \Sigma \subseteq S \Sigma $ and, in fact, agrees with the identification $\moduliH{0} \overset{\cong}{\longrightarrow} \{0\} \times \Sigma$ in Lemma \ref{lem: moduli_0^H}. Its inverse $\{0\} \times \Sigma \rightarrow \moduliH{0} \subseteq \Bcal$ is given explicitly by mapping a point $(0,z_1) \in \{0\} \times \Sigma$ to the $s$-independent $w = (u,\tau(\alpha))$ with $u(s,t) := (0, \, \phi_{R_\alpha}^{\tau(\alpha)t}(z_1)) \in \{0\} \times \Sigma \,$. The inverse is thus smooth as map to $\Bcal$ and hence also as map to the submanifold $\moduliH{0} \,$. Therefore $\ev|_{\moduliH{0}}$ is a smooth embedding onto $\{0\} \times \Sigma \,$. This implies the lemma.
\end{proof}

\noindent We are now ready to give the proof of Proposition \ref{prop: moduli^H_[0,R](z) non-empty}.

\begin{proof}[Proof of Proposition \ref{prop: moduli^H_[0,R](z) non-empty}]
  By Lemma \ref{lem: Analytic Set-Up - Bcal smooth bump fcts} the Banach manifold $\Bcal$ admits smooth bump functions and from Lemma \ref{lem: Analytic Set-Up - B-bundle pair} we know that the bundle $\Ecal$ over $\Bcal$ extends to a B-bundle pair $(\Ecal,\Ecal^1)$ over $\Bcal$. By Propositions \ref{prop: Non_Empty Moduli Spaces - Compactness} and \ref{prop: Non_Empty Moduli Spaces - Fredholm}, the section $\Fsection : [0,R] \times \Bcal = \BcalHat \rightarrow \EcalHat$ is a Fredholm section of index $\dim(\Sigma) + 1$ with compact zero set $\moduliH{[0,R]} \,$. Moreover $\Fsection_0$ is transverse to the zero section by Proposition \ref{prop: Non_Empty Moduli Spaces - Transversality}. Consider the submanifold $Z:=\R \times \{z\} \subseteq S \Sigma$ (observe that $\mathrm{codim}_{S \Sigma}(Z) = \dim(\Sigma) \,$ and that $Z$ is a closed subset of $S \Sigma$) and the smooth map
  \[
  \widehat{\mathrm{ev}} : \BcalHat = [0,R] \times \Bcal \longrightarrow S \Sigma \comma \quad \widehat{\mathrm{ev}}(r,w) := \ev(w) \,\, .
   \]
   By Lemma \ref{lem: Non_Empty Moduli Spaces - ev transverse}, the restriction of $\widehat{\mathrm{ev}}$ to $\{0\} \times \Fsection_0^{\,-1}(0) = \{0\} \times \moduliH{0}$ intersects $Z$ transversally in precisely one point. Hence all premises of Theorem \ref{thm: Non-Empty Moduli Space - abstract perturbation thm} are satisfied, so that there exists $w \in \Bcal$ in the zero set of $\Fsection_R$ with $\widehat{\mathrm{ev}}(R,w) \in Z \,$. This means $w \in \moduliH{R}(z) \,$, which shows that $\moduliH{R}(z)$ is non-empty.
\end{proof}

\noindent It remains to prove the compactness assertion in Proposition \ref{prop: Non_Empty Moduli Spaces - Compactness}, Proposition \ref{prop: Non_Empty Moduli Spaces - Fredholm} on the Fredholm property and Proposition \ref{prop: Non_Empty Moduli Spaces - Transversality} on transversality for $r=0 \,$. We will accomplish this in the next three subsections.

\subsection{Compactness}
\label{subsec: Non_Empty Moduli Spaces - Compactness}

We now show Proposition \ref{prop: Non_Empty Moduli Spaces - Compactness}, that is, that the moduli space $\moduliH{[0,R]}$ is a compact subset of $\BcalHat = [0,R] \times \Bcal \,$. Essentially, compactness holds by Gromov compactness and because breaking of Floer cylinders is prevented due to the uniform energy bound, $E(w) \leq \eps$ for every $(r,w) \in \moduliH{[0,R]}$ (see Lemma \ref{lem: Proof MThm - bounds on energy, Lagrange multiplier, cylinder}), with energy bound $\eps$ being smaller than the spectral gap $\mathrm{specgap}(\RabinowitzHZero, \CCrit) = \tau(\alpha)$ (see Lemma \ref{lem: Crit(RabinowitzHZero)} and \eqref{eq: Proof MThm - eps < tau(alpha)}). Below we will give the details of this argument.

\begin{lemma}
\label{lem: Non_Empty Moduli Spaces - Compactness C^0(loop)-convergence}
For every sequence $(r_n,w_n)_{n \in \N} = (r_n,(u_n,\eta_n)) \subseteq \moduliH{[0,R]}$ there exists a subsequence $(r_{n_k},w_{n_k})_{k \in \N}$ and $(r,w) = (r,(u,\eta)) \in \moduliH{[0,R]}$ so that
\[
(r_{n_k}, u_{n_k}, \eta_{n_k}) \overset{k\rightarrow \infty}{\longrightarrow} (r,u,\eta) \, \text{ in } [0,R] \times \Cinftyloc(\R \times \bbS^1,W) \times \Cinftyloc(\R,\R)
\]
as well as $(r_{n_k},w_{n_k}) \overset{k \rightarrow \infty}{\longrightarrow} (r,w)$ in $[0,R] \times C^0(\R,\loopspace \times \R) \,$.
\end{lemma}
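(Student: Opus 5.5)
First I extract a $\Cinftyloc$-convergent subsequence from Gromov compactness. Lemma \ref{lem: Proof MThm - bounds on energy, Lagrange multiplier, cylinder} gives three uniform bounds for every element of $\moduliH{[0,R]}$: the energy bound $E(w_n) \leq \eps$, the multiplier bound $\lVert \eta_n \rVert_\infty \leq T^*$, and $\image(u_n) \subseteq [h_0^-,h_0] \times \Sigma$. The interpolated Hamiltonian $H_{r,s} = (1-\beta_r(s))H_0 + \beta_r(s) H$ is $s$-independent for $|s| \geq R+1$. So Proposition \ref{prop: Gromov Compactness} yields a subsequence with $(r_{n_k},w_{n_k}) \to (r,w)$ in $[0,R] \times \Cinftyloc$. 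By Remark \ref{rmk: Gromov compactness - limit gradient flow line}, $w$ is a gradient flow line of $\RabinowitzHrs$, it satisfies the same three bounds, and $E(w) \leq \eps$.

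Next I show that the limit lies in $\moduliH{r}$, by analysing the ends. For $|s| \geq R+1$, $w$ is an $\RabinowitzHZero$-flow line with finite energy and precompact image. Corollary \ref{cor: Gromov compactness - single gradient flow line crit pts at ends} together with the Morse-Bott property then gives limits $w(\pm\infty) \in \Crit(\RabinowitzHZero)$; for an isolated Morse-Bott component, convergence along subsequences upgrades to genuine convergence via Proposition \ref{prop: Uniform Exponential Decay}. The action values at the ends lie in $[\tau(\alpha)-\eps,\tau(\alpha)+\eps]$ by the action bound, and the spectrum of $\RabinowitzHZero$ is $\tau(\alpha)\Z$ with $\eps < \tau(\alpha)$. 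Hence $w(\pm\infty) \in \CCrit$. The condition $u \simeq 0$ rel $\CCrit$ would then follow from the $C^0(\R,\loopspace\times\R)$-convergence proved below, since homotopy classes are locally constant in that topology (Remark \ref{rmk: homotopic rel C^tau(alpha)}).

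The main step is upgrading $\Cinftyloc$-convergence to uniform convergence on all of $\R$, i.e.\ excluding energy escaping to $s \to \pm\infty$ (breaking). I would argue by contradiction. Suppose there are $\sigma_k \to \infty$ with $w_{n_k}(\sigma_k)$ staying a fixed $C^0$-distance from $w(\sigma_k)$. For $s \geq R+1$ all curves are $\RabinowitzHZero$-flow lines. By Lemma \ref{lem: Gromov compactness - sequence on (-R_n, R_n)}, the shifts $w_{n_k}(\cdot + \sigma_k)$ converge to an $\RabinowitzHZero$-flow line $w'$. Its asymptotic actions lie in the spectrum $\tau(\alpha)\Z$, and they lie within $\eps < \tau(\alpha)$ of $\tau(\alpha)$. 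So both ends of $w'$ have action $\tau(\alpha)$, hence $E(w')=0$ and $w'$ is constant at a point of $\CCrit$.

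This shows that the curves $w_{n_k}$ spend all large times in an arbitrarily small neighbourhood $\mathscr{U}$ of $\CCrit$. The delicate point is making this uniform: I would use the action squeeze, namely that $\RabinowitzHZero(w_{n_k}(s))$ is monotone for $s \geq R+1$ with limit $\tau(\alpha)$, combined with a gradient lower bound away from $\Crit$ on a compact set. This gives an $s_0$ such that $w_{n_k}([s_0,\infty)) \subseteq \mathscr{U}$ for all $k$. Applying Proposition \ref{prop: Uniform Exponential Decay} and the integrated estimates of Remark \ref{rmk: Uniform Exponential Decay} then bounds $\sup_{s \geq s_1} d(w_{n_k}(s), w_{n_k}(\infty))$ by $\tfrac1\kappa \sup \Xi\, e^{-\kappa(s_1-s_0)}$, uniformly in $k$, and likewise for $w$. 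Since $\Xi$ is continuous, vanishes on $\CCrit$, and is evaluated near the limit point, this gives equicontinuity at infinity. Together with local convergence it yields convergence of $w_{n_k}(\pm\infty)$ and uniform $C^0(\R,\loopspace\times\R)$-convergence; the negative end is handled symmetrically via Remark \ref{rmk: Uniform Exponential Decay}~\ref{it: Uniform Exponential Decay - negative ends}. The obstacle I expect is exactly this uniform entry into $\mathscr{U}$, which needs $\Cinfty$-closeness of loops rather than just $C^0$.
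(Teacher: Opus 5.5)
Your route is in substance the paper's: Gromov compactness, then exclusion of breaking because $E(w_n)\le\eps<\tau(\alpha)=\mathrm{specgap}(\RabinowitzHZero,\CCrit)$, then uniform exponential decay near $\CCrit$ to control the ends. The paper packages the last two steps by verifying hypotheses (A1') and (A2) of the abstract compactness theorem \cite[Thm.~2.12]{Compact_Moduli_Spaces_Gradient_Floer_Lines}. The gap is in the step you treat as routine, namely applying Proposition \ref{prop: Uniform Exponential Decay} ``uniformly in $k$, and likewise for $w$''.

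\emph{First issue: circularity.} That proposition presupposes an existing limit $w(\infty)\in\CCrit\cap\mathscr{U}$. So it cannot turn the subsequential limits that Corollary \ref{cor: Gromov compactness - single gradient flow line crit pts at ends} gives for the limit curve $w$ into genuine limits. In the paper, existence of $\Bar{w}(\pm\infty)$ is an \emph{output} of uniform convergence, and you should get it the same way. Take tail estimates for the $w_{n_k}$, which do have limits, uniform in $k$; then run a Cauchy argument after passing to a subsequence with $w_{n_k}(\infty)\to z\in\CCrit$.

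\emph{Second issue, more serious: single-point charts.} $\mathscr{U}$ and $\Xi$ are only defined near a single point of $\CCrit$. You only know that the tails lie in a small neighbourhood $U$ of all of $\CCrit$. Nothing you wrote prevents a tail from drifting slowly along $\CCrit$ out of one such $\mathscr{U}$ into another, and that drift is exactly what would destroy uniform convergence. You need a trapping argument, as follows.
\begin{enumerate}
\item Let $V\subseteq\mathscr{U}_z$ be a small neighbourhood of $z$, and let $s_k^*\ge s_0$ be the infimal time after which the tail of $w_{n_k}$ stays in $V$.
\item Apply the decay estimate on $[s,\infty)$ and let $s\downarrow s_k^*$. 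This bounds the coarse ($C^0$-loop plus $|\eta|$) distance from $w_{n_k}(s_k^*)$ to $w_{n_k}(\infty)$ by $\tfrac{2}{\kappa}\sup_{U\cap V}\Xi$. That bound is small once $U$ and $V$ are small, since $\Xi$ is continuous and vanishes on $\CCrit$.
\item Compactness of $\CCrit$ shows that points of $U$ coarsely close to $z$ are $\Cinfty$-close to $z$. Hence $w_{n_k}(s_k^*)$ lies well inside $V$, which forces $s_k^*=s_0$.
\end{enumerate}
This is precisely the content of (A2), which the paper obtains by feeding the bound $\tfrac{2}{\kappa}\Xi$ into \cite[Lemma~2.5]{Compact_Moduli_Spaces_Gradient_Floer_Lines}.

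Conversely, the point you flag as the main obstacle is the easy one, and your shift argument already settles it. Lemma \ref{lem: Gromov compactness - sequence on (-R_n, R_n)} gives $\Cinftyloc$-convergence of $w_{n_k}(\cdot+\sigma_k)$ to a constant in $\CCrit$. So $w_{n_k}(\sigma_k)$ converges in the $\Cinfty\times\R$ topology to a point of $\CCrit$. Argue by contradiction over sequences $(k_j,\sigma_j)$, and use that $w_{n_k}(s)\to w_{n_k}(\infty)\in\CCrit$ for each fixed $k$. This gives a uniform $s_0$ with $w_{n_k}([s_0,\infty))\subseteq U$, with no action squeeze needed.

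Finally, the decay estimates only yield uniform convergence in the coarse metric. The lemma asks for uniform convergence in $\loopspace\times\R$ with its $\Cinfty$ topology. This upgrade also follows from the shift argument, applied both to the $w_{n_k}$ and to $w$ itself. Any $\Cinfty$-subsequential limit of $w_{n_k}(\sigma_k)$ or of $w(\sigma_k)$, with $\sigma_k\to\infty$, lies in $\CCrit$ and must coincide with the coarse limit $w(\infty)$. This step has to be stated explicitly. With these pieces supplied, your argument becomes a self-contained version of what the paper imports from \cite{Compact_Moduli_Spaces_Gradient_Floer_Lines}.
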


\noindent Here $\loopspace \times \R = \Cinfty(\bbS^1,W) \times \R$ is considered to be a metric space, whose topology is the product topology of the $\Cinfty$-topology on $\loopspace$ and the standard topology on $\R$. Convergence in $C^0(\R,\loopspace \times \R)$ means metric uniform convergence. This is independent of the metric used since $\moduliH{[0,R]} \subseteq C^0(\Rbar,\loopspace \times \R)$ and metric uniform convergence on $\Rbar$ is the same as convergence in the compact-open topology.

\begin{proof}[Proof of Lemma \ref{lem: Non_Empty Moduli Spaces - Compactness C^0(loop)-convergence}]
    We will suppress subsequences in the notation throughout. Due to the uniform bounds in Lemma \ref{lem: Proof MThm - bounds on energy, Lagrange multiplier, cylinder}, we can apply Gromov compactness (Proposition \ref{prop: Gromov Compactness}) with Hamiltonian 
    \[
    H_{r,s} := (1-\beta_r(s)) H_0 + \beta_r(s) H \,\, ,
    \]
    so that a subsequence $(r_n,w_n)_n$ converges to some $(\Bar{r},\Bar{w}) = (\Bar{r},(\Bar{u},\Bar{\eta}))$ in $[0,R] \times \Cinftyloc(\R \times \bbS^1,W) \times \Cinftyloc(\R,\R) \,$. By Remark \ref{rmk: Gromov compactness - limit gradient flow line} the limit $\Bar{w}$ has energy bounded above by $\sup_n E(w_n) \leq \eps$ and is a solution to the $s$-dependent gradient equation
    \begin{equation}
    \label{eq: Compactness - limit satisfies gradient eq}
    \partial_s \Bar{w} (s) = \nabla \RabinowitzHParam{\Bar{r},s}(\Bar{w}(s)) \qquad \Forall s \in \R \,\, .
    \end{equation}
    \begin{claim}
    $\Bar{w}$ has existing limits $\Bar{w}(\pm\infty) \in \CCrit$ in the $(\Cinfty \times \R)$-topology and, after extracting a further subsequence, $(r_n,w_n)_n$ converges to $(\Bar{r},\Bar{w})$ in $C^0(\R,\loopspace \times \R) \,$.
    \end{claim}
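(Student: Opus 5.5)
The plan is to exploit the fact that each $w_n$ is an honest gradient flow line of $\RabinowitzHZero$ on $\R \backslash (-R-1,R+1)$. Combined with the energy bound $E(w_n) \leq \eps < \tau(\alpha) = \mathrm{specgap}(\RabinowitzHZero,\CCrit)$, this should rule out any energy escaping to the ends. The argument has four steps.

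\textbf{Step 1: limits of $\Bar w$.} Restricted to $s \geq R+1$, the limit $\Bar w$ is a finite-energy gradient flow line of $\RabinowitzHZero$, with $\lVert \Bar\eta\rVert_\infty \leq T^*$ and cylinder in $[h_0^-,h_0]\times\Sigma$. By Corollary \ref{cor: Gromov compactness - single gradient flow line crit pts at ends}, every sequence $s_k \to \infty$ has a subsequence along which $\Bar w(s_k)$ converges to some critical point of $\RabinowitzHZero$. Its action lies in $[\tau(\alpha)-\eps,\tau(\alpha)+\eps]$, since Lemma \ref{lem: Proof MThm - bounds on energy, Lagrange multiplier, cylinder} passes to the $\Cinftyloc$-limit. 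The spectral gap (Lemma \ref{lem: Crit(RabinowitzHZero)}) together with $\eps<\tau(\alpha)$ then forces every such critical point to lie in $\CCrit$.

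\textbf{Step 2: uniqueness of the limit.} To show the limit does not depend on the sequence, I will use the Morse–Bott exponential decay of Proposition \ref{prop: Uniform Exponential Decay}. Once $\Bar w(s_k)$ enters a neighbourhood $\mathscr{U}$ of a point of $\CCrit$, the orbit must stay in $\mathscr{U}$ forever. This is the standard Morse–Bott argument using the small remaining action drop $\RabinowitzHZero(\Bar w(\infty)) - \RabinowitzHZero(\Bar w(s_k)) \to 0$ and the local gradient inequality near a Morse–Bott manifold. The decay estimate \eqref{eq: Uniform Exponential Decay - d(u, u_infty)} then gives convergence to a single point $\Bar w(+\infty)\in\CCrit$. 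The case $s\to-\infty$ is symmetric, using Remark \ref{rmk: Uniform Exponential Decay} \ref{it: Uniform Exponential Decay - negative ends}.

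\textbf{Step 3: uniform convergence on $\R$.} This is the main obstacle: excluding that a piece of the $w_n$ escapes to $s\to\pm\infty$. I would argue by contradiction. Suppose there exist $\epsilon_0>0$ and $\sigma_n\to\infty$ with $d(w_n(\sigma_n),\Bar w(+\infty))\geq\epsilon_0$ (in a metric for $\loopspace\times\R$).

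Then I would use Lemma \ref{lem: Gromov compactness - sequence on (-R_n, R_n)} on the shifted flow lines $w_n(\cdot+\sigma_n)$. These are $\RabinowitzHZero$-flow lines on $(R+1-\sigma_n,\infty)$, so a subsequence converges in $\Cinftyloc$ to a full $\RabinowitzHZero$-flow line $w'$. By Step 1, its asymptotics lie in critical sets of action in $[\tau(\alpha)-\eps,\tau(\alpha)+\eps]$, hence in $\CCrit$. Its energy is therefore $\RabinowitzHZero(w'(\infty))-\RabinowitzHZero(w'(-\infty))=0$, so $w'$ is constant at a point $c\in\CCrit$.

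To reach a contradiction, I would choose $\sigma_n$ as the \emph{first} time after a large fixed $S$ at which $w_n$ leaves an $\epsilon_0$-neighbourhood $\mathscr{U}$ of $\Bar w(+\infty)$ chosen as in Proposition \ref{prop: Uniform Exponential Decay}. For $n$ large, $w_n(S)$ is close to $\Bar w(S)$, which is close to $\Bar w(+\infty)$, so $\Xi(w_n(S))$ is small. Proposition \ref{prop: Uniform Exponential Decay}, applied on $[S,\sigma_n]$, then bounds the total displacement of $w_n$ on $[S,\sigma_n]$ by $\tfrac1\kappa\Xi(w_n(S))$. This contradicts leaving $\mathscr{U}$.

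The subtlety is that Proposition \ref{prop: Uniform Exponential Decay} is stated for half-infinite flow lines staying in $\mathscr{U}$ with limit in $\CCrit\cap\mathscr{U}$. I would handle this by applying it to the whole tail $[S,\infty)$ of $w_n$: its limit $w_n(+\infty)\in\CCrit$ exists by definition of $\moduli^H$. The first-exit argument above shows that this tail must stay in $\mathscr{U}$, by a continuity/bootstrapping argument in the exit time. Carrying out this bootstrap, and checking that the hypotheses of Proposition \ref{prop: Uniform Exponential Decay} are met along the way, is the step where I expect the real difficulty to lie.

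\textbf{Step 4: conclusion.} Combining $\Cinftyloc$ convergence on $[-S,S]$ with smallness near both ends gives uniform convergence in $C^0(\R,\loopspace\times\R)$. It also shows $w_n(\pm\infty)\to\Bar w(\pm\infty)$, so the relative homotopy class $u\simeq0$ rel $\CCrit$ is preserved in the limit and $(\Bar r,\Bar w)\in\moduliH{[0,R]}$.
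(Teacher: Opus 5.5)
Your Steps 1 and 4 are fine. Step 3, however, has a genuine gap, and it sits exactly where the content of the claim lies: you must rule out that the $w_n$ slide, with vanishing energy, along $\CCrit$ as $s\to+\infty$. Your shifted-limit argument correctly gives a constant limit at some $c\in\CCrit$. That yields no contradiction, because nothing forces $c=\Bar w(+\infty)$; this is precisely the drift scenario.

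Your remedy does not work. Proposition \ref{prop: Uniform Exponential Decay} only applies to $w\in\Cinfty(\R_{\geq s_0},\mathscr U)$ with existing limit $w(\infty)\in\CCrit\cap\mathscr U$, and it says nothing on a finite interval $[S,\sigma_n]$. With $\mathscr U$ centred at $\Bar w(+\infty)$, you have no way of knowing that the tail of $w_n$ stays in $\mathscr U$ or that $w_n(+\infty)\in\mathscr U$. That is essentially the statement $w_n(+\infty)\to\Bar w(+\infty)$ you are trying to prove, so the ``bootstrap in the exit time'' is circular. Step 2 has the same defect: you invoke the decay estimate to produce $\Bar w(+\infty)$, but the proposition presupposes that limit. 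As written, existence therefore rests on an unproved {\L}ojasiewicz-type gradient inequality that the paper does not supply.

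The missing idea is to centre the neighbourhood at $z_\infty:=\lim_n w_n(+\infty)$ (after passing to a subsequence, using compactness of $\CCrit$) and to use \emph{last} exit times instead of first ones.
\begin{itemize}
\item Take $\mathscr U,\Xi,\kappa$ at $z_\infty$ as in Proposition \ref{prop: Uniform Exponential Decay} with $\Xi$ bounded. Choose an open $\mathscr U'\ni z_\infty$ with $\overline{\mathscr U'}\subseteq\mathscr U$, and let $t_n$ be the last time with $w_n(t_n)\notin\mathscr U'$.
\item Then $w_n|_{[t_n,\infty)}$ genuinely satisfies the hypotheses of the proposition, so for $s\geq t_n$
\[
d_{C^0}(u_n(s,\cdot),u_n(+\infty,\cdot))+|\eta_n(s)-\tau(\alpha)|\leq\tfrac{2}{\kappa}\,\Xi(w_n(t_n))\,e^{-\kappa(s-t_n)} .
\]
\item If $t_n\to\infty$, your shifted-limit argument gives $w_n(t_n)\to c\in\CCrit\cap(\overline{\mathscr U'}\setminus\mathscr U')$ in the $\Cinfty\times\R$ topology. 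Hence $\Xi(w_n(t_n))\to\Xi(c)=0$, which forces $c=z_\infty\in\mathscr U'$, a contradiction. So the $t_n$ stay bounded by some $T$.
\item The tails on $[T,\infty)$ are then exponentially close to $w_n(+\infty)$ in the coarse metric, uniformly in $n$. This yields existence of $\Bar w(+\infty)=z_\infty$, upgraded to the $\Cinfty\times\R$ topology by your Step 1. It also yields uniform convergence, upgraded by one more shifted-limit argument. Step 2 becomes unnecessary.
\end{itemize}

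The paper does not carry this out by hand. It packages this no-drift step into \cite[Thm.~2.12]{Compact_Moduli_Spaces_Gradient_Floer_Lines}, verifying assumption (A1') via Gromov compactness on half-lines and (A2) via \cite[Lemma~2.5]{Compact_Moduli_Spaces_Gradient_Floer_Lines} with the function $\tfrac{2}{\kappa}\Xi$.
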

    \noindent Provided the claim is true, we can conclude that $(\Bar{r},\Bar{w}) \in \moduliH{[0,R]} \,$. To this end, use \eqref{eq: Compactness - limit satisfies gradient eq} and notice also that $\Bar{u} \simeq 0$ rel $\CCrit$ since $u_n \simeq 0$ rel $\CCrit$ for every $n \in \N$ and we have convergence $u_n \rightarrow \Bar{u}$ in $C^0(\Rbar \times \bbS^1,W) $ by the claim.
    The claim thus finishes the proof of the lemma.
\\

\noindent \textit{Proof of the claim.} Due to convergence $(u_n,\eta_n) \rightarrow (\Bar{u},\Bar{\eta}) $ in $\Cinftyloc(\R \times \bbS^1,W) \times \Cinftyloc(\R,\R) \,$, we are only interested in the asymptotic behavior.
        We will restrict our considerations to positive asymptotics, negative asymptotics work analogously.
        
        The claim will follow from \cite[Thm.~A]{Compact_Moduli_Spaces_Gradient_Floer_Lines} respectively its slight refinement \cite[Thm.~2.12] {Compact_Moduli_Spaces_Gradient_Floer_Lines} with
        \begin{equation}
        \label{eq: Compactness - notation translation to Compactness paper}
        (M,Z, f ,G,\zeta, E_0)_{\text{\cite{Compact_Moduli_Spaces_Gradient_Floer_Lines}}} := (\loopspace \times \R , \, \CCrit , \, \RabinowitzHZero , \, |\nabla \RabinowitzHZero|_{g_J}^2, \, \tau(\alpha), \, \eps)
        \end{equation}
        and the \textit{$(\RabinowitzHZero,|\nabla \RabinowitzHZero|_{g_J}^2)$-gradient flow line class}\footnote{see \cite[Def.~1.3]{Compact_Moduli_Spaces_Gradient_Floer_Lines}} $\Gamma$, defined for each closed interval $I_0 \subseteq \R$ by
        \begin{align*}
         \Gamma(I_0) := \set{w = (u,\eta) \in \Cinfty(I_0,\loopspace \times \R)}{\begin{array}{c}
           \partial_s w(s) = \nabla \RabinowitzHZero(w(s)) \,\,\, \Forall s \in I_0 \comma     \\
              \mathrm{im}(u) \subseteq [h_0^-,h_0] \times \Sigma \comma \lVert \eta \rVert_\infty \leq T^*
         \end{array}} \,\, .
        \end{align*}
        Here the constants $h_0,h_0^-$ and $T^*$ are the fixed ones, appearing in Lemma \ref{lem: Proof MThm - bounds on energy, Lagrange multiplier, cylinder}. Note that, because the $w_n \in \moduliH{r_n}$ are $\RabinowitzHZero$-gradient flow lines for $s \geq R+1 $ and
        again by Lemma \ref{lem: Proof MThm - bounds on energy, Lagrange multiplier, cylinder}, we have $w_n|_{[R+1,\infty)} \in \Gamma([R+1,\infty))$ for every $n$ and the uniform energy bound $E(w_n|_{[R+1,\infty)}) \leq \eps \,$. Consider the intervals $I := [R+1,\infty)$ and $I_1 := [R+2,\infty) \,$. We now verify the assumptions of \cite[Thm.~2.12]{Compact_Moduli_Spaces_Gradient_Floer_Lines}.

        \textit{General set-up:} The set-up in \cite{Compact_Moduli_Spaces_Gradient_Floer_Lines} requires $M$ to be a metrizable space, $f,G$ to be continuous functions on $M$ with $G \geq 0$ and $Z \subseteq G^{-1}(0)$ to be compact with $f|_Z \equiv \zeta \,$. With the choices in \eqref{eq: Compactness - notation translation to Compactness paper} all of this holds true: With the $(\Cinfty \times \R)$-topology, $\loopspace \times \R$ is indeed metrizable. Zeroes of $|\nabla \RabinowitzHZero|^2_{g_J}$ are just critical points of $\RabinowitzHZero \,$. We know from Lemma \ref{lem: Crit(RabinowitzHZero)} that $\RabinowitzHZero$ is constant on $\CCrit \subseteq \mathrm{Crit}(\RabinowitzHZero)$ with value $\tau(\alpha)$ and that $\CCrit \cong \Sigma$ is compact.

        The basic assumption for \cite[Thm.~2.12]{Compact_Moduli_Spaces_Gradient_Floer_Lines} now is that the energy bound $E_0 = \eps$ is strictly smaller than the \textit{spectral gap for positive/negative ends} $\mathfrak{S}^+_{f,G}(Z) \,$, defined by
        \[
         \mathfrak{S}^\pm_{f,G}(Z) := \inf\Bigset{\pm (\zeta - f(x))}{x \in G^{-1}(0)\backslash Z \text{ with } \pm (\zeta - f(x)) \geq 0}
        \]
        Remembering the definition of the spectral gap in \eqref{eq: specgap def}, with the choices in \eqref{eq: Compactness - notation translation to Compactness paper} we clearly have
        \[
       \mathrm{specgap}(\RabinowitzHZero,\CCrit) = \min\{ \mathfrak{S}^-_{f,G}(Z), \, \mathfrak{S}^+_{f,G}(Z)\} \,\, .
        \]
        By \eqref{eq: Proof MThm - eps < tau(alpha)} and Lemma \ref{lem: Crit(RabinowitzHZero)}, we indeed have 
        \[
        \eps < \tau(\alpha) = \mathrm{specgap}(\RabinowitzHZero,\CCrit) \leq \mathfrak{S}^\pm_{f,G}(Z) \,\, .
        \]
    
        \textit{First Assumption:}
       Assumption (A1'$_{(E_0,I,I_1)}$) in \cite[Subsec.~2.5]{Compact_Moduli_Spaces_Gradient_Floer_Lines} states that, for every sequence $(\widehat{w}_n)_n \subseteq \Gamma(I)$ with energy $E(\widehat{w}_n)$ uniformly bounded by $E_0 = \eps \,$, a subsequence converges in $C^0_{\mathrm{loc}}(I_1,\loopspace \times \R)$ to some $\widehat{w} \in \Gamma(I_1) \,$. Using the version of Gromov compactness for gradient flow lines on an infinite half-open interval (Proposition \ref{prop: Gromov Compactness}, Remarks \ref{rmk: Gromov compactness - limit gradient flow line} and \ref{rmk: Gromov compactness - half-open interval}) we see that assumption (A1'$_{(E_0,I,I_1)}$) holds true.

        \textit{Second Assumption:}
        To show that also assumption (A2$_{(E_0,I)}$) holds true, we argue as in \cite[Cor.~4.2]{Compact_Moduli_Spaces_Gradient_Floer_Lines} and verify the hypotheses of \cite[Lemma~2.5]{Compact_Moduli_Spaces_Gradient_Floer_Lines}. A coarse metric on $\loopspace \times \R \,$, \ie a distance function whose induced topology is coarser than the ($\Cinfty \times \R$)-topology, is given by the sum of the $C^0$-distance of loops in $\loopspace$ and the standard distance on $\R \,$, that is
        \[
        (\loopspace \times \R) \times (\loopspace \times \R) \longrightarrow \R_{\geq 0} \, , \quad \big( (v_1,\tau_1) \comma (v_2,\tau_2) \big) \mapsto d_{C^0}(v_1,v_2) + |\tau_1 - \tau_2| \,\, ,
        \]
        For given $(v,\tau(\alpha)) \in \CCrit$ we choose $\kappa > 0 \,$, a ($\Cinfty \times \R$)-open neighborhood $\mathscr{U} \subseteq \loopspace \times \R$ and a ($\Cinfty \times \R$)-continuous function $\Xi : \mathscr{U} \rightarrow \R_{\geq 0} \,$, vanishing on $\mathscr{U} \cap \CCrit \,$, as in Proposition \ref{prop: Uniform Exponential Decay}. Given $I_0 = [s_0,\infty) \subseteq I$ and 
        $w = (u,\eta) \in \Cinfty(I_0, \mathscr{U}) \cap \Gamma(I_0)$ with existing limit $w(\infty) \in \CCrit \cap \mathscr{U} \,$, because $w$ is an $\RabinowitzHZero$-gradient flow line and by choice of $\mathscr{U}$ and $\Xi \,$, the inequalities \eqref{eq: Uniform Exponential Decay - d(u, u_infty)} and \eqref{eq: Uniform Exponential Decay - |eta - eta(infty)|} hold. This means
        \begin{align*}
            d_{C^0}(u(s_0,\,\cdot\,),u(\infty,\,\cdot\,)) + |\eta(s_0) - \eta(\infty)| \leq \tfrac{2}{\kappa} \, \Xi(w(s_0)) \, e^{-\kappa(s_0-s_0)} = \tfrac{2}{\kappa} \, \Xi(w(s_0)) \,\, .
        \end{align*}
        Hence the function $\tfrac{2}{\kappa}\,\Xi : \mathscr{U} \rightarrow \R_{\geq 0}$ has all the properties required in \cite[Lemma~2.5]{Compact_Moduli_Spaces_Gradient_Floer_Lines}. This shows that the hypotheses of \cite[Lemma~2.5]{Compact_Moduli_Spaces_Gradient_Floer_Lines} are satisfied, so that (A2$_{(E_0,I)}$) holds as well.

    \textit{Conclusion:}
        We have successfully verified all assumptions of \cite[Thm.~2.12]{Compact_Moduli_Spaces_Gradient_Floer_Lines}. Applying this theorem to the sequence $(w_n|_{[R+1,\infty)})_n \subseteq \Gamma([R+1,\infty)) $, having energy uniformly bounded by $\eps$ and existing limits $w_n(\infty) \in \CCrit \,$, we extract a subsequence converging in $C^0([R+2,\infty), \loopspace \times \R)$ to some $w \in \Gamma([R+2,\infty))$ with existing limit $w(\infty) \in \CCrit $ in the ($\Cinfty \times \R$)-topology. By uniqueness of the limit, $w$ must agree with the ($\Cinftyloc(\R \times \bbS^1,W) \times \Cinftyloc(\R,\R)$)-limit $\Bar{w}$ on $[R+2,\infty)\,$, which finishes the proof of the claim.
\end{proof}

\noindent The compactness of $\moduliH{[0,R]}$ in the subspace topology of $\BcalHat = [0,R] \times \Bcal$ now follows from the previous lemma and the exponential decay estimates in Proposition \ref{prop: Uniform Exponential Decay}.

\begin{proof}[Proof of Proposition \ref{prop: Non_Empty Moduli Spaces - Compactness}]
    Since $\BcalHat$ is metrizable, it suffices to show that $\moduliH{[0,R]}$ is sequentially compact. Let $(r_n,w_n)_n \subseteq \moduliH{[0,R]}$ be an arbitrary sequence. By Lemma \ref{lem: Non_Empty Moduli Spaces - Compactness C^0(loop)-convergence}, a subsequence (suppressed in the notation) converges both in $[0,R] \times \Cinftyloc(\R \times \bbS^1) \times \Cinftyloc(\R)$ and in $[0,R] \times C^0(\Rbar,\loopspace \times \R)$ to some $(r,w) \in \moduliH{[0,R]} \,$. It remains to show that also $w_n \overset{n\rightarrow \infty}{\longrightarrow} w$ in $\Bcal \,$. To this end, fix a chart $f_*^{-1}$ for $\Bcal$ centered at $w = (u,\eta) \,$, as constructed in Appendix \ref{app subsec: Bcal' Atlas}. Since the domain of $f_*^{-1}$ is $C^0$-open (see Remark \ref{app rmk: Bcal' Atlas - f_* C^0-open image}), after discarding finitely many terms, we may assume that the $w_n$ all lie in the chart domain, so that we can write
    \[
    w_n = (u_n,\eta_n) = f_*(\xi_n, x^-_n, x^+_n, \eta_n - \tau(\alpha)) 
    \]
    with $\xi_n \in W^{1,p}_{\kappa_0}(\R \times \bbS^1,u^*TW)$ and $x_n^\pm \in \R^{2N-1} \,$. It follows from $u_n(\pm \infty,0) \convergence{n} u(\pm \infty,0)$ in $W$ and \eqref{app eq: Bcal' Atlas - u(pm infty,t) = (t,0) + x^pm} that $x_n^\pm \convergence{n} 0 \,$. By \eqref{eq: Analytic Set-Up - choice kappa_0 exponential decay} we may choose a constant $\kappa > \kappa_0 \,$, a $(\Cinfty \times\R)$-open neighborhood $\mathscr{U} \subseteq \loopspace \times \R$ of $w(+\infty) \in \CCrit$ and a function $\Xi : \mathscr{U} \rightarrow \R_{\geq 0}$ as in Proposition \ref{prop: Uniform Exponential Decay}. As $w_n \convergence{n} w $ in $C^0(\Rbar,\loopspace \times \R)$, there exists some sufficiently large $s_0 \geq R+1$ so that $w(\Rbar_{\geq s_0}) \subseteq \mathscr{U}$ as well as (after discarding finitely many terms) $w_n(\Rbar_{\geq s_0}) \subseteq \mathscr{U}$ for every $n\,$. In particular, we may apply the estimates \eqref{eq: Uniform Exponential Decay - partial_s}, \eqref{eq: Uniform Exponential Decay - d(u, u_infty)} and \eqref{eq: Uniform Exponential Decay - |eta - eta(infty)|} to each $w_n$ and $w \,$.
    
    From $\eta_n \convergence{n} \eta$ in $\Cinftyloc(\R,\R)$, the exponential decay estimate
    \[
    |\partial_s \eta(s)| + |\partial_s \eta_n(s)| + |\eta(s) - \tau(\alpha)| + |\eta_n(s) - \tau(\alpha)| \leq C \, e^{-\kappa(s-s_0)} \qquad \Forall s \geq s_0 
    \]
    with constant $C >0$ independent of $n$ (by \eqref{eq: Uniform Exponential Decay - partial_s}, \eqref{eq: Uniform Exponential Decay - |eta - eta(infty)|}, Remark \ref{rmk: Uniform Exponential Decay} \ref{it: Uniform Exponential Decay - Xi bounded}) and dominated convergence it now follows that $\eta_n - \tau(\alpha )\convergence{n} \eta - \tau(\alpha)$ in $W^{1,p}_{\kappa_0}(\R_{\geq s_0},\R) \,$.

    It remains to show that $\xi_n \convergence{n} 0$ in $W^{1,p}_{\kappa_0}(\R \times \bbS^1,u^*TW) \,$. From $u_n \convergence{n} u$ in $\Cinftyloc(\R \times \bbS^1,W)$ and $x_n^\pm \convergence{n} 0$ we conclude that 
    \begin{equation}
    \label{eq: Compactness - xi_n Cloc convergence to zero}
    \xi_n \convergence{n} 0 \quad \text{ in } \Cinftyloc(\R \times \bbS^1,u^*TW) \,\, .
    \end{equation}
    Let $\varphi^+ : U^+ \rightarrow \bbS^1 \times \R^{2N-1}$ be a tubular neighborhood around $u(\pm \infty, \,\cdot\,)$ as in Lemma \ref{lem: Analytic Set-Up - tubular nbhd}, corresponding to the choice of chart $f_*^{-1}$ for $\Bcal \,$. After possibly choosing $s_0$ larger, we may assume that $u(\Rbar_{\geq s_0} \times \bbS^1) \subseteq U^+$ and $u_n(\Rbar_{\geq s_0} \times \bbS^1) \subseteq U^+$ for every $n \,$.
    We will show that $\xi_n^{\varphi^+} := \d_{u} \varphi^+ [\xi_n]$ tends to zero in $W^{1,p}_{\kappa_0}(\R_{s_0} \times \bbS^1,\R^{2N}) \,$. For some constant $C > 0 \,$, independent of $n$, we have
    \begin{align}
    \label{eq: Compactness - estimates xi_n^phi}
    \begin{cases}
         |\xi_n^{\varphi^+}(s,t)| \leq C e^{- \kappa s} \\
         |\partial_s \xi_n^{\varphi^+}(s,t)| \leq C e^{- \kappa s} \\
        | \partial_t \xi_n^{\varphi^+}(s,t)| \leq C e^{- \kappa s} & \Forall s \geq s_0 \comma t \in \bbS^1 \,\, .
    \end{cases}
    \end{align}
    Indeed, the first estimate holds by \eqref{eq: Uniform Exponential Decay - d(u, u_infty)}, the second by \eqref{eq: Uniform Exponential Decay - partial_s} and the third one by the first two estimates and the first component of the gradient equation \eqref{eq: s-dependent gradient eq}. Details are carried out in Appendix \ref{app subsec: Supplement Compactness Moduli Space in BcalHat}. The function $C e^{-(\kappa - \kappa_0) s} \in L^p(\R_{\geq s_0} \times \bbS^1)$ is thus a dominating function for $e^{\kappa_0 s} \, |\xi_n^{\varphi^+}| \,$, so that $e^{\kappa_0 s} \, |\xi_n^{\varphi^+}| \convergence{n} 0$ in $L^p$ by pointwise convergence to the zero function (see \eqref{eq: Compactness - xi_n Cloc convergence to zero}) and the dominated convergence theorem. In other words $\xi_n^{\varphi^+} \convergence{n} 0$ in $L^p_{\kappa_0} \,$. Similarly the derivatives of $\xi_n^{\varphi^+}$ tend to the zero function in $L^p_{\kappa_0} \,$, so that altogether $\xi_n^{\varphi^+} \convergence{n} 0$ in $W^{1,p}_{\kappa_0} (\R_{\geq s_0} \times \bbS^1,\R^{2N}) \,$.

    Proving analogous statements near minus infinity, we conclude that $\eta_n - \tau(\alpha) \convergence{n} \eta - \tau(\alpha)$ in $W^{1,p}_{\kappa_0}(\R,\R)$ and that $\xi_n \convergence{n} 0$ in $W^{1,p}_{\kappa_0}(\R \times \bbS^1,u^*TW) \,$. This shows $f_*^{-1}(w_n ) \convergence{n} f_*^{-1}(w)$ and hence $w_n \convergence{n} w$ in $\Bcal \,$.
\end{proof}

\subsection{Fredholm property}
\label{subsec: Non_Empty Moduli Spaces - Fredholm}

We now prove Proposition \ref{prop: Non_Empty Moduli Spaces - Fredholm}, that is that $\Fsection_r$ is a Fredholm section of index $\dim(\Sigma) = 2N-1 \,$. Because we want to avoid additional topological assumptions on $\Sigma \,$, we do not invoke the index formula in \cite[Prop.~4.1]{Cieliebak_Frauenfelder}. Instead, we will use that the index for $r=0$ on $\moduliH{0}$ can readily be computed via the spectral flow, that the index is locally constant and that $\Bcal $ is connected. Making this precise will require a bit of work.

\begin{convention}
Throughout this subsection, we will work with the Riemannian metric $g := \omega(\,\cdot\,,J\,\cdot\,)$ on $W = S \Sigma \,$. Appearing exponential maps or parallel transport are always understood with respect to $g$.
\end{convention}

\begin{definition}
\label{def: moduliB(w) and moduliB^infty}
Recall the smooth structure on $\Rbar = [-\infty ,\infty ] $ defined by the diffeomorphism in \eqref{eq: Rbar smooth structure}. We will write
\[
\Bcal^\infty := \Bcal \cap \big(\Cinfty(\Rbar \times \bbS^1,W) \times \Cinfty(\Rbar,\R) \big) \,\, .
\]
Considering the evaluations $\ev_{\pm}: \Bcal \rightarrow \CCrit \comma w \mapsto w(\pm \infty) \,$, for every $w \in \Bcal$ we denote by $\Bcal(w) \subseteq \Bcal$ the Banach submanifold
\[
\Bcal(w) := (\ev_{-})^{-1}(w(-\infty)) \cap (\ev_{+})^{-1}(w(+\infty)) \,\, .
\]
(This is a submanifold since $(\ev_-, \ev_+) : \Bcal \rightarrow \CCrit \times \CCrit$ is a submersion.)
\end{definition}

Observe that $\moduliH{r} \subseteq \Bcal^\infty$ for every $r \geq 0$ because elements in $\moduliH{r}$ exhibit exponential convergence to its limits in derivatives of every degree (being asymptotically gradient flow lines of the Morse-Bott functional $\RabinowitzHZero$) and hence extend smoothly to $\Rbar \,$.

For every $w = (u,\eta) \in \Bcal^\infty$ we can canonically identify the tangent space of $\Bcal(w)$ at $w$ with
\[
T_w \Bcal(w) = W^{1,p}_{\kappa_0}(\R \times \bbS^1,u^*TW) \oplus W^{1,p}_{\kappa_0}(\R,\R) \,\, .
\]
The tangent spaces fit into a split exact sequence of Banach spaces
\begin{align*}
    0 \longrightarrow  T_w \Bcal(w) \longrightarrow T_w \Bcal \longrightarrow T_{w(-\infty)} \CCrit \oplus T_{w(+\infty)} \CCrit \longrightarrow 0 \,\, ,
\end{align*}
where the first map is the differential of the inclusion and the second map the differential of $(\ev_-,\ev_+) \,$. Thus, we have a non-canonical splitting
\begin{align}
\label{eq: T_w B = T_w B(w) oplus fin dim}
    T_w \Bcal \cong T_w \Bcal(w) \oplus \big( T_{w(-\infty)} \CCrit \oplus T_{w(+\infty)} \CCrit \big) \cong T_w \Bcal(w) \oplus \R^{2\dim(\Sigma)} \,\, .
\end{align}
Since the second summand in the splitting \eqref{eq: T_w B = T_w B(w) oplus fin dim} is finite-dimensional, the vertical differential $\Dvert \Fsection_r (w) : T_w \Bcal \rightarrow \Ecal_w$ at $w \in \moduliH{r}$ is Fredholm iff the restriction 
\[
\Dvert (\Fsection_r |_{\Bcal(w)})(w) = \Dvert \Fsection_r (w)|_{T_w \Bcal(w)} : T_w \Bcal(w) \rightarrow \Ecal_w
\]
is Fredholm. If this is the case, then
\begin{equation}
\label{eq: ind Dvert Fsection_r = ind Dvert Fsection|_B(w) + 2 dim Sigma}
\mathrm{ind}(\Dvert \Fsection_r(w)) = \mathrm{ind}(\Dvert \Fsection_r(w) |_{T_w \Bcal(w)}  ) + 2 \dim(\Sigma) \,\, .
\end{equation}

\noindent It is convenient to introduce a derivative of a section which extends the vertical differential whenever defined.

\begin{definition}
\label{def: derivative D}
We define the $\D$-linearization.
\begin{enumerate}[label=(\alph*)]
    \item Given $w_0 = (u_0,\eta_0) \in \Bcal^\infty$ and a smooth section $\mathcal{G} : \Bcal(w_0) \rightarrow \Ecal|_{\Bcal(w_0)} \,$. Let $\Phi$ be the trivialization of $\Ecal|_{\Bcal(w_0)}$ given by parallel transport, that is at $w = (\exp_{u_0}(\xi),\eta) \in \Bcal(w_0)$ close to $w_0$ the map on the fiber is given by
\begin{align*}
    &\Phi^{-1}_{w} : \Ecal_{w_0} = L^p_{\kappa_0}(u_0^*TW) \oplus L^p_{\kappa_0}(\R,\R) \overset{\cong}{\longrightarrow} \Ecal_w =  L^p_{\kappa_0}(u^*TW) \oplus L^p_{\kappa_0}(\R,\R) \\
    &\Phi^{-1}_{w} \big[ (\mathfrak{u},\mathfrak{n}) \big] = (\widetilde{\mathfrak{u}} , \mathfrak{n}) \comma \text{ where } \widetilde{\mathfrak{u}}(s,t) := \mathrm{P}_{(s,t,\xi(s,t))} \big[ \mathfrak{u}(s,t) \big] \,\, .
\end{align*}
Here $\mathrm{P}_{(s,t,\chi)} : T_{u_0(s,t)} W \overset{\cong}{\longrightarrow} T_{\exp_{u_0(s,t)}(\chi)}W \,$, for any $\chi \in T_{u_0(s,t)} W \,$, denotes parallel transport with respect to $g = \omega(\,\cdot\,,J\,\cdot\,)$ along the path $[0,1] \rightarrow W \comma \sigma \mapsto \exp_{u_0(s,t)}(\sigma \chi) \,$. Define the bounded linear map
\[
\D \mathcal{G} (w_0) : \, T_{w_0} \Bcal(w_0) \longrightarrow \Ecal_{w_0}
\]
as the differential at $w_0$ of the smooth map
\[
\Bcal(w_0) \longrightarrow \Ecal_{w_0} \comma \quad w \mapsto \Phi_{w} \big[ \mathcal{G}(w) \big]\,\, ,
\]
taking values in the Banach space $\Ecal_{w_0} \,$.
\item If $\mathcal{G} : \Bcal \rightarrow \Ecal$ is a section on $\Bcal$, consider the restricted section $\mathcal{G}|_{\Bcal(w_0)} : \Bcal(w_0) \rightarrow \Ecal|_{\Bcal(w_0)}$ and set
\[
\D \mathcal{G} (w_0) := \D (\mathcal{G}|_{\Bcal(w_0)})(w_0) \, : \,\, T_{w_0} \Bcal(w_0) \longrightarrow \Ecal_{w_0} \,\, .
\]
\end{enumerate}
\end{definition}

\begin{remark}
\label{rmk: def derivative D}
Observe that, if $w_0$ is a zero of $\mathcal{G} :\Bcal \rightarrow \Ecal$, then the just defined operator $\D \mathcal{G}(w_0)$ agrees with the vertical differential at $w_0$ of $\mathcal{G} |_{\Bcal(w_0)} \,$, that is 
\[
\D \mathcal{G} (w_0) = \Dvert (\mathcal{G}|_{\Bcal(w_0)})(w_0) = (\Dvert \mathcal{G} (w_0))|_{T_{w_0}\Bcal(w_0)} \,\, .
\]
\end{remark}

\noindent The crucial point regarding the previous definition is that for each $w_0$ we now compute the differential of $\mathcal{G}$ at $w_0$ in a distinguished trivialization, which only depends on $w_0 $ and no further auxiliary choices.

\begin{lemma}
\label{lem: Fredholm D Fsection_0}
For every $w \in \Bcal^\infty$ the operator $\D \Fsection_0 (w) : T_w \Bcal(w) \rightarrow \Ecal_w$ is Fredholm of Fredholm index $\mathrm{ind}(\D \Fsection_0(w) ) = - \dim(\Sigma) = - (2N-1) \,$.
\end{lemma}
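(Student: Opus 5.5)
The plan is to reduce to the operator's asymptotic behaviour, establish the Fredholm property at each point of $\Bcal^\infty$ by the standard weighted-space theory, and then compute the index at a single convenient point. The index will come out as a spectral flow across the Morse-Bott kernel. Throughout, $\D\Fsection_0 = \D(\partial_s - \nabla\RabinowitzHZero)$ is an $s$-independent gradient-type operator with no $\beta_r$ terms, since $\beta_0\equiv 0$.

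\textbf{Step 1 (normal form of the operator).} Fix $w=(u,\eta)\in\Bcal^\infty$. In the parallel-transport trivialization of Definition \ref{def: derivative D}, choose a unitary frame of $u^*TW$ extending smoothly to $\Rbar\times\bbS^1$; it exists because the bundle is trivial over the compact cylinder. In this frame $\D\Fsection_0(w)$ takes the form $\partial_s + A(s)$ on $W^{1,p}_{\kappa_0}(\R\times\bbS^1,\R^{2N})\oplus W^{1,p}_{\kappa_0}(\R,\R)$. Here $A(s)$ is a family of first-order selfadjoint operators (up to compact perturbation) which converges as $s\to\pm\infty$ to the Hessians $\nabla^2\RabinowitzHZero(w(\pm\infty))$ in suitable frames. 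The convergence uses that $w$ is smooth up to $\Rbar$.

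\textbf{Step 2 (Fredholm property).} Conjugate by the isometry $f\mapsto e^{\kappa_0\rho(s)s}f$. This turns the operator into an unweighted $\partial_s + A(s) \mp \kappa_0$ near $\pm\infty$. By the choice \eqref{eq: Analytic Set-Up - choice kappa_0 spec Hessian} of $\kappa_0$, the shifted asymptotic operators $\nabla^2\RabinowitzHZero(w(\pm\infty))\mp\kappa_0$ are invertible. The standard theorem on Fredholmness of $\partial_s+A(s)$ with invertible asymptotics then applies: one can follow the Robbin–Salamon spectral flow paper, adapted to the $L^p$ setting as in Salamon's Floer notes, treating the $\R$-component and the nonlocal coupling term $\int_0^1 \d H_0(u)\,\d t$ as compact or bounded perturbations. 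It gives that $\D\Fsection_0(w)$ is Fredholm. Its index equals minus the spectral flow of $A(s)+\kappa_0\rho(s)$ from $-\infty$ to $+\infty$.

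\textbf{Step 3 (index computation).} The index is locally constant along continuous paths of Fredholm operators. $\Bcal$ is connected, and the whole setup depends continuously on $w\in\Bcal^\infty$ in the operator norm. So it suffices to compute the index at a constant $w\in\CCrit$. There $A(s)\equiv A:=\nabla^2\RabinowitzHZero(w)$, whose kernel has dimension $\dim\CCrit=\dim\Sigma$ by Lemma \ref{lem: Crit(RabinowitzHZero)}. The weighted operator is $\partial_s + A - \kappa_0\rho(s)$. As $s$ runs from $-\infty$ to $+\infty$, the shift runs from $+\kappa_0$ to $-\kappa_0$, so exactly the zero eigenvalues cross, each from positive to negative. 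No other eigenvalue crosses, because $\kappa_0$ is below the spectral gap. This yields spectral flow $\pm\dim\Sigma$. The sign convention is fixed by a one-dimensional check: $\partial_s$ on $W^{1,p}_{\kappa_0}(\R)$ is injective but has cokernel of dimension $1$, because exponentially decaying weights kill constants. Hence the index is $-\dim\Sigma=-(2N-1)$.

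I expect the main obstacle to be Step 3's deformation argument. Continuity of $w\mapsto\D\Fsection_0(w)$ requires transporting operators to a common Banach space over a connected family of asymptotic limits. Since $\Bcal^\infty$ need not be obviously connected, I would first try a direct alternative: deform $A(s)$ linearly inside the space of operators with the same invertible shifted asymptotics. A path from $w$ to a constant element of $\CCrit$ within $\Bcal^\infty$ moves the asymptotic limits along $\CCrit$, and along such a path the shifted asymptotic operators stay invertible. Homotopy invariance of the spectral flow then finishes the argument without needing connectedness of $\Bcal^\infty$ itself.
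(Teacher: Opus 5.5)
Your Steps 1--2 and the computation at a constant element follow the paper's route. You trivialize, conjugate by $e^{\kappa_0\rho(s)s}$, apply a Robbin--Salamon type theorem, and at $w\in\moduliH{0}$ note that only the $\dim\Sigma$ kernel eigenvalues cross. Your signs drift between Steps 1 and 3, since $\D\Fsection_0=\partial_s-\nabla^2\RabinowitzHZero$, but the one-dimensional check with $\partial_s$ on $W^{1,p}_{\kappa_0}(\R)$ fixes the sign correctly.

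The gap is the deformation in Step 3, which you flag but do not close. Your operators $\D\Fsection_0(w)$ are only set up for $w\in\Bcal^\infty$. So connectedness of $\Bcal$ does nothing until you know that every $w\in\Bcal^\infty$ is joined to a constant element by a path \emph{inside} $\Bcal^\infty$. Your fallback explicitly uses ``a path from $w$ to a constant element of $\CCrit$ within $\Bcal^\infty$''. That is precisely path-connectedness of $\Bcal^\infty$, so the fallback is circular. A linear deformation of $A(s)$ with fixed ends cannot reach the constant case by itself either, since the ends $A(\pm\infty)$ stay fixed and generally differ.

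This step is not cosmetic: it is exactly where the choice of the component $\Bcal\subseteq\Bcal'$, i.e.\ $u\simeq 0$ rel $\CCrit$, enters the index computation. The paper notes that the index formula fails on $\Bcal'$. The paper proves path-connectedness of $\Bcal^\infty$ as a separate lemma, in four moves:
\begin{enumerate}
\item move inside the convex chart image of $\Bcal^\infty$ to asymptotically constant elements;
\item take a homotopy rel $\CCrit$;
\item make its asymptotic loops move smoothly and the homotopy $s$-independent for large $|s|$;
\item smooth it by Whitney approximation relative to where it is already smooth.
\end{enumerate}
A shorter fix in your spirit: $\Bcal^\infty$ is dense in $\Bcal$, and its image in every chart of the atlas is convex. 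Together these upgrade connectedness of $\Bcal$ to path-connectedness of $\Bcal^\infty$.

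The second unproved claim is that $w\mapsto\D\Fsection_0(w)$ is continuous in operator norm. Convergence $w_n\to w_0$ in $\Bcal$ controls $\partial_t u_n-\partial_t u_0$ only in $C^0+L^p_{\kappa_0}$, not uniformly. Hence the zeroth-order coefficient $S$ of the trivialized Hessian does not converge in $C^0$, since it contains $(\nabla J)\,\partial_t u$ and the connection term $\nabla_t\Psi^{-1}$. Moreover, the operators live on different bundles $u_n^*TW$.

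The paper handles this as follows:
\begin{itemize}
\item it fixes one unitary trivialization of the pullback of $TW$ under the chart map over a whole chart, which induces trivializations $\Psi^{u_n}$ for all $u_n$ in that chart;
\item it splits $S_n-S_0$ into a $C^0$-small part and an $L^p_{\kappa_0}$-small part;
\item it uses that multiplication $L^p\times W^{1,p}\to L^p$ on $\R\times\bbS^1$ is bounded for $p>2$.
\end{itemize}
With these two ingredients added, your argument becomes the paper's proof.
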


\noindent We will prove the lemma in the following. Before that we show how the lemma immediately entails Proposition \ref{prop: Non_Empty Moduli Spaces - Fredholm}.

\begin{proof}[Proof of Proposition \ref{prop: Non_Empty Moduli Spaces - Fredholm}]
    Given $0 \leq r \leq R$ and a zero $w_0 = (u_0,\eta_0) \in \moduliH{r} \subseteq \Bcal^\infty$ of $\Fsection_r \,$. Because
    \[
   (\Fsection_r - \Fsection_0)(u,\eta) = \beta_r { { -\eta \, J(u) (X_H - X_{H_0})(u) }\choose {\int_0^1  (H - H_0)(u) \, \d t } }
    \]
    does not differentiate $(u,\eta)$ and since $\supp (\beta_r) \subseteq I_{R+1} := [-R-1,R+1] \,$, the operator $\D(\Fsection_r - \Fsection_0)(w_0) = \D \Fsection_r (w_0) - \D \Fsection_0(w_0)$ factors continuously through 
    \[
    W^{1,p}( I_{R+1} \times \bbS^1, \,u_0^*TW) \oplus W^{1,p}(I_{R+1},\,\R)
    \]
    and hence is compact. Combining this with Lemma \ref{lem: Fredholm D Fsection_0}, we deduce that $ \D \Fsection_r(w_0) = \Dvert \Fsection_r (w_0) |_{T_{w_0}\Bcal(w_0)}$ is a Fredholm operator of index $-\dim(\Sigma) \,$. The proposition follows from our preliminary considerations around equation \eqref{eq: ind Dvert Fsection_r = ind Dvert Fsection|_B(w) + 2 dim Sigma}.
\end{proof}

\noindent It remains to prove Lemma \ref{lem: Fredholm D Fsection_0}. We first revisit the covariant Hessian of the Rabinowitz action functional $\RabinowitzHZero\,$, then prove the Fredholm property and finally show the index formula.

\subsubsection{Covariant Hessian of $\RabinowitzHZero$}
\label{subsubsec: Fredholm - Covariant Hessian}

Let us temporarily write $\mathcal{T}$ for the Banach bundle over $H^{1}(\bbS^1,W) \times \R$ whose fiber at $(v,\tau)$ is given by
\[
\mathcal{T}_{(v,\tau)} = L^2(\bbS^1,v^*TW) \oplus \R \,\, .
\]
Then $\nabla \RabinowitzHZero : H^1(\bbS^1,W) \times \R \rightarrow \mathcal{T}$ is a smooth section. The \textit{covariant Hessian} 
\[
\nabla^2 \RabinowitzHZero(v,\tau) \, : \,H^1(\bbS^1,v^*TW) \oplus \R \longrightarrow L^2(\bbS^1,v^*TW) \oplus \R
\]
at $(v,\tau) \in \Cinfty(\bbS^1,W) \times \R$ is the differential at $(v,\tau)$ of the trivialized section $\nabla \RabinowitzHZero$ in a trivialization of $\mathcal{T}$ determined by $g$-parallel transport along paths $\sigma \mapsto \exp_{v}(\sigma \xi) $ (entirely analogous to Definition \ref{def: derivative D}). We record that $\nabla^2 \RabinowitzHZero(v,\tau)$ is selfadjoint as unbounded operator on $\big( L^2(\bbS^1,v^*TW) \oplus \R, \, (g_{J})_{(v,\tau)} \big) \,$, so that the spectrum of $\nabla^2 \RabinowitzHZero(v,\tau)$ is real, countable, accumulates only at infinity and consists of eigenvalues only. In particular, $\nabla^2 \RabinowitzHZero(v,\tau) : H^1(\bbS^1,v^*TW) \oplus \R \rightarrow L^2(\bbS^1,v^*TW) \oplus \R$ is a Fredholm operator of index zero.\footnote{
Since the spectrum is discrete, we can choose $\mu \in \R$ in the resolvent set of $A:= \nabla^2 \RabinowitzHZero(v,\tau) \,$. Then $A - \mu \, \id$ is bijective, hence Fredholm of index zero. Because the inclusion $H^1(\bbS^1,v^*TW) \oplus \R \hookrightarrow L^2(\bbS^1,v^*TW) \oplus \R$ is compact, $A$ is a compact perturbation of $A-\mu \, \id$ and thus also Fredholm of index zero.
}

Considering the Rabinowitz action functional as Lagrange multiplier functional, as detailed in \cite[App.~C]{Cieliebak_Frauenfelder}, we can compare the covariant Hessian of $\RabinowitzHZero$ at $(v,\tau)$ with the covariant Hessian of the classical action functional $\nabla^2 \ActionClass{\tau H_0} (v) : H^1(\bbS^1,v^*TW) \rightarrow L^2(\bbS^1,v^*TW) \,$. This gives
\begin{align}
    &\nabla^2 \RabinowitzHZero(v,\tau) : \, H^1(\bbS^1,v^*TW) \oplus \R \longrightarrow L^2(\bbS^1,v^*TW) \oplus \R \comma \notag\\
    &\nabla^2 \RabinowitzHZero(v,\tau) \cdot { {\mathfrak{v}} \choose {\mathfrak{t}} } = \left(\begin{matrix}
   \nabla^2 \ActionClass{\tau H_0}(v) \cdot \mathfrak{v}   + \mathfrak{t} \, J(v) X_{H_0}(v)
    \\
    -\int_0^1 \d H_0(v) \cdot \mathfrak{v} \, \d t 
    \end{matrix}\right) \,\, , \label{eq: Fredholm - Hessian RabinowitzHZero}
\end{align}
the vector field $J(v) X_{H_0}(v) = - \nabla H_0(v)$ along $v$ being the adjoint of $-\int_0^1 \d H_0(v) \, \d t \,$.
We denote by 
\[
J_0 = \left( \begin{matrix}
   0 & - \mathbbm{1} \\
   \mathbbm{1} & 0
\end{matrix} \right) \quad \text{and} \quad \omega_0 = \sum_{j=1}^{N} \d x_j \wedge \d y_j
\]
the standard almost complex structure respectively the standard symplectic form on $\R^{2N}$. Note that $\omega_0(\,\cdot\,, J_0\,\cdot\,)$ is the Euclidean inner product on $\R^{2N} \,$.
It is well-known, see \cite[\S3.2.3]{Weber} for example, that in a fixed unitary trivialization
\[
\Psi : (v^*TW, J ,\omega) \overset{\cong}{\longrightarrow} (\bbS^1 \times \R^{2N} , \, J_0 , \, \omega_0)
\]
the trivialized covariant Hessian
\begin{align*}
    \nabla^2_{\Psi} \ActionClass{\tau H_0} (v) : \, H^1(\bbS^1,\R^{2N}) \longrightarrow L^2(\bbS^1,\R^{2N}) \comma \,\, \nabla^2_{\Psi} \ActionClass{\tau H_0} (v) := \Psi_* \circ \nabla^2 \ActionClass{\tau H_0}(v) \circ \Psi_*^{-1} 
\end{align*}
is of the form
\begin{equation}
\label{eq: Fredholm - Hessian ActionClass trivialized}
    \nabla^2_{\Psi} \ActionClass{\tau H_0} (v) \cdot \zeta = - J_0 \, \partial_t \zeta - S(t) \, \zeta \,\, ,
\end{equation}
where $S(t) \in \R^{2N \times 2N}$ is the smooth loop of symmetric matrices given by
\begin{equation}
\label{eq: Fredholm - Hessian ActionClass trivialized S}
S(t) \, x = \Psi_t \big( J(v) \, (\nabla_t \Psi^{-1} ) \, x + (\nabla_{\Psi_t^{-1} x} J) \, \partial_t v + \tau \, \nabla_{\Psi_t^{-1} x} \nabla H_0 \big)  \comma \quad x \in \R^{2N} \, .
\end{equation}
Owing to \eqref{eq: Fredholm - Hessian RabinowitzHZero} and \eqref{eq: Fredholm - Hessian ActionClass trivialized}, we can now write the trivialized Hessian of the Rabinowitz action functional as
\begin{align}
\label{eq: Fredholm - Hessian RabinowitzHerzo trivialized}
\nabla^2_\Psi \RabinowitzHZero (v,\tau) \cdot { \zeta \choose \mathfrak{t}}= - \left( \begin{matrix}
 J_0 \, \partial_t \zeta + S(t) \, \zeta + \mathfrak{t} \, R(t) \\
\int_0^1 \langle b(t) , \zeta(t) \rangle \, \d t
\end{matrix} \right) \comma \quad { \zeta \choose \mathfrak{t}} \in H^1(\bbS^1,\R^{2N}) \oplus \R
\end{align}
where $S$ is defined in \eqref{eq: Fredholm - Hessian ActionClass trivialized S} and $R(t) := -\Psi_t (J(v) \, X_{H_0}(v)) \in \R^{2N}$ as well as $b(t) := \d H_0(v) \circ \Psi_t^{-1} \in \mathrm{Hom}(\R^{2N},\R) \cong \R^{2N} $ are vector-valued loops. Here $\langle \cdot ,\cdot\rangle$ denotes the Euclidean inner product on $\R^{2N}$.

\subsubsection{$\D \Fsection_0$ is Fredholm}
\label{subsubsec: Fredholm - D Fsection_0 Fredholm}

The $\D$-linearization (Definition \ref{def: derivative D}) of the section $\Fsection_0 = \partial_s - \nabla \RabinowitzHZero$ at $w = (u,\eta) \in \Bcal^\infty$ is given by
\begin{align}
    &\D \Fsection_0 (w) : \, W_{\kappa_0}^{1,p}(\R \times \bbS^1,u^*TW) \oplus W^{1,p}_{\kappa_0}(\R,\R) \longrightarrow  L_{\kappa_0}^{p}(\R \times \bbS^1,u^*TW) \oplus L^{p}_{\kappa_0}(\R,\R) \notag\\
    &\D \Fsection_0 (w) \cdot { \mathfrak{u} \choose \mathfrak{n} } = { \nabla_s \mathfrak{u} \choose  \partial_s \mathfrak{n} } - \nabla^2 \RabinowitzHZero(w(s)) \cdot { \mathfrak{u}(s,\cdot\,) \choose \mathfrak{n}(s) } \,\, .   \label{eq: Fredholm - D-linearization Fsection_0}
\end{align}
Fix a smooth unitary trivialization
\[
\Psi : (u^*TW, J, \omega) \overset{\cong}{\longrightarrow} \big( (\Rbar \times \bbS^1) \times \R^{2N}, \, J_0 , \, \omega_0 \big) 
\]
over $\Rbar \times \bbS^1 $ and conjugate the $\D$-linearization with the induced pushforward maps
\begin{align}
\label{eq: Fredholm - Dtilde Fsection_0 trivialized}
    \Psi_* \circ  \D \Fsection_0 (w)  \circ \Psi_*^{-1} = { \partial_s + \Gamma(s,t) \choose  \partial_s} - \nabla^2_{\Psi_{(s,\,\cdot\,)}} \RabinowitzHZero(w(s))  \,\, ,
\end{align}
where $\Gamma(s,t) \in \R^{2N \times 2N}$ is the connection matrix of $u^*\nabla$ for trivialization $\Psi$ evaluated along the vector field $\partial_s \,$. Note that $\Gamma(s,t) \overset{s \rightarrow \pm \infty}{\longrightarrow} 0$ uniformly in $t \in \bbS^1$ since, due to the smooth structure on $\Rbar$ defined by \eqref{eq: Rbar smooth structure}, the smooth vector field $\partial_s$ on $\R \times \bbS^1$ extends to a $C^1$-vector field on $\Rbar \times \bbS^1 $ with $\partial_s|_{(\pm \infty, t)} = 0 \,$.

Let $I : L^p_{\kappa_0} \rightarrow L^p$ be the isometry given by $I(f) := e^{\kappa_0 \rho(s) s} f \,$, where we remind the reader that $\rho \in \Cinfty(\R,[-1,1])$ is a smooth increasing function with $\rho(s) = \pm 1$ for $\pm s \geq 1 \,$. The isometry $I$ restricts to an isometry $I_1 : W^{1,p}_{\kappa_0} \rightarrow W^{1,p} \,$. Conjugating the operator in \eqref{eq: Fredholm - Dtilde Fsection_0 trivialized} with these isometries yields the operator
\begin{align*}
     &(I \oplus I) \circ \Psi_* \circ  \D \Fsection_0 (w)  \circ \Psi_*^{-1} \circ (I_1^{-1} \oplus I_1^{-1}) 
    \\
    = \,\, &{ \partial_s + \Gamma(s,t) \choose \partial_s} - \nabla^2_{\Psi_{(s,\,\cdot\,)}} \RabinowitzHZero(w(s)) - \kappa_0 \, (\rho'(s) s + \rho(s)) \, \id \,\, .
\end{align*}
Since $\Gamma(\pm \infty, \,\cdot \,) = 0$ and $\rho'(s) = 0$ for $|s| \geq 1 \,$, this operator is a compact perturbation of the operator
\begin{align}
    &D_{w,\Psi} : \, W^{1,p}(\R \times \bbS^1,\R^{2N}) \oplus W^{1,p}(\R,\R) \longrightarrow    L^{p}(\R \times \bbS^1,\R^{2N}) \oplus L^{p}(\R,\R) \notag\\
    &D_{w,\Psi} := \partial_s - \nabla^2_{\Psi_{(s,\,\cdot\,)}} \RabinowitzHZero(w(s) ) - \kappa_0 \, \rho(s) \, \id \,\, , \label{eq: Fredholm - def D_(w,Psi)}
\end{align}
\cf \cite[Lemma 3.18]{Robbin_Salamon_Spectral_Flow}.

\begin{lemma}
\label{lem: Fredholm - ind D Fsection_0 = ind D_(w,Psi)}
For $w = (u,\eta) \in \Bcal^\infty$ and a smooth unitary trivialization $\Psi$ of $u^*TW$ over $\Rbar \times \bbS^1 \,$, the operators $\D \Fsection_0(w)$ and $D_{w,\Psi}$ are both Fredholm and their Fredholm indices agree with the spectral flow of the path $(A(s))_{s \in \R}$ of operators
\begin{align*}
    &A(s) : \, H^1(\bbS^1,\R^{2N}) \oplus \R \rightarrow L^2(\bbS^1,\R^{2N}) \oplus \R \\
    &A(s) :=  -\nabla^2_{\Psi_{(s,\,\cdot\,)}} \RabinowitzHZero(w(s) ) - \kappa_0 \, \rho(s) \, \id \,\, .
\end{align*}
\end{lemma}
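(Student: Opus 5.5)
The plan is to reduce to the Robbin--Salamon spectral flow theorem \cite{Robbin_Salamon_Spectral_Flow} applied to $D_{w,\Psi} = \partial_s + A(s)$. By the computation preceding the lemma, $(I \oplus I)\circ\Psi_*\circ \D\Fsection_0(w)\circ\Psi_*^{-1}\circ(I_1^{-1}\oplus I_1^{-1})$ differs from $D_{w,\Psi}$ by a compact operator. Conjugation by isometries and by the pushforwards $\Psi_*$ preserves Fredholmness and index. So it suffices to prove that $D_{w,\Psi}$ is Fredholm with index equal to the spectral flow of $(A(s))_s$.

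First I check the structural hypotheses on the path $A(s)$. By \eqref{eq: Fredholm - Hessian RabinowitzHerzo trivialized} we have
\[
A(s)(\zeta,\mathfrak{t}) = \big(J_0\partial_t\zeta + S_s(t)\zeta + \mathfrak{t}R_s(t),\ \textstyle\int_0^1\langle b_s,\zeta\rangle \,\d t\big) - \kappa_0\rho(s)(\zeta,\mathfrak{t}).
\]
This is $J_0\partial_t \oplus 0$ plus a bounded perturbation, and it is self-adjoint on $L^2(\bbS^1,\R^{2N})\oplus\R$ with dense domain $H^1\oplus\R$. Self-adjointness uses that $\Psi$ is unitary, so $S$ is symmetric. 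It also uses that $R_s = -\Psi(JX_{H_0}(u_s)) = \Psi(\nabla H_0(u_s))$ coincides with $b_s$ under $\mathrm{Hom}(\R^{2N},\R)\cong\R^{2N}$. The inclusion $H^1\oplus\R\hookrightarrow L^2\oplus\R$ is compact. Since $w\in\Bcal^\infty$ and $\Psi$ is smooth up to $\Rbar$, the coefficients $S_s,R_s,b_s$ depend smoothly on $s$, and they converge, together with their $s$-derivatives, as $s\to\pm\infty$. Hence $A$ is a $C^1$ path in $\mathcal{L}(H^1\oplus\R, L^2\oplus\R)$ with limits $A(\pm\infty) = -\nabla^2_{\Psi}\RabinowitzHZero(w(\pm\infty)) \mp \kappa_0\,\id$.

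Next I establish the nondegeneracy of the ends. Since $w(\pm\infty)\in\CCrit$, the spectrum of $\nabla^2\RabinowitzHZero(w(\pm\infty))$ consists of eigenvalues. By \eqref{eq: Analytic Set-Up - choice kappa_0 spec Hessian}, every nonzero eigenvalue $\mu$ satisfies $|\mu|>\kappa_0$. Therefore $\pm\kappa_0$ is not an eigenvalue, so $A(\pm\infty)$ is invertible; the choice $0<\kappa_0$ removes the kernel $T\CCrit$. The Robbin--Salamon theorem is stated for $L^2$ settings and $W^{1,2}$ domains, so I still have to pass to $p>2$. I would do this in one of two ways. One option is to use the $L^p$ extension of their result, as for Floer operators in \cite[\S3]{Robbin_Salamon_Spectral_Flow}/Salamon's notes. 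The other is to argue via elliptic regularity that kernel and cokernel of $D_{w,\Psi}$ on $W^{1,p}$ agree with those on $W^{1,2}$, so Fredholmness and index are $p$-independent. One point needs care here. The operator mixes a two-dimensional elliptic part $\partial_s + J_0\partial_t$ on $\R\times\bbS^1$ with a one-dimensional part $\partial_s$ on $\R$, coupled through the nonlocal terms $\mathfrak{t}R$ and $\int\langle b,\zeta\rangle$. These couplings are of order zero and map $W^{1,p}$ compactly on bounded $s$-intervals. Away from bounded intervals I use the invertibility of the asymptotic operators, via the standard splitting into a compact middle piece and two invertible translation-invariant ends.

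The main obstacle is this last step: adapting the Robbin--Salamon Fredholm/index theorem to the mixed $(\R\times\bbS^1)\oplus\R$ setting in $L^p$. I would first handle the ends by showing that $\partial_s + A(\pm\infty)$ is an isomorphism $W^{1,p}\to L^p$ on the full line, using the spectral decomposition of the hyperbolic self-adjoint operator $A(\pm\infty)$ and convolution with exponentially decaying kernels. Then I would patch with cutoffs to obtain a semi-Fredholm estimate $\|\xi\|_{W^{1,p}}\le C(\|D\xi\|_{L^p}+\|\xi\|_{L^p(K)})$, with $K$ compact, and apply the same to the adjoint. The index identity with spectral flow would then follow from invariance under homotopy and the $p=2$ case.
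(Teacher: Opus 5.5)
Your proposal is correct and follows the paper's proof: reduce to $D_{w,\Psi}=\partial_s+A(s)$ via the weight isometries and the compact perturbation, use self-adjointness of the trivialized Hessian and invertibility of $A(\pm\infty)$ from the choice of $\kappa_0$, apply a Robbin--Salamon type theorem at $p=2$ (the paper cites the version in \cite{Frauenfelder_Weber_Spectral_Flow}), and then pass to $p>2$. The paper handles that last step only by pointing to \cite[Ch.~2.3]{Salamon_Lectures_on_Floer_homology}, which is essentially the argument you sketch: isomorphism on the ends, a semi-Fredholm estimate, and regularity of kernel and cokernel.
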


\begin{proof}
   By the previous discussion, it is clear that $D \Fsection_0(w)$ is Fredholm iff $D_{w,\Psi}$ is and, if this is the case, their Fredholm indices agree. Notice that $A(s)$ is Fredholm of index zero and selfadjoint as unbounded operator on $L^2(\bbS^1,\R^{2N})  \oplus \R$ with the Euclidean $L^2$-inner product since $\nabla^2_{\Psi_{(s,\,\cdot\,)}} \RabinowitzHZero(w(s))$ is (see Subsection \ref{subsubsec: Fredholm - Covariant Hessian}). The path $(A(s))_{s \in \R}$ has invertible limits 
\[
A(\pm \infty) =  - \nabla^2_{\Psi_{(\pm \infty,\,\cdot\,)}} \RabinowitzHZero(w(\pm \infty)) \mp \kappa_0 \, \id
\]
due to $\kappa_0 > 0$ satisfying \eqref{eq: Analytic Set-Up - choice kappa_0 spec Hessian}. Hence, by the Robbin-Salamon type result \cite[Thm.~A]{Frauenfelder_Weber_Spectral_Flow}, the operator $D_{w,\Psi} = \partial_s + A(s)$ is Fredholm for $p=2 $ and its Fredholm index agrees with the spectral flow of $(A(s))_{s \in \R} \,$. From the case $p=2$ one obtains the case $p >2 \,$, following the line of arguments in \cite[Ch.~2.3]{Salamon_Lectures_on_Floer_homology}.
\end{proof}

\noindent The previous lemma showed the Fredholm assertion in Lemma \ref{lem: Fredholm D Fsection_0}.

\subsubsection{Index of $\D \Fsection_0$}
\label{subsubsec: Fredholm - index D Fsection_0}

To prove the index formula we will crucially use that $\Bcal$ is a connected Banach manifold. It should come to no surprise that this entails that also $\Bcal^\infty$ is connected, as the next lemma clarifies.
  
\begin{lemma}
\label{lem: Bcal^infty connected}
The subset $\Bcal^\infty$ is path-connected in the subspace topology of $\Bcal \,$.
\end{lemma}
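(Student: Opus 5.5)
The plan is to show that every element of $\Bcal$ can be joined to an element of $\Bcal^\infty$ by a path staying inside a small chart, and that $\Bcal^\infty$ is dense and locally path-connected relative to $\Bcal$ in a strong sense. Combining this with connectedness of $\Bcal$ (by definition a connected component of $\Bcal'$) will give path-connectedness of $\Bcal^\infty$.

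\emph{Step 1: charts centred at smooth points.} I will use the atlas of Appendix \ref{app subsec: Bcal' Atlas}. Its charts $f_*^{-1}$ are centred at points $w_0 \in \Bcal^\infty$ and have convex image in the model space
\[
W^{1,p}_{\kappa_0}(\R \times \bbS^1,u_0^*TW) \times \R^{2N-1} \times \R^{2N-1} \times W^{1,p}_{\kappa_0}(\R,\R) .
\]
This works because the chart $f_*(\xi,x^-,x^+,\mathfrak{n})$ is built by exponentiating $\xi$ and shifting asymptotics via the tubular coordinates of Lemma \ref{lem: Analytic Set-Up - tubular nbhd}. If $\xi$ and $\mathfrak{n}$ are smooth up to $\Rbar$, say compactly supported, then $f_*(\xi,x^-,x^+,\mathfrak{n}) \in \Bcal^\infty$. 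I expect this to follow directly from the explicit formula for $f_*$, since the asymptotic orbits are Reeb orbits and the tubular coordinates are smooth.

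\emph{Step 2: density and local path-connectedness.} Compactly supported smooth sections are dense in the weighted Sobolev spaces, since these are isometric to $W^{1,p}$ via $e^{\kappa_0 \rho(s)s}$. Hence the smooth vectors form a dense linear subspace $\mathbb{D}$ of the model space. Its intersection with a convex open set is convex, hence path-connected, and the straight-line segments stay in $\mathbb{D}$. So in each chart the set $f_*(\mathbb{D} \cap \text{image}) \subseteq \Bcal^\infty$ is path-connected and dense in the chart domain $\Ucal$. Because $\mathbb{D}$ is dense in the whole model space, every point of $\Bcal$, not only smooth centres, lies in some such chart domain.

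\emph{Step 3: chaining.} Fix $w_0 \in \Bcal^\infty$. Let $A \subseteq \Bcal$ be the union of all chart domains $\Ucal$ centred at smooth points such that $\Ucal \cap \Bcal^\infty$ contains a point path-connected to $w_0$ within $\Bcal^\infty$. Then $A$ is open. It is also closed: if $w \in \overline{A}$, take a chart $\Ucal$ around $w$ centred at a smooth point. It meets some $\Ucal' \subseteq A$, and the intersection $\Ucal \cap \Ucal'$ is open, so it contains a point of the dense set $f_*(\mathbb{D})$ of both charts. Joining through the two path-connected smooth pieces gives $w \in A$. Since $\Bcal$ is connected, $A = \Bcal$, and in particular every point of $\Bcal^\infty$ is joined to $w_0$.

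The main obstacle is the subtle point in Steps 2 and 3. We need that for each chart $\Ucal$ the whole set $\Ucal \cap \Bcal^\infty$, and not just $f_*(\mathbb{D})$, is path-connected to these points. That is, a given smooth $w \in \Ucal$ must be connected to $f_*(\mathbb{D})$ inside $\Bcal^\infty$. I would handle this by recentring at $w$ itself, since $w$ is smooth and may serve as a chart centre. The straight segment from $w$ to a nearby point of $f_*(\mathbb{D})$ in the chart centred at $w$ stays in $\Bcal^\infty$ by Step 1. The remaining check, that points smooth in one chart are smooth vectors in another, reduces to smoothness of the transition maps on smooth data, which I expect to be routine.
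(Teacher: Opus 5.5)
Your argument is correct in substance, but it follows a different route from the paper. The paper uses the description of $\Bcal$ as the set of $(u,\eta)\in\Bcal'$ with $u\simeq 0$ rel $\CCrit$. After moving $w_0,w_1$ inside a chart to asymptotically constant smooth elements, it takes a homotopy rel $\CCrit$ between the cylinders. It smooths this homotopy by Whitney approximation, first the asymptotic paths in $\Sigma$ and then the whole homotopy relative to $\{\sigma=0,1\}\cup\{|s|\geq s_0\}$. Interpolating the Lagrange multipliers linearly then gives an explicit path in $\Bcal^\infty$. You instead use only two facts: $\Bcal$ is a connected component of $\Bcal'$, and in each chart centred at a smooth point $\Bcal^\infty$ is dense and path-connected. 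An open--closed chaining argument then finishes. Your route avoids Whitney approximation and the identification of the component with the homotopy class rel $\CCrit$. It also avoids checking that a smooth $\sigma$-family of asymptotically constant cylinders is continuous in the $W^{1,p}_{\kappa_0}$-topology. The paper's route, in exchange, produces an explicit path.

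Your ``main obstacle'' disappears if you take $\mathbb{D}$ to be all data smooth up to $\Rbar$, namely $\bigl(\Cinfty(\Rbar\times\bbS^1)\cap W^{1,p}_{\kappa_0}\bigr)\times\R^{2N-1}\times\R^{2N-1}\times\bigl(\Cinfty(\Rbar)\cap W^{1,p}_{\kappa_0}\bigr)$, rather than compactly supported data. The map $f$ is smooth up to $\Rbar$. Conversely, $x^\pm=\varphi^\pm(u(\pm\infty,0))$ and $\widehat{\xi}_{(x^-,x^+)}=(\exp^{\gaux}_{u_0})^{-1}(u)$ recover the coordinates by operations smooth up to $\Rbar$. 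Hence the chart maps $\Ucal\cap\Bcal^\infty$ exactly onto the convex set $\mathbb{D}\cap f_*^{-1}(\Ucal)$, which is precisely the fact the paper uses in its first step. If $\mathbb{D}$ meant compactly supported data, your Step 3 phrase ``a point of $f_*(\mathbb{D})$ of both charts'' would not be justified. Two dense subsets of an open set need not intersect, and compactly supported coordinates in one chart are generally not compactly supported in another. With the identification above, any point of $\Bcal^\infty\cap\Ucal\cap\Ucal'$ works.

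Finally, every point of $\Bcal$ lies in a chart domain centred at a smooth point because of how the atlas is built, not because $\mathbb{D}$ is dense. All charts are centred at points of $(\Bcal')^\infty$ and they cover $\Bcal'$. A chart domain is connected, so if it meets $\Bcal$ it lies in $\Bcal$.
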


\begin{proof}
    Given arbitrary $w_0 = (u_0,\eta_0) \comma w_1 = (u_1,\eta_1) \in \Bcal^\infty \,$, we construct a path from $w_0$ to $w_1$ within $\Bcal^\infty\,$. Connecting $w_{0}, w_1$ to asymptotically constant elements of $\Bcal^\infty$ with same endpoints via a path in $\Bcal^\infty$ (using that $\Bcal^\infty$ is mapped via a chart for $\Bcal$ to a convex set, namely the intersection of an open ball with the vector space $\Cinfty(\Rbar \times \bbS^1,\R^{2N}) \times \R^{2\dim(\Sigma)}\times \Cinfty(\Rbar,\R)$), we may assume without loss of generality that $w_0$ and $w_1$ are asymptotically constant, say $w_{0,1}(\pm s) = w_{0,1}(\pm \infty)$ for $ s \geq s_0 \,$. As $w_0 , w_1 \in \Bcal \,$, we have
    $u_0 \simeq u_1 \text{ rel } \CCrit \,$. Hence we can find a homotopy $(u_\sigma)_{0 \leq \sigma \leq 1}$ from $u_0$ to $u_1 \,$, \ie a continuous map
    \[
     (\Rbar \times \bbS^1) \times [0,1] \rightarrow W \comma \quad ((s,t),\sigma) \mapsto u_\sigma(s,t) \, ,
    \]
    with
    \begin{equation}
    \label{eq: Bcal^infty connected - homotopy asymptotics}
        (u_\sigma(\pm \infty , \,\cdot\,),\tau(\alpha)) \in \CCrit \quad \Forall \sigma \in [0,1] \, .
    \end{equation}
     Applying the Whitney approximation theorem to the continuous paths $[0,1] \rightarrow \Sigma \comma \sigma \mapsto u_\sigma(\pm \infty,0) \,$,
     we can asymptotically modify $(u_\sigma)_\sigma$ and choose $s_0$ larger so that \eqref{eq: Bcal^infty connected - homotopy asymptotics} still holds, the paths $[0,1] \rightarrow \Sigma \comma \sigma \mapsto u_\sigma(\pm \infty,0) \,$, are smooth
    and $u_\sigma(\pm s,t) = u_\sigma(\pm \infty,t)$ for $s \geq s_0 \,$. It follows that the homotopy $(u_\sigma)_\sigma$ is smooth on $\{\sigma = 0,1\} \cup \{|s| \geq s_0\} \,$. Again by Whitney approximation, there exists a smooth homotopy $(\widetilde{u}_\sigma)_{0 \leq \sigma \leq 1}$ so that
   \begin{align*}
       \widetilde{u}_{0,1} = u_{0,1} \text{ and } \widetilde{u}_{\sigma}(s,t)  = u_\sigma(s,t) \text{ for } |s| \geq s_0 \,\, .
   \end{align*}
  Then $(w_\sigma)_{0 \leq \sigma \leq 1} := (\widetilde{u}_\sigma, \,(1-\sigma) \eta_0 + \sigma \eta_1)_{0 \leq \sigma \leq 1}$ is a path from $w_0$ to $w_1$ in $\Bcal^\infty \,$.  
\end{proof}

\begin{lemma}
\label{lem: Fredholm - ind D Fsection_0 locally constant}
The map
\[
\Bcal^\infty \rightarrow \Z \comma \quad w \mapsto \mathrm{ind}(\D \Fsection_0(w)) \,\, ,
\]
is locally constant, where $\Bcal^\infty$ carries the subspace topology of $\Bcal \,$.
\end{lemma}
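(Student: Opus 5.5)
The plan is to show that for $w_0 \in \Bcal^\infty$ there is a neighborhood $\Ucal$ of $w_0$ in $\Bcal$ such that $w \mapsto \D \Fsection_0(w)$, after transporting everything to fixed Banach spaces, is a norm-continuous family of Fredholm operators on $\Ucal \cap \Bcal^\infty$. Local constancy then follows from the stability of the Fredholm index under small perturbations in operator norm. The difficulty is that $\D \Fsection_0(w)$ acts between $T_w\Bcal(w)$ and $\Ecal_w$, and these spaces depend on $w$. Moreover the asymptotic limits $w(\pm\infty)$ vary, so the slices $\Bcal(w)$ differ.

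To handle this I will use the chart $f_*^{-1}$ centered at $w_0$ from Appendix \ref{app subsec: Bcal' Atlas} together with the parallel-transport trivialization $\Phi$ of $\Ecal|_\Ucal$ from the proof of Lemma \ref{lem: Analytic Set-Up - B-bundle pair}. For $w \in \Ucal$ the tangent space $T_w\Bcal$ is identified with the model space $W^{1,p}_{\kappa_0}(u_0^*TW)\oplus \R^{2(2N-1)} \oplus W^{1,p}_{\kappa_0}(\R,\R)$ via $\d f_*$. Under this identification $T_w\Bcal(w)$ is a closed complemented subspace of finite codimension $2\dim(\Sigma)$, and it depends continuously on $w$. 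So instead of comparing $\D\Fsection_0(w)$ directly, I compare the full operators
\[
\Phi_w \circ \Dvert\text{-type differential of } \Fsection_0 \text{ at } w \circ \d f_*,
\]
namely the derivative at $f_*^{-1}(w)$ of the smooth map $f_*^{-1}(\Ucal) \to \Ecal_{w_0}$, $x \mapsto \Phi_{f_*(x)}[\Fsection_0(f_*(x))]$.

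\textbf{Step 1.} This map is $C^1$, because $\Fsection_0$ and $\Phi$ are smooth. Hence its differential $L(x)$ is norm-continuous in $x$.

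\textbf{Step 2.} Relate $L(x)$ to $\D\Fsection_0(w)$ for $w = f_*(x) \in \Bcal^\infty$. The restriction of $L(x)$ to the finite-codimension subspace $\d f_*^{-1}(T_w\Bcal(w))$ should differ from $\D\Fsection_0(w)$, conjugated by isomorphisms, only by a compact operator. Two sources contribute:
\begin{enumerate}[label=(\roman*)]
\item the difference between parallel transport along the chart straight line from $w_0$ and parallel transport along $\exp_u$ from $w$;
\item the fact that $w$ need not be a zero, so the choice of trivialization matters.
\end{enumerate}
Both contributions should be zeroth-order terms. They involve the connection or the curvature applied to $\Fsection_0(w)$, or derivatives of the transition maps multiplied by $\Fsection_0(w)$. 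I expect these terms to be compact, or at least to decay at the ends, using that $w\in\Bcal^\infty$ is smooth up to $\pm\infty$. This is the step I expect to be the main obstacle.

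If compactness fails because $\Fsection_0(w)$ does not vanish at the ends, I will fall back on a cleaner argument. By Lemma \ref{lem: Fredholm - ind D Fsection_0 = ind D_(w,Psi)} the index of $\D\Fsection_0(w)$ equals the spectral flow of $A_w(s) = -\nabla^2_{\Psi_{(s,\cdot)}}\RabinowitzHZero(w(s)) - \kappa_0\rho(s)\,\id$. For $w$ close to $w_0$ in $\Bcal$, the trivialization $\Psi$ can be chosen by parallel transport from a fixed $\Psi_0$. The path $A_w$ then stays uniformly close to $A_{w_0}$, with endpoints $A_w(\pm\infty)$ remaining invertible by \eqref{eq: Analytic Set-Up - choice kappa_0 spec Hessian}. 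Homotopy invariance of spectral flow for paths with invertible ends then gives equality, since the operators $D_{w,\Psi}$ are close in norm and are Fredholm by \cite[Thm.~A]{Frauenfelder_Weber_Spectral_Flow}.

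\textbf{Step 3.} With either route, the Fredholm index is locally constant near $w_0$, because it is stable under norm-small and compact perturbations. Restricting to finite-codimension subspaces that vary continuously shifts the index by the constant $-2\dim(\Sigma)$, as in \eqref{eq: ind Dvert Fsection_r = ind Dvert Fsection|_B(w) + 2 dim Sigma}. This proves the lemma.
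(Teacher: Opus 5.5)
Your main route (Steps 1--3) is sound and differs from the paper's, and the step you flag as the obstacle does close.

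\textbf{Why Step 2 closes.} In the chart $f_*$ centered at $w_0$, $T_w\Bcal(w)$ is the image of the fixed subspace $X_0=\{\hat x^-=\hat x^+=0\}$ for every $w$ in the chart, because $\varphi^\pm(u(\pm\infty,t))=(t,0)+x^\pm$. Let $\Phi'$ be the exponential parallel transport based at $w$ from Definition~\ref{def: derivative D}. Put $T(w')=\Phi_{w'}\circ(\Phi'_{w'})^{-1}:\Ecal_w\to\Ecal_{w_0}$, so that $T(w)=\Phi_w$. Differentiating $\Phi_{w'}[\Fsection_0(w')]=T(w')\,\Phi'_{w'}[\Fsection_0(w')]$ at $w'=w$ gives $L(x)|_{X_0}=\bigl(\Phi_w\circ\D\Fsection_0(w)+K\bigr)\circ\d f_*(x)|_{X_0}$ with $K\hat w=(\d T(w)\hat w)[\Fsection_0(w)]$.

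This $K$ is of order zero: it multiplies $\hat u$ by a matrix function bounded pointwise by $C\,|\Fsection_0(w)(s,t)|$. For $w\in\Bcal^\infty$, $\Fsection_0(w)$ is continuous on $\Rbar\times\bbS^1$ and vanishes at $s=\pm\infty$, since $\partial_s u\to 0$, $\eta\to\tau(\alpha)$ and $\partial_t u^\pm=\tau(\alpha)X_{H_0}(u^\pm)$. (Membership in $L^p_{\kappa_0}$ would already suffice, using $L^p\cdot W^{1,p}\subset L^p$ for $p>2$.) Hence $K$ is compact: apply Rellich on $[-R,R]\times\bbS^1$ and note the tails have small norm. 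Therefore $\mathrm{ind}\,\D\Fsection_0(w)=\mathrm{ind}\,L(x)-2\dim(\Sigma)$ is locally constant. Your worry that $\Fsection_0(w)$ might not vanish at the ends is unfounded.

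\textbf{Comparison with the paper.} You get norm-continuity for free from smoothness of $\Fsection$ in the trivialization of Lemma~\ref{lem: Analytic Set-Up - B-bundle pair}, at the cost of a compact correction at non-zeros. The paper instead fixes a unitary trivialization $\Psi$ of $f^*TW$ over the whole chart domain. It then proves by hand that $w\mapsto D_{w,\Psi^u}$ is norm-continuous at $w_0$, via Lemma~\ref{lem: Fredholm - auxiliary operator norm estimate} and the splitting of $S_n-S_0$ in Appendix~\ref{app subsec: Supplement Fredholm index locally constant}. Your route is shorter but rests on smoothness of the nonlinear section $\Fsection$ in that trivialization, which the paper asserts rather than proves. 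The paper's route is explicit and stays with the operators whose index it already identified with a spectral flow.

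\textbf{The fallback.} It is the paper's route, but the reason you give does not work. Closeness of $w$ to $w_0$ in $\Bcal$ does not make the path $A_w$ uniformly close to $A_{w_0}$ in $\sup_s\lVert\cdot\rVert_{\mathcal L(H^1,L^2)}$. The zeroth-order coefficient $S$ of the trivialized Hessian contains $\partial_t u$, through $(\nabla J)\,\partial_t u$ and the connection term. Convergence in $\Bcal$ only makes $\partial_t\xi$ small in $L^p_{\kappa_0}(\R\times\bbS^1)$, not in $\sup_s\lVert\partial_t\xi(s,\cdot)\rVert_{L^2(\bbS^1)}$. For instance, take $\xi_n=\chi(s/\delta_n)\,\delta_n\sin(2\pi t/\delta_n)$ with $1/\delta_n\in\N$. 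So a homotopy-of-paths argument for spectral flow is not available as stated.

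What is true, and what the paper proves, is norm convergence $D_{w_n,\Psi^{u_n}}\to D_{w_0,\Psi^{u_0}}$ as operators $W^{1,p}\oplus W^{1,p}\to L^p\oplus L^p$ on the cylinder. This comes from splitting $S_n-S_0$ into an $L^p_{\kappa_0}$-small part and a $C^0$-small part, plus boundedness of $L^p\times W^{1,p}\to L^p$ for $p>2$. Stability of the index then finishes without any spectral-flow homotopy. You assert this norm closeness but do not prove it, and it is the whole content of the lemma on this route.

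Moreover, Levi-Civita parallel transport of a unitary frame is not unitary unless $J$ is parallel. So Lemma~\ref{lem: Fredholm - ind D Fsection_0 = ind D_(w,Psi)} does not apply directly to your transported $\Psi$. The paper avoids this by trivializing $f^*TW$ unitarily, which is possible because the chart domain retracts onto $\bbS^1$.
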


\begin{proof}
   Let $w_0 = (u_0,\eta_0) \in \Bcal^\infty$ be arbitrary but fixed. For a sufficiently small, fiberwise convex, open neighborhood $\mathcal{O} \subseteq TW$ of the zero section and small open balls $B^\pm \subseteq \R^{2N-1}$ of the origin respectively, consider a smooth map
   \begin{align*}
       f : u_0^*\mathcal{O} \times B^- \times B^+ \longrightarrow W \,\, ,
   \end{align*}
   as defined in equation \eqref{app eq: Bcal' Atlas - map f} in the appendix. (Such a map depends on various auxiliary choices, which in turn depend on $u_0$.) The associated pushforward map
   \begin{align*}
   &f_* : W^{1,p}_{\kappa_0}(u^*\mathcal{O}) \times B^- \times B^+ \times W^{1,p}_{\kappa_0}(\R,\R) \longrightarrow  \Bcal \\
   &f_*(\xi, x^-,x^+,\widehat{\eta}) := (u, \widehat{\eta} + \tau(\alpha)) \comma \text{ where } u(s,t) := f\big( \xi(s,t), x^-,x^+ \big) \,\, ,
   \end{align*}
   is the inverse of the natural chart for $\Bcal$ centered at $w_0$ associated to $f$ (as constructed in Appendix \ref{app subsubsec: Bcal' Atlas - construction chart}), with chart domain $\Ucal := \mathrm{im}(f_*) \subseteq \Bcal \,$. Notice that $f$ may also depend on $(s,t)$, being hidden in the factor $u_0^*\mathcal{O} \,$.
   Because $u_0^*\mathcal{O} \times B^- \times B^+$ deformation retracts to the zero section of $u_0^*TW \,$, which is homotopy equivalent to $\Rbar \times \bbS^1 \simeq \bbS^1 \,$, the Hermitian bundle $(f^*TW,J,\omega)$ is trivial. Therefore we can choose a smooth unitary trivialization
   \[
   \Psi : \, (f^*TW,J,\omega) \longrightarrow \big( (u_0^*\mathcal{O} \times B^- \times B^+) \times \R^{2N} \comma J_0 , \omega_0 \big) \,\, .
   \]
   Observe that, given $w= (u,\eta) = f_*(\xi,x^-,x^+,\eta- \tau(\alpha)) \in \Ucal \cap \Bcal^\infty \,$, this induces a unitary trivialization $\Psi^u$ of $(u^*TW,J,\omega)$ by setting
   \[
   \Psi^u_{(s,t)} := \Psi_{(\xi(s,t),x^-,x^+)} \, : \, T_{u(s,t)} W \longrightarrow \R^{2N} \,\, .
   \]
   By Lemma \ref{lem: Fredholm - ind D Fsection_0 = ind D_(w,Psi)}, the Fredholm index of $\D \Fsection_0(w)$ agrees with the index of $D_{w, \Psi^u} \,$. We will show that the map
   \begin{align*}
       \Ucal \cap \Bcal^\infty &\longrightarrow \mathcal{L}\big(W^{1,p}(\R \times \bbS^1,\R^{2N}) \oplus W^{1,p}(\R,\R) \comma L^p(\R \times \bbS^1,\R^{2N}) \oplus L^p(\R,\R) \big) \notag \\
       w = (u,\eta) &\longmapsto D_{w,\Psi^u}
   \end{align*}
   to the space of bounded linear operators (with the operator norm topology)
   is continuous at $w_0$. If this has been shown, then the stability of the Fredholm index under small perturbations implies that $w = (u,\eta) \mapsto \mathrm{ind}(D_{w,\Psi^u}) = \mathrm{ind}(\D \Fsection_0(w))$ is locally constant around $w_0 \,$.

   Now let $(w_n)_{n \geq 1} = (u_n,\eta_n)_{n \geq 1} \subseteq \Ucal \cap \Bcal^\infty$ be an arbitrary sequence tending to $w_0$ in $\Bcal \,$. By definition of the operator $D_{w,\Psi^u}$ in \eqref{eq: Fredholm - def D_(w,Psi)}, we have
   \begin{equation*}
   D_{w_n,\Psi^{u_n}} - D_{w_0,\Psi^{u_0}} = \nabla^2_{\Psi^{u_0}} \RabinowitzHZero(w_0) - \nabla^2_{\Psi^{u_n}} \RabinowitzHZero(w_n) \,\, .
   \end{equation*}
   By \eqref{eq: Fredholm - Hessian RabinowitzHerzo trivialized}, we can write the trivialized covariant Hessians as
  \begin{align*}
     \nabla^2_{\Psi^{u_n}} \RabinowitzHZero (w_n) \cdot { \widehat{u} \choose \widehat{\eta}}= - \left( \begin{matrix}
 J_0 \, \partial_t \widehat{u} + S_n \, \widehat{u} + \widehat{\eta} \, R_n \\
\int_0^1 \langle b_n, \widehat{u} \rangle \, \d t
\end{matrix} \right) \,\, ,  
  \end{align*}
  with $S_n \in \Cinfty(\R \times \bbS^1,\R^{2N \times 2N})$ and $R_n , \, h_n \in \Cinfty(\R \times \bbS^1,\R^{2N}) \,$, and similarly for $\nabla^2_{\Psi^{u_0}} \RabinowitzHZero(w_0) \,$. Therefore
  \begin{align*}
   (D_{w_n,\Psi^{u_n}} - D_{w_0,\Psi^{u_0}})    { \widehat{u} \choose \widehat{\eta}}= \left( \begin{matrix}
 (S_n - S_0) \, \widehat{u} + \widehat{\eta} \, (R_n -R_0) \\
\int_0^1 \langle b_n - b_0 , \, \widehat{u} \rangle \, \d t
\end{matrix} \right) \,\, .
  \end{align*}
  The convergence $w_n \overset{n \rightarrow \infty}{\longrightarrow} w_0$ in $\Bcal$ and the definition of the trivializations $\Psi^{u_n}$ above in terms of the common trivialization $\Psi$ imply that $R_n \overset{n \rightarrow \infty}{\longrightarrow} R_0$ and $b_n  \overset{n \rightarrow \infty}{\longrightarrow} b_0$ in $C^0(\Rbar \times \bbS^1)$ respectively. Looking at \eqref{eq: Fredholm - Hessian ActionClass trivialized S}, the difference $S_n - S_0$ can be written as sum of two terms: One of which tends to zero in $L^p_{\kappa_0}(\R \times \bbS^1)\,$, the other one tends to zero in $C^0(\Rbar \times \bbS^1) \,$. (We show this in detail in Appendix \ref{app subsec: Supplement Fredholm index locally constant}.) Then Lemma \ref{lem: Fredholm - auxiliary operator norm estimate} below implies that the operator norm of $D_{w_n,\Psi^{u_n}} - D_{w_0,\Psi^{u_0}}$ tends to zero as $n \rightarrow \infty \,$. This finishes the proof.
\end{proof}

\noindent Let us state the lemma alluded to in the proof above.

\begin{lemma}
\label{lem: Fredholm - auxiliary operator norm estimate}
Given matrix-valued functions $A \in L^p(\R \times \bbS^1,\R^{2N \times 2N}) $, $ B \in L^\infty (\R \times \bbS^1,\R^{2N \times 2N}) $ and vector-valued functions $ R, b  \in L^\infty(\R \times \bbS^1,\R^{2N }) \,$. Then the operator
\begin{align*}
    &L : W^{1,p}(\R \times \bbS^1,\R^{2N}) \oplus W^{1,p}(\R,\R) \longrightarrow L^p(\R \times \bbS^1,\R^{2N}) \oplus L^p(\R,\R) \\
   &L \, { \widehat{u} \choose \widehat{\eta}} := \left( \begin{matrix}
        (A(s,t) + B(s,t)) \, \widehat{u}(s,t) + \widehat{\eta}(s) R(s,t) \\
        \int_0^1 \langle b(s,t) , \widehat{u}(s,t) \rangle \, \d t
    \end{matrix} \right) 
\end{align*}
is a bounded linear operator whose operator norm is bounded by
\begin{align*}
    \lVert L \rVert \leq c \left( \lVert A \rVert_{L^p} + \lVert B \rVert_{L^\infty} + \lVert R \rVert_{L^\infty} + \lVert  b \rVert_{L^\infty}\right) \,\, ,
\end{align*}
with a constant $c > 0$ independent of $A,B, R$ and $b$. 
\end{lemma}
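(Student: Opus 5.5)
The estimate splits into the two components of $L$, each bounded by Hölder's inequality together with the Sobolev embedding $W^{1,p}(\R \times \bbS^1) \hookrightarrow L^\infty(\R \times \bbS^1)$, valid because $p > 2$ and the cylinder has bounded geometry. Linearity of $L$ is evident, so only the norm bound needs proof. Write $\lVert (\widehat{u},\widehat{\eta}) \rVert := \lVert \widehat{u} \rVert_{W^{1,p}} + \lVert \widehat{\eta} \rVert_{W^{1,p}}$.

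\emph{First component, matrix terms.} For the $B$-term we use $\lVert B \widehat{u} \rVert_{L^p} \leq \lVert B \rVert_{L^\infty} \lVert \widehat{u} \rVert_{L^p} \leq \lVert B \rVert_{L^\infty} \lVert \widehat{u} \rVert_{W^{1,p}}$. The $A$-term is the only one where $A$ is merely $L^p$, so we put $\widehat{u}$ in $L^\infty$:
\[
\lVert A \widehat{u} \rVert_{L^p} \leq \lVert A \rVert_{L^p} \lVert \widehat{u} \rVert_{L^\infty} \leq c_S \lVert A \rVert_{L^p} \lVert \widehat{u} \rVert_{W^{1,p}} .
\]
Here $c_S$ is the Sobolev constant. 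It is uniform on $\R \times \bbS^1$ because the cylinder is covered by translates of the unit square $[k,k+1] \times \bbS^1$, and on each square the embedding $W^{1,p} \hookrightarrow C^0$ (Morrey, $p>2$) holds with the same constant by translation invariance. This is the one step needing genuine justification, so we would spell out the cover-by-squares argument.

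\emph{First component, $\widehat{\eta}$-term.} Since $\widehat{\eta}$ depends only on $s$, Fubini over $\bbS^1$ (which has length $1$) gives
\[
\lVert \widehat{\eta} R \rVert_{L^p(\R \times \bbS^1)} \leq \lVert R \rVert_{L^\infty} \lVert \widehat{\eta} \rVert_{L^p(\R)} \leq \lVert R \rVert_{L^\infty} \lVert \widehat{\eta} \rVert_{W^{1,p}} .
\]

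\emph{Second component.} Set $g(s) := \int_0^1 \langle b(s,t), \widehat{u}(s,t) \rangle \, \d t$. Jensen's (or Hölder's) inequality on $\bbS^1$ gives $|g(s)|^p \leq \lVert b \rVert_{L^\infty}^p \int_0^1 |\widehat{u}(s,t)|^p \, \d t$. Integrating over $s$ yields
\[
\lVert g \rVert_{L^p(\R)} \leq \lVert b \rVert_{L^\infty} \lVert \widehat{u} \rVert_{L^p} \leq \lVert b \rVert_{L^\infty} \lVert \widehat{u} \rVert_{W^{1,p}} .
\]

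Summing the four bounds gives the claimed operator-norm estimate with $c := \max\{1, c_S\}$, which is independent of $A$, $B$, $R$ and $b$. The only obstacle is the uniform Sobolev embedding on the unbounded cylinder, handled by the cover-by-squares argument above; everything else is Hölder's inequality.
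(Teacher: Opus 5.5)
Your proposal is correct and follows essentially the same route as the paper, which bounds the $A$-term via the continuous multiplication $L^p(\R \times \bbS^1) \times W^{1,p}(\R \times \bbS^1) \to L^p(\R \times \bbS^1)$ (i.e.\ the embedding $W^{1,p} \hookrightarrow L^\infty$ for $p>2$) and the lower component via Jensen's inequality on $\bbS^1$. You go further than the paper only by writing out the elementary $B$- and $R$-terms and by justifying, via the cover by unit annuli $[k,k+1]\times\bbS^1$, that the Sobolev constant is uniform on the unbounded cylinder; the paper leaves both points implicit.
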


\begin{proof}
    Since $p >2 \,$, the product of functions induces a continuous bilinear map $L^p(\R \times \bbS^1) \times W^{1,p}(\R \times \bbS^1) \rightarrow L^p(\R \times \bbS^1) \,$, which yields an estimate $\lVert A \widehat{u} \rVert_{L^p} \leq c \, \lVert A \rVert_{L^p} \, \lVert \widehat{u} \rVert_{W^{1,p}} \,$. Using Jensen's inequality, moreover
    \begin{align*}
       \int_{-\infty}^{+\infty} \big| \int_0^1 \langle b , \widehat{u} \rangle \, \d t \big|^p \, \d s \leq \int_{-\infty}^{+\infty} \int_0^1 |\langle b , \widehat{u} \rangle|^p \, \d t \, \d s \leq \lVert b \rVert_{L^\infty}^p \, \lVert \widehat{u} \rVert_{L^p}^p \,\, ,
    \end{align*}
    which bounds the $L^p$-norm of the term in the lower line.
\end{proof}

\begin{lemma}
\label{lem: Fredholm - index on moduli^H_0}
For $w \in \moduliH{0} \,$, the operator $\D \Fsection_0(w)$ has Fredholm index $- \dim(\Sigma) \,$.
\end{lemma}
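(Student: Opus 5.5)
The plan is to use Lemma \ref{lem: Fredholm - ind D Fsection_0 = ind D_(w,Psi)} and compute the spectral flow of $(A(s))_{s \in \R}$ explicitly at a constant element. By Lemma \ref{lem: moduli_0^H}, any $w \in \moduliH{0}$ is $s$-independent, $w(s) = (v,\tau(\alpha)) \in \CCrit$ for all $s$. We choose the unitary trivialization $\Psi$ of $u^*TW$ to be $s$-independent, $\Psi_{(s,t)} := \Psi_t$ for a fixed unitary trivialization of $v^*TW$ over $\bbS^1$. Then the trivialized Hessian
\[
K := \nabla^2_{\Psi} \RabinowitzHZero(v,\tau(\alpha))
\]
does not depend on $s$, and
\[
A(s) = -K - \kappa_0 \, \rho(s) \, \id \, .
\]
Hence the path $A(s)$ is a diagonal shift of one fixed selfadjoint operator with compact resolvent, and its spectral flow can be read off from the eigenvalues of $K$.

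The eigenvalues of $A(s)$ are $-\mu - \kappa_0 \rho(s)$ with $\mu$ ranging over the spectrum of $K$. As $s$ runs from $-\infty$ to $+\infty$, $\rho$ increases monotonically from $-1$ to $1$, so $-\kappa_0 \rho(s)$ decreases from $\kappa_0$ to $-\kappa_0$. An eigenvalue of $A(s)$ can change sign only if $|\mu| < \kappa_0$. By the choice \eqref{eq: Analytic Set-Up - choice kappa_0 spec Hessian} of $\kappa_0$, this happens only for $\mu = 0$. By Lemma \ref{lem: Crit(RabinowitzHZero)} (Morse--Bott), the kernel of $K$ is $\Psi_*(T_{(v,\tau(\alpha))}\CCrit)$, of dimension $\dim(\Sigma)$. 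Each of these eigenvalues $-\kappa_0\rho(s)$ passes from positive to negative, monotonically and transversally, at $s=0$. All other eigenvalues keep a constant sign.

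With the sign convention of \cite[Thm.~A]{Frauenfelder_Weber_Spectral_Flow} for $\partial_s + A(s)$, the spectral flow is therefore $-\dim(\Sigma)$. Concretely, the index equals the number of eigenvalues going from negative to positive minus the number going from positive to negative. Lemma \ref{lem: Fredholm - ind D Fsection_0 = ind D_(w,Psi)} then gives $\mathrm{ind}(\D\Fsection_0(w)) = -\dim(\Sigma)$.

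The main obstacle I expect is pinning down this sign convention. To check it, I would verify it on the elementary scalar model $\partial_s - \kappa_0\rho(s)$ on $W^{1,p}(\R) \to L^p(\R)$. Its kernel consists of multiples of $\exp\bigl(\kappa_0\int_0^s\rho\bigr)$, which grows at both ends and so gives trivial kernel. Its cokernel is one-dimensional, spanned by $\exp\bigl(-\kappa_0\int_0^s\rho\bigr)$, which decays at both ends. So the scalar model has index $-1$.

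The constant-in-$s$ situation decouples orthogonally along the eigenspaces of $K$. This reduces the whole operator to $\dim(\Sigma)$ copies of this scalar model plus an invertible part, confirming index $-\dim(\Sigma)$. This direct kernel and cokernel computation could even replace the spectral flow citation if the conventions prove awkward.
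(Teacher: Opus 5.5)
Your proposal is correct and follows essentially the paper's own argument: at the constant $w$ the path $A(s) = -\nabla^2\RabinowitzHZero(w) - \kappa_0\rho(s)\id$ has only the $\dim(\Sigma)$ kernel eigenvalues (Morse--Bott) crossing zero, each from positive to negative by the choice of $\kappa_0$, so the spectral flow is $-\dim(\Sigma)$. Your scalar-model check that $\partial_s - \kappa_0\rho(s)$ has index $-1$ is a useful confirmation of the sign convention that the paper does not spell out. One small correction: these eigenvalues cross where $\rho(s)=0$, which need not be at $s=0$ and need not be transversal, but this is harmless because only the net number of crossings enters the spectral flow.
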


\begin{proof}
    Recall from Lemma \ref{lem: moduli_0^H} that $w$ is $s$-independent, \ie constant. By Lemma \ref{lem: Fredholm - ind D Fsection_0 = ind D_(w,Psi)}, the index of $\D \Fsection_0(w)$ agrees with the spectral flow of the path of operators
    \[
    A(s) = - \nabla^2 \RabinowitzHZero(w) - \kappa_0 \, \rho(s) \, \id \,\, ,
    \]
    \ie with the number of net crossings of eigenvalues of $A(s)$ from negative to positive as $s$ varies from $s=-\infty$ to $s=+\infty\,$.
    Because $\kappa_0 > 0$ is strictly smaller than the smallest absolute value of a non-zero eigenvalue of $\nabla^2 \RabinowitzHZero(w)$ by \eqref{eq: Analytic Set-Up - choice kappa_0 spec Hessian}, thus $\mathrm{ind}(\D \Fsection_0(w)) = - \dim (\ker \nabla^2 \RabinowitzHZero(w)) \,$. 
    Since $\RabinowitzHZero$ is Morse-Bott by Lemma \ref{lem: Crit(RabinowitzHZero)}, the kernel of $\nabla^2 \RabinowitzHZero(w)$ has dimension $\dim(\CCrit) = \dim(\Sigma) \,$.
\end{proof}

\noindent Our preparations allow us to finally finish the proof of Lemma \ref{lem: Fredholm D Fsection_0}.

\begin{proof}[Proof of Lemma \ref{lem: Fredholm D Fsection_0}]
That $\D \Fsection_0 (w)$ is Fredholm is asserted in Lemma \ref{lem: Fredholm - ind D Fsection_0 = ind D_(w,Psi)}. From Lemma \ref{lem: Fredholm - index on moduli^H_0} we know that $\mathrm{ind}(\D \Fsection_0(w)) = - \dim(\Sigma)$ for every $w \in \moduliH{0} \subseteq \Bcal^\infty \,$. Since the index is locally constant on $\Bcal^\infty$ (Lemma \ref{lem: Fredholm - ind D Fsection_0 locally constant}) and $\Bcal^\infty$ is connected (Lemma \ref{lem: Bcal^infty connected}), we conclude that $\mathrm{ind}(\D \Fsection_0 (w)) = -\dim(\Sigma)$ for every $w \in \Bcal^\infty \,$.
\end{proof}

\noindent We have successfully proven Lemma \ref{lem: Fredholm D Fsection_0}, concluding the proof of Proposition \ref{prop: Non_Empty Moduli Spaces - Fredholm}.

\subsection{Transversality for $r=0$}
\label{subsec: Non-Empty Moduli Spaces - Transversality r=0}

The proof of Proposition \ref{prop: Non_Empty Moduli Spaces - Transversality} that we will present in this subsection is inspired but different from the one Fauck gives \cite[App.~C]{FauckThesis}.

We fix $w_0 = (u_0,\eta_0) \in \moduliH{0}$ and recall from Lemma \ref{lem: moduli_0^H} that 
$w_0 $ is $s$-independent, sitting constantly at $\CCrit$. In particular $\eta_0 \equiv \tau(\alpha) \,$. Abusing notation, we will also write $w_0 = (u_0,\tau(\alpha)) \in \CCrit$ for the constant value of $w_0 \in \Cinfty(\R,\loopspace \times \R) \,$. Let us fix a tubular neighborhood $U_0 \cong \bbS^1 \times \R^{2N-1}$ around the loop $u_0$ as in Lemma \ref{lem: Analytic Set-Up - tubular nbhd}. This induces an $s$-independent trivialization $u_0^*TW \cong (\Rbar \times \bbS^1) \times \R^{2N}$ and a chart for $\Bcal$ centered at $w_0 \,$, as in Appendix \ref{app subsubsec: Bcal' Atlas - construction chart}, which allows us to identify a neighborhood of $w_0$ in $\Bcal$ with an open neighborhood of the origin in
\[
T_{w_0} \Bcal = W^{1,p}_{\kappa_0}(\R \times \bbS^1,\R^{2N}) \oplus \R^{2N-1} \oplus \R^{2N-1} \oplus W^{1,p}_{\kappa_0}(\R,\R) \,\, ,
\]
in which $\R^{2N-1} = \R^{2N-1} \times \{0\} \subseteq \R^{2N} \,$. 
Let $\gamma^\pm \in \Cinfty(\R,[0,1])$ be monotone cutoff functions with $\gamma^\pm(s) = 1$ for $\pm s \geq 2$ and $\gamma^\pm(s) = 0$ for $\pm s \leq 1 \,$. Due to \eqref{app eq: Bcal' Atlas - exp^gaux at u_0} and \eqref{app eq: Bcal' Atlas - map f}, a tangent vector $(\xi,x^-,x^+ ,\widehat{\eta}) \in T_{w_0} \Bcal$ may be identified with
\begin{equation}
\label{eq: Transversality - wHat}
\widehat{w} := { {\xi + \gamma^-(s) x^- + \gamma^+(s) x^+ } \choose {  \widehat{\eta}  } } \in W^{1,p}_{\mathrm{loc}}(\R \times \bbS^1,\R^{2N}) \oplus W^{1,p}_{\kappa_0}(\R,\R) \,\, .
\end{equation}
Consider the pair of Hilbert spaces
\[
   W := H^1(\bbS^1,\R^{2N}) \oplus \R \quad \subseteq \quad  H := L^2(\bbS^1,\R^{2N}) \oplus \R  \,\, ,
\]
and abbreviate minus the covariant Hessian of $\RabinowitzHZero$ at $w_0 \in \loopspace \times \R$ by
\[
   A := - \nabla^2 \RabinowitzHZero(w_0) \in \mathcal{L}(W,H)  \,\, .
\]
Then $A$ is an unbounded selfadjoint operator on the Hilbert space $(H,(g_J)_{w_0}) \,$. The inner product $(g_J)_{w_0}$ and the standard inner product on $H$ (using the Euclidean $L^2$-inner product in the first factor) induce equivalent norms.

\begin{lemma}
\label{lem: Transversality - total differential}
The vertical differential of $\Fsection_0 = \partial_s - \nabla \RabinowitzHZero$ at $w_0 \in \moduliH{0}$ in direction $(\xi,x^-,x^+,\widehat{\eta}) \in T_{w_0} \Bcal$ is given by
\[
\Dvert \Fsection_0(w_0) [(\xi, x^-,x^+, \widehat{\eta})] = \partial_s \widehat{w} + A \widehat{w} \,\, ,
\]
with $\widehat{w}$ defined in \eqref{eq: Transversality - wHat}.
\end{lemma}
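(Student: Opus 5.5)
The plan is to compute the vertical differential directly in the chart centred at $w_0$ described just before the statement, and to exploit that $w_0$ is an $s$-independent zero sitting in $\CCrit$.

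\textbf{Reduction to a derivative along the path.} Since $\Fsection_0(w_0)=0$, the vertical differential does not depend on the trivialization of $\Ecal$ near $w_0$. I will therefore use the trivialization induced by the tubular coordinates $U_0\cong \bbS^1\times\R^{2N-1}$ of Lemma \ref{lem: Analytic Set-Up - tubular nbhd}. Via \eqref{app eq: Bcal' Atlas - exp^gaux at u_0} and \eqref{app eq: Bcal' Atlas - map f}, the chart sends $(\xi,x^-,x^+,\widehat\eta)$ to the pair $(\widehat{u},\tau(\alpha)+\widehat\eta)$, where $\widehat{u}$ is the first component of $\widehat w$ in \eqref{eq: Transversality - wHat}, added to $u_0$ in these coordinates (at least to first order). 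The vertical differential is then
\[
\frac{\d}{\d\sigma}\Big|_{\sigma=0}\Fsection_0\big(f_*(\sigma(\xi,x^-,x^+,\widehat\eta))\big),
\]
computed in these coordinates.

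\textbf{Splitting $\Fsection_0$.} Write $\Fsection_0(w)=\partial_s w-\nabla\RabinowitzHZero(w(s))$. The term $\partial_s$ is linear in coordinates, so it contributes $\partial_s\widehat w$; note that $\partial_s$ also hits the cutoffs, giving $(\gamma^\pm)'x^\pm$. For the second term, $w_0(s)$ is the same critical point $w_0\in\CCrit$ for every $s$. Hence, for each fixed $s$, the derivative of $\sigma\mapsto \nabla\RabinowitzHZero(w_0+\sigma\widehat w(s))$ at $\sigma=0$ is exactly the Hessian of $\RabinowitzHZero$ at the critical point $w_0$ applied to $\widehat w(s)$. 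At a critical point the Hessian is independent of the chosen connection or trivialization, because the Christoffel terms multiply $\nabla\RabinowitzHZero(w_0)=0$. So it agrees with the covariant Hessian $\nabla^2\RabinowitzHZero(w_0)$, and this term contributes $-\nabla^2\RabinowitzHZero(w_0)\widehat w(s)=A\widehat w(s)$.

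\textbf{Check that the formula makes sense in $\Ecal_{w_0}$.} I must verify that $\partial_s\widehat w+A\widehat w$ lies in $L^p_{\kappa_0}$ even though the $x^\pm$ parts are not weighted. By Lemma \ref{lem: Analytic Set-Up - tubular nbhd}, the constant vectors $(x^\pm,0)$ with vanishing last component are tangent to $\CCrit$. By the Morse--Bott property (Lemma \ref{lem: Crit(RabinowitzHZero)}, Fact~2 in the proof of Proposition \ref{prop: Uniform Exponential Decay}) they therefore lie in $\ker A$. Consequently $A(\gamma^\pm x^\pm)=0$, and the remaining terms $(\gamma^\pm)'x^\pm$ are compactly supported.

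\textbf{Main obstacle.} The hardest part is justifying that the chart map is affine to first order, i.e.\ that $\frac{\d}{\d\sigma}f_*(\sigma\cdot)$ equals $\widehat w$ in the tubular coordinates. This requires unwinding the appendix definitions of $\exp^{\gaux}$ and $f$. I would try first to choose the auxiliary metric flat in the coordinates of $U_0$, so that $f$ is literally addition there.
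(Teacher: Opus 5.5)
Your proposal is correct and is essentially the paper's argument. The paper also uses that $u_0$ and the tubular trivialization are $s$-independent, so the linearization on $T_{w_0}\Bcal(w_0)$ is $\partial_s + A$, and that the $x^\pm$-directions stay in $\CCrit$, hence lie in $\ker A$ and contribute only $\partial_s(\gamma^\pm x^\pm)$; you simply package both parts into one pointwise-in-$s$ computation at the zero $w_0$.

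Your ``main obstacle'' is already settled by the appendix chart. There $\gaux$ is Euclidean in the tubular coordinates, and $u_0$ lies in that region for all $s$, so $f_*$ is literally addition in coordinates, which is exactly how the paper identifies tangent vectors with $\widehat w$.
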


\begin{proof}
The restriction of $\Dvert \Fsection_0(w_0)$ to the infinite-dimensional component $T_{w_0} \Bcal(w_0) = W^{1,p}_{\kappa_0}(\R \times \bbS^1,\R^{2N}) \oplus W^{1,p}_{\kappa_0}(\R,\R)$ is the $\D$-linearization, as noted in Remark \ref{rmk: def derivative D}. As $u_0$ and the trivialization are $s$-independent, we have $\nabla_s = \partial_s \,$. Hence, formula \eqref{eq: Fredholm - D-linearization Fsection_0} for the $\D$-linearization simplifies to
\[
\Dvert \Fsection_0(w_0) |_{W^{1,p}_{\kappa_0}(\R \times \bbS^1) \oplus W^{1,p}_{\kappa_0}(\R)} = \D \Fsection_0 (w_0) = \partial_s + A \,\, .
\]
Next we compute $\Dvert \Fsection_0(w_0)$ on the finite-dimensional component. Since\footnote{
In the upper entry we suppress the tubular coordinates and consider the image of $x^\pm \in \R^{2N-1} \times \{0\} \subseteq \R^{2N}$ under the natural projection $\R^{2N} \longrightarrow  \R/ \Z \times \R^{2N-1} = \bbS^1 \times \R^{2N-1} \,$.
} 
\[
{u_0 (s,\,\cdot\,) + \gamma^-(s) x^- + \gamma^+(s) x^+\choose \tau(\alpha)} \in \CCrit \qquad \Forall s \in \R \comma \,\, \Forall x^-,x^+ \in \R^{2N-1}
\]
and $- \nabla \RabinowitzHZero$ vanishes identically on $\CCrit \subseteq \Crit(\RabinowitzHZero)$ with vertical differential $A$ at $w_0 \in \CCrit \,$,
the partial derivative in direction of $(x^-,x^+)$ computes to
\begin{align*}
\Dvert \Fsection_0(w_0) [(0,x^-x,^+,0)] = \partial_s { \gamma^- \, x^- + \gamma^+ \, x^+ \choose 0} = (\partial_s + A) \, { \gamma^- \, x^- + \gamma^+ \, x^+ \choose 0} \,\, .
\end{align*}
\end{proof}

\begin{lemma}
\label{lem: Transversality - ker D Fsection_0}
There is a well-defined injective linear map
\begin{equation}
\label{eq: Transversality - ker D Fsection_0 injective map}
\ker \Dvert \Fsection_0(w_0) \hookrightarrow \set{\widehat{w}}{ \widehat{w} \text{ is } s\text{-independent and } \widehat{w} \in \ker A} \,\, ,
\end{equation}
mapping $(\xi,x^-,x^+,\widehat{\eta})$ to $\widehat{w}$ defined in \eqref{eq: Transversality - wHat}. Thus $\dim (\ker \Dvert \Fsection_0(w_0)) \leq \dim(\Sigma) \,$.
\end{lemma}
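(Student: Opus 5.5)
Let $(\xi,x^-,x^+,\widehat{\eta})$ lie in the kernel of $\Dvert \Fsection_0(w_0)$. By Lemma \ref{lem: Transversality - total differential}, the associated $\widehat{w}$ from \eqref{eq: Transversality - wHat} satisfies the linear ODE $\partial_s \widehat{w} + A\widehat{w} = 0$. We view this as an equation for a curve $s \mapsto \widehat{w}(s) \in H$. The plan has three steps: show $\widehat{w}$ is $s$-independent, show its value lies in $\ker A$, and show the assignment is injective.

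First, $\widehat{w}$ is smooth in $s$ with values in $W$. Locally, $\widehat{w} \in W^{1,p}_{\mathrm{loc}}$ solves a translation-invariant Cauchy--Riemann-type equation, and elliptic bootstrapping together with the one-dimensional $\eta$-equation should give this. Next decompose $H$ orthogonally, with respect to $(g_J)_{w_0}$, into eigenspaces of the selfadjoint operator $A$ with compact resolvent. Write $\widehat{w}(s) = \sum_\mu \widehat{w}_\mu(s)$. The equation decouples as $\widehat{w}_\mu(s) = e^{-\mu s}\widehat{w}_\mu(0)$ for every eigenvalue $\mu$ of $A$.

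Now use the growth constraints at $\pm\infty$. By the definition of $\Bcal$ and the form of $\widehat{w}$, for $s \ge 2$ we have $\widehat{w}(s) = \xi(s) + x^+$ with $\xi \in W^{1,p}_{\kappa_0}$, and analogously for $s \le -2$. Hence $\widehat{w}(s)$ stays bounded in $H$ as $|s|\to\infty$; this follows from the Sobolev embedding $W^{1,p}\hookrightarrow C^0$, using $p>2$. For $\mu>0$ the component $e^{-\mu s}\widehat{w}_\mu(0)$ blows up as $s\to-\infty$ unless it vanishes. For $\mu<0$ it blows up as $s\to+\infty$ unless it vanishes. Therefore only the $\mu=0$ component survives, so $\widehat{w}(s) \equiv \widehat{w}_0 \in \ker A$ is $s$-independent. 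This establishes that the map \eqref{eq: Transversality - ker D Fsection_0 injective map} is well defined. It is clearly linear.

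For injectivity, suppose $\widehat{w} = 0$. Then $\widehat{\eta} = 0$ and $\xi = -\gamma^- x^- - \gamma^+ x^+$. But $\xi \in W^{1,p}_{\kappa_0}$ decays exponentially, so letting $s\to\pm\infty$ forces $x^\pm = 0$, and hence $\xi = 0$. The dimension bound then follows from the Morse--Bott property in Lemma \ref{lem: Crit(RabinowitzHZero)}: $\dim\ker A = \dim \CCrit = \dim\Sigma$.

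The main obstacle I expect is justifying the spectral decomposition argument rigorously. One needs $\widehat{w}(s)\in W$, the ODE to hold in $H$ for each $s$, and differentiation of $\langle \widehat{w}(s), e_\mu\rangle$ to be allowed; this needs the regularity step above. I would also have to check that boundedness in $L^2$ of each slice suffices, which the $C^0$ bound provides.
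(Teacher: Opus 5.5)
Your proof is correct and takes the same approach as the paper. Both project $\partial_s\widehat{w}+A\widehat{w}=0$ onto spectral subspaces of the selfadjoint operator $A$, use the behaviour of $\widehat{w}$ at $\pm\infty$ to kill every component outside $\ker A$, and read off injectivity from $\widehat{w}(\pm\infty)=(x^\pm,0)$.

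The only differences are technical. The paper splits $H=\ker A\oplus E_+\oplus E_-$ and uses the contraction semigroups $e^{-A_+s}$, $e^{A_-s}$ together with decay of the $E_\pm$-parts; you instead expand in an eigenbasis, which needs only boundedness of $\widehat{w}(s)$ in $H$. Your regularity worry is also avoidable without bootstrapping: pair the equation with a fixed eigenvector $e_\mu$ and use the symmetry of $A$ on the slices $\widehat{w}(s)\in W$, which holds for a.e.\ $s$, to get $f_\mu'=-\mu f_\mu$ for the absolutely continuous function $f_\mu(s)=\langle\widehat{w}(s),e_\mu\rangle$.
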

\begin{proof}
    The kernel of the Hessian $ \nabla^2 \RabinowitzHZero(w_0)  = - A$ has dimension $\dim(\CCrit)=\dim(\Sigma) $ due to $\RabinowitzHZero$ being Morse-Bott (Lemma \ref{lem: Crit(RabinowitzHZero)}), so the target vector space in \eqref{eq: Transversality - ker D Fsection_0 injective map} has dimension $\dim(\Sigma) \,$. It is clear that the assignment $(\xi,x^-,x^+,\widehat{\eta}) \mapsto \widehat{w} \,$, with $\widehat{w}$ defined by \eqref{eq: Transversality - wHat}, is linear. It is also injective since $\widehat{w}(\pm \infty) = (x^\pm,0)\,$. By Lemma \ref{lem: Transversality - total differential}, for $(\xi,x^-,x^+,\widehat{\eta}) \in \ker \Dvert \Fsection_0(w_0)$ it holds
    \begin{equation}
    \label{eq: Transversality - partial_s wHat + A wHat = 0}
    \partial_s \widehat{w}(s) + A \,\widehat{w}(s) = 0  \qquad \Forall s \in \R
    \end{equation}
    and we will show that this implies that $\widehat{w}$ is $s$-independent with $\widehat{w} \in \ker A \,$.
    
   By the spectral theorem, the Hilbert space $H$ decomposes orthogonally
    \[
    H = \ker(A) \oplus E_+ \oplus E_+ \,\, ,
    \]
    where $E_\pm$ denote the positive respectively negative eigenspace of the unbounded selfadjoint operator $A$. Moreover, the corresponding orthogonal projections leave $W= \mathrm{dom}(A)$ invariant and commute with $A$. Define $A_\pm := A|_{E_\pm \cap W}$ and denote by $\widehat{w}_\pm(s) $ the orthogonal projection of $\widehat{w}(s) $ to $E_\pm$ and by $\widehat{w}_0(s)$ the projection to $\ker(A) \,$. Applying the projections to equation \eqref{eq: Transversality - partial_s wHat + A wHat = 0} yields
    \begin{equation}
    \label{eq: Transversality - wHat plusminus eq}
    \partial_s \widehat{w}_\pm + A_\pm \widehat{w}_\pm = 0 \quad \text{and} \quad \partial_s \widehat{w}_0 = \partial_s \widehat{w}_0 + A \widehat{w}_0 = 0 \,\, .
    \end{equation}
    In particular $\widehat{w}_0$ is $s$-independent. We will show $\widehat{w}_\pm = 0 \,$, finishing the proof.

    By Hille-Yosida, see \cite[Thm.~7.3.4]{Functional_Analysis_Salamon}, the negative definite selfadjoint operators $-A_+$ and $A_-$ are infinitesimal generators of strongly continuous semigroups
    \[
    [0,\infty) \longrightarrow \mathcal{L}(E_\pm, E_\pm) \comma \,\, s \mapsto  e^{\mp A_\pm s}
    \]
    with $\lVert e^{\mp A_\pm s} \rVert \leq 1$ for every $s \geq 0 \,$. Hence, for given initial value the homogeneous Cauchy problem for $\mp A_\pm$ has a unique weak solution for positive times, see \cite[Thm.~7.6.2]{Functional_Analysis_Salamon}. By uniqueness and \eqref{eq: Transversality - wHat plusminus eq}, it must hold
    \[
    \widehat{w}_+(s + \sigma) = e^{- A_+ s} \, \widehat{w}_+(\sigma) \qquad \Forall \sigma \in \R \comma s \geq 0 \,\, ,
    \]
    and consequently
    \begin{equation}
    \label{eq: Transversality - semigroup argument}
    |\widehat{w}_+(s+\sigma)|_H \leq \lVert e^{- A_+ s} \rVert_{\mathcal{L}(E_+,E_+)} \, |\widehat{w}_+(\sigma)|_H \leq |\widehat{w}_+(\sigma)|_H \qquad \Forall \sigma \in \R \comma s \geq 0 \,\, .
    \end{equation}
    Now notice, since ${\gamma^\pm(s) x^\pm \choose 0} \in \ker A$ and  $(\xi,\widehat{\eta}) \in W^{1,p}_{\kappa_0}(\R \times \bbS^1) \oplus W^{1,p}_{\kappa_0}(\R) $, we have  $\widehat{w}_+(\pm \infty) =  { \xi \choose \widehat{\eta} }_+(\pm \infty) = 0 \,$. Thus the right-hand side in \eqref{eq: Transversality - semigroup argument} tends to zero as $\sigma \rightarrow -\infty\,$, so that $|\widehat{w}_+(s)|_H = 0$ for every $s \in \R \,$. Hence $\widehat{w}_+ = 0 \,$. A similar argument shows $\widehat{w}_- = 0\,$, using $\widehat{w}_- (-s + \sigma) = e^{A_- s}\,  \widehat{w}_-(\sigma) \,$.
\end{proof}

\begin{remark}
\label{rmk: Transversality - ker D Fsection_0}
Although we do not need it, clearly the map in the preceding lemma is surjective as well. 
\end{remark}

\noindent We can now prove Proposition \ref{prop: Non_Empty Moduli Spaces - Transversality}.

\begin{proof}[Proof of Proposition \ref{prop: Non_Empty Moduli Spaces - Transversality}]
    Let $w_0 \in \Fsection_0^{\,-1}(0)=\moduliH{0}$ be arbitrary. Then $\Dvert \Fsection_0(w_0)$ is a Fredholm operator of index $\dim (\Sigma) \,$, as we know from Proposition \ref{prop: Non_Empty Moduli Spaces - Fredholm}. In particular, its kernel is automatically a complemented subspace. Using that $\Dvert \Fsection_0(w_0)$ has index $\dim(\Sigma)$ and that its kernel has dimension at most $\dim(\Sigma)$ by Lemma \ref{lem: Transversality - ker D Fsection_0}, we conclude that $\Dvert \Fsection_0(w_0)$ is surjective.
\end{proof}

\noindent The proof of Proposition \ref{prop: moduli^H_[0,R](z) non-empty} is now complete.

\subsection{Modifications of the perturbation argument}
\label{subsec: Non-Empty Moduli Spaces - Modifications}

Verifying all hypotheses in Theorem \ref{thm: Non-Empty Moduli Space - abstract perturbation thm} was a cumbersome task. In this concluding subsection, we briefly explain two minor variants of the perturbation argument, in which certain hypotheses can be omitted. (Of course, they each come at a price, see below...) 

\begin{enumerate}[label=(\roman*)]
    \item The assumption in Theorem \ref{thm: Non-Empty Moduli Space - abstract perturbation thm} that the vertical differential of $\Fsection$ has the \textit{same} Fredholm index at every zero is in fact redundant: In the proof of \cite[Thm.~D]{Abstract_Perturbations}, using the notation therein,
     even without this assumption one still obtains a compact manifold $\widetilde{e}^{\,-1}(Z) \subseteq \BcalHat$ of possibly mixed dimension with boundary
    \begin{align*}
    \hspace*{14mm} \partial (\widetilde{e}^{\,-1}(Z)) = (e_0|_{\Fsection_0^{-1}(0)})^{-1}(Z)  \sqcup \widetilde{e}^{\, -1}_R(Z) = \{\mathrm{pt}.\} \sqcup \widetilde{e}^{\, -1}_R(Z)  \,\, .
    \end{align*}
    Since the boundary of a compact manifold (possibly of mixed dimension) cannot consist of a single point, this still shows that $\widetilde{e}_R^{\, -1}(Z)$ is non-empty and the proof of \cite[Thm.~D]{Abstract_Perturbations} goes through.

    This variant of Theorem \ref{thm: Non-Empty Moduli Space - abstract perturbation thm} would allow us to only work with the ambient Banach manifold $\Bcal' \,$, for which the Fredholm property but not the index formula in Proposition \ref{prop: Non_Empty Moduli Spaces - Fredholm} remains true.
    \item We could have applied a similar perturbation theorem by Abbondandolo-Haug-Schlenk \cite[Thm.~A.1]{Action_Selector_SimpleConstruction} with Banach manifold $\Bcal_z := \ev^{-1}(\R \times \{z\}) \subseteq \Bcal$ and Fredholm section $\Fsection|_{[0,R] \times \Bcal_z}$ of index $1$. The hypothesis therein that the bundle $\Ecal|_{\Bcal_z}$ is trivial is satisfied since the general linear group of $L^p_{\kappa_0}(\R \times \bbS^1,\R^{2N}) \oplus L^p_{\kappa_0}(\R , \R)$ is contractible due to a version of Kuiper's theorem, see \cite[Prop.~5]{GL_Banach_contractible}. The benefit here is that one does not need to show extendibility to a B-bundle pair.
\end{enumerate}

\section{Outlook}
\label{sec: Outlook}

\noindent As mentioned right at the beginning in Subsection \ref{subsec: Introduction - Motivation}, it makes perfect sense to ask the analogous question also for general (not necessarily Zoll) contact forms:
\begin{question*}
    Let $(\Sigma,\alpha)$ be a contact manifold and $D \subseteq \Sigma$ be an open smooth domain for which $\alpha$ has a periodic Reeb orbit intersecting $D$. Does every sufficiently $C^0$-small perturbed contact form $e^f \alpha \,$, with $f \equiv 0$ on $\Sigma \backslash D \,$, also have a periodic Reeb orbit intersecting $D \,$?
\end{question*}

\noindent A reasonable approach is to first find an appropriate adaption of Theorem \ref{mthm: Rabinowitz gradient flow line}. In fact, the proof of Theorem \ref{mthm: Rabinowitz gradient flow line} would for the most parts go through if one replaced the Zoll contact assumption with the following weaker Morse-Bott assumption for fixed $\tau > 0 \,$:
\begin{itemize}
    \item[(MB$_\tau$)] The subset $N_\tau \subseteq \Sigma$ of the $\tau$-periodic Reeb orbits is a closed submanifold of $\Sigma \,$, the rank of $\d \alpha|_{N_\tau}$ is locally constant and $T_p N_\tau = \ker(\d \phi_{R_\alpha}^\tau - \id_{T_p \Sigma})$ for every $p \in N_\tau \,$.
\end{itemize}
Notice that, opposed to \cite[Sec.~2.3]{SFT_Compactness} for example, we only require this to hold for fixed $\tau $ instead of for every $\tau \in \R \,$.

The bottleneck is now the final step in the proof of Proposition \ref{prop: moduli^H_[0,R](z) non-empty}: We would like to find a closed submanifold $S \subseteq \Sigma$ of codimension $\mathrm{codim}_\Sigma (S) = \dim(N_\tau) \,$, which intersects $N_\tau$ transversally in a single point (or more generally in an odd number of points), and then apply Theorem \ref{thm: Non-Empty Moduli Space - abstract perturbation thm} with $Z = \R \times S \,$. We call such $S$ a \textit{slice for $N_\tau \,$}. In the Zoll case we have $N_{\tau(\alpha)} = \Sigma$ and can take $S$ to be any singleton of $\Sigma \,$. The existence of a slice $S$ places some rather restrictive topological assumptions on $N_\tau $ and $\Sigma \,$: For example, for the intersection product we have 
\[
[N_\tau] \cdot [S] = 1 \in  \Z / 2 \Z \cong H_0(\Sigma; \Z / 2 \Z) \,\, ,
\]
so that the homology groups $H_*(\Sigma; \Z / 2 \Z)$ in degrees $*=\dim(N_\tau) ,\dim(\Sigma) - \dim(N_\tau)$ must be nonzero. Due to these topological restrictions and since the notation in the Zoll case can be simplified considerably, we decided to present in this work only the Zoll case. We nonetheless propose the following generalization of Theorem \ref{mthm: Rabinowitz gradient flow line} to the Morse-Bott case, which we plan to prove in future work:

Let $(\Sigma,\alpha)$ be a contact manifold for which (MB$_\tau$) holds true for $\tau > 0 \,$. Let $J$ be an almost complex structure of SFT-type on the symplectization $S \Sigma $ and let $H_0 \in \Hcal^{h_0}$ be a defining Hamiltonian. 

\begin{conjecture*}
Given $\eps > 0 $ and a $C^1$-datum $\Dscr \,$, there exists a constant $\delta = \delta(J,H_0, h_0,\eps,\Dscr) > 0$ with the following significance: For every $H \in \Hclassparam$ and every slice $S \subseteq \Sigma$ for $N_\tau$ there exist a $(\nabla^{g_J} \RabinowitzH)$-flow line $w = (u,\eta) \in \Cinfty(\R,\loopspace\times\R)$, a sequence $(s_n)_{n \in \N}$ of positive numbers tending to infinity and critical points $(v^-,\tau^-), \, (v^+,\tau^+) \in \Crit(\RabinowitzH)$ with
\begin{enumerate}[label=\arabic*), itemsep=0.75ex]
     \item $\tau^-, \tau^+ \not= 0 \,$,
    \item $w(\pm s_n) \overset{n \rightarrow \infty}{\longrightarrow} (v^\pm, \tau^\pm)$ in $\Cinfty(\bbS^1,S\Sigma) \times \R \,$,
    \item $\RabinowitzH(w(s)) , \, \RabinowitzHZero(w(s)) \in [\tau- \eps, \, \tau + \eps]$ for every $s \in \R \,$,
    \item $u(0,0) \in \R \times S \,$.
\end{enumerate}
\end{conjecture*}

\noindent 

\appendix

\section{Proof of Lemma \ref{lem: Cieliebak-Frauenfelder}}
\label{app sec: Cieliebak-Frauenfelder estimate}

\noindent In this section we show Lemma \ref{lem: Cieliebak-Frauenfelder} by adapting the proof of \cite[Prop.~3.2 and 3.4]{Cieliebak_Frauenfelder} to the current setting. To rewrite the interpolated gradient equation more succinctly, it is convenient to introduce some notation. 

For a Hamiltonian $H$ and parameters $(r,s) \in \R_{\geq 0} \times \R$ we define the Hamiltonian
\[
H_{r,s} := H_0 + \beta_r(s) (H - H_0) \in \Cinfty(W) \,\ ,
\]
whose Hamiltonian vector field is
\[
X_{H_{r,s}} = X_{H_0} + \beta_r(s) \, (X_{H} - X_{H_0} ) \,\, .
\]
This allows us to express the interpolation functional $\RabinowitzHrs$ as
\begin{align*}
    \RabinowitzHrs(v,\tau) = \int_{\bbS^1} v^* \lambda - \tau \int_0^1 H_{r,s}(v(t)) \, \d t
\end{align*}
and its gradient as
\begin{equation}
\label{app eq: Cieliebak-Frauenfelder - gradient(RabinowitzHrs)}
 \nabla \RabinowitzHrs (v,\tau) = { {- J(v) (\partial_t v - \tau X_{H_{r,s}}(v))} \choose {-\int_0^1  H_{r,s}(v) \, \d t} } \,\, .
\end{equation}
Its squared $g_J$-norm is
\begin{align}
\label{app eq: Cieliebak-Frauenfelder - gradient(RabinowitzHrs) norm}
    |\nabla \RabinowitzHrs(v,\tau)|_{g_J}^2 = \lVert \partial_tv - \tau X_{H_{r,s}}(v) \rVert_{L^2}^2 + \big| \int_0^1 H_{r,s}(v) \, \d t \big|^2 \,\, ,
\end{align}
where the $L^2$-norm is with respect to the metric $\omega(\,\cdot\, , J \,\cdot\,) \,$. We stipulate that, when writing a norm of a vector field or a 1-form, by default we mean its $\omega(\,\cdot\, , J \,\cdot\,)$-norm.

\begin{proof}[Proof of Lemma \ref{lem: Cieliebak-Frauenfelder}]
For every $\kappa > 0 $ set $U_\kappa := H_0^{-1}((-\kappa,\kappa)) \,$. Since $H_0$ is a defining Hamiltonian, the family $(U_\kappa)_{\kappa >0}$ constitutes a neighborhood basis of $\Sigma = H_0^{-1}(0)$ in $W = S \Sigma$ of open sets, which are precompact for $\kappa$ sufficiently small.

Now let $\COneDatum$ be an arbitrary $C^1$-datum which we keep fixed throughout. We set
\[
C_\lambda := \lVert \lambda|_{U_\Sigma} \rVert_\infty \in (0,\infty) \,\, .
\]
We will prove the lemma in three steps.
\begin{step}
\label{app step: Cieliebak-Frauenfelder - 1}
There exist constants $\kappa > 0$ and $C > 0$ such that $U_\kappa \subseteq U_\Sigma$ and with the following significance. For every $H \in \Hclassdelta{\kappa} \,$, every $(r,s) \in \R_{\geq 0} \times \R$ and every $(v,\tau) \in \loopspace \times \R$ it holds
\begin{align*}
  \big( v(t) \in U_\kappa \quad \Forall t \in \bbS^1 \big) \,\,\, \Longrightarrow \,\,\, |\tau| \leq C \left( |\RabinowitzHrs(v,\tau)| + |\nabla \RabinowitzHrs(v,\tau)|_{g_J} \right)
\end{align*}
\end{step}
\noindent Since $H_0$ is a defining Hamiltonian for $\Sigma \,$, we have $\lambda(X_{H_0}) |_\Sigma \equiv 1 \,$. We can thus choose 
$0 < \kappa < \min\{\tfrac{1}{4} \comma \tfrac{c}{2}\}$ (with $c$ from the $C^1$-datum $\Dscr$) so that $ U_\kappa \subseteq U_\Sigma$ and
\[
\lambda(X_{H_0}) \geq \tfrac{1}{2} \quad \text{ on } U_\kappa \,\, .
\]
We now set (notice that the denominator is positive)
\begin{align*}
    C := \frac{\max\{1, C_\lambda\}}{\min\{\tfrac{1}{2} \comma c\} - 2 \kappa} > 0
\end{align*}
and claim that $\kappa$ and $C$ have the desired property.

To verify this, let $H \in \Hclassdelta{\kappa} \comma (r,s) \in \R_{\geq 0} \times \R$ and $(v,\tau) \in \loopspace \times \R $ be arbitrary. We assume $v \subseteq U_\kappa $ and have to bound $|\tau| \,$.

Since $H$ complies with $\Dscr \,$, it holds $\lambda(X_H) \geq c$ on $U_\Sigma \supseteq U_\kappa \,$. Consequently
\begin{align}
\label{app eq: Cieliebak-Frauenfelder - lambda(X_Hrs)}
    \lambda(X_{H_{r,s}}) = (1-\beta_r(s)) \underbrace{\lambda(X_{H_0})}_{\geq \tfrac{1}{2}} + \beta_r(s) \underbrace{\lambda(X_H)}_{\geq c} \geq \min\{\tfrac{1}{2} \comma c\} > 0 \quad \text{ on } U_\kappa \, .
\end{align}
Moreover, using $\lVert H - H_0 \rVert_\infty \leq \kappa $ because $H \in \Hclassdelta{\kappa} \,$, we also have 
\begin{align}
\label{app eq: Cieliebak-Frauenfelder - norm H_rs}
|H_{r,s}| \leq |H_0| + \beta_r(s) \, |H - H_0| \leq 2 \kappa \quad \text{ on } U_\kappa = H_0^{-1}((-\kappa,\kappa)) \, .
\end{align}
From the assumption $v \subseteq U_\kappa \subseteq U_\Sigma$ we infer
\begin{align}
    \big| \int_0^1 \lambda_v \big(\partial_t v - \tau X_{H_{r,s}}(v) \big) \, \d t \big| &\leq C_\lambda \int_0^1 |\partial_t v - \tau X_{H_{r,s}}(v)| \, \d t \notag\\
    &\leq C_\lambda \, \lVert \partial_t v - \tau X_{H_{r,s}}(v) \rVert_{L^2} \notag\\
    &\hspace*{-2mm}\overset{\text{\eqref{app eq: Cieliebak-Frauenfelder - gradient(RabinowitzHrs) norm}}}{\leq} C_\lambda \, |\nabla \RabinowitzHrs(v,\tau)|_{g_J} \,\, ,  \label{app eq: Cieliebak-Frauenfelder - int lambda(vdot - tau X_H)}
    \end{align}
(the second inequality is H\"older's inequality).
Now use the estimates in \eqref{app eq: Cieliebak-Frauenfelder - lambda(X_Hrs)}, \eqref{app eq: Cieliebak-Frauenfelder - norm H_rs} and \eqref{app eq: Cieliebak-Frauenfelder - int lambda(vdot - tau X_H)} as well as the assumption $v \subseteq U_\kappa$ again to bound
\begin{align*}
  |\RabinowitzHrs(v,\tau)| &= \Big| \int_0^1 \lambda_v(\partial_t v) \, \d t - \tau \int_0^1 H_{r,s}(v) \, \d t \Big|  \\
  &=  \Big| \tau \int_0^1 \lambda_v(X_{H_{r,s}}(v)) \, \d t + \int_0^1 \lambda_v (\partial_t v - \tau X_{H_{r,s}}(v)) \, \d t - \tau \int_0^1 H_{r,s}(v) \, \d t \Big| \\
  &\geq \Big| \tau \int_0^1 \lambda_v(X_{H_{r,s}}(v)) \,\d t \Big| - \Big| \int_0^1 \lambda_v (\partial_t v - \tau X_{H_{r,s}}(v)) \,\d t \Big| - \Big| \tau \int_0^1 H_{r,s}(v) \, \d t \Big| \\
  &\geq |\tau| \, \min\{\tfrac{1}{2} \comma c\} - C_\lambda \, |\nabla \RabinowitzHrs(v,\tau)|_{g_J} - 2 \kappa \, |\tau| \,\, .
\end{align*}
Rearrange this to obtain
\begin{align*}
    |\tau| \leq \frac{|\RabinowitzHrs(v,\tau)| + C_\lambda \, |\nabla \RabinowitzHrs(v,\tau)|_{g_J}}{\min\{\tfrac{1}{2} \comma c \} - 2 \kappa} \leq C \left( |\RabinowitzHrs(v,\tau)| + |\nabla \RabinowitzHrs(v,\tau)|_{g_J}\right) \,\, ,
\end{align*}
which finishes Step \ref{app step: Cieliebak-Frauenfelder - 1}.

\begin{step}
\label{app step: Cieliebak-Frauenfelder - 2}
For every $\kappa > 0$ with $U_\kappa \subseteq U_\Sigma$ there exists $\eps = \eps(\kappa) > 0$ with the following significance. For every $H \in \Hclassdelta{\tfrac{\kappa}{8}}$, every $(r,s) \in \R_{\geq 0} \times \R$ and every $(v,\tau) \in \loopspace \times \R$ it holds
\begin{align*}
    |\nabla \RabinowitzHrs(v,\tau)|_{g_J} \leq \eps \,\,\, \Longrightarrow \,\,\, \big( v(t) \in U_\kappa \quad \Forall t \in \bbS^1 \big) \,\, .
\end{align*}
\end{step}

\noindent Since $\BCOne \subseteq \Cinfty(\overline{U_\Sigma})$ is a $C^1$-bounded subset (where $\BCOne$ is from the $C^1$-datum $\Dscr$), we can choose a constant $D > 0$ so that
\begin{align*}
    \lVert \d f \rVert_\infty \leq D  \qquad \Forall f \in \BCOne \,\, . 
\end{align*}
In particular, by definition of complying with a $C^1$-datum, for every Hamiltonian $H$ it holds
\begin{align}
    H \text{ complies with } \Dscr \,\,\, \Longrightarrow \,\,\, \lVert \d H|_{U_\kappa}\rVert_\infty \leq \lVert \d H|_{\overline{U_\Sigma}}\rVert_\infty \leq D \,\, . \label{app eq: Cieliebak-Frauenfelder - constant C_Dscr choice}
\end{align}
Now choose $\eps > 0$ so that
\begin{align*}
    \eps < \min\big\{\tfrac{3}{8} \kappa \comma \tfrac{1}{4} \kappa \, \big(\lVert \d H_{0}|_{U_\kappa} \rVert_\infty + D \big)^{-1} \big\} \,\, .
\end{align*}
We claim that $\eps$ has the stated property. To this end, let $H \in \Hclassdelta{\tfrac{\kappa}{8}} \comma (r,s) \in \R_{\geq 0} \times \R$ and $(v,\tau) \in \loopspace \times \R $ be arbitrary. In particular $\lVert H - H_0 \rVert_\infty \leq \tfrac{\kappa}{8} \,$.
Assume that there exists some $t \in \bbS^1$ with $v(t) \notin U_\kappa \,$. We have to show that
\[
|\nabla \RabinowitzHrs (v,\tau)|_{g_J} > \eps \,\, .
\]
Notice that either $v$ is contained in the complement of $U_{\kappa /2 }$ or that there exist $t_1,t_2 \in \bbS^1$ with $|H_0(v(t_1))| \geq \kappa$ and $|H_0(v(t_2))| < \tfrac{\kappa}{2} \,$.
\\

\noindent \textbf{Case 1:} Suppose $v(t) \notin U_{\kappa/2}$ for every $t \in \bbS^1 \,$.

For every $t \in \bbS^1 \,$, the reverse triangle inequality yields
\begin{align*}
    |H_{r,s}(v(t))|  &\geq \big|\, |H_0(v(t))| - \beta_r(s) | (H-H_0)(v(t)) |  \,\big| \\
    &\geq |H_0(v(t))| - \lVert H -H_0 \rVert_\infty \geq \tfrac{\kappa}{2} - \tfrac{\kappa}{8} = \tfrac{3}{8} \kappa \,\, .
\end{align*}
In particular, $H_{r,s}(v)$ does not change signs. Then
\[
|\nabla \RabinowitzHrs(v,\tau)|_{g_J} \overset{\text{\eqref{app eq: Cieliebak-Frauenfelder - gradient(RabinowitzHrs) norm}}}{\geq} \big| \int_0^1 H_{r,s}(v) \, \d t \big| \geq \tfrac{3}{8} \kappa > \eps \,\, .
\]
\noindent \textbf{Case 2:} Suppose there exist $t_1,t_2 \in \bbS^1$ with $|H_0(v(t_1))| \geq \kappa$ and $|H_0(v(t_2))| \leq \tfrac{\kappa}{2} \,$.

Choose representatives of $t_1,t_2 \in \R/\Z$ in $\R $ (also denoted by $t_1,t_2 $) with $t_2 - t_1 \in (0,1) \,$. Without loss of generality we may assume that\footnote{
Let $t_1' := \max\{t \in [t_1,t_2]  : \,\, |H_0(v(t))| \geq \kappa\} \,$. Then $t_2 - t_1' \in (0,1) \,$. Now replace $t_1$ with $t_1' \,$.
}
\[
v(t) \in U_\kappa \qquad \Forall t \in (t_1,t_2) \,\, .
\]
It follows from $\lVert H - H_0 \rVert_\infty \leq \tfrac{\kappa}{8}$ and the triangle inequality that $|H_{r,s}(v(t_1))| \geq \tfrac{7}{8} \kappa$ and $|H_{r,s}(v(t_2))| \leq \tfrac{5}{8} \kappa \,$, hence
\[
\tfrac{1}{4} \kappa \leq |H_{r,s}(v(t_2)) - H_{r,s}(v(t_1))| \,\, .
\]
Using that $H$ complies with $\Dscr$ since $H \in \Hclassdelta{\tfrac{\kappa}{8}}$, by \eqref{app eq: Cieliebak-Frauenfelder - constant C_Dscr choice} we have
\begin{align*}
\lVert \d H_{r,s} |_{U_\kappa} \rVert_\infty \leq (1-\beta_r(s)) \, \lVert \d H_0 |_{U_\kappa}\rVert_{\infty} + \beta_r(s) \, \lVert \d H |_{U_\kappa} \rVert_\infty \leq  \lVert \d H_0 |_{U_\kappa}\rVert_{\infty} + D \,\, .
\end{align*}
Keeping in mind the trivial relation
\[
\d H_{r,s} (X_{H_{r,s}}) = \omega (X_{H_{r,s}} , X_{H_{r,s}}) = 0 \,\, ,
\]
we estimate
\begin{align*}
    \tfrac{1}{4} \kappa &\leq |H_{r,s}(v(t_2)) - H_{r,s}(v(t_1))|  \\
    &= \Big| \int_{t_1}^{t_2} (\d H_{r,s})_v(\partial_t v) \, \d t \Big| = \Big| \int_{t_1}^{t_2} (\d H_{r,s})_v(\partial_t v - \tau X_{H_{r,s}}(v) ) \ \d t\Big| \\
    &\leq \lVert \d H_{r,s}|_{U_\kappa} \rVert_\infty \int_{t_1}^{t_2}  |\partial_t v - \tau X_{H_{r,s}}(v)| \,\d t \\
    &\leq \big( \lVert \d H_0 |_{U_\kappa} \rVert_\infty + D \big) \int_0^1 |\partial_t v - \tau X_{H_{r,s}}(v)| \, \d t \\
    &\leq \big( \lVert \d H_0 |_{U_\kappa} \rVert_\infty + D \big) \, |\nabla \RabinowitzHrs(v,\tau)|_{g_J} \,\, ,
\end{align*}
where the penultimate inequality uses $t_2 - t_1 < 1$ and that $v$ is 1-periodic and the last inequality follows by first applying H\"older and then \eqref{app eq: Cieliebak-Frauenfelder - gradient(RabinowitzHrs) norm}.
Thus
\begin{align*}
    |\nabla \RabinowitzHrs(v,\tau)|_{g_J} \geq \frac{\kappa}{4 (\lVert \d H_0 |_{U_\kappa} \rVert_\infty + D)} > \eps \,\, .
\end{align*}
This finishes Case 2 and consequently also Step \ref{app step: Cieliebak-Frauenfelder - 2}.

\begin{step}
    Choose constants $C ,\kappa > 0$ as in Step \ref{app step: Cieliebak-Frauenfelder - 1}. Choose $\eps = \eps(\kappa) > 0$ as in Step \ref{app step: Cieliebak-Frauenfelder - 2} and set $\delta := \min\{\tfrac{\kappa}{8} ,\, \eps , \, 1 \} \,$. Then $C,\delta$ have the property stated in Lemma \ref{lem: Cieliebak-Frauenfelder}. 
\end{step}
 \noindent Given $H \in \Hclassdelta{\delta} $ and $(v,\tau) \in \loopspace \times \R$ as well as $(r,s) \in \R_{\geq 0} \times \R $ and assume that $|\nabla \RabinowitzHrs(v,\tau)|_{g_J} \leq \delta \,$. Observe that $H$ is an element of $\Hclassdelta{\tfrac{\kappa}{8}} \subseteq \Hclassdelta{\kappa} \,$.
 It follows from Step \ref{app step: Cieliebak-Frauenfelder - 2} that $v \subseteq U_\kappa \,$. By Step \ref{app step: Cieliebak-Frauenfelder - 1} hence
 \begin{align*}
     |\tau| \leq C \left( |\RabinowitzHrs(v,\tau)| + | \nabla \RabinowitzHrs(v,\tau)|_{g_J} \right) \leq  C \left( |\RabinowitzHrs(v,\tau)| + \delta \right) \leq C \, |\RabinowitzHrs(v,\tau)| + C \,\, .
 \end{align*}
\end{proof}

\section{Local Computations in the Banach Manifold $\Bcal$}
\label{app sec: Banach Mfd Bcal}

\subsection{Smooth atlas for $\Bcal'$}
\label{app subsec: Bcal' Atlas}

The set $\Bcal'$ was introduced in Definition \ref{def: Analytic Set-Up - Banach mfd}. We define charts for $\Bcal' \,$, the collection of which constitutes a smooth atlas.

We begin by observing that the smooth and asymptotically constant $w \in \Bcal'$ form a $C^0$-dense subset of $\Bcal' \,$. In particular, the subset 
\begin{equation*}
    (\Bcal')^\infty := \Bcal' \cap \big(\Cinfty(\Rbar \times \bbS^1,W) \times \Cinfty(\Rbar,\R) \big)
\end{equation*}
is a $C^0$-dense subset of $\Bcal' \,$. 

\subsubsection{Construction of a chart}
\label{app subsubsec: Bcal' Atlas - construction chart}
Now fix $w_0 = (u_0,\eta_0) \in (\Bcal')^\infty$ and let $(u_0^\pm,\tau(\alpha)) := w_0(\pm \infty) \in \CCrit \,$. We will construct a chart for $\Bcal'$ centered at $w_0$ and depending on various auxiliary choices. For simplicity we assume that $u_0^-(\bbS^1) \cap u_0^+(\bbS^1) = \varnothing \,$. (Otherwise $u_0^+ = u_0^- (t_0 + \,\cdot\,)$ for some $t_0 \in \bbS^1$ and we define the auxiliary data around $u_0^+$ in terms of the auxiliary choices for $u_0^- \,$.) We fix disjoint tubular neighborhoods
\[
\varphi^\pm = (\vartheta^\pm, y^\pm) : W \supseteq U^\pm \overset{\cong}{\longrightarrow} \bbS^1 \times \R^{2N-1}
\]
around the loops $u^\pm_0$ as in Lemma \ref{lem: Analytic Set-Up - tubular nbhd}. For every real number $a > 0$ we denote by $U^\pm_a \subseteq U^\pm$ the preimage of $\bbS^1 \times (-a,a)^{2N-1}$ under $\varphi^\pm \,$. Note that $U^\pm_a$ has compact closure contained in $U^\pm \,$. We choose an auxiliary Riemannian metric $\gaux$ on $W$ with $\gaux|_{U^\pm_2} = (\varphi^\pm|_{U_2^\pm})^{*} g_{\mathrm{euc}} \,$, the pullback of the Euclidean metric on $\bbS^1 \times \R^{2N-1} \,$. Fix an open, fiberwise convex neighborhood $\Ocal \subseteq TW$ of the zero section so that
\begin{enumerate}[label=\arabic*)]
    \item there exists a larger open, fiberwise convex neighborhood $\Ocal' \subseteq TW$ containing the closure of $\Ocal$ and for which
\[
\pi_{TW} \times \exp^{\gaux} : \, \Ocal' \longrightarrow W \times W
\]
is a diffeomorphism onto an open neighborhood of the diagonal,
\item $\exp^{\gaux}_p(\Ocal'_p) \subseteq U_2^\pm$ for every $p \in U_{1}^\pm \,$.
\end{enumerate}
Since $\varphi^\pm : (U^\pm_2 , \gaux) \rightarrow (\bbS^1\times \R^{2N-1},g_\mathrm{euc})$ is a local isometry, the $\gaux$-exponential map on $U^\pm_1$ is simply given by
\begin{equation}
\label{app eq: Bcal' Atlas - exp^gaux on U^pm}
(\varphi^\pm \circ \exp^{\gaux}_p)(\nu) = (\exp_{\varphi^\pm(p)}^{g_{\mathrm{euc}}} \circ \,\d_p \varphi^\pm)(\nu) = \varphi^\pm(p) + \d_p \varphi^\pm [ \nu ] \quad \Forall \nu \in \Ocal'_p \comma p \in U_1^\pm \, .
\end{equation}
Here, by slight abuse of notation, we regard $\d_p \varphi^\pm[\nu] \in \R^{2N}$ as element of $\bbS^1 \times \R^{2N-1}$ via the covering map $\R^{2N} \rightarrow \bbS^1 \times \R^{2N-1} \,$. We will also tacitly lift elements in $\bbS^1 \times \R^{2N-1} \,$, which are sufficiently close to $([0],0) \in \bbS^1 \times \R^{2N-1} \,$, to $\R^{2N} \,$.

By \eqref{app eq: Bcal' Atlas - exp^gaux on U^pm}, the hypersurfaces 
\[
\Sigma \cap U^\pm_1 = \{y_{2N-1}^\pm = 0 \comma \,\, |y_i^\pm| < 1 \,\,\, \Forall i=1,\ldots , 2N-2\}
\]
are totally geodesic with respect to $\gaux \,$.

Since $u_0$ tends uniformly to $u_0^\pm$ as $s \rightarrow \pm \infty \,$, we can fix $s_0 > 0$ so that $u_0(s,t) \in U^\pm_{1}$ for all $|s| \geq s_0  \comma t \in \bbS^1 \,$. Now let $\gamma \in \Cinfty(\R,[0,1])$ be a smooth cutoff function with 
\begin{align*}
   \gamma(s) = \begin{cases}
        0 & \text{ for } |s| \leq s_0 + 1  \\
        1 & \text{ for } |s| \geq s_0 + 2  \, ,
    \end{cases}
\end{align*}
being decreasing on $\R_{\leq 0}$ and increasing on $\R_{\geq 0} \,$. 

For every vector $\nu = ((s,t),\nu) \in u_0^*TW$ and $x^\pm \in \R^{2N}$ define the vector $\widehat{\nu}_{(x^-,x^+)} = ((s,t),\widehat{\nu}_{(x^-,x^+)}) \in u_0^*TW$ by
\begin{align*}
    \widehat{\nu}_{(x^-,x^+)} := \begin{cases}
        \nu & \text{, if } |s| \leq s_0 + 1\\
        \nu + \gamma(s) \, (\d_{u_0(s,t)} \varphi^\pm)^{-1}[x^\pm] &\text{, if } \pm s \geq s_0  \, .
    \end{cases}
\end{align*}
Let $B \subseteq \R^{2N}$ be the intersection of $\R^{2N-1} \times \{0\}$ with an open $2N$-ball centered at the origin of sufficiently small radius so that $\widehat{\nu}_{(x^-,x^+)} \in \Ocal'_{u_0(s,t)}$  for every $(s,t) \in \Rbar \times \bbS^1 \comma \nu \in \Ocal_{u_0(s,t)} $ and $x^-,x^+ \in B \,$.
For $((s,t),\nu) \in u_0^*\Ocal$ with $\pm s \geq s_0$ and $x^\pm \in B \,$, it follows from \eqref{app eq: Bcal' Atlas - exp^gaux on U^pm} that 
\begin{equation}
\label{app eq: Bcal' Atlas - exp^gaux at u_0}
    (\varphi^\pm \circ \exp_{u_0(s,t)}^{\gaux})(\widehat{\nu}_{(x^-,x^+)}) = (\varphi^\pm \circ u_0)(s,t) + \d_{u_0(s,t)}\varphi^\pm [\nu] + \gamma(s) \, x^\pm 
\end{equation}
for every $((s,t),\nu) \in u_0^*TW$ and $x^\pm \in B \,$. In particular, for $((s,t),\nu) = ((\pm \infty,0),0) $ we obtain
\begin{equation}
\label{app eq: Bcal' Atlas - exp^gaux at u_0(infty,t)}
    (\varphi^\pm \circ \exp_{u_0(\pm \infty,t)}^{\gaux})(\widehat{0}_{(x^-,x^+)}) = (\varphi^\pm \circ u^\pm_0)(t)  + \gamma(\pm \infty) \, x^\pm = (t,0) +  x^\pm \,\, ,
\end{equation}
so that, since the last component of the right-hand side vanishes, we have
\begin{equation}
\label{app eq: Bcal' Atlas - exp^gaux at u_0(infty,t) in Sigma}
    \exp_{u_0(\pm \infty,t)}^{\gaux}(\widehat{0}_{(x^-,x^+)}) \in \Sigma \cap U_1^\pm \,\, .
\end{equation}
Now define a smooth map $f : u_0^* \Ocal \times B \times B \rightarrow W$ by
\begin{align}
\label{app eq: Bcal' Atlas - map f}
f\big( ((s,t),\nu) \comma x^- , x^+ \big) := \exp_{u_0(s,t)}^{\gaux}(\widehat{\nu}_{(x^-,x^+)}) \,\, .
\end{align}
Let us write
\[
W_{\kappa_0}^{1,p}(\R \times \bbS^1,u_0^*\Ocal) := \set{\xi \in W^{1,p}_{\kappa_0}(\R \times \bbS^1,u_0^*TW)}{\xi(\R \times \bbS^1) \subseteq u_0^*\Ocal}
\]
and observe that this is in fact an open subset of $W^{1,p}_{\kappa_0}(\R \times \bbS^1,u_0^*TW)$ due to the continuous Sobolev embedding
\[
W^{1,p}_{\kappa_0}(\R \times \bbS^1,u_0^*TW) \hookrightarrow C^0(\Rbar \times \bbS^1,u_0^*TW) \,\, .
\]
The map $f$ in \eqref{app eq: Bcal' Atlas - map f} induces the pushforward map
\begin{align*}
    &f_* : W^{1,p}_{\kappa_0}(\R \times \bbS^1,u_0^*\Ocal) \times B \times B \times W^{1,p}_{\kappa_0}(\R,\R) \longrightarrow \Bcal' \\
     &f_*(\xi, x^-, x^+, \eta) := (u, \eta + \tau(\alpha)) \, , \quad \text{where } u(s,t) := f\big( ((s,t), \xi(s,t)) \comma x^- , x^+ \big) \,\, .
\end{align*}
The cylinder component of $f_*$ is illustrated in Figure \ref{fig: Bcal' chart construction}.

\begin{lemma}
\label{app lem: Bcal' Atlas - f_* well-defined}
The map $f_*$ is well-defined, \ie takes on values in $\Bcal' \,$.
\end{lemma}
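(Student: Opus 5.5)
To show $f_*(\xi,x^-,x^+,\eta) = (u,\eta+\tau(\alpha))$ lies in $\Bcal'$, I check the defining conditions of Definition \ref{def: Analytic Set-Up - Banach mfd} one by one.

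First, regularity. The map $f$ is smooth on $u_0^*\Ocal \times B \times B$, and $u_0$ is smooth up to $\Rbar$. The section $\xi$ lies in $W^{1,p}_{\mathrm{loc}}$ and, by the Sobolev embedding ($p>2$), is continuous on $\Rbar \times \bbS^1$. Composing a smooth map with a $W^{1,p}_{\mathrm{loc}}\cap C^0$ section therefore gives $u \in W^{1,p}_{\mathrm{loc}}(\R\times\bbS^1,W)$. The Lagrange multiplier component $\eta + \tau(\alpha)$ is trivially in $W^{1,p}_{\mathrm{loc}}$, and $\eta \in W^{1,p}_{\kappa_0}(\R,\R)$ is given.

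Second, the asymptotic limits. Since $\xi \in C^0(\Rbar\times\bbS^1)$ and $e^{\kappa_0|s|}\xi \in W^{1,p}$, we get $\xi(s,\cdot)\to 0$ uniformly as $s\to\pm\infty$. By continuity of $f$, $u(s,\cdot)$ converges in $C^0$ to $\exp^{\gaux}_{u_0^\pm(t)}(\widehat 0_{(x^-,x^+)})$. By \eqref{app eq: Bcal' Atlas - exp^gaux at u_0(infty,t)}, in the coordinates $\varphi^\pm$ this limit is $t\mapsto (t,0)+x^\pm$ with $x^\pm\in\R^{2N-1}\times\{0\}$. This lies in $\Sigma\cap U_1^\pm$ by \eqref{app eq: Bcal' Atlas - exp^gaux at u_0(infty,t) in Sigma}, and it is a flow line of $\partial_\vartheta=\tau(\alpha)R_\alpha$ (Lemma \ref{lem: Analytic Set-Up - tubular nbhd}). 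Hence $w(\pm\infty)\in\CCrit$, with $\eta\to 0$ giving multiplier $\tau(\alpha)$.

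Third, and this is the main point, the weighted decay near the ends, which must be checked in tubular coordinates around the limit loops $u(\pm\infty,\cdot)$. I first verify the condition with respect to the fixed neighborhoods $\varphi^\pm$ centered at $u_0^\pm$. This is legitimate because the limit loop is a Reeb-shift of $u_0^\pm$, and the definition asserts independence of the choice of tubular neighborhood. (If that independence needs justification, it follows because a change of coordinates between two such neighborhoods is smooth and fixes the translation structure along $\Sigma$, so it preserves $W^{1,p}_{\kappa_0}$-decay.) For $\pm s\ge s_0+2$, formula \eqref{app eq: Bcal' Atlas - exp^gaux at u_0} with $\gamma=1$ gives
\[
\varphi^\pm(u(s,t)) - \big((t,0)+x^\pm\big) = \big(\varphi^\pm(u_0(s,t)) - (t,0)\big) + \d_{u_0(s,t)}\varphi^\pm[\xi(s,t)] .
\]
The first term lies in $W^{1,p}_{\kappa_0}$ because $w_0\in\Bcal'$. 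The second term lies in $W^{1,p}_{\kappa_0}$ because $\xi$ does and $\d\varphi^\pm\circ u_0$ is smooth and bounded with bounded derivatives up to $\Rbar$. Subtracting the constant $x^\pm$ corresponds exactly to recentering the coordinates at the shifted limit loop (a translation in $(\vartheta,y)$ preserving $\{y_{2N-1}=0\}$ and $\partial_\vartheta$). This yields the required membership $(\vartheta(u)-t,\,y(u))\in W^{1,p}_{\kappa_0}(\R_{\ge s_0+2}\times\bbS^1)$, and analogously at $-\infty$.
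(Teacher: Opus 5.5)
Your proposal is correct and follows the paper's proof. You show that $\varphi^\pm(u(\pm\infty,t))=(t,0)+x^\pm$, so the limit loops are orbits of $\partial_\vartheta=\tau(\alpha)R_\alpha$ in $\Sigma$ and lie in $\CCrit$, and that for $\pm s\geq s_0+2$ the difference $\varphi^\pm(u)-\varphi^\pm(u(\pm\infty,\cdot))$ splits into $\varphi^\pm(u_0)-(t,0)$ plus $\d_{u_0}\varphi^\pm[\xi]$, both in $W^{1,p}_{\kappa_0}$; your added $W^{1,p}_{\mathrm{loc}}$ and $C^0$-convergence checks fill in what the paper leaves implicit. One small slip: the limit loop is in general a nearby Reeb orbit (displaced by the transverse components of $x^\pm$), not a time-shift of $u_0^\pm$, but this is harmless, since your recentering via the translated chart $\varphi^\pm - x^\pm$ is the correct justification and is exactly what the paper uses implicitly.
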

\begin{proof}
Let $u := f(\xi,x^-,x^+) \,$. We must show that $(u,\eta+\tau(\alpha)) \in \Bcal' \,$. Observe that 
\begin{equation}
\label{app eq: Bcal' Atlas - u(pm infty,t) = (t,0) + x^pm}
\varphi^\pm(u(\pm \infty, t)) = (t,0) + x^\pm \in \Sigma
\end{equation}
by \eqref{app eq: Bcal' Atlas - exp^gaux at u_0(infty,t)} and \eqref{app eq: Bcal' Atlas - exp^gaux at u_0(infty,t) in Sigma}, so that $u^\pm = u(\pm\infty,\,\cdot\,)$ is a flow line of $\partial_\vartheta = \tau(\alpha)  R_\alpha$ and hence $(u^\pm,\eta^\pm + \tau(\alpha))=(u^\pm,\tau(\alpha)) \in \CCrit \,$. Moreover, by \eqref{app eq: Bcal' Atlas - exp^gaux at u_0} and \eqref{app eq: Bcal' Atlas - u(pm infty,t) = (t,0) + x^pm} it holds
\begin{align*}
    \varphi^+ (u(s,t)) - \varphi^+(u^+(t)) = \varphi^\pm(u_0(s,t)) - (t,0) + \d_{u_0(s,t)} \varphi^+ [\xi(s,t)] \quad \Forall  s \geq s_0 +2\, .
\end{align*}
This map lies in $W^{1,p}_{\kappa_0}(\R_{\geq s_0+2} \times \bbS^1)$ because $(s,t) \mapsto \varphi^+(u_0(s,t)) - (t,0)$ lies in $W^{1,p}_{\kappa_0}(\R_{\geq s_0+2} \times \bbS^1)$ (since $u_0 \in \Bcal'$) and $\d_{u_0} \varphi^+[\xi] \in W^{1,p}_{\kappa_0}(\R_{\geq s_0 + 2} \times \bbS^1) \,$. An analogous argument works for $s \leq - s_0 - 2 \,$.
\end{proof}

\begin{lemma}
\label{app lem: Bcal' Atlas - f_* injective}
The map $f_*$ is injective.
\end{lemma}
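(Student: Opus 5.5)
The strategy is to recover the input $(\xi,x^-,x^+,\eta)$ from the output $f_*(\xi,x^-,x^+,\eta)=(u,\eta+\tau(\alpha))$ in three steps: first the Lagrange multiplier, then the asymptotic parameters $x^\pm$, and finally $\xi$ pointwise.

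\textbf{Step 1: the multiplier.} Suppose $f_*(\xi_1,x_1^-,x_1^+,\eta_1)=f_*(\xi_2,x_2^-,x_2^+,\eta_2)=(u,\eta+\tau(\alpha))$. Comparing second components gives $\eta_1=\eta_2$ at once.

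\textbf{Step 2: the asymptotic parameters.} By \eqref{app eq: Bcal' Atlas - u(pm infty,t) = (t,0) + x^pm} we have $\varphi^\pm(u(\pm\infty,t))=(t,0)+x_j^\pm$ for $j=1,2$. Evaluating at $t=0$ gives
\[
([0],0)+x_1^\pm=([0],0)+x_2^\pm
\]
in $\bbS^1\times\R^{2N-1}$. The vectors $x_j^\pm$ lie in $B$, which is a small ball in $\R^{2N-1}\times\{0\}$; the cutoff $\gamma$ equals $1$ at $\pm\infty$. If the radius of $B$ is less than $1/2$, the covering map $\R^{2N}\to\bbS^1\times\R^{2N-1}$ is injective on $B$. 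Hence $x_1^\pm=x_2^\pm$.

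\textbf{Step 3: the section $\xi$.} With $x^\pm$ now common to both inputs, fix $(s,t)$ and compare $\exp^{\gaux}_{u_0(s,t)}\big((\widehat{\xi_1})_{(x^-,x^+)}\big)$ with $\exp^{\gaux}_{u_0(s,t)}\big((\widehat{\xi_2})_{(x^-,x^+)}\big)$. Both vectors $(\widehat{\xi_j})_{(x^-,x^+)}(s,t)$ lie in $\Ocal'_{u_0(s,t)}$; this was the defining property of $B$. By condition 1), the map $\pi_{TW}\times\exp^{\gaux}$ is a diffeomorphism on $\Ocal'$, so $\exp^{\gaux}_{u_0(s,t)}$ is injective on $\Ocal'_{u_0(s,t)}$. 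Therefore $(\widehat{\xi_1})_{(x^-,x^+)}=(\widehat{\xi_2})_{(x^-,x^+)}$ at $(s,t)$.

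The hat operation only adds the term $\gamma(s)\,(\d_{u_0(s,t)}\varphi^\pm)^{-1}[x^\pm]$, which depends solely on $(s,t)$ and $x^\pm$ and not on $\xi$. Subtracting it gives $\xi_1(s,t)=\xi_2(s,t)$.

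\textbf{Main obstacle.} The only delicate point is Step 2, the lift from $\bbS^1\times\R^{2N-1}$ to $\R^{2N}$. It requires the radius of $B$ to be small enough that $B$ injects under the covering map. This is the convention already adopted in the construction, where elements close to $([0],0)$ are tacitly lifted. If it was not stated explicitly, it can be imposed without loss of generality by shrinking $B$.
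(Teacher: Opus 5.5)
Your proof is correct and follows the same route as the paper's: read off $x^\pm$ from $\varphi^\pm(u(\pm\infty,0))$ via \eqref{app eq: Bcal' Atlas - u(pm infty,t) = (t,0) + x^pm}, then recover $\widehat{\xi}_{(x^-,x^+)}=(\exp^{\gaux}_{u_0})^{-1}(u)$ pointwise using injectivity of $\exp^{\gaux}$ on $\Ocal'$, and finally undo the $\xi$-independent shift. The extra smallness of $B$ you impose in Step~2 is in fact automatic: for $p=u_0(\pm\infty,0)$ the vectors $(\d_p\varphi^\pm)^{-1}[x]$ with $x\in B$ already lie in $\Ocal'_p$ by the choice of $B$, and $\exp^{\gaux}_p$ is injective there.
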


\begin{proof}
    If $u = f(\xi,x^-,x^+) \,$, then $x^\pm = \varphi^\pm(u(\pm \infty,0))$ by \eqref{app eq: Bcal' Atlas - u(pm infty,t) = (t,0) + x^pm} and
    \[
    \widehat{\xi}_{(x^-,x^+)} = (\exp^{\gaux}_{u_0})^{-1}(u) \,\, ,
    \]
    which determines $\xi$ in terms of $u$ and $(x^-,x^+) \,$. This shows that $f_*$ is injective.
\end{proof}

\begin{remark}
\label{app rmk: Bcal' Atlas - f_* C^0-open image}
We note that the image of $f_*$ is $C^0$-open in $\Bcal' \,$. That is, given $(u,\eta) \in \mathrm{im}(f_*) \,$, every $(\widetilde{u},\widetilde{\eta}) \in \Bcal' \,$, for which $\widetilde{u}$ is sufficiently close to $u$ in $C^0(\Rbar \times \bbS^1,W) \,$, lies in $\mathrm{im}(f_*)\,$.
\end{remark}

\begin{figure}
    \centering
    \includegraphics[width=0.6\linewidth]{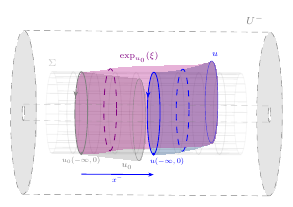}
    \caption{Negative asymptotics of cylinder {\color{blue}$u = \exp_{u_0}^{\gaux}(\widehat{\xi}_{(x^-,x^+)})$} near the cylinder {\color{gray}$u_0 \,$}. The cylinder {\color{violet}$\exp_{u_0}^{\gaux}(\xi)=\exp_{u_0}^{\gaux}(\widehat{\xi}_{(0,0)})$} has asymptotic loop {\color{gray}$u_0^- = u_0(-\infty,\,\cdot\,) \,$}; for $s \leq -s_0-2$ (up to the dashed loops), it is a shift of $u$ by $-x^-$ in the tubular coordinates $(U^-,\varphi^-) $ around $u_0^- \,$; for $s \geq -s_0 -1\,$, it agrees with $u \,$.
    }
    \label{fig: Bcal' chart construction}
\end{figure}

\subsubsection{The atlas}
\label{app subsubsec: Bcal' Atlas - Atlas}
An atlas for $\Bcal'$ is now given by taking the collection $\mathscr{A} := \{(\mathrm{im}(f_*) , f_*^{-1}) \} \,$, varying over all admissibly auxiliary data that were necessary to define $f$. In order to verify that $\mathscr{A}$ indeed defines an atlas, we first record that the chart domains cover $\Bcal' \,$. In fact, countably many charts suffice.

\begin{lemma}
\label{app lem: Bcal' Atlas - chart domains cover Bcal'}
Countably many of the domains $\mathrm{im}(f_*)$ from charts in $\mathscr{A}$ cover $\Bcal' \,$.
\end{lemma}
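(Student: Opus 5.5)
The strategy is to exploit the $C^0$-density of the smooth, asymptotically constant elements of $\Bcal'$ together with the fact that each chart domain $\mathrm{im}(f_*)$ is $C^0$-open (Remark~\ref{app rmk: Bcal' Atlas - f_* C^0-open image}). The statement then reduces to a separability argument in the space $C^0(\Rbar \times \bbS^1, W)$.

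First we show that every $w \in \Bcal'$ lies in some chart domain. Fix $w = (u,\eta) \in \Bcal'$ and choose tubular neighborhoods around the asymptotic loops $u(\pm\infty,\cdot\,)$ as in Lemma~\ref{lem: Analytic Set-Up - tubular nbhd}. Using the weighted $W^{1,p}_{\kappa_0}$ decay in these coordinates, we approximate $u$ in $C^0(\Rbar \times \bbS^1,W)$ by a smooth $u_0$. This $u_0$ should agree exactly with an $s$-independent Reeb loop in $\CCrit$ for $|s|$ large, obtained by cutting off the deviation $(\vartheta - t, y)$ near the ends and mollifying in the interior. In the same way, approximate $\eta$ by a smooth $\eta_0$ with $\eta_0 - \tau(\alpha)$ compactly supported. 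Then $w_0 := (u_0,\eta_0) \in (\Bcal')^\infty$.

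Next, build a chart $f_*$ centered at $w_0$. The chart domain contains every element of $\Bcal'$ whose cylinder component is $C^0$-close to $u_0$, with closeness measured by the size of $\Ocal$ and the ball $B$. One has to make sure that the size of this $C^0$-neighborhood does not shrink as $w_0$ changes. To arrange this, fix the auxiliary data (the tubular neighborhoods around $u(\pm\infty,\cdot\,)$, $\gaux$, $\Ocal$, $B$) before choosing the approximation $u_0$, so that they serve for all $u_0$ near $u$. Then choose $u_0$ so close to $u$ that $u \in \mathrm{im}(f_*)$. The fact that $u$ lies in the image also requires recovering $\xi = (\exp^{\gaux}_{u_0})^{-1}(u) - \gamma(s)(\d\varphi^\pm)^{-1}x^\pm$ in $W^{1,p}_{\kappa_0}$. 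This should follow from the computation \eqref{app eq: Bcal' Atlas - exp^gaux at u_0} in the tubular coordinates, in the same way as in Lemma~\ref{app lem: Bcal' Atlas - f_* well-defined}.

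Finally, to pass to countably many charts, use the separability of $C^0(\Rbar \times \bbS^1,W)$ with $\Rbar \times \bbS^1$ compact and $W$ second countable. Choose a countable $C^0$-dense subset of $(\Bcal')^\infty$ consisting of smooth, asymptotically constant elements, and for each one countably many rational choices of auxiliary data. The $C^0$-open domains $\mathrm{im}(f_*)$ then cover $\Bcal'$ by the previous step: each $w$ has a uniform $C^0$-ball contained in a chart centered at any sufficiently close dense point.

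The main obstacle is making the $C^0$-neighborhood size uniform in the center $w_0$. The plan for this is to let the auxiliary data depend only on a fixed reference element and on the countable choice of tubular neighborhoods (around countably many asymptotic loops, using compactness of $\CCrit \cong \Sigma$), rather than on $w_0$ itself.
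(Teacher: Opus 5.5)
Your overall strategy is the paper's: chart centers taken from a countable $C^0$-dense family of smooth asymptotically constant cylinders, auxiliary data drawn from a fixed countable stock, and $C^0$-openness of the chart domains. Your second paragraph is also fine. With tubular coordinates, $\gaux$ and $\Ocal$ fixed around $u(\pm\infty,\cdot)$, and a center $u_0$ sharing these asymptotic loops, one has $x^\pm=0$, and $w\in\mathrm{im}(f_*)$ as soon as $u_0$ is $C^0$-close to $u$.

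The gap is the sentence ``by the previous step: each $w$ has a uniform $C^0$-ball contained in a chart centered at any sufficiently close dense point''. It does not follow from the previous step, for two reasons. First, a chart centered at a dense point $w'$ needs tubular coordinates centered at the asymptotic loops of $w'$ (the construction uses $\varphi^\pm\circ u_0^\pm(t)=(t,0)$), and these loops differ from those of $u$. Second, ``rational choices'' of tubular neighborhoods or of $\gaux$ mean nothing until a countable stock has been fixed. Building that stock with sizes bounded below uniformly is the entire content of the paper's proof; your last paragraph only names it.

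Here is how the paper does it.
\begin{enumerate}
\item It fixes a finite cover of $\Sigma$ by tubular neighborhoods $(U_i,\varphi_i)$ such that any two Reeb orbits lie either in a common $U_i$, or in $U_{i^-},U_{i^+}$ with disjoint closures. This follows from a Lebesgue-number argument on $\Sigma/\bbS^1$. It is needed because $\gaux$ must be Euclidean near both ends at once. For two independently chosen, overlapping patches around close but distinct asymptotic orbits, the two Euclidean pullbacks are generally inconsistent. Shrinking the patches to make them disjoint destroys the uniform size bound as the orbits approach each other.
\item It takes one metric $g_I$ for each admissible pair $I=(i^-,i^+)$. Euclidean pullbacks are translation invariant, so the $\bbS^1$-equivariant coordinates can be recentered at any Reeb loop in $U_{i^\pm}$.
\item It uses lower bounds $c_{Ij}$ for the injectivity radius of $g_I$ on $K_j=[-j,j]\times\Sigma$.
\item It takes countable $C^0$-dense families $\mathscr{S}'_{Ijn}$ of cylinders that are $s$-independent for $|s|\ge n$.
\item It builds charts with $\Ocal$ and $B$ of radius at least $c_{Ij}/3$. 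Then $C^0$-distance less than $c_{Ij}/6$ from a center suffices to lie in the chart domain.
\end{enumerate}

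There is also a shortcut that makes your second paragraph sufficient on its own and removes the uniformity issue entirely.
\begin{itemize}
\item By the remark on $C^0$-openness, each $\mathrm{im}(f_*)$ is open in the topology on $\Bcal'$ pulled back from $C^0(\Rbar\times\bbS^1,W)$ via $(u,\eta)\mapsto u$.
\item Since $\Rbar\times\bbS^1$ is compact and $W$ is second countable, $C^0(\Rbar\times\bbS^1,W)$ is separable metrizable.
\item Hence the pulled-back topology is second countable, and therefore Lindel\"of.
\end{itemize}
The chart domains cover $\Bcal'$ by your second paragraph, so a countable subfamily already covers. The paper's construction does more work, but in exchange it exhibits an explicit countable atlas rather than an abstract subcover.
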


\begin{proof}
    We can choose finitely many tubular neighborhoods $(U_i,\varphi_i)_{i=1}^n$ as in Lemma \ref{lem: Analytic Set-Up - tubular nbhd} so that $(U_i)_{i=1}^n$ is a cover of $\Sigma $ by open subsets of $W$ and moreover, given any two closed $R_\alpha$-Reeb orbits $v^-,v^+ \subseteq \Sigma \,$, one of the following holds true:
        \begin{enumerate}[label=\arabic*)]
            \item There exists $1 \leq i \leq n$ with $v^-,v^+ \subseteq U_i \,$.
            \item There exist $1 \leq i^-, i^+ \leq n$ with $v^- \subseteq U_{i^-} \comma v^+ \subseteq U_{i^+}$ and the closures of $U_{i^-}$ and $U_{i^+}$ are disjoint.
        \end{enumerate}
    This may be seen by viewing Reeb orbits in $\Sigma$ as points in the quotient $\Sigma  / \bbS^1$ of $\Sigma$ by the free Reeb action (which is a smooth compact manifold).
    
    We consider pairs $I = (i^-,i^+)$ with $1 \leq i^-,i^+ \leq n$ so that $i^-=i^+$ or the closures of $U_{i^-}$ and $U_{i^+}$ are disjoint. For each such pair $I= (i^-,i^+)$ choose an auxiliary metric $g_I $ on $W$, so that $g_I|_{ U_{i^\pm}}$ agrees with the pullback of the Euclidean metric via $\varphi_{i^\pm} \,$. Consider the exhaustion of $W = S \Sigma$ by the compact subsets $K_j := [-j,j] \times \Sigma \comma j \in \N \,$. The injectivity radius of each $g_I$ on $K_j$ is bounded below by some constant $c_{Ij} > 0 \,$. Now choose a countable, dense subset $\Sigma'$ of $\Sigma \,$.
    
    For each $I = (i^-,i^+) $ and $j, n \in \N$ consider the subset $\mathscr{S}_{Ijn} \subseteq \Cinfty(\R \times \bbS^1,W)$ consisting of cylinders $u$ having image contained in $K_j \,$, being $s$-independent for $|s| \geq n$ and so that $u(\pm \infty,\,\cdot\,) \subseteq \Sigma \cap U_{i^\pm}$ is a periodic orbit of $\tau(\alpha) R_\alpha \,$. (The last condition means that $(u(\pm\infty,\,\cdot\,),\tau(\alpha)) \in \CCrit \,$.)
    By standard arguments from differential topology, there exists a countable subset $\mathscr{S}'_{Ijn} \subseteq \mathscr{S}_{Ijn}$ having the following properties
    \begin{enumerate}[label=\arabic*)]
        \item $u'(\pm \infty,0) \in \Sigma'$ for every $u' \in \mathscr{S}'_{Ijn} \,$,
        \item\label{app it: Bcal' Atlas - dense S'_ijn} $\mathscr{S}'_{Ijn}$ is $C^0$-dense in $\mathscr{S}_{Ijn} \,$, \ie every $u \in \mathscr{S}_{Ijn}$ can be approximated in $C^0(\R \times \bbS^1,K_j)$ by a sequence $(u_\ell')_\ell \subseteq \mathscr{S}'_{Ijn} \,$.
    \end{enumerate}
    For each $(I,j,n)$ and every $u \in \mathscr{S}'_{Ijn}$ we define $f = f^{(I,j,n,u)}$ as in \eqref{app eq: Bcal' Atlas - map f}, say with radius of both the zero section neighborhood $\mathcal{O}$ and the $(2N-1)$-ball $B \subseteq \R^{2N-1} \times \{0\}$ at least $\tfrac{1}{3} c_{Ij} \,$. This yields countably many $f_*^{(I,j,n,u)}$ and we contend that their images cover $\Bcal' \,$.

   To this end, for given $\widetilde{w} =(\widetilde{u},\widetilde{\eta}) \in \Bcal' \,$, choose a pair $I=(i^-,i^+)$ with $\widetilde{u}(\pm \infty,\,\cdot\,) \subseteq U_{i^\pm}$ and $j$ so that $\widetilde{u}$ has image contained in $K_{j} \,$. Now observe that we can approximate $\widetilde{u}$ arbitrarily well in $C^0(\Rbar \times \bbS^1,W)$ by elements from $\bigcup_{n \in \N} \mathscr{S}_{Ijn}$ and thus by elements from $\bigcup_{n \in \N} \mathscr{S}'_{Ijn} \,$. For $u \in \mathscr{S}'_{Ijn}$ having $C^0$-distance from $\widetilde{u}$ smaller than $\tfrac{1}{6} c_{Ij} \,$, say, it holds $\widetilde{w} \in \mathrm{im}(f_*^{(I,j,n,u)})$ 
\end{proof}

\subsubsection{Transition maps are smooth}
\label{app subsubsec: Bcal' Atlas - Transition maps smooth}
That the charts in $\mathscr{A}$ have smooth transition maps hinges upon the next local smoothness result on Banach spaces.

Given a smooth function $F : (\Rbar \times \bbS^1) \times \R^{m} \times \R^d \longrightarrow \R^n \,$, we denote by $\partial_s F$ and $\partial_t F$ its partial derivatives with respect to $\R \subseteq \Rbar$ respectively $\bbS^1\,$ and by $D_2 F \in \mathcal{L}(\R^{d},\R^{n})$ its partial derivative in $\R^d$-direction. Similarly, $D_2^k F \in \mathcal{L}(\R^d,\ldots,\R^d;\R^n)$ denotes the $k$-th partial derivative in $\R^d$-direction. By convention $D_2^0 F := F \,$.

\begin{lemma}
\label{app lem: Bcal' Atlas - smoothness lemma}
Suppose $F \in \Cinfty(\Rbar \times \bbS^1 \times \R^m \times \R^d, \R^n)$ satisfies
\begin{itemize}
    \item[] for every $k \geq 0 $ and every $x_0 \in \R^d$ there exist a neighborhood $U(x_0) \subseteq \R^d$ of $x_0$ and a non-negative function $h = h_{(k,x_0)} \in L^p_{\kappa_0}(\R \times \bbS^1,\R)$ so that
    \begin{equation}
    \label{app eq: Bcal' Atlas - smoothness lemma assumption}
    |D^k_2 F((s,t),0,x)| + |\partial_s D^k_2 F( (s,t),0,x)| + |\partial_t D^k_2 F( (s,t),0,x)| \leq h(s,t)
    \end{equation}
    for every $(s,t) \in \R \times \bbS^1$ and $x \in U(x_0) \,$.
\end{itemize}
Then 
\begin{align*}
    &F_* : W^{1,p}_{\kappa_0}(\R \times \bbS^1,\R^m) \times \R^d \longrightarrow W^{1,p}_{\kappa_0}(\R \times \bbS^1,\R^n) \\
    &F_*(\xi,x) \, (s,t) := F((s,t),\xi(s,t),x) \quad \text{for } \xi \in W^{1,p}_{\kappa_0}(\R \times \bbS^1,\R^m) \text{ and } x \in \R^d
\end{align*}
is a well-defined smooth map between Banach spaces.
\end{lemma}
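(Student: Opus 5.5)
The plan is to reduce to unweighted Sobolev spaces and then treat $F_*$ as a Nemytskii-type operator, using $p>2$. Via the isometry $I_1 : W^{1,p}_{\kappa_0} \to W^{1,p}$, $f \mapsto e^{\kappa_0 \rho(s) s} f$, it suffices to show that $F_*(\xi,x)$ lies in $W^{1,p}_{\kappa_0}$ and that all derivatives exist and are continuous in the weighted norms. The basic tool is the continuous Sobolev embedding $W^{1,p}_{\kappa_0}(\R \times \bbS^1) \hookrightarrow C^0(\Rbar \times \bbS^1)$. Combined with the compactness of $\Rbar \times \bbS^1$, it ensures that for $\xi$ in a bounded set the values $((s,t),\xi(s,t),x)$ range in a compact subset of $(\Rbar\times\bbS^1)\times\R^m\times\R^d$. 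On that set all derivatives of $F$ are uniformly bounded.

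First I check well-definedness. I write
\[
F((s,t),\xi,x) = F((s,t),0,x) + \int_0^1 D_1F((s,t),\sigma\xi,x)\,\d\sigma \cdot \xi ,
\]
where $D_1$ is the derivative in $\R^m$. The first term, together with its $\partial_s$ and $\partial_t$ derivatives, is dominated by $h_{(0,x_0)} \in L^p_{\kappa_0}$ by \eqref{app eq: Bcal' Atlas - smoothness lemma assumption}. The second term is a $C^0$-bounded matrix times $\xi$, so it lies in $L^p_{\kappa_0}$. Its derivatives are computed by the chain rule, $\partial_s[F(\cdot,\xi,x)] = (\partial_s F)(\cdot,\xi,x) + D_1F(\cdot,\xi,x)\,\partial_s\xi$. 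The part $\partial_sF(\cdot,\xi,x) - \partial_sF(\cdot,0,x)$ is again bounded by $C|\xi|$, which lies in $L^p_{\kappa_0}$. The term $D_1F\,\partial_s\xi$ is a bounded function times an $L^p_{\kappa_0}$ function. Hence $F_*(\xi,x) \in W^{1,p}_{\kappa_0}$.

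Next I prove smoothness by showing that $F_*$ is $C^1$ with derivative
\[
\d F_*(\xi,x)[\hat\xi,\hat x] = (D_1F)_*(\xi,x)\,\hat\xi + (D_2F)_*(\xi,x)\,\hat x ,
\]
and then bootstrapping. The derivatives $D_1F$ and $D_2F$ are again functions of the same type. For $D_2F$, the hypothesis with $k\ge1$ provides the required domination at $\xi=0$. For $D_1F$, the multiplication operator $\hat\xi \mapsto G(\cdot,\xi,x)\hat\xi$ maps $W^{1,p}_{\kappa_0}$ to itself without any decay on $G$. This holds because $G$ and its $s,t$-derivatives are bounded on compacta, and because the product estimate $\|(\partial G)\cdot\hat\xi\|$ involves only $\|\hat\xi\|_{C^0}$ and $\|\partial\xi\|_{L^p_{\kappa_0}}$, both of which are controlled since $p>2$. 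The remainder of the first-order Taylor expansion is handled with the integral form of Taylor's theorem. It is bounded in $W^{1,p}_{\kappa_0}$ by $o(\|\hat\xi\|+|\hat x|)$, using uniform continuity of the second derivatives of $F$ on the relevant compact set. The terms with derivatives landing on $\partial_s\hat\xi$ use the same $C^0 \times L^p$ product estimate. Induction on the order then gives $C^k$ for all $k$: each derivative is a finite sum of multilinear Nemytskii operators built from $D^a_1D^b_2F$, and the same estimates apply to each.

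The main obstacle is the continuity of the derivative in operator norm in the $W^{1,p}$ part. This involves terms like $(D_1F(\cdot,\xi_n,x_n) - D_1F(\cdot,\xi,x))\,\partial_s\hat\xi$ and $(\partial_sD_1F(\cdot,\xi_n,x_n)-\ldots)\hat\xi$. These are controlled by the sup-norm of the coefficient difference, which tends to zero because $\xi_n\to\xi$ uniformly on $\Rbar\times\bbS^1$ and $F$ is uniformly continuous on compacta. The other term, $D_1^2F\,(\partial_s\xi_n-\partial_s\xi)\hat\xi$, is controlled by $\|\partial_s\xi_n - \partial_s\xi\|_{L^p_{\kappa_0}}\|\hat\xi\|_{C^0}$. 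Here the smoothness of $F$ up to $s=\pm\infty$ is exactly what makes these sup-norm estimates uniform.
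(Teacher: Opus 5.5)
Your proposal is correct and follows the route the paper indicates. The paper omits the proof and points to the Nemytskii-operator argument of Schwarz's Lemma~A.5, adapted to the weights and to the parameter $x$; your sketch is that adaptation: the Sobolev embedding $W^{1,p}_{\kappa_0}\hookrightarrow C^0(\Rbar\times\bbS^1)$ for $p>2$, a Taylor expansion in $\xi$ around $\xi=0$, and induction over the operators built from $D_1^aD_2^bF$.

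The one place to tighten is the parameter $x$ itself. In the Taylor remainder and in the continuity of $(D_2F)_*$, the terms multiplied only by $\hat x$ are not controlled by uniform continuity of the derivatives of $F$, because a nonzero constant is not in $L^p_{\kappa_0}$. There you must use the decay hypothesis for $D_2^kF$ with $k=1,2$, including its $\partial_s,\partial_t$ parts, combined with the expansion in $\xi$ exactly as in your well-definedness step.
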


\begin{remark}
\label{app rmk: Bcal' Atlas - smoothness lemma}
There is a version of the lemma for $F$ being defined only on a suitable open subset of $\Rbar \times \bbS^1 \times \R^m \times \R^d \,$.
\end{remark}

\noindent Lemma \ref{app lem: Bcal' Atlas - smoothness lemma} can be proved in a manner analogous to \cite[Lemma~A.5]{M_Schwarz}. Care has to be taken due to the additional parameter $x$ and because we work with weighted Sobolev spaces. We omit the rather lengthy proof of Lemma \ref{app lem: Bcal' Atlas - smoothness lemma} but use it to show that the transition maps in $\mathscr{A}$ are smooth.

\begin{lemma}
\label{app lem: Bcal' Atlas - smooth transition maps}
Given $f_{1\,*}^{-1} \comma f_{2\,*}^{-1} \in \mathscr{A}$ whose domains intersect non-trivially, \ie $\Ucal_{12} := \mathrm{im}(f_{1\,*}) \cap \mathrm{im}(f_{2\,*}) \not= \varnothing \,$. Then
\[
\mathcal{V}_{12} := f_{1\,*}^{-1}(\Ucal_{12}) \subseteq W^{1,p}_{\kappa_0}(\R \times \bbS^1,u_1^*TW) \times B_1 \times B_1 \times W^{1,p}_{\kappa_0}(\R,\R)
\]
is an open subset and
\[
f_{2\,*}^{-1} \circ f_{1\,*} : \, \mathcal{V}_{12} \longrightarrow W^{1,p}_{\kappa_0}(\R \times \bbS^1,u_2^*TW) \times B_2 \times B_2 \times W^{1,p}_{\kappa_0}(\R,\R)
\]
is a smooth map between open subsets of Banach spaces.
\end{lemma}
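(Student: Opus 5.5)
Write $w = (u,\eta) = f_{1\,*}(\xi_1,x_1^-,x_1^+,\widehat\eta)$. The plan is to write the transition map explicitly and reduce its smoothness to Lemma \ref{app lem: Bcal' Atlas - smoothness lemma}.

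\emph{Openness of $\mathcal{V}_{12}$.} By Remark \ref{app rmk: Bcal' Atlas - f_* C^0-open image}, $\Ucal_{12}$ is $C^0$-open in $\Bcal'$. The map $f_{1\,*}$ is continuous into $C^0(\Rbar\times\bbS^1,W)\times C^0(\Rbar,\R)$ by the Sobolev embedding $W^{1,p}_{\kappa_0}\hookrightarrow C^0(\Rbar\times\bbS^1)$ (valid since $p>2$). Hence $\mathcal{V}_{12}=f_{1\,*}^{-1}(\Ucal_{12})$ is open.

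\emph{Formula for the transition map.} The Lagrange multiplier component is the identity $\widehat\eta\mapsto\widehat\eta$, which is smooth. By the proof of Lemma \ref{app lem: Bcal' Atlas - f_* injective}, the image $f_{2\,*}^{-1}(w)=(\xi_2,x_2^-,x_2^+,\widehat\eta)$ is determined by
\begin{equation*}
x_2^\pm=\varphi_2^\pm\big(u(\pm\infty,0)\big)-(0,0), \qquad \widehat{(\xi_2)}_{(x_2^-,x_2^+)}=(\exp^{g_{\mathrm{aux},2}}_{u_2})^{-1}(u).
\end{equation*}
By \eqref{app eq: Bcal' Atlas - u(pm infty,t) = (t,0) + x^pm} for chart $1$, $u(\pm\infty,0)=(\varphi_1^\pm)^{-1}((0,0)+x_1^\pm)$. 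Thus $x_2^\pm$ is a smooth function of $x_1^\pm$ alone, via a smooth finite-dimensional coordinate change on $\Sigma$. Unwinding the definition of $\widehat{\nu}$ gives
\begin{equation*}
\xi_2(s,t)=F\big((s,t),\xi_1(s,t),(x_1^-,x_1^+)\big),
\end{equation*}
where
\begin{equation*}
F\big((s,t),\nu,x\big):=(\exp^{g_{\mathrm{aux},2}}_{u_2(s,t)})^{-1}\Big(\exp^{g_{\mathrm{aux},1}}_{u_1(s,t)}\big(\widehat\nu_{x}\big)\Big)-\gamma_2(s)\,(\d_{u_2(s,t)}\varphi_2^\pm)^{-1}\big[x_2^\pm(x)\big].
\end{equation*}
After trivializing $u_1^*TW$ and $u_2^*TW$ (smoothly over $\Rbar\times\bbS^1$), $F$ is a smooth fiberwise map defined on an open subset of $\Rbar\times\bbS^1\times\R^{2N}\times\R^{2(2N-1)}$. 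We then apply Lemma \ref{app lem: Bcal' Atlas - smoothness lemma} in the form of Remark \ref{app rmk: Bcal' Atlas - smoothness lemma}. Since the $x$-part is finite-dimensional and smooth, the transition map is smooth once the lemma's hypothesis \eqref{app eq: Bcal' Atlas - smoothness lemma assumption} holds.

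\emph{Main obstacle: verifying \eqref{app eq: Bcal' Atlas - smoothness lemma assumption}.} We need that $F((s,t),0,x)$ and its $x$-, $s$- and $t$-derivatives are dominated, locally uniformly in $x$, by an $L^p_{\kappa_0}$ function. On the compact part $|s|\le S$ this is automatic by continuity.

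For $\pm s$ large, both $u_1(s,t)$ and $u_2(s,t)$ lie in the tubular neighborhoods. Assume first the generic case that the $\pm$ tubular neighborhoods of the two charts overlap near the common asymptotic orbits, which are flow lines of $\partial_\vartheta$ by Lemma \ref{lem: Analytic Set-Up - tubular nbhd}. There both exponential maps are affine in coordinates by \eqref{app eq: Bcal' Atlas - exp^gaux on U^pm}. The composite coordinate change $\varphi_2^\pm\circ(\varphi_1^\pm)^{-1}$ maps $\Sigma$-orbits to $\Sigma$-orbits, and with $\gamma_1=\gamma_2=1$ one gets
\begin{equation*}
F((s,t),0,x)=\Theta\big(\varphi_1^\pm(u_1(s,t))+x_1^\pm\big)-\Theta\big((t,0)+x_1^\pm\big)-\big(\varphi_2^\pm(u_2(s,t))-(t,0)\big),
\end{equation*}
where $\Theta:=\varphi_2^\pm\circ(\varphi_1^\pm)^{-1}$, up to the linear identification $\d\varphi_2^\pm$.

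By the mean value theorem, each difference is bounded by $C\,|\varphi_i^\pm(u_i)-(t,0)|$, locally uniformly in $x$. The same holds for the $x$-derivatives, since $\Theta$ is smooth with locally bounded derivatives. For the $s$- and $t$-derivatives, the bound is in terms of $|\partial_s u_i|$ and $|\partial_t(\varphi_i^\pm\circ u_i)-(1,0)|$.

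Since $u_1,u_2\in(\Bcal')^\infty$ belong to $\Bcal'$, the functions $\varphi_i^\pm(u_i)-(t,0)$ lie in $W^{1,p}_{\kappa_0}$, and by smoothness on $\Rbar$ they are also bounded. Their pointwise absolute values together with first derivatives give the required $h\in L^p_{\kappa_0}$. The remaining care is bookkeeping: handling the case $u^+=u^-(t_0+\cdot)$, and checking that the equalities $\gamma=1$ and the domain conditions on $\Ocal$ hold simultaneously for large $|s|$.
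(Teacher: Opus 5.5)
Your proposal is correct and follows essentially the paper's proof. The finite-dimensional part is $x_2^\pm=(\varphi_2^\pm\circ(\varphi_1^\pm)^{-1})(x_1^\pm)$; your explicit $F$ is the paper's implicitly defined one; and your large-$|s|$ expression for $F((s,t),0,x)$, with Lipschitz bounds in terms of $\varphi_j^\pm(u_j)-(t,0)\in W^{1,p}_{\kappa_0}$, is exactly \eqref{app eq: Bcal' Atlas - F asymptotic} and the estimate after it. You also justify openness of $\mathcal{V}_{12}$, which the paper leaves implicit.

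Two small precisions. First, the overlap of tubular neighborhoods you treat as a ``generic case'' is automatic, since $U_1^\pm$ and $U_2^\pm$ both contain $u(\pm\infty,\cdot)$ for any $(u,\eta)\in\Ucal_{12}$. Second, replacing $\Theta(x_1^\pm)$ by $\Theta((t,0)+x_1^\pm)-(t,0)$ uses that $\Theta$ is $\bbS^1$-equivariant on $\Sigma$, because both $\varphi_j^\pm$ send $\tau(\alpha)R_\alpha$ to $\partial_\vartheta$; this is stronger than $\Theta$ merely mapping orbits to orbits.
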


\begin{proof}
    By \eqref{app eq: Bcal' Atlas - u(pm infty,t) = (t,0) + x^pm} the transition map $f_{2\,*}^{-1} \circ f_{1\,*}$ restricted to the finite-dimensional components $B_{1,2} \times B_{1,2}$ is just given by
    \begin{equation}
    \label{app eq: Bcal' Atlas - x_2^pm def}
    x_2^\pm = (\varphi_2^\pm \circ (\varphi_1^\pm)^{-1})(x_1^\pm) \in B_2 \qquad \text{for } x_1^\pm \in B_1 \,\, .
    \end{equation}
    To make sense of \eqref{app eq: Bcal' Atlas - x_2^pm def}, we remind the reader that $B_{1,2}$ are intersections of $\R^{2N-1} \times \{0\}$ with balls of small radius so that we can regard $x_{1,2}^\pm \in B_{1,2}$ as elements of $\bbS^1 \times \R^{2N-2} \times \{0\} $ via the natural projection in the first factor (which is a local diffeomorphism).
    
    The nontrivial part of the lemma is showing smoothness of the first component of $f_{2\,*}^{-1} \circ f_{1\,*} \,$. For $j=1,2 \,$, choose smooth global trivializations $\Psi_j : u_j^*TW \overset{\cong}{\rightarrow} (\Rbar \times \bbS^1) \times \R^{2N}$, which induce toplinear isomorphisms $W^{1,p}_{\kappa_0}(\R \times \bbS^1,u_j^*TW) \cong W^{1,p}_{\kappa_0}(\R \times \bbS^1,\R^{2N}) \,$. We define the smooth map
    \begin{align*}
        F : (\Rbar \times \bbS^1) \times \R^{2N} \times (\R^{2N-1} \times \R^{2N-1} ) \longrightarrow \R^{2N}
    \end{align*}
     implicitly by the equation
     \begin{align*}
         f_1 \big(  (s,t) , \nu   , \, (x_1^-,x_1^+) \big) = f_2 \big(  (s,t) , F((s,t),\nu,(x^-_1,x^+_1) )   , \, (x_2^-,x_2^+) \big) \,\, ,
     \end{align*}
     with $\nu \in \R^{2N} \comma x^\pm_1 \in \R^{2N-1} \cong \R^{2N-1} \times \{0\} \subseteq \R^{2N}$ and $x_2^\pm$ defined by \eqref{app eq: Bcal' Atlas - x_2^pm def}. Strictly speaking, $F$ is only defined on a suitable open subset of $(\Rbar \times \bbS^1) \times \R^{2N} \times (\R^{2N-1} \times \R^{2N-1})$. The map $F$ is uniquely determined by the implicit equation and smooth because $f_{j}((s,t),\,\cdot\,,(x^-_j,x^+_j))$ is a diffeomorphism for fixed $(s,t)$ and $(x^-_j,x^+_j) $ by its definition in equation \eqref{app eq: Bcal' Atlas - map f}. 
     
     By construction, the first component of $f_{2 \, *}^{-1} \circ f_{1 \,*}$ agrees with the composition of the projection along the Lagrange multiplier component with
     \[
     F_* : W^{1,p}_{\kappa_0}(\R \times \bbS^1 , \R^{2N}) \times (\R^{2N-1} \times \R^{2N-1}) \longrightarrow W^{1,p}_{\kappa_0}(\R \times \bbS^1,\R^{2N}) \,\, ,
     \]
     where $F_*$ is defined as in Lemma \ref{app lem: Bcal' Atlas - smoothness lemma}. We will now verify that $F$ satisfies the hypothesis of Lemma \ref{app lem: Bcal' Atlas - smoothness lemma}.
     
     By \eqref{app eq: Bcal' Atlas - exp^gaux at u_0}, \eqref{app eq: Bcal' Atlas - map f} and definition of $F$, for $|s|$ sufficiently large,
     \begin{align}
     \label{app eq: Bcal' Atlas - chart transitions exp^g_1}
    \exp^{g_1}_{u_1}\widehat{(\Psi_1^{-1} [\nu]}_{(x_1^-,x_1^+)}) = (\varphi_1^\pm)^{-1}\big( (\varphi_1^\pm \circ u_1) + \d_{u_1}\varphi_1^\pm [\Psi_1^{-1} [\nu]] + x_1^\pm \big) \,\, ,    
     \end{align}
     agrees with
     \begin{align}
    \label{app eq: Bcal' Atlas - chart transitions exp^g_2}
         \exp^{g_2}_{u_2}\widehat{(\Psi_2^{-1} [F]}_{(x_2^-,x_2^+)}) = (\varphi_2^\pm)^{-1}\big( (\varphi_2^\pm \circ u_2) + \d_{u_2}\varphi_2^\pm [\Psi_2^{-1} [F]] + x_2^\pm \big) \,\, ,
     \end{align}
     where $x_2^\pm$ is defined in terms of $x_1^\pm$ by \eqref{app eq: Bcal' Atlas - x_2^pm def}. For $j=1,2 \,$, we abbreviate 
     \[
     \widetilde{u}_j^{\pm}(s,t) := \varphi_j^\pm(u_j(s,t)) - (t,0) \in \bbS^1 \times \R^{2N-1}
     \]
      and $A_{j}^\pm(s,t) := \d_{u_j(s,t)} \varphi_j^\pm \circ (\Psi_j)_{(s,t)}^{-1} \,$. We observe that $\widetilde{u}_{j}^\pm \in W^{1,p}_{\kappa_0}$ (since $u_j \in \Bcal'$) and that $A_j^+$ is a smooth map in $\Cinfty([s_0,+\infty] \times \bbS^1,\mathrm{GL}(2N,\R))$ and similarly for $A_j^- \,$. Equating \eqref{app eq: Bcal' Atlas - chart transitions exp^g_1} with \eqref{app eq: Bcal' Atlas - chart transitions exp^g_2}
     and rearranging yields 
     \begin{align}
     \label{app eq: Bcal' Atlas - F asymptotic intermediate result}
       F  = (A_2^\pm)^{-1}  \big[ - \widetilde{u}_2^\pm - (t,0)  - x_2^\pm + (\varphi_2^\pm \circ (\varphi^\pm_1)^{-1}) \big( \widetilde{u}_1^\pm + (t,0) + x_1^\pm  + A_1^\pm [\nu]\big)  \big] \,\, .
     \end{align}
     Having chosen the tubular maps $\varphi_j^\pm$ as in Lemma \ref{lem: Analytic Set-Up - tubular nbhd}, the maps
     \[
     \varphi^\pm_j : U_j^\pm \cap \Sigma \overset{\cong}{\longrightarrow} \bbS^1 \times \R^{2N-2} \times \{0\}
     \]
     are $\bbS^1$-equivariant, \ie they conjugate the flow of $\tau(\alpha)R_\alpha$ on $\Sigma$ with the $\bbS^1$-action on the first component of $\bbS^1 \times \R^{2N-2} \times \{0\} \,$. Hence 
     \begin{align*}
         x_2^\pm + (t,0) \overset{\text{\eqref{app eq: Bcal' Atlas - x_2^pm def}}}{=} (\varphi_2^\pm \circ (\varphi_1^\pm)^{-1})(x_1^\pm) + (t,0) =  (\varphi_2^\pm \circ (\varphi_1^\pm)^{-1})(x_1^\pm + (t,0)) \,\, .
     \end{align*}
     Plugging this relation into \eqref{app eq: Bcal' Atlas - F asymptotic intermediate result}, we obtain
     \begin{align}
        \hspace*{4mm}F( (s,t),\nu,(x^-_1,x^+_1)) 
         &= (A_2^\pm(s,t))^{-1} \Big[ - \widetilde{u}_2^\pm(s,t)  - (\varphi_2^\pm \circ (\varphi_1^\pm)^{-1})((t,0) + x_1^\pm)  \notag\\
        &\hspace*{4mm}+ (\varphi_2^\pm \circ (\varphi^\pm_1)^{-1}) \big( \widetilde{u}_1^\pm(s,t) + (t,0) + x_1^\pm   + A_1^\pm(s,t) [\nu]\big)  \Big] \,\, . \label{app eq: Bcal' Atlas - F asymptotic}
     \end{align}
     Using formula \eqref{app eq: Bcal' Atlas - F asymptotic}, that $\widetilde{u}_{1,2}^\pm \in W^{1,p}_{\kappa_0}$ and that $(A_{2}^\pm)^{-1}$ is bounded and has bounded first derivatives, one readily verifies that the assumption \eqref{app eq: Bcal' Atlas - smoothness lemma assumption} in Lemma \ref{app lem: Bcal' Atlas - smoothness lemma} holds for every $k \geq 0$ and every $(x^-,x^+) \in \R^{2N-1} \times \R^{2N-1} \,$. For example, for large $|s|$ and $(\widetilde{x}^-,\widetilde{x}^+)$ close to $(x^-,x^+)$ due to \eqref{app eq: Bcal' Atlas - F asymptotic} we may estimate
     \begin{align*}
      &|F((s,t),0,(\widetilde{x}^-,\widetilde{x}^+))| \\
      \leq\,\,&C_1 \, |\widetilde{u}_2^\pm| + C_1\,\big| (\varphi_2^\pm \circ (\varphi_1^\pm)^{-1}) (\widetilde{u}_1^\pm + (t,0) + \widetilde{x}^\pm) - (\varphi_2^\pm \circ (\varphi_1^\pm)^{-1})((t,0) + \widetilde{x}^\pm) \big| \\
      \leq\,\,&C_1 \, |\widetilde{u}_2^\pm| + C_2 \, |\widetilde{u}_1^\pm| \in L^{p}_{\kappa_0} \,\, , 
     \end{align*}
     where the second inequality follows from a Lipschitz estimate for $\varphi_2^\pm \circ (\varphi_1^\pm)^{-1}$ on a compact neighborhood of $\set{x^\pm + (t,0)}{t \in \bbS^1}\,$. Similar estimates hold for $\partial_s F$ and $\partial_t F$ at $((s,t),0,(\widetilde{x}^-,\widetilde{x}^+)) \,$, showing that \eqref{app eq: Bcal' Atlas - smoothness lemma assumption} holds for $k=0 \,$.

    Therefore the hypothesis of Lemma \ref{app lem: Bcal' Atlas - smoothness lemma} holds, hence $F_*$ is smooth and so is the first component of $f_{2\,*}^{-1} \circ f_{1\,*} \,$.
\end{proof}

\subsubsection{Conclusion}
\label{app subsubsec: Bcal' Atlas - Conclusion}

The atlas $\mathscr{A}$ thus gives $\Bcal'$ a smooth manifold structure.

\begin{corollary}
\label{app cor: Bcal' Atlas - Bcal' smooth Banach mfd}
$(\Bcal', \mathscr{A})$ is a Hausdorff smooth Banach manifold without boundary.
\end{corollary}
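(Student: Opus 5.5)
The statement bundles three properties of $(\Bcal',\mathscr{A})$, and I would check them in increasing order of difficulty: the charts cover $\Bcal'$, the transition maps are smooth, and the resulting topology is Hausdorff.

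\textbf{Covering and smoothness.} Both are already in hand. The chart domains cover $\Bcal'$ by Lemma \ref{app lem: Bcal' Atlas - chart domains cover Bcal'}. Each $f_*$ is well-defined and injective by Lemmata \ref{app lem: Bcal' Atlas - f_* well-defined} and \ref{app lem: Bcal' Atlas - f_* injective}, so each chart is a bijection onto an open subset of the model Banach space
\[
W^{1,p}_{\kappa_0}(\R \times \bbS^1,u_0^*TW) \times B \times B \times W^{1,p}_{\kappa_0}(\R,\R) .
\]
By Lemma \ref{app lem: Bcal' Atlas - smooth transition maps}, the overlap sets $\mathcal{V}_{12}$ are open and the transition maps are smooth. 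The standard construction therefore gives a unique topology on $\Bcal'$ in which the $\mathrm{im}(f_*)$ are open and the $f_*^{-1}$ are homeomorphisms, together with a smooth structure. The manifold has no boundary because every model space is an open subset of a Banach space, with no half-space factor.

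\textbf{Hausdorffness.} This is the only genuinely new point, and I would obtain it by comparison with the $C^0$-topology. The plan is to show that the inclusion
\[
\Bcal' \hookrightarrow C^0(\Rbar \times \bbS^1, W) \times C^0(\Rbar,\R)
\]
is continuous. Since the target is a metric space, hence Hausdorff, this forces Hausdorffness: two distinct points of $\Bcal'$ have distinct images, which can be separated by disjoint open sets in the target, and their preimages are then disjoint open sets in $\Bcal'$.

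To check continuity, it suffices to work on a single chart domain. There the map factors through $f_*$, and I would split it into two pieces.
\begin{itemize}
\item The Sobolev embedding $W^{1,p}_{\kappa_0} \hookrightarrow C^0(\Rbar)$, valid because $p>2$, handles $\xi$ and $\eta$.
\item The pushforward by the continuous map $f$, which depends continuously on $\xi$, $x^-$ and $x^+$, then gives $C^0$-continuity of $u = f(\xi,x^-,x^+)$.
\end{itemize}

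\textbf{Main obstacle.} The step needing care is uniform control at $s=\pm\infty$ in the second piece. I would handle it using that $f$ is uniformly continuous on the compact set $\overline{u_0^*\Ocal}\times\overline{B}\times\overline{B}$ over $\Rbar\times\bbS^1$.
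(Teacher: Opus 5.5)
Your proposal is correct. The covering, transition-map and no-boundary part is exactly what the paper does. The difference lies in the Hausdorff step.

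\textbf{The two routes.} The paper simply asserts that any two distinct points of $\Bcal'$ lie in charts of $\mathscr{A}$ with disjoint domains. This implicitly relies on chart domains becoming $C^0$-small around their centers once $\Ocal$ and $B$ are shrunk. You instead prove that the identity $\Bcal'\to C^0(\Rbar\times\bbS^1,W)\times C^0(\Rbar,\R)$ is continuous for the chart topology, and then pull Hausdorffness back from a metric space.

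\textbf{What your route buys.} It is slightly longer but more robust, for two reasons.
\begin{enumerate}
\item It handles pairs $(u,\eta_1)\neq(u,\eta_2)$ with the same cylinder. The paper's sentence, read literally, fails there: every chart in $\mathscr{A}$ has all of $W^{1,p}_{\kappa_0}(\R,\R)$ in the Lagrange multiplier slot. So any chart domain containing one of these points contains the other, and one must separate them inside a single chart instead.
\item It records that the manifold topology is finer than the $C^0$-topology. The paper uses this tacitly later, e.g.\ when it identifies the connected component $\Bcal$ with $\{u\simeq 0 \text{ rel } \CCrit\}$. That identification needs the homotopy class rel $\CCrit$ to be locally constant on $\Bcal'$.
\end{enumerate}

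\textbf{A small correction to your ``main obstacle''.} The paper's choice of $\Ocal$ does not force bounded fibers. So $\overline{u_0^*\Ocal}$ need not be compact, nor need it lie in the domain of $f$. You do not need either property. Fix $(\xi_0,x_0)$. The set $\{((s,t),\xi_0(s,t),x_0)\}$ is compact, being parametrized by the compact base $\Rbar\times\bbS^1$. Hence $f$ is uniformly continuous on a compact neighborhood of this set, with $u_0^*TW$ trivialized so that distances make sense. This already gives continuity of $(\xi,x)\mapsto f(\xi,x)$ into $C^0(\Rbar\times\bbS^1,W)$ at $(\xi_0,x_0)$, with respect to the $C^0$-topology on $\xi$. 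Combined with the Sobolev embedding you cite, that completes your argument.
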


\begin{proof}
    Since chart transitions are smooth (Lemma \ref{app lem: Bcal' Atlas - smooth transition maps}) and charts in $\mathscr{A}$ cover $\Bcal'$ (Lemma \ref{app lem: Bcal' Atlas - chart domains cover Bcal'}), the pair $(\Bcal',\mathscr{A})$ is a smooth Banach manifold with a uniquely determined topology for which the charts in $\mathscr{A}$ have open domains and are homeomorphisms onto their images. For every two distinct elements in $\Bcal'$ there exist charts with disjoint chart domains, hence $\Bcal'$ is Hausdorff.
\end{proof}

\noindent We supplement the following result regarding the topology of $\Bcal' $ induced by the atlas $\mathscr{A} \,$.

\begin{corollary}
\label{app cor: Bcal' Atlas - Bcal' second countable}
$\Bcal'$ is second-countable and metrizable.
\end{corollary}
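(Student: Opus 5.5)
Second-countability comes from the atlas; metrizability then follows once $\Bcal'$ is shown to be regular (or paracompact).

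\emph{Second-countability.} By Lemma \ref{app lem: Bcal' Atlas - chart domains cover Bcal'}, countably many chart domains $\mathrm{im}(f_*)$ cover $\Bcal'$. Each chart $f_*^{-1}$ is a homeomorphism onto an open subset of the Banach space
\[
W^{1,p}_{\kappa_0}(\R \times \bbS^1,u_0^*TW) \times \R^{2N-1} \times \R^{2N-1} \times W^{1,p}_{\kappa_0}(\R,\R) .
\]
This space is isometric to a product of ordinary Sobolev spaces $W^{1,p}$ (via $f \mapsto e^{\kappa_0\rho(s)s}f$ and a global trivialization of $u_0^*TW$) and Euclidean spaces. Since $1<p<\infty$, these spaces are separable, hence second-countable, and so is every open subset of them. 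A countable union of second-countable open subspaces is second-countable, so $\Bcal'$ is second-countable.

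\emph{Metrizability.} Every chart domain is homeomorphic to a subset of a metric space, so $\Bcal'$ is locally metrizable, and it is Hausdorff by Corollary \ref{app cor: Bcal' Atlas - Bcal' smooth Banach mfd}. I would then aim for the Urysohn metrization theorem, which needs $\Bcal'$ to be regular. This is the main obstacle: local metrizability plus Hausdorff does not give regularity for general Banach manifolds.

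To get regularity, fix $w \in \Bcal'$ and a neighborhood $V$ of $w$. Work inside one chart domain $\Ucal$ containing $w$, and choose a closed chart ball $\overline{B}$ around $f_*^{-1}(w)$ with $f_*(\overline{B}) \subseteq V \cap \Ucal$. I must show that $f_*(\overline{B})$ is closed in all of $\Bcal'$, not only in $\Ucal$. I would take a sequence in $f_*(\overline{B})$ converging in $\Bcal'$ to some $\widetilde w$; first-countability holds since the space is locally metrizable. The chart topology dominates the $C^0(\Rbar\times\bbS^1,W)\times C^0(\Rbar,\R)$ topology through the Sobolev embedding. The chart images are therefore $C^0$-close to $w$, so the $C^0$-limit $\widetilde w$ is $C^0$-close to $w$. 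By Remark \ref{app rmk: Bcal' Atlas - f_* C^0-open image}, this places $\widetilde w$ in the chart domain $\Ucal$. Closedness of $\overline{B}$ inside the chart then gives $\widetilde w \in f_*(\overline{B})$, which proves regularity.

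An alternative route: show directly that $d(w_1,w_2) := \min\{1, d'(w_1,w_2)\}$ defines a metric compatible with the topology, using the global Hausdorff and $C^0$-control. The Urysohn route is cleaner, so I would use it. A regular, Hausdorff, second-countable space is metrizable, which proves the corollary.
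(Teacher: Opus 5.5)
Your proof is correct. The second-countability part is the paper's argument: separable model spaces plus the countable subatlas of Lemma \ref{app lem: Bcal' Atlas - chart domains cover Bcal'}.

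For metrizability you take a different route. The paper cites Palais for the general statement that every Hausdorff second-countable Banach manifold is metrizable. You instead prove regularity by hand from the concrete structure of $\Bcal'$ and then apply Urysohn. You use two ingredients: the inclusion $\Bcal' \hookrightarrow C^0(\Rbar\times\bbS^1,W)\times C^0(\Rbar,\R)$ is continuous (Sobolev embedding in each chart), and chart domains are $C^0$-open by Remark \ref{app rmk: Bcal' Atlas - f_* C^0-open image}.

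The paper's route is a one-line citation. Yours is longer but self-contained. It never has to decide whether Hausdorff plus second-countable alone forces regularity in infinite dimensions, which is exactly the delicate point you flag; the paper hands that step entirely to the reference.

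The one detail to make explicit is the order of choices:
\begin{enumerate}
\item Take $\epsilon>0$ from the Remark, so that every element of $\Bcal'$ within $C^0$-distance $\epsilon$ of $w$ lies in $\Ucal$.
\item Using continuity of $f_*$ into $C^0$, shrink the radius of $\overline{B}$ so that $f_*(\overline{B})$ lies within $C^0$-distance $\epsilon/2$ of $w$ (as well as inside $V$).
\end{enumerate}
Only then is the $C^0$-limit $\widetilde w$ guaranteed to land in $\Ucal$, after which closedness of $\overline{B}$ in the chart finishes the argument. The sketched alternative with $\min\{1,d'\}$ is not needed.
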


\begin{proof}
    Since the Sobolev spaces $W^{1,p} \cong W^{1,p}_{\kappa_0}$ are separable, each chart $f_*^{-1}$ for $\Bcal'$ is modeled on a separable (hence second-countable) Banach space. Because $\Bcal'$ admits a countable subatlas by Lemma \ref{app lem: Bcal' Atlas - chart domains cover Bcal'}, it is therefore second-countable. Now every Hausdorff second-countable Banach manifold is metrizable, see \cite[Cor.~2]{Palais_ParacompactBanachMfds}.
\end{proof}

\subsection{Supplement to the proof of Proposition \ref{prop: Non_Empty Moduli Spaces - Compactness}}
\label{app subsec: Supplement Compactness Moduli Space in BcalHat}

We prove the estimates \eqref{eq: Compactness - estimates xi_n^phi} in the proof of Proposition \ref{prop: Non_Empty Moduli Spaces - Compactness}. We recall that
\[
w_n = (u_n,\eta_n) = f_*\big(\xi_n,x_n^-,x_n^+,\eta_n - \tau(\alpha)\big) \in \Bcal
\]
all lie in the fixed chart centered at $w=(u,\eta) \,$. We also remind the reader of the notation $\xi_n^{\varphi^+} := \d_u \varphi^+ [\xi_n] $ and analogously define $u_n^{\varphi^+} := \varphi^+ \circ u_n$ and $u^{\varphi^+} := \varphi^+ \circ u \,$.

Observe that \eqref{app eq: Bcal' Atlas - exp^gaux at u_0} can asymptotically (for $s \gg 0$) be rewritten as
\begin{equation*}
u_n^{\varphi^+} = u^{\varphi^+} + \xi_n^{\varphi^+} + u^{\varphi^+}_n(+\infty,0) = u^{\varphi^+} + \xi_n^{\varphi^+} + u^{\varphi^+}_n(+\infty,t) - u^{\varphi^+}(+\infty,t) \, ,
\end{equation*}
using that $u_n^{\varphi^+}(+\infty,t) = u_n^{\varphi^+}(+\infty,0) + (t,0) $ and $u^{\varphi^+}(+\infty,t) = (t,0) \,$. Hence
\begin{equation}
 \label{app eq: Compactness estimates - xi_n formula} 
 \xi_n^{\varphi^+}(s,t) = u_n^{\varphi^+}(s,t) - u_n^{\varphi^+}(+\infty,t) - u^{\varphi^+}(s,t) + u^{\varphi^+}(+\infty,t) \,\, .
\end{equation}
Recall that, to make sense of this formula, we have to consider the image of $\xi_n^{\varphi^+}(s,t)$ under the projection $\mathrm{pr}_1 : \R^{2N} \rightarrow \bbS^1 \times \R^{2N-1}\,$. We can also interpret the equation to hold in $\R^{2N}$ if we take the inverse of the covering map $\mathrm{pr}_1$ restricted to the sheet containing the origin (the right-hand side converges uniformly to $([0],0) \in \bbS^1 \times \R^{2N-1}$ as $s \rightarrow +\infty$).

Using \eqref{app eq: Compactness estimates - xi_n formula} and \eqref{eq: Uniform Exponential Decay - d(u, u_infty)} and assuming $\Xi$ to be bounded (see Remark \ref{rmk: Uniform Exponential Decay} \ref{it: Uniform Exponential Decay - Xi bounded}), we estimate
\begin{align*}
    |\xi_n^{\varphi^+}(s,t)| &\leq |u_n^{\varphi^+}(s,t)- u_n^{\varphi^+}(+\infty,t)| + |u^{\varphi^+}(s,t)- u^{\varphi^+}(+\infty,t)| \\
    &\leq C_1 \big( d_W(u_n(s,t), u_n(+\infty,t)) + d_W(u(s,t), u(+\infty,t)) \big) \leq C_2 \, e^{-\kappa s} \,\, ,
\end{align*}
which is already the first estimate in \eqref{eq: Compactness - estimates xi_n^phi}.

Similarly, deriving \eqref{app eq: Compactness estimates - xi_n formula} in $s$-direction and using \eqref{eq: Uniform Exponential Decay - partial_s}, we obtain
\begin{align*}
    |\partial_s \xi_n^{\varphi^+}(s,t)| \leq C_3 \big( |\partial_s u_n(s,t)| + |\partial_s u(s,t)| \big) \leq C_4 \, e^{-\kappa s} \,\, ,
\end{align*}
the second estimate in \eqref{eq: Compactness - estimates xi_n^phi}.

Asymptotically the $w_n = (u_n,\eta_n)$ are gradient flow lines of $\RabinowitzHZero \,$, hence in the tubular coordinates 
\begin{equation}
\label{app eq: Compactness estimates - Rabinowitz-Floer eq coordinates}
0 = \partial_s u_n^{\varphi^+} + (\varphi^+_*J)(u_n^{\varphi^+}) \big( \partial_t u_n^{\varphi^+} - \eta_n \, (\varphi^+_*X_{H_0})(u_n^{\varphi^+}) \big) \quad \text{ for } s \text{ large}
\end{equation}
and an analogous equation holds for $w=(u,\eta) \,$. Moreover, the limits are orbits of $\partial_\vartheta = \tau(\alpha) (\varphi^+_* R_\alpha)$ and $H_0$ is a defining Hamiltonian, so
\begin{equation}
\label{app eq: Compactness estimates - partial_t limits}
\partial_t u_n^{\varphi^+} (+\infty,t) = \partial_{\vartheta} =\tau(\alpha) (\varphi^+_* X_{H_0})(u_n^{\varphi^+}(+\infty,t)) \,\, .
\end{equation}
Deriving \eqref{app eq: Compactness estimates - xi_n formula} in $t$-direction and using the gradient equation \eqref{app eq: Compactness estimates - Rabinowitz-Floer eq coordinates} and equation \eqref{app eq: Compactness estimates - partial_t limits} for the $t$-derivatives of the limits, we obtain
\begin{align}
    \partial_t \xi_n^{\varphi^+}(s,t) &= \partial_t u_n^{\varphi^+}(s,t) - \partial_t u_n^{\varphi^+}(+\infty,t) -\partial_t u^{\varphi^+}(s,t) + \partial_t u^{\varphi^+}(+\infty,t) \notag\\
    &=  (\varphi^+_*J)(u_n^{\varphi^+}) \, \partial_s u_n^{\varphi^+} + \eta_n \, (\varphi_*^+ X_{H_0})(u_n^{\varphi^+}) - \tau(\alpha) (\varphi^+_*X_{H_0})(u_n^{\varphi^+}(+\infty,t))  \notag\\
    &\hspace*{4mm} - (\varphi^+_*J)(u^{\varphi^+}) \, \partial_s u^{\varphi^+} - \eta \, (\varphi_*^+ X_{H_0})(u^{\varphi^+})  + \tau(\alpha) (\varphi^+_*X_{H_0})(u^{\varphi^+}(+\infty,t))\,\, . \label{app eq: Compactness estimates - partial_t xi_n}
\end{align}
By the triangle inequality and a Lipschitz estimate for the $\R^{2N}$-valued function $(\varphi_*^+X_{H_0})$ on a compact neighborhood of $\set{(t,0)}{t\in \bbS^1} \subseteq \bbS^1 \times \R^{2N-1} \,$, we have
\begin{align*}
    &\hspace*{7mm} \big| (\varphi^+_*J)(u_n^{\varphi^+}) \, \partial_s u_n^{\varphi^+} + \eta_n \, (\varphi_*^+ X_{H_0})(u_n^{\varphi^+}) - \tau(\alpha) (\varphi^+_*X_{H_0})(u_n^{\varphi^+}(+\infty,t)) \big|\\
    &\leq C_5 \big( |\partial_s u_n^{\varphi^+}| + |\eta_n (s) - \tau(\alpha)| +   |u_n^{\varphi^+}(s,t) - u_n^{\varphi^+}(+\infty,t)| \big) \leq C_6 \, e^{- \kappa s} \,\, .
\end{align*}
The latter inequality holds by combining the exponential decay estimates in \eqref{eq: Uniform Exponential Decay - partial_s}, \eqref{eq: Uniform Exponential Decay - d(u, u_infty)} and \eqref{eq: Uniform Exponential Decay - |eta - eta(infty)|}. This bounds the norm of the first line in \eqref{app eq: Compactness estimates - partial_t xi_n} by $C e^{-\kappa s}$ (for a suitable constant $C>0$) and analogously one can bound the second line by $C e^{-\kappa s} \,$, thus showing the desired exponential decay estimate for $|\partial_t \xi_n^{\varphi^+}|$ in \eqref{eq: Compactness - estimates xi_n^phi}.

\subsection{Supplement to Lemma \ref{lem: Fredholm - ind D Fsection_0 locally constant}}
\label{app subsec: Supplement Fredholm index locally constant}

We show an assertion made in the proof of Lemma \ref{lem: Fredholm - ind D Fsection_0 locally constant}. We slightly simplify the notation therein for our purposes.

Given $w_0 = (u_0,\eta_0) \in \Bcal^\infty\,$, a sufficiently small fiberwise convex, open neighborhood $\mathcal{O} \subseteq TW$ of the zero section and a smooth map $f: u_0^*\mathcal{O} \times \widehat{B} \rightarrow W \,$, where $\widehat{B} := B^- \times B^+ \subseteq \R^{d} \comma d := 2 (2N-1) \,$, is the product of two $(2N-1)$-balls around the origin. The map $f$ induces a pushforward map of sections $f_* \,$, defined by
\begin{align*}
    (f_*(\xi,x)) (s,t) := f( \xi(s,t) , x ) \comma \quad \xi \in W^{1,p}_{\kappa_0}(\R \times \bbS^1,u_0^*\mathcal{O}) \comma x \in \widehat{B} \,\, .
\end{align*}
Here we abridge $f(\xi(s,t),x) = f\big( ( (s,t),\xi(s,t)) \comma x \big) \,$. Let
\begin{align*}
    \Psi : f^*TW \overset{\cong}{\longrightarrow} (u_0^*TW \times \widehat{B}) \times \R^{2N}
\end{align*}
be a smooth, unitary trivialization over the base $u_0^*TW \times \widehat{B}$. Analogous to the proof of Lemma \ref{lem: Fredholm - ind D Fsection_0 locally constant}, for $u =  f_*(\xi,x)$ we define the induced trivialization
\begin{align*}
    \Psi^u : u^*TW \overset{\cong}{\longrightarrow} (\Rbar \times \bbS^1) \times \R^{2N} \comma \quad \Psi^u_{(s,t)} := \Psi_{(\xi(s,t),x)} : \, T_{u(s,t)}W \overset{\cong}{\longrightarrow} \R^{2N} \,\, .
\end{align*}
Following formula \eqref{eq: Fredholm - Hessian ActionClass trivialized S}, for $w = (u,\eta) \in \Bcal^\infty \comma u = f_*(\xi,x) \,$, we set
\begin{align}
\label{app eq: Supplement Fredholm index locally constant - Stilde}
    \widetilde{S}^w := \Psi^u  \big(  J(u) \, (\nabla_t (\Psi^u)^{-1}) + ( \nabla_{(\Psi^u)^{-1}} J) \, \partial_t u  + \eta \,\nabla_{(\Psi^u)^{-1}} \nabla H_0 \big) \,\, ,
\end{align}
so that $\widetilde{S}^w \in \Cinfty(\Rbar \times \bbS^1,\R^{2N \times 2N}) \,$. In Lemma \ref{lem: Fredholm - ind D Fsection_0 locally constant} we have claimed the following.

\begin{lemma}
\label{app lem: Supplement Fredholm locally constant - S_n - S_0 trivialized}
Let $(w_n)_n = ( u_n , \eta_n )_n \subseteq \Bcal^\infty$ be a sequence converging in $\Bcal$ to $w_0=(u_0,\eta_0) \in \Bcal^\infty \,$. Then there exist sequences $(S^1_n)_n \comma (S^2_n)_n \subseteq \Cinfty(\Rbar \times \bbS^1,\R^{2N\times 2N})$ with $S^1_n \rightarrow 0$ in $L^p_{\kappa_0}(\R \times \bbS^1,\R^{2N \times 2N}) \comma S^2_n \rightarrow 0$ in $C^0(\Rbar \times \bbS^1,\R^{2N \times 2N}) $ and
\[
\widetilde{S}^{w_n} - \widetilde{S}^{w_0} = S^1_n + S^2_n \,\, .
\]
\end{lemma}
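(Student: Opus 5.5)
We decompose $\widetilde{S}^{w_n} - \widetilde{S}^{w_0}$ according to the three summands in \eqref{app eq: Supplement Fredholm index locally constant - Stilde}, writing $u_n = f_*(\xi_n,x_n)$ with $\xi_n \to 0$ in $W^{1,p}_{\kappa_0}$, $x_n \to 0$ in $\widehat{B}$ and $\eta_n - \eta_0 \to 0$ in $W^{1,p}_{\kappa_0}(\R,\R)$, which is what convergence $w_n \to w_0$ in $\Bcal$ means in the chart $f_*^{-1}$. Two facts are used throughout. First, the Sobolev embedding $W^{1,p}_{\kappa_0} \hookrightarrow C^0(\Rbar)$ (valid as $p>2$) gives $\xi_n \to 0$ uniformly and $\eta_n \to \eta_0$ uniformly. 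Second, $\Psi$ is a smooth trivialization over the compact base $u_0^*\overline{\mathcal{O}} \times \overline{\widehat{B}}$ (the base $\Rbar \times \bbS^1$ being compact), so all its derivatives are bounded and uniformly continuous.

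\emph{Terms without derivatives of $u$.} The term $\eta\,\Psi^u \nabla_{(\Psi^u)^{-1}} \nabla H_0$ is a smooth function $G(\xi(s,t),x)\,\eta(s)$ of the point $(\xi(s,t),x)$ in the base, multiplied by $\eta$. Hence its difference along the sequence tends to zero in $C^0(\Rbar \times \bbS^1)$, by uniform continuity of $G$ and uniform convergence of $\xi_n, x_n, \eta_n$. We put this piece into $S^2_n$.

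\emph{Terms involving $\partial_t u$.} The other two terms, $\Psi^u J(u)(\nabla_t(\Psi^u)^{-1})$ and $\Psi^u(\nabla_{(\Psi^u)^{-1}}J)\partial_t u$, involve $\partial_t u$. By the chain rule, $\partial_t u_n = \d f\,[\partial_t u_0\text{-direction},\, \nabla_t \xi_n]$. Likewise $\nabla_t(\Psi^{u_n})^{-1}$ equals the derivative of $\Psi^{-1}$ along the base direction $(\partial_t$ of the point $(s,t,\xi_n(s,t)),0)$, and this splits into a horizontal part along $\partial_t u_0$ and a vertical part along $\nabla_t\xi_n$. Both terms therefore take the form
\[
\Lambda(\xi_n,x_n)\,[\partial_t\text{-horizontal}] + M(\xi_n,x_n)\,[\nabla_t \xi_n],
\]
where $\Lambda$ and $M$ are smooth and bounded on the compact base, and the horizontal input is fixed and continuous on $\Rbar\times\bbS^1$, since $u_0\in\Bcal^\infty$. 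The horizontal parts converge uniformly to their $n=0$ values, so they also go into $S^2_n$. The vertical parts are bounded pointwise by $C|\nabla_t\xi_n|$, while at $n=0$ the vertical part vanishes because $\xi_0=0$. Their difference is thus bounded by $C|\nabla_t \xi_n| \to 0$ in $L^p_{\kappa_0}$, and we put it into $S^1_n$.

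\emph{Main obstacle.} The delicate point is the bookkeeping of $\nabla_t$ applied to the composite trivialization, that is, checking that the $\partial_t$-derivative of $(s,t)\mapsto \Psi^{-1}_{(\xi_n(s,t),x_n)}$ really splits as a bounded, smooth operator applied to $\partial_t$ in the base plus a bounded operator applied to $\nabla_t\xi_n$. The weight $\kappa_0$ must also survive, which it does because the coefficients are uniformly bounded. One should also note that on $\R$ the terms are smooth, so $S^1_n$ and $S^2_n$ are smooth; smoothness up to $\Rbar$ follows because $\xi_n$ is smooth on $\Rbar$ for $w_n\in\Bcal^\infty$.
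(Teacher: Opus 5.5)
Your proposal is correct and follows essentially the same route as the paper. You split $\widetilde{S}^{w}$ into its three summands, send the zeroth-order term and the base-direction ($\partial_t$) parts of the two first-order terms to the $C^0$-convergent piece $S^2_n$, and send the parts linear in the fibre derivative of $\xi_n$ to the $L^p_{\kappa_0}$-convergent piece $S^1_n$. The only difference is cosmetic: the paper fixes a global trivialization of $u_0^*TW$, so that $\partial_t G_{(\xi,x)} = \partial_t \oplus \partial_t \xi \oplus 0$ and the bookkeeping is done by the connection matrix $A^\Psi$ and the $1$-form $Q$, whereas your horizontal/vertical splitting with $\nabla_t \xi_n$ is equivalent.
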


\begin{proof}
    We fix, once and for all, a smooth global trivialization $u_0^*TW \cong (\Rbar \times \bbS^1) \times \R^{2N} \,$, so that we can regard $f$ as a smooth function on an open subset $U \subseteq (\Rbar \times \bbS^1) \times \R^{2N} \times \R^{d} $ containing $(\Rbar \times \bbS^1) \times \{0\} \times \widehat{B} \,$.
    It will be convenient to introduce the parameter-dependent function
    \begin{align*}
        G_{(\xi,x)} : \Rbar \times \bbS^1 \longrightarrow (\Rbar \times \bbS^1) \times \R^{2N} \times \R^{d} \comma \quad G_{(\xi,x)}(s,t) := \big( (s,t),\xi(s,t) , x \big) 
    \end{align*}
    for $\xi \in W^{1,p}_{\kappa_0}(\R \times \bbS^1,\R^{2N}) \cap \Cinfty(\Rbar \times \bbS^1,\R^{2N} ) $ and $x \in \R^d \,$, so that we can write $u=f_*(\xi,x)$ as $u = f \circ G_{(\xi,x)} \,$. Notice that
    \begin{equation}
    \label{app eq: Supplement Fredholm index locally constant - partial_t G}
    \partial_t G_{(\xi,x)} (s,t) = \partial_t \oplus \partial_t \xi (s,t) \oplus 0 \in T_{(s,t)} (\R \times \bbS^1) \oplus \R^{2N} \oplus \R^{d} \,\, .
    \end{equation}
    We investigate the three summands in \eqref{app eq: Supplement Fredholm index locally constant - Stilde} separately.

    \textit{First term:} Let $u = f_*(\xi,x) = f \circ G_{(\xi,x)}$ be arbitrary. Denote by $A^\Psi \in \Omega^1(U, \R^{2N \times 2N})$ the connection matrix of the pullback connection $f^*\nabla$ with respect to the trivialization $\Psi$. Then $u^*\nabla = (G_{(\xi,x)})^* f^* \nabla$ has connection matrix $(G_{(\xi,x)})^* A^\Psi$ with respect to the trivialization $\Psi^u = (G_{(\xi,x)})^* \Psi \,$. Hence
    \begin{align}
    \label{app eq: Supplement Fredholm index locally constant - connection matrix}
        \Psi^u \big( J(u) \, (\nabla_t (\Psi^u)^{-1} \big) = J_0 \, \Psi^u\big( \nabla_t (\Psi^u)^{-1}  \big) = J_0 \, A^{\Psi}_{G_{(\xi,x)}} (\partial_t G_{(\xi,x)}) \,\, .
    \end{align}
    Here we used that $\Psi^u$ is unitary, so that it intertwines $J(u)$ and the standard almost complex structure $J_0$ on $\R^{2N} \,$.
    
    Now let $u_n = f_*(\xi_n , x_n)$ be the given sequence tending to $u_0 = f_*(0,0) \,$. Then $\xi_n \rightarrow 0$ in $W^{1,p}_{\kappa_0}(\R \times \bbS^1,\R^{2N})$ and $x_n \rightarrow 0$ in $\R^{d} \,$. In particular, $(\xi_n)_n $ also converges to the zero function in $C^0(\Rbar \times \bbS^1,\R^{2N}) \,$. By \eqref{app eq: Supplement Fredholm index locally constant - partial_t G} and \eqref{app eq: Supplement Fredholm index locally constant - connection matrix}, the difference of the first terms in \eqref{app eq: Supplement Fredholm index locally constant - Stilde} is
    \begin{align*}
     &\hspace*{3mm}\Psi^{u_n} \big( J(u_n) \, \nabla_t (\Psi^{u_n})^{-1} \big)  -  \Psi^{u_0} \big( J(u_0) \, \nabla_t (\Psi^{u_0})^{-1} \big)  \\
     &= J_0 \Big( \underbrace{ ( A^{\Psi}_{G_{(\xi_n,x_n)}} - A^\Psi_{G_{(0,0)}} )(\partial_t) }_{ \rightarrow 0 \text{ in } C^0(\Rbar \times \bbS^1)} + \underbrace{A^\Psi_{G_{(\xi_n,x_n)}} (\partial_t \xi_n) }_{ \rightarrow 0 \text{ in } L^p_{\kappa_0}(\R \times \bbS^1)} \Big)  \,\, .
    \end{align*}

   \textit{Second term:} In view of \eqref{app eq: Supplement Fredholm index locally constant - partial_t G}, for $u = f_*(\xi,x) = f \circ G_{(\xi,x)}$ we may write
   \begin{align*}
       \Psi^u \big( (\nabla_{(\Psi^u)^{-1}} J) \, \partial_t u \big) &= \underbrace{\Psi_{G_{(\xi,x)}} \cdot (\nabla_{\Psi^{-1}_{G_{(\xi,x)}}} J ) \cdot (\d f)_{G_{(\xi,x)}} }_{=: Q_{G_{(\xi,x)}} }  \, \big( \underbrace{\partial_t \oplus \partial_t \xi}_{ \in \, T_{G_{(\xi,x)}} U} \big) \,\, ,
   \end{align*}
   with smooth matrix-valued 1-form $Q \in \Omega^1(U,\R^{2N \times 2N})\,$. Hence
   \begin{align*}
    &\hspace*{3mm}\Psi^{u_n} \big( (\nabla_{(\Psi^{u_n})^{-1}} J) \, \partial_t u_n \big) - \Psi^{u_0} \big( (\nabla_{(\Psi^{u_0})^{-1}} J) \, \partial_t u_0 \big)   \\
    &= \underbrace{\big(  Q_{G_{(\xi_n, x_n)}} - Q_{G_{(0,0)}} \big)  (\partial_t)}_{\rightarrow 0 \text{ in } C^0(\Rbar \times \bbS^1)} + \underbrace{Q_{G_{(\xi_n,x_n)}}  (\partial_t \xi_n)}_{\rightarrow 0 \text{ in } L^p_{\kappa_0}(\R \times \bbS^1)}  \,\, .
   \end{align*}

   \textit{Third term:} Let $u_n = f_*(\xi_n,x_n) \,$. Clearly
   \begin{align*}
     &\hspace*{6mm}\Psi^{u_n} \big( \eta_n \, \nabla_{(\Psi^{u_n})^{-1}} \nabla H_0 \big) - \Psi^{u_0} \big( \eta_0 \, \nabla_{(\Psi^{u_0})^{-1}} \nabla H_0 \big) \\
     &=  \eta_n \, \Psi_{G_{(\xi_n,x_n)}} \big( \nabla_{\Psi_{G_{(\xi_n,x_n)}}^{-1} } \nabla H_0\big) - \eta_0 \, \Psi_{G_{(0,0)}} \big( \nabla_{\Psi_{G_{(0,0)}}^{-1} } \nabla H_0\big) \,\convergence{n} 0 \quad \text{in } C^0(\Rbar \times \bbS^1)   
   \end{align*}
   since $\xi_n \rightarrow 0$ in $C^0(\Rbar \times \bbS^1,\R^{2N})$, $ x_n \rightarrow 0$
 in $\R^d$ and $\eta_n \rightarrow \eta_0$ in $C^0(\Rbar,\R) \,$.
\end{proof}

\printbibliography

\end{document}